\newtheorem{thm}{Theorem}[section]
\newtheorem{cor}[thm]{Corollary}
\newtheorem{prop}[thm]{Proposition}
\newtheorem{lemma}[thm]{Lemma}
\theoremstyle{definition}
\newtheorem{definition}[thm]{Definition}
\newtheorem{eg}[thm]{Example}
\newtheorem{remark}[thm]{Remark}
\newtheorem{notation}[thm]{Notation}
\newcommand{\N}{\mathbb{N}}
\newcommand{\Z}{\mathbb{Z}}
\newcommand{\Q}{\mathbb{Q}}
\newcommand{\PP}{\mathbb{P}}
\newcommand{\R}{\mathbb{R}}
\newcommand{\Frob}{\mathrm{Frob}}
\newcommand{\Qur}{\Q_p^{\mathrm{ur}}}
\newcommand{\X}{\mathscr{X}}
\newcommand{\Y}{\mathscr{Y}}
\newcommand{\ZZ}{\mathscr{Z}}
\newcommand{\Ccal}{\mathcal{C}}
\newcommand{\s}{\mathfrak{s}}
\newcommand{\OO}{\mathcal{O}}
\newcommand{\OOO}{\OO_K}
\newcommand{\tcqi}{tame cyclic quotient invariants}
\newcommand{\GK}{\mathrm{G}_K}
\newcommand{\clusleadcoeff}{\nu_{\s}}
\newcommand{\depth}{d_{\s}}
\newcommand{\tfrak}{\mathfrak{t}}
\newcommand{\RR}{\mathcal{R}}
\newcommand{\Rcal}{\mathcal{R}}
\newcommand{\Kbar}{\overline{K}}
\newcommand{\Gal}{\textrm{Gal}}
\newcommand{\pointInOrbit}{Q}
\newcommand\restr[2]{{
  \left.\kern-\nulldelimiterspace 
  #1 
  \vphantom{\big|} 
  \right|_{#2} 
  }}
\newcommand{\oo}{\mathfrak{o}}
\newcommand{\singletonsofs}{\s_{\mathrm{sing}}}
\newcommand{\Cnew}{C^{\mathrm{new}}}
\newcommand{\snew}{\s^{\mathrm{new}}}
\newcommand{\childrenfixedofs}[1]{{\widehat{#1}}^{\mathrm{f}}}
\newcommand{\childrennotfixedofs}[1]{{\widehat{#1}}^{\mathrm{nf}}}
\newcommand{\Rcalnew}{\Rcal^{\mathrm{new}}}
\newcommand{\ssg}[1]{g_{\textrm{ss}}(#1)}
\newcommand{\ssgs}{\ssg{\s}}
\title{Models of Hyperelliptic Curves with Tame Potentially Semistable Reduction}
\author{Omri Faraggi}
\address{University College London, Gower Street, London, WC1E 6BT, UK}
\email{omri.faraggi.17@ucl.ac.uk}
\author{Sarah Nowell}
\address{University College London, Gower Street, London, WC1E 6BT, UK}
\email{sarah.nowell.17@ucl.ac.uk}
\begin{document}
	
\maketitle

\begin{abstract}
Let $C$ be a hyperelliptic curve $y^2 = f(x)$ over a discretely valued field $K$. The $p$-adic distances between the roots of $f(x)$ can be described by a completely combinatorial object known as the cluster picture. We show that the cluster picture of $C$, along with the leading coefficient of $f$ and the action of $\Gal(\overline{K}/K)$ on the roots of $f$, completely determines the combinatorics of the special fibre of the minimal strict normal crossings model of $C$. In particular, we give an explicit description of the special fibre in terms of this data.
\end{abstract}

\renewcommand{\contentsname}{Table of contents}
\thispagestyle{empty}\setcounter{tocdepth}{1}
\tableofcontents

\section{Introduction}
\label{sec::intro}

Models of curves are invaluable objects which can be used to deduce a 
large amount of arithmetic information about the curve 
more easily than would otherwise be possible.
In this paper we study 
hyperelliptic curves
, giving a description of their minimal strict normal crossings (SNC) models using 
\textit{cluster pictures}, a relatively new innovation which have already proved 
advantageous in studying the arithmetic of hyperelliptic curves. In particular, cluster pictures have been used to calculate semistable models, conductors, minimal discriminants and Galois representations in \cite{DDMM18}, Tamagawa numbers in \cite{Bet18}, root numbers in \cite{Bis19} and differentials in \cite{Kun19}. 


Let $K$ be a field complete with respect to a discrete valuation $v_K$, with algebraically closed residue field $k$ of characteristic $p > 2$. Let $C/K$ be a hyperelliptic curve given by Weierstrass equation $y^2 = f(x),$ with genus $g=g(C)$\footnote{Unless explicitly mentioned otherwise, we assume $g\geq 2$ throughout the paper.}. We write $\Rcal$ for the set of roots of $f(x)$ in the algebraic closure $\Kbar$ of $K$ and $c_f$ for the leading coefficient of $f$, so
$$f(x)=c_f\prod_{r\in\Rcal}(x-r),$$
and $|\Rcal|\in\{2g+1, 2g+2\}$. Following \cite{DDMM18} we associate to $C$ a cluster picture, defined by the combinatorics of the root configuration of $f$.

Using cluster pictures we will calculate a combinatorial description of the minimal SNC model $\X$ of $C/K$: a model whose singularities on the special fibre $\X_k$ are normal crossings (i.e. locally they look like the union of two axes), and where blowing down any exceptional component of $\X_k$ would result in a worse singularity. Such models can be used to calculate arithmetic invariants, to study the Galois representation, and to deduce the existence of $K$-rational points of $C$. For the case of elliptic curves, Tate's algorithm \cite{Sil94} is sufficient to calculate the minimal SNC model of a given curve. For hyperelliptic curves, \cite{DDMM18} the authors calculate the SNC model when $C$ has semistable reduction, and in \cite{Dok18} when $C$ has a particularly nice cluster picture.\footnote{In fact, the methods of \cite{Dok18} work for a much larger class of smooth projective curves, but we restrict our attention to its applications for hyperelliptic curves.} Similar work has also been done on models of different classes of curves and the applications of these models --- such as \cite{BW17} on stable models of superelliptic curves and \cite{LLLGR18} on non-hyperelliptic genus 3 curves. Other work on hyperelliptic invariants has also been done in \cite{OS19}, where the authors prove a conductor-discriminant inequality for hyperelliptic curves. 

We extend existing results about models of hyperelliptic curves to the more general case where $C$ has tame potentially semistable reduction over $K$ --- that is, there exists some finite extension $L/K$ such that $C$ has semistable reduction over $L$, and $[L:K]$ is coprime to $p$. It is important to note that our theorems do not apply in the case where a wild extension is required for semistability. However this condition is not too strong since for large enough $p$, every curve of genus $g$ has tame potentially semistable reduction. Most of the information required to deduce the special fibre of $\X$ is contained in the cluster picture of $C$. 
\begin{definition}
\label{def::clusterintro}
    A \textit{cluster} is a non-empty subset $\s\subseteq\mathcal{R}$ of the form $\s=D\cap\mathcal{R}$ for some disc $D = z + \pi_K^n\mathcal{O}_{\overline{K}}$, where $z\in\overline{K}$, $n\in\Q$ and $\pi_K$ is a uniformiser of $K$. If $\s$ is a cluster and $|\s| > 1$, we say that $\s$ is a \emph{proper cluster}. For a proper cluster $\s$ 
    we define its \emph{depth} $d_\s$ to be 
    $$d_\s=\min_{r,r'\in\s}v_K(r-r').$$ 
    We write $d_{\s} = \frac{a_{\s}}{b_{\s}}$ with $a_{\s},b_{\s}$ coprime. The \textit{cluster picture} $\Sigma_{C/K}$ of $C$ is the collection of all clusters of the roots of $f$. When there is no risk of confusion, we may simplify this to $\Sigma_{C}$.
\end{definition}

The cluster picture $\Sigma_{C/K}$ comes with a natural action of $\GK=\Gal(\overline{K}/K)$ and, along with the valuation of the leading coefficient $v_K(c_f)$, this action is all we need to calculate a combinatorial description of the minimal SNC model of $C$.

\begin{thm}
\label{thm::main1intro}
    Let $K$ be a complete discretely valued field with algebraically closed residue field of characteristic $p>2$. Let $C:y^2 = f(x)$ be a hyperelliptic curve over $K$ with tame potentially semistable reduction. Then the dual graph, with genus and multiplicity, of the special fibre of the minimal SNC model of $C/K$ is completely determined by $\Sigma_{C/K}$ (with depths), the valuation of the leading coefficient $v_K(c_f)$ of $f$, and the action of $\GK$.
\end{thm}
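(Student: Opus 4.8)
The plan is to pass to a finite tame extension $L/K$ over which $C$ acquires semistable reduction, invoke the known description of the semistable model from \cite{DDMM18}, and then descend back to $K$ by analysing the action of $\Gamma = \Gal(L/K)$ on the special fibre. The key observation is that all the relevant data — the finite extension $L$, the semistable model over $L$, and the $\Gamma$-action — is itself read off from the cluster picture $\Sigma_{C/K}$, the valuation $v_K(c_f)$, and the $\GK$-action on $\Rcal$. Concretely, I would first show that the denominators $b_\s$ of the cluster depths, together with the behaviour of the leading coefficient, determine the minimal such $L$ (its degree and ramification), using that the minimal extension over which $C$ is semistable is governed by the $b_\s$ and a factor of $2$ coming from $c_f$. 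Then I would record that, by \cite{DDMM18}, the dual graph with genus and multiplicity of the special fibre of the \emph{minimal regular semistable} model $\X^L$ over $L$ is explicitly given by $\Sigma_{C/L}$; and $\Sigma_{C/L}$ is obtained from $\Sigma_{C/K}$ by a purely combinatorial rescaling of depths (multiplying by the ramification degree) — no new roots or clusters appear, since $L/K$ is unramified-on-residue-fields and the roots of $f$ already lie in $\Kbar$.

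**Next I would** describe the quotient. The minimal SNC model $\X/K$ is obtained from $\X^L/L$ by taking the quotient by $\Gamma$ (acting compatibly with the semistable model, since $L/K$ is tame so the action on each component and node is tame cyclic), normalising, and then resolving the resulting tame cyclic quotient singularities and finally blowing down/up to reach the \emph{minimal} SNC model. Each of these operations is combinatorial once we know: (a) the dual graph of $\X^L_k$ with its $\Gamma$-action, i.e. which components/nodes are fixed or permuted and with what stabilisers; (b) the local structure at each fixed node, encoded by the \tcqi\ (the ``tame cyclic quotient invariants'' the paper has set up notation for), which are computed from the depths $d_\s$ and the slopes of the two branches; and (c) the behaviour of the hyperelliptic involution, which interacts with the quotient. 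The $\Gamma$-action on components is dictated by how $\Gamma$ permutes the clusters and roots — and that is exactly the $\GK$-action on $\Sigma_{C/K}$ — so (a) is determined by the given data. For (b), resolving a tame cyclic quotient singularity inserts a chain of $\PP^1$'s whose multiplicities are given by the Hirzebruch–Jung continued fraction expansion of the quotient invariants, a finite combinatorial recipe; genus-$0$ components stay genus $0$, and the genus of a quotient of a positive-genus component is computed by Riemann–Hurwitz from the ramification data, again combinatorial. Finally, minimality: I would invoke the standard criterion that an SNC model is minimal iff it contains no $(-1)$-curve that is either a genus-$0$ component meeting the rest in $\le 1$ point or a genus-$0$ component of multiplicity $1$ meeting the rest in $\le 2$ points; checking and iterating this is a finite combinatorial process on the dual graph.

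**The main obstacle** I expect is bookkeeping the quotient and resolution precisely at the nodes — in particular, handling the interaction between the $\Gamma$-quotient, the hyperelliptic involution, and the fixed points of $\Gamma$ on components of positive genus, where the quotient map can be ramified and the local quotient invariants must be pinned down exactly (not just up to isomorphism type) in terms of the cluster-theoretic data; this is where \cite{Dok18}'s machinery on $\Delta_v$-type invariants and the tame cyclic quotient computations must be adapted. A secondary technical point is verifying that the rescaling $\Sigma_{C/K} \rightsquigarrow \Sigma_{C/L}$ truly loses no information and that $L$ is correctly identified from the denominators $b_\s$ and $v_K(c_f)$ — i.e. that no ``hidden'' ramification is needed — which requires the tameness hypothesis in an essential way. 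Once these local models are in hand, assembling the global dual graph, computing multiplicities as orbit-sizes times local indices, and running the minimality reduction are all routine, so the theorem follows by exhibiting the explicit algorithm; the explicit description promised in the abstract is the output of this algorithm, and its well-definedness (independence of choices of $L$ and of ordering of blow-downs) is the final thing to check, again a finite verification.
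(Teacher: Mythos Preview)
Your overall strategy --- pass to the semistable model over a tame $L$, quotient by $\Gal(L/K)$, resolve the resulting tame cyclic quotient singularities via Hirzebruch--Jung, then contract to minimality --- is exactly the framework the paper sets up in Sections \ref{subsec::QuotientsofModels}--\ref{subsec::SemistableModels} and uses to prove the structure result (Theorem \ref{thm::structureofSNCmodel}). The substantive difference is in how the detailed computation of the linking chains and tails is organised. You propose computing the quotient invariants at every node directly from the global $\Gal(L/K)$-action; the paper instead argues by a double induction on the number of proper clusters in $\Sigma_{C/K}$ and on $e=[L:K]$, with base cases the single-cluster case (Section \ref{sec::tpgr}, done by your direct method) and the semistable case $e=1$. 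For the inductive step it constructs auxiliary curves $C^{\mathrm{new}}$ (Lemma \ref{lem:cnew}) and $C'$ with strictly simpler cluster pictures but matching local invariants ($d_{\s}$, $\lambda_{\s}\bmod\Z$, parities); Lemma \ref{lem:linkingchaininvariantdetermines} shows these invariants already determine the chain, so the answer is read off from the two-cluster case, which is computed once and for all via Newton polytopes (Section \ref{sec::upc}, using \cite{Dok18}). Your direct route is valid in principle but would force you to redo the node-by-node Hirzebruch--Jung analysis throughout; the paper's inductive reduction buys explicit closed-form parameters for every chain (Theorem \ref{thm::chaindescription}) and sidesteps the bookkeeping you flagged as the main obstacle. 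One minor slip: your minimality criterion is garbled --- the correct statement is that a $(-1)$-curve can be contracted within the SNC category iff it meets at most two other components (multiplicity plays no role here), and the paper handles this via Proposition \ref{prop::centralnotexceptional}.
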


\begin{remark}
    If $K$ does not have an algebraically closed residue field, then the Frobenius action is determined by the data in Theorem \ref{thm::main1intro}, as well as the values of some invariants $\epsilon_X{\Frob}$ for all orbits of clusters $X$. See Definition \ref{def::clusterfunctions2} for a definition of $\epsilon$ and Theorem \ref{thm::frobactionintro} for a full description of the Frobenius action.
\end{remark}

\begin{remark}
    In \cite{Bis19} the author classifies the possible cluster pictures which can arise from hyperelliptic curves with tame potentially semistable reduction. He also shows that the inertia action is determined by the cluster picture (with depths). Given time and determination, this fact, along with Theorem \ref{thm::main1intro}, allows us to classify the minimal SNC models which can arise from such hyperelliptic curves of a given genus. We do so for elliptic curves in Example \ref{eg::KNclassification}.
\end{remark}

A maximal subcluster $\s'$ of a cluster $\s$ is called a \textit{child} of $\s$, denoted $\s' < \s$, and $\s$ is the \textit{parent} of $\s'$, denoted $P(\s')$. We say $\s$ is \textit{odd} (resp. \textit{even}) if $|\s|$ is odd (resp. even) Furthermore, $\s$ is a \emph{twin} if $|\s|=2$, and $\s$ is \emph{\"ubereven} if $\s$ has only even children. A cluster $\s\neq\Rcal$ is \emph{principal} if $|\s| \geq 3$. The cluster $\Rcal$ is \textit{not} principal if it has a child of size $2g(C)$, or if $\Rcal$ is even and has exactly two children; otherwise $\Rcal$ is principal. The remaining theorems given in the introduction assume that $\Rcal$ is principal. Full theorems including the case when $\Rcal$ is not principal are given in Section \ref{sec::mainthm}.

Every Galois orbit of principal clusters $X$ contributes components to the special fibre $\X_k$. More precisely: orbits of principal, \"ubereven clusters contribute either one or two components and orbits of principal, non-\"ubereven clusters contribute one component. We call these components \emph{central components}, and they are linked by either one or two chains of rational curves which we call \emph{linking chains}. The central components of two orbits $X$ and $X'$ are linked by a chain (or chains) of rational curves if and only if there exits some $\s\in X$ and $\s'\in X'$ such that $\s'<\s$. Orbits of twins gives rise to a chain of rational curves which intersects the component(s) arising from their parent's orbit. Some central components are also intersected by other chains of rational curves: \textit{loops}, \textit{tails} and \textit{crossed tails}. Loops are chains from a component to itself; tails are chains which intersect the rest of the special fibre in only one place; crossed tails are similar to tails but with two additional components, called \textit{crosses}, intersecting the final component of the chain. Figures \ref{fig::linkingchainsandloops} and \ref{fig::tailsandcrossed tails} give pictorial descriptions of the different types of chains of rational curves that can occur, where the dashed lines illustrate all the components of $\X_k$ which are intersected by the chain.
\begin{figure}[h]
\centering
\begin{subfigure}{.5\textwidth}
  \centering
  \begin{tikzpicture}
    \draw[dashed,gray] (0,-0.5) -- ++ (0,-1.5);
    \draw (-0.2,-1) -- 
    ++ (1,0);
    \draw (0.6,-0.8) -- 
    ++ (0,-1);
    \draw (0.4,-1.6) -- ++ (1,0);
    \path (1.4,-1.6) -- node[auto=false]{\ldots} (2.1,-1.6);
    \draw (2.1,-1.6) -- 
    ++ (1,0);
    \draw[dashed,gray] (2.9,-0.6)  -- ++ (0,-1.5);
    \end{tikzpicture}
    \caption{Linking chain}
    \label{fig::linkingchain}
\end{subfigure}%
\begin{subfigure}{.5\textwidth}
  \centering
  \begin{tikzpicture}
    \draw[dashed,gray] (0,0) -- ++ (3.7,0);
    \draw (0.75,0.2) -- 
    ++ (0,-1);
    \draw (2.95,0.2)-- 
    ++ (0,-1);
    \draw(0.6,-0.5)-- ++ (1,-0.8);
    \draw(2.1,-1.3)-- ++ (1,0.8);
    \path (1.6,-1.15) -- node[auto=false]{\ldots} (2.1,-1.15);
    \end{tikzpicture}
    \caption{Loop}
    \label{fig::loop}
\end{subfigure}
\caption{Pictorial description of linking chains and loops.}
\label{fig::linkingchainsandloops}
\end{figure}
\vspace{-15px}
\begin{figure}[h]
\centering
\begin{subfigure}{.5\textwidth}
  \centering
  \begin{tikzpicture}
    \draw[dashed,gray] (0,-0.5) -- ++ (0,-1.5);
    \draw (-0.2,-1) -- 
    ++ (1,0);
    \draw (0.6,-0.8) -- 
    ++ (0,-1);
    \draw (0.4,-1.6) -- ++ (1,0);
    \path (1.4,-1.6) -- node[auto=false]{\ldots} (2.1,-1.6);
    \draw (2.1,-1.6) -- 
    ++ (1,0);
    \end{tikzpicture}
    \caption{Tail}
    \label{fig::tail}
\end{subfigure}%
\begin{subfigure}{.5\textwidth}
  \centering
  \begin{tikzpicture}
  \draw[dashed,gray] (0,-0.7) -- ++ (0,-1.3);
    \draw (-0.2,-1) -- 
    ++ (1,0);
    \draw (0.6,-0.8) -- 
    ++ (0,-1);
    \draw (0.4,-1.6) -- ++ (1,0);
    \path (1.4,-1.6) -- node[auto=false]{\ldots} (2.1,-1.6);
    \draw (2.1,-1.6) -- 
    ++ (1,0);
    \draw (2.6,-1.2) -- 
    ++ (0,-0.8);
    \draw (2.85,-1.2) -- 
    ++ (0,-0.8);
    \end{tikzpicture}
    \caption{Crossed tail} 
    \label{fig::crossed tail}
\end{subfigure}
\caption{Pictorial description of tails and crossed tails.}
\label{fig::tailsandcrossed tails}
\end{figure}

This paper explicitly describes the structure, multiplicities and genera of components of $\X_k$. Before we give a precise statement let us illustrate the main result of the paper via an example.

\begin{eg}
\label{eg::firstintroeg}
Let $K=\Qur$, and $C/K$ be the hyperelliptic curve given by $$C:y^2=((x^3-p)^3-p^{15})((x-1)^4-p^9).$$ The cluster picture of $C/K$ is shown in Figure \ref{fig::eg3clusterpic} and the special fibre $\X_k$ of the minimal SNC model of $C/K$ is shown in Figure \ref{fig::eg3model}. The principal clusters in $\Sigma_{C/K}$ are $\s_1,\s_2,\s_3,\s_4,\s_5$, and $\Rcal$, as labeled in Figure \ref{fig::eg3clusterpic}. Note that $\s_3$, $\s_4$ and $\s_5$ are permuted by $\GK$ and denote their orbit by $X$. None of the principal clusters in this example are \"ubereven, so by Theorem \ref{thm::structureofSNCmodelintro}, each orbit of principal clusters gives rise to one central component, shown in bold and labeled in Figure \ref{fig::eg3model}. Clusters $\s_1$ and $\s_2 $ are children of $\Rcal$, so there are one or two linking chains between $\Gamma_{\Rcal}$ and $\Gamma_{\s_i}$ for $i=1,2$, and between $\Gamma_{\s_2}$ and $\Gamma_{X}$. 
\begin{figure}[ht]
\centering
\begin{subfigure}{0.45\textwidth}
  \centering
  \begin{tikzpicture}
    \fill (0,0) circle (1.5pt);
    \fill (0.25,0) circle (1.5pt);
    \fill (0.5,0) circle (1.5pt);
    \fill (1,0) circle (1.5pt);
    \fill (1.25,0) circle (1.5pt);
    \fill (1.5,0) circle (1.5pt);
    \fill (2,0) circle (1.5pt);
    \fill (2.25,0) circle (1.5pt);
    \fill (2.5,0) circle (1.5pt);
    \fill (3.5,0) circle (1.5pt);
    \fill (3.75,0) circle (1.5pt);
    \fill (4,0) circle (1.5pt);
    \fill (4.25,0) circle (1.5pt);

    \draw (0.25,0)
    ellipse (0.45cm and 0.25cm) node[below, yshift = -0.1cm, xshift = 0.4cm, font=\small]{$\s_3$}node[above, yshift = 0cm, xshift = 0.5cm, font=\small]{$\frac{13}{3}$};
    
    \draw (1.25,0)
    ellipse (0.45cm and 0.25cm) node[below, yshift = -0.1cm, xshift = 0.4cm, font=\small]{$\s_4$};
    
    \draw (2.25,0)
    ellipse (0.45cm and 0.25cm) node[below, yshift = -0.1cm, xshift = 0.4cm, font=\small]{$\s_5$ };

    \draw (1.25,0)
    ellipse (1.8cm and 0.9cm) node[below, yshift = -0.4cm, xshift = 1.7cm,font=\small]{$\s_2$} node[above, yshift = 0.3cm, xshift = 1.7cm,font=\small]{$\frac{1}{3}$};
    
    \draw (3.875,0)
    ellipse (0.6cm and 0.25cm) node[below, yshift = -0.1cm, xshift = 0.7cm,font=\small]{$\s_1$} node[above, yshift = 0cm, xshift = 0.7cm,font=\small]{$\frac{9}{4}$};
    
    \draw (2.125,0)
    ellipse (2.9cm and 1.2cm) node[below, yshift = -0.5cm, xshift = 2.7cm,font=\small]{$\Rcal$} node[above, yshift = 0.5cm, xshift = 2.7cm,font=\small]{$0$};
    \end{tikzpicture}
  \caption{Cluster picture $\Sigma_{C/K}$.}
  \label{fig::eg3clusterpic}
\end{subfigure}
\begin{subfigure}{.5\textwidth}
  \centering
  \begin{tikzpicture}
  \draw(0,0)[line width = 0.5mm
  ] -- node[below,font=\small, xshift=-0.2cm] {1}  node[right, font=\small, xshift=1.4cm] {$\Gamma_{\Rcal}$} ++ (3,0);
  
  \draw(2,0.2) -- node[right,font=\small] {1} ++ (0,-1);
  \draw(2.75,0.2) -- node[right,font=\small] {1} ++ (0,-1);
  \draw(1.85,-0.45)
  -- node[below,font=\small, xshift=-0.1cm,yshift=0.1cm] {1} ++ (0.9,-0.9);
  \draw(2.6,-0.45)
  -- node[below,font=\small,xshift=-0.1cm,yshift=0.1cm] {1} ++ (0.9,-0.9);
  \draw(2.4,-1.2)[line width = 0.5mm
  ] -- node[above,font=\small,xshift=0.3cm] {4}  node[right, font=\small, xshift=0.9cm] {$\Gamma_{\s_1}$}++ (2,0);
  \draw(4.15,-1)
  -- node[left,font=\small,yshift=-0.1cm] {2} ++ (0,-0.8);
  
  \draw(0.25,0.2)[line width = 0.5mm
  ]
  -- node[left,font=\small,yshift=-0.4cm] {6}  node[below, font=\small, yshift=-0.75cm] {$\Gamma_{\s_2}$} ++ (0,-1.6);
  \draw(0.05,-0.6)
  -- node[above,font=\small] {2} ++ (0.8,0);
  \draw(0.05,-1.2)
  -- node[above,font=\small] {3} ++ (1,0);
  \draw(0.85,-1) -- node[left,font=\small] {3} ++ (0,-1);
  \draw(0.65,-1.8) -- node[above,font=\small] {3} ++ (1,0);
  \draw(1.45,-1.6) -- node[left,font=\small] {3} ++ (0,-1);
  \draw(1.25,-2.4) -- node[above,font=\small] {3} ++ (1,0);
  \draw(2.05,-2.2)
  -- node[left,font=\small] {3} ++ (0,-1);
  \draw(1.85,-3)[line width = 0.5mm
  ]
  -- node[above,font=\small] {6}  node[right, font=\small, xshift=0.9cm] {$\Gamma_{X}$} ++ (2,0);
  \draw(3.6,-2.8)
  -- node[left,font=\small,yshift=-0.1cm] {3} ++ (0,-0.8);
  \draw(3.1,-2.8)
  -- node[left,font=\small,yshift=-0.1cm] {3} ++ (0,-0.8);
  \draw(2.6,-2.8)
  -- node[left,font=\small,yshift=-0.1cm] {3} ++ (0,-0.8);
  
    \end{tikzpicture}
  \caption{Special fibre of the minimal SNC model of C/K.}
  \label{fig::eg3model}
\end{subfigure}
\caption{$C:y^2=((x^3-p)^3-p^{15})((x-1)^4-p^9)$ over $K=\Qur$.}
\label{fig::eg3}
\end{figure}
Each of $\Gamma_{s_1},\Gamma_{s_2},$ and $\Gamma_{X}$ are also intersected by tails. How one determines the number and length of the linking chains and tails is discussed in Theorem \ref{thm::intromainthm2}.

In this example, we can compare the chains intersecting the central components in $\X_k$ to tails appearing in the minimal SNC models of related elliptic curves, see Table \ref{tab::KNclassification}. 
The chains intersecting $\Gamma_\Rcal$, along with $\Gamma_{\Rcal}$ itself, look much like a type $\mathrm{I}_0$ elliptic curve. Similarly type $\mathrm{III}$ for $\s_1$, type $\mathrm{II}$ for $\s_2$, and type $\mathrm{I}_0^*$ for $X$ (but with multiplicities multiplied by $|X|=3$).
\end{eg}


Here we give an abridged version of the description of the structure of the special fibre, given in full in Theorem \ref{thm::structureofSNCmodel}. In stating this theorem we use a subtle invariant of even clusters denoted $\epsilon_X$. This is defined fully in Definition \ref{def::clusterfunctions2}, however in practice for $X$ with $\s \in X$ even, and $r_{\s}$ any root of $\s$, $\epsilon_X$ is given by $\epsilon_X = (-1)^{|X|\left(v_K(c_f) + \sum_{r\not\in\s} v_K(r_{\s} - r)\right)}.$


\begin{thm}[Structure of SNC model]
    \label{thm::structureofSNCmodelintro}
    Let $K$ be a complete discretely valued field with algebraically closed residue field of characteristic $p>2$. Let $C/K$ be a hyperelliptic curve with tame potentially semistable reduction. Then the special fibre of its minimal SNC model is structured as follows. Every principal Galois orbit of clusters $X$ contributes one component $\Gamma_X$, unless $X$ is \"ubereven with $\epsilon_X = 1$, in which case $X$ contributes two components $\Gamma_X^+$ and $\Gamma_X^-$.
    
    These components are linked by chains of rational curves in the following cases (where, for any orbit $Y$, we write $\Gamma_Y^+ = \Gamma_Y^- = \Gamma_Y$ if $Y$ contributes only one central component):
    \begin{center}
    \begin{tabular}{|c|c|c|c|}
    \hline
         \small{Name} & \small{From} & \small{To} & \small{Condition} \\ \hline
         $L_{X,X'}$ & $\Gamma_X$ & $\Gamma_{X'}$  & $X' < X$ both principal, $X'$ odd \\ \hline
         $L_{X,X'}^+$ & $\Gamma_X^+$ & $\Gamma_{X'}^+$ & $X' < X$ both principal, $X'$ even with $\epsilon_{X'} = 1$ \\ \hline
         $L_{X,X'}^-$ & $\Gamma_X^-$ & $\Gamma_{X'}^-$ & $X' < X$ both principal, $X'$ even with $\epsilon_{X'} = 1$ \\ \hline
         $L_{X,X'}$ & $\Gamma_X$ & $\Gamma_{X'}$ & $X' < X$ both principal, $X'$ even with $\epsilon_{X'} = -1$ \\ \hline
         $L_{X'}$ & $\Gamma_X^-$ & $\Gamma_X^+$ & $X$ principal, $X' < X$ orbit of twins, $\epsilon_{X'} = 1$ \\ \hline
         $T_{X'}$ & $\Gamma_X$ & - & $X$ principal, $X' \leq X$ orbit of twins, $\epsilon_{X'} = -1$ \\ \hline
    \end{tabular}
    \end{center}
    Chains where the ``To'' column has been left blank are crossed tails. Some central components $\Gamma_X$ are also intersected transversally by tails. These are explicitly described in Theorem \ref{thm::intromainthm2}.
\end{thm}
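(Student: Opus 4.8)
\emph{Reduction to the semistable case.} The argument proceeds in three stages: first reduce to the semistable case over a tame extension, then pass to the Galois quotient, and finally resolve the resulting singularities and contract to the minimal SNC model. By hypothesis there is a finite extension $L/K$ of degree coprime to $p$ over which $C$ becomes semistable; passing to the Galois closure (still tame) and then to the minimal such extension, we may assume $L/K$ is Galois, and since $k$ is algebraically closed $L/K$ is totally tamely ramified, so $G := \Gal(L/K)$ is cyclic of order $e := [L:K]$ coprime to $p$, and $\GK$ acts on $\Rcal$ through $G$. By the semistability criterion of \cite{DDMM18}, $\Sigma_{C/L}$ (with depths $d_{\s}^{L} = e\, d_{\s}^{K}$), $v_L(c_f) = e\, v_K(c_f)$ and the triviality of the residual Galois action are all determined by $(\Sigma_{C/K}, v_K(c_f), \GK)$. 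Over $L$, \cite{DDMM18} gives the minimal SNC model $\Y/\OO_L$ of $C_L$, which is in particular a regular model; its special fibre has one component $\Gamma_{\s}$ for each principal cluster $\s$ of $\Sigma_{C/L}$, realised as a double cover of $\PP^1_k$ whose branch locus is read off from the odd children of $\s$, this cover being connected unless $\s$ is übereven, in which case $\Gamma_{\s} = \Gamma_{\s}^+ \sqcup \Gamma_{\s}^-$ with each $\Gamma_{\s}^\pm \cong \PP^1_k$; these components are joined by chains of $\PP^1_k$'s according to the parent--child relation, an odd principal child $\s' < \s$ attaching by a single chain, while a twin child or an even principal child of $\s$ contributes a chain (a loop when $\s$ is übereven) attached at the \emph{two} preimages in $\Gamma_{\s}$ of the corresponding point of $\PP^1_k$.

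\emph{Quotient.} The $G$-action on $C$ extends to $\Y/\OO_L$, and we form the normal model $\Ycal := \Y/G$ over $\OO_K$, whose generic fibre is $C$. The induced action permutes the components of $\Y_k$ compatibly with the action of $G$ on the set of clusters, so a Galois orbit $X$ of principal clusters contributes the single component $\Gamma_X = \Gamma_{\s}/G_{\s}$ to $\Ycal_k$ (where $\s \in X$ and $G_{\s} := \mathrm{Stab}_G(\s)$), \emph{except} when $\s$ is übereven and $G_{\s}$ preserves each sheet $\Gamma_{\s}^{\pm}$, in which case $X$ contributes two components $\Gamma_X^+$ and $\Gamma_X^-$. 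The key point is to identify the obstruction to preserving the sheets with the sign $\epsilon_X = (-1)^{|X|(v_K(c_f) + \sum_{r\not\in\s} v_K(r_{\s}-r))}$ of Definition \ref{def::clusterfunctions2}: unwinding the defining equation of the double cover, $G_{\s}$ swaps $\Gamma_{\s}^+$ and $\Gamma_{\s}^-$ precisely when $\epsilon_X = -1$. The same analysis applied to the chain attached to a twin $\tfrak$ — which in $\Y_k$ runs between the two preimages of $\tfrak$ in $\Gamma_{P(\tfrak)}$, one on each sheet when $P(\tfrak)$ is übereven — shows that its image in $\Ycal_k$ is an embedded loop or linking chain when the twin orbit $X'$ has $\epsilon_{X'} = 1$ and is folded in half onto a chain with one free end when $\epsilon_{X'} = -1$; and likewise the two chains linking $\Gamma_{\s}$ to $\Gamma_{\s'}$ for an even principal child $\s' < \s$ survive as two chains when $\epsilon_{X'} = 1$ and are identified into one when $\epsilon_{X'} = -1$, while the single chain from an odd principal child persists unchanged. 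This yields exactly the six rows of the table.

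\emph{Resolution, crosses, and minimality.} The singular points of $\Ycal$ lie only over the points of $\Y_k$ with non-trivial $G$-stabiliser; since $e$ is coprime to $p$ these are tame cyclic quotient singularities, and resolving each by the Hirzebruch--Jung procedure replaces it by a chain of $\PP^1_k$'s whose length and self-intersections are the associated \tcqi. This refines the chains found above; in particular, at the free ends of the folded chains it attaches the two extra ``cross'' components that turn a tail into a crossed tail, so the result is a regular model $\ZZ/\OO_K$ with strict normal crossings special fibre. The minimal SNC model $\X$ is then obtained from $\ZZ$ by successively contracting exceptional $\PP^1_k$'s of self-intersection $-1$ whose contraction preserves normal crossings; using the explicit multiplicities and lengths coming from the \tcqi\ one checks that this contracts precisely the redundant components, leaves exactly the configuration of central components and chains in the statement, and cannot be continued, i.e. no remaining $(-1)$-curve can be blown down without destroying the normal crossings property. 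The case in which $\Rcal$ is not principal is treated by the same argument with the modifications recorded in Section \ref{sec::mainthm}.

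\emph{Main difficulty.} The delicate part is the quotient step: computing the $G$-action on $\Y$ precisely enough — above all on the two sheets of an übereven component, and on the twin and even-child chains — and matching the identifications it forces with the invariant $\epsilon_X$. Once that is done, the resolution and the final round of contractions are a lengthy but essentially mechanical computation with Hirzebruch--Jung continued fractions, carried out in detail in Section \ref{sec::mainthm}.
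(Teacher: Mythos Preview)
Your proposal is correct and follows essentially the same route as the paper: pass to the minimal semistable model $\Y$ over a tame Galois extension $L/K$ via \cite{DDMM18}, take the quotient by $\Gal(L/K)$, resolve the resulting tame cyclic quotient singularities, and contract to the minimal SNC model, with the invariant $\epsilon_X$ governing whether sheets of \"ubereven components and the paired chains $L^{\pm}$ are swapped. The one place where the paper is more explicit than you is the minimality step---showing that no central component $\Gamma_X$ gets blown down---which it handles by a Riemann--Hurwitz count (Proposition~\ref{prop::centralnotexceptional} and Lemma~\ref{lem::numberofchildren}) rather than leaving it as a mechanical check, but your outline already flags that this must be verified.
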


The case when $\Rcal$ is not principal is described in Theorem \ref{thm::structureofSNCmodel}. We do not given explicit equations for the components in the special fibre. However, these can be calculated using the method laid out in this paper if desired (see Remark \ref{rem::explicitequations}). 

The linking chains, tails, and the multiplicities and genera of the components in the special fibre are given explicitly in Theorem \ref{thm::intromainthm2} below. In order to describe the chains of rational curves in detail, we introduce the notion of sloped chains of rational curves. We will also need a few other numerical invariants associated to clusters.

\begin{definition}
    Let $t_1,t_2 \in \Q$ and $\mu, \lambda \in \N$ with $\lambda$ minimal be such that 
    \begin{equation*}
        \mu t_1=\frac{m_0}{d_0}>\frac{m_1}{d_1}>\dots>\frac{m_{\lambda}}{d_{\lambda}}>\frac{m_{\lambda+1}}{d_{\lambda+1}}= \mu t_2,\;\textrm{and}\;\left|\begin{tabular}{cc}
    $m_i$ & $m_{i+1}$\\
    $d_i$ & $d_{i+1}$
    \end{tabular}\right|=1.
    \end{equation*}
    Suppose $\Ccal = \bigcup_{i=1}^{\lambda} E_i$ is a chain of rational curves where $E_i$ has multiplicity $\mu d_i$. Then $\Ccal$ is a \textit{sloped chain of rational curves} with parameters $(t_2,t_1,\mu)$. If $\Ccal$ is a tail, then $\Ccal$ is a sloped chain with parameters $(\lfloor t_1 - 1 \rfloor, t_1, \mu)$, so we usually just write $(t_1,\mu)$ for its parameters.
\end{definition}


\begin{notation}
Write $\tilde{\s}$ for the set of odd children of $\s$, and $\singletonsofs$ for the set of size $1$ children of $\s$.
\end{notation}
\begin{definition}
Let $\s$ be a cluster, then the \emph{semistable genus of $\s$} is given by 
$$|\tilde{\s}| = 2\ssgs + 1 \textrm{ or } 2\ssgs + 2,$$
or $\ssgs=0$ if $\s$ is \"ubereven. If $X$ is an orbit with $\s\in X$ the \emph{semistable genus of $X$} is defined by $g_{\textrm{ss}}(X)=\ssgs$. From this we define the \emph{genus} of an orbit $X$. If $X=\{\s\}$ is a trivial orbit with $d_s=\frac{a_s}{b_s}$, where $(a_{\s},b_{\s})=1$, and $\ssgs > 0$ then $g(\s)$ is given by
\begin{align*} 
g(X)=g(\s)= \begin{cases} \lfloor \frac{\ssgs}{b_{\s}} \rfloor & \lambda_{\s} \in \Z, \\
\lfloor \frac{\ssgs}{b_{\s}} + \frac{1}{2} \rfloor & \lambda_{\s} \not \in \Z, b_{\s} \textrm{ even}, \\
0 & \lambda_{\s} \not \in \Z, b_{\s} \textrm{ odd}.\end{cases} 
\end{align*}
Otherwise, $g(X)=g(\s) = 0$ if $\ssgs = 0$. For a general orbit $X$, define $g(X) = g(\s)$ for $\s \in X$, where $\s$ is considered as a cluster in $\Sigma_{C/K_X}$, and $K_X$ is the unique extension of $K$ of degree $|X|$.
\end{definition}

\begin{definition}
Let $X$ be an orbit of clusters with $\s\in X$, and $r_{\s}$ any root of $\s$. Define $e_X$ to be the minimal degree of extension required to make the clusters in $X$ satisfy the conditions of the Semistability Criterion \cite[Theorem~1.8]{DDMM18}. $X$ also has the following invariants: $$d_X = d_{\s}, \quad b_X = b_{\s}, \quad \delta_X =  d_{\s} - d_{P(\s)}, \textrm{ and }\lambda_X = \frac{v_K(c_f)}{2} + \frac{|\tilde{\s}|d_{\s}}{2} + \frac{1}{2}\sum_{r \not \in \s} v_K(r_{\s} - r).$$
\end{definition}

\begin{definition}
A child $\s'<\s$ is \textit{stable} if it has the same stabiliser as $\s$, and an orbit is \textit{stable} if all (equivalently any) of its children are stable.
\end{definition}

\begin{thm}
\label{thm::intromainthm2}
Let $K$ and $C/K$ be as in Theorem \ref{thm::structureofSNCmodelintro}. Let $X$ be a principal orbit of clusters in the cluster picture of a hyperelliptic curve $C$ with tame potentially semistable reduction and with $\Rcal$ principal. Then $\Gamma_X^{\pm}$ has genus $g(X)$. Furthermore, it has multiplicity $|X|e_X$ if $X$ is non-\"ubereven, or if $X$ is \"ubereven with $\epsilon_X = 1$; otherwise $\Gamma_X^{\pm}$ has multiplicity $2|X|e_X$ if $X$ is \"ubereven with $\epsilon_X = -1$. Suppose further that $e_X > 1$, and choose some $\s \in X$. Then the central component(s) associated to $X$ are intersected transversely by the following sloped tails with parameters $(t_1,\mu)$ (writing $\Gamma_X = \Gamma_X^+ = \Gamma_X^-$ if $X$ contributes only one central component):
    \begin{center}
    \begin{tabular}{|c|c|c|c|c|p{6cm}|}
    \hline
         \small{Name} & \small{From} & \small{Number} & $t_1$ & $\mu$ & \small{Condition}  \\ \hline
         $T_{\infty}$ & $\Gamma_X$ & $1$ & \small{$ (g + 1)d_{\Rcal}-\lambda_{\Rcal}$} & $1$ & $X=\{\Rcal\}$, $\Rcal$ odd \\ \hline
         $T_{\infty}^{\pm}$ & $\Gamma_X^{\pm}$ & $2$ & $d_{\Rcal}$ &$1$ & $X = \{\Rcal\}$, $\Rcal$ even, $\epsilon_{\Rcal} = 1$ \\ \hline
         $T_{\infty}$ & $\Gamma_X$ & $1$ & $d_{\Rcal}$ & $2$ & $X = \{\Rcal\}$, $\Rcal$ even, $e_{\Rcal} > 2$, $\epsilon_{\Rcal}=-1$ \\ \hline
         $T_{y=0}$ & $\Gamma_X$ & $\frac{|\singletonsofs||X|}{b_X}$ & $-\lambda_X$ & $|X|b_X$ &  $|\s_{\mathrm{sing}}|\geq2$, and $e_X>b_X/|X|$  \\\hline
         $T_{x=0}$ & $\Gamma_X$ & $1$ & $-d_X$ & $2|X|$ & $X$ has no stable child, $\lambda_X \not \in \Z$, $e_X>2$ and either $\ssg{X} > 0$ or $X$ is \"ubereven \\ \hline 
         $T_{x=0}^{\pm}$ & $\Gamma_X^{\pm}$ & $2$ & $-d_X$ & $|X|$ & $X$ has no stable child, $\lambda_X \in \Z$, and either $\ssg{X} > 0$ or $X$ is \"ubereven \\ \hline
         $T_{(0,0)}$ & $\Gamma_X$ & $1$ & $-\lambda_X$ & $|X|$ & $X$ has a stable singleton or $\ssg{X} = 0$, $X$ is not \"ubereven and $X$ has no proper stable odd child\\ \hline
    \end{tabular}
    \end{center} 
    The central components are intersected by the following sloped chains of rational curves with parameters $(t_2, t_2 + \delta, \mu)$:
    \begin{center}
    \begin{tabular}{|c|c|c|c|c|}
    \hline
         \small{Name} & $t_2$ & $\delta$ & $\mu$ & \small{Condition} \\ \hline
         $L_{X,X'}$ & $-\lambda_X$ & $\delta_{X'}/2$& $|X|$ & $X' \leq X$ both principal, $X'$ odd \\ \hline
         $L_{X,X'}^+$ & $-d_X$ & $\delta_{X'}$ & $|X|$ & $X' \leq X$ both principal, $X'$ even with $\epsilon_{X'} = 1$ \\ \hline
         $L_{X,X'}^-$ & $-d_X$ & $\delta_{X'}$ & $|X|$ & $X' \leq X$ both principal, $X'$ even with $\epsilon_{X'} = 1$ \\ \hline
         $L_{X,X'}$ & $-d_X$ & $\delta_{X'}$ & $2|X|$ & $X' \leq X$ both principal, $X'$ even with $\epsilon_{X'} = -1$ \\ \hline
         $L_{X'}$ & $-d_X$ & $2\delta_{X'}$ & $|X|$ & $X$ principal, $X' \leq X$ orbit of twins, $\epsilon_{X'} = 1$ \\ \hline
         $T_{X'}$ & $-d_X$ & $\delta_{X'}+\frac{1}{\mu}$ & $2|X|$ & $X$ principal, $X' \leq X$ orbit of twins, $\epsilon_{X'} = -1$ \\ \hline
    \end{tabular}
    \end{center}
    Note that here the names indicate the components which each chains intersect, as explicitly written in the second table of Theorem \ref{thm::structureofSNCmodelintro}. Finally, the crosses of any crossed tail have multiplicity $\frac{\mu}{2}$.
\end{thm}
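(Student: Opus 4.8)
The plan is to pass to a semistable model by base change, quotient by the Galois group, resolve the resulting tame cyclic quotient singularities, and then contract down to the minimal SNC model, reading off all the combinatorial data from continued fractions of cluster depths along the way.

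\textbf{Step 1 (Base change to semistable).} Using the Semistability Criterion \ref{thm::sscriterion} I would first choose a finite tamely ramified Galois extension $L/K$ over which $C$ becomes semistable; since $k$ is algebraically closed, $L/K$ is totally ramified and $G:=\Gal(L/K)$ is cyclic of order $n$ coprime to $p$. Over $L$ the depths of all clusters become integral and $\Sigma_{C/L}$ (which is $\Sigma_{C/K}$ with depths scaled by $e(L/K)$) satisfies the hypotheses of \cite{DDMM18}, so the minimal SNC model $\X^L/\OO_L$ and its special fibre are explicitly known: one component $\Gamma_{\s}$ of genus $\ssgs$ for each principal cluster $\s$ of $\Sigma_{C/L}$ (two, $\Gamma_{\s}^{\pm}$, if $\s$ is \"ubereven), linking chains of $\PP^1$'s joining $\Gamma_{\s}$ to $\Gamma_{\s'}$ whenever $\s'<\s$, and the genus-zero pieces attached to twins and coming from the behaviour at $x=\infty$. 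By uniqueness of the minimal SNC model the $G$-action on $C_L$ extends to $\X^L$.

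\textbf{Step 2 (Quotient and central components).} Set $\Y:=\X^L/G$, a normal proper flat $\OO_K$-model of $C$. It is smooth away from the special fibre, and the $G$-action on $\X^L_k$ is free outside finitely many closed points (certain nodes of $\X^L_k$, the points over $x=\infty$, the points over the singletons $\singletonsofs$, and certain Weierstrass points), so $\Y$ has only finitely many singularities, each a tame cyclic quotient singularity since $\X^L$ is regular and $p\nmid n$. The orbits of components of $\X^L_k$ are in bijection with the Galois orbits $X$ of clusters; the image of the orbit of $\Gamma_{\s}$ is the central component $\Gamma_X$, and $\mathrm{Stab}_G(\s)$ acts on $\Gamma_{\s}\cong$ a genus-$\ssgs$ curve with quotient of genus $g(X)$ — the three cases in the definition of $g(X)$ are exactly the Riemann--Hurwitz outcomes for rotation of order $b_{\s}$ according to whether it is compatible with the hyperelliptic involution $y\mapsto -y$, which is governed by $\lambda_{\s}\in\Z$ and the parity of $b_{\s}$. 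The multiplicity of $\Gamma_X$ is the order of the subgroup of $G$ fixing $\Gamma_{\s}$ pointwise but acting non-trivially transversally, namely $|X|e_X$, doubled to $2|X|e_X$ precisely when $X$ is \"ubereven with $\epsilon_X=-1$ so that the branches $\Gamma_{\s}^{\pm}$ are swapped; the sign $\epsilon_X$ is computed as the value of the relevant quadratic character, giving the stated formula.

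\textbf{Step 3 (Resolution, chains and tails).} Next I would resolve each tame cyclic quotient singularity of $\Y$ by its Hirzebruch--Jung string. A singularity sitting between two slopes $t_2<t_1$ with scaling $\mu$ resolves into exactly a sloped chain $\Ccal$ with parameters $(t_2,t_1,\mu)$: the determinant-one/mediant condition in the definition of a sloped chain is the standard combinatorics of Hirzebruch--Jung continued fractions, and the multiplicities interpolate between $\mu t_1$ and $\mu t_2$, forcing the $i$-th curve to have multiplicity $\mu d_i$. Doing this at every singular point produces the linking chains $L_{X,X'}$, $L_{X,X'}^{\pm}$, $L_{X'}$, $T_{X'}$, whose endpoint slopes $-\lambda_X$ or $-d_X$ and whose vertical extents $\delta_{X'}$ (respectively $2\delta_{X'}$, $\delta_{X'}+\tfrac1\mu$) are the cluster depths and depth-differences, rescaled by $|X|$ or $2|X|$ to clear denominators; and the tails $T_{\infty}$, $T_{y=0}$, $T_{x=0}^{(\pm)}$, $T_{(0,0)}$, coming respectively from the point over $x=\infty$ (with the odd/even/$\epsilon$ trichotomy for $\Rcal$ and slopes in $d_{\Rcal}$, $\lambda_{\Rcal}$), from the singletons in $\singletonsofs$ (giving $|\singletonsofs||X|/b_X$ parallel tails of slope $-\lambda_X$ and scaling $|X|b_X$), from the Weierstrass points not absorbed into the genus (slope $-d_X$, scaling $|X|$ or $2|X|$ according to $\lambda_X\in\Z$), and from the residual node-type point (slope $-\lambda_X$, scaling $|X|$). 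The crosses of a crossed tail are the images of the two branches of a twin and hence inherit multiplicity $\mu/2$.

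\textbf{Step 4 (Minimality) and the main obstacle.} Finally one must check the model so obtained is the \emph{minimal} SNC model: I would verify, via a self-intersection computation on each component, that no exceptional $\PP^1$ of self-intersection $-1$ can be contracted without worsening the singularities. This is exactly where the hypotheses $e_X>1$, $e_X>b_X/|X|$, $e_X>2$, $\lambda_X\in\Z$ versus $\notin\Z$, and ``$X$ has no stable child'' enter: when they hold each listed chain has positive length with no contractible internal $(-1)$-curve, and when they fail the corresponding chain degenerates to length $0$ or to a single contractible $(-1)$-curve which has already been blown down. I expect the hard part to be the local analysis in Steps 2--3: pinning down the exact cyclic-quotient type $\tfrac1r(1,q)$ at every relevant point, and in particular disentangling the interaction of the rotation with the hyperelliptic involution, which is what produces the $\epsilon_X$ dichotomy (one central component versus two, and the factors of $2$ in multiplicities and scalings) and the parity cases in $g(X)$ and in $T_{x=0}$ versus $T_{x=0}^{\pm}$; everything else is bookkeeping with continued fractions and a minimality check, together with the separate treatment of the case $\Rcal$ not principal deferred to the full theorems.
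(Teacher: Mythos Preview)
Your overall strategy (pass to a semistable model over $L$, quotient by $\Gal(L/K)$, resolve the resulting tame cyclic quotient singularities, then contract) is exactly the conceptual backbone the paper uses, and your treatment of the central components in Step~2 matches the paper's Theorem~\ref{thm::maincentral} essentially verbatim. Where you diverge is in Step~3. You propose to compute the Hirzebruch--Jung type at \emph{every} singular point of the quotient directly, and then assemble these into the sloped chains. The paper instead organises this computation inductively: it proves the single-cluster case by your method (Section~\ref{sec::tpgr}, Proposition~\ref{prop::singPointsAretsgs}), handles the two-cluster case not by quotient-and-resolve but by Newton polytope methods from \cite{Dok18} (Section~\ref{sec::upc}), and then for a general cluster picture constructs auxiliary curves $C^{\mathrm{new}}$ and $C'$ with strictly fewer proper clusters whose linking chains coincide with those of $C$ (Lemmas~\ref{lem:linkingchaininvariantdetermines} and~\ref{lem:cnew}), reducing everything to the two base cases by induction on the number of proper clusters and on $[L:K]$.

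What this buys the paper is that it never has to carry out the local TCQS analysis at a node lying on a linking chain of the semistable model: such a node sits on two rational components permuted nontrivially by Galois, and tracking the quotient-plus-resolution there directly (your anticipated ``hard part'') is exactly what the $C^{\mathrm{new}}$ comparison sidesteps, since Lemma~\ref{lem:linkingchaininvariantdetermines} shows the resulting chain depends only on $d_{\s_i}$, $\lambda_{\s_i}\bmod\Z$, and parity, which $C^{\mathrm{new}}$ is built to match. Your direct approach would work, but you would have to reproduce the content of \cite[Theorem~6.2]{DDMM18} for the action on each linking-chain $\PP^1$ and then stitch together a sequence of local resolutions into a single sloped chain with the stated parameters; the paper's inductive route replaces that with a one-line appeal to the Newton polytope slopes already computed in Lemma~\ref{lem::slopeslemma}. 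One small imprecision in your Step~3: the linking chains in $\X_k$ are not purely resolution curves, but a mixture of images of the semistable linking-chain $\PP^1$'s (forming the level section) together with resolution curves at the nodes (forming the uphill and downhill sections); the paper's Definition~\ref{def::linkingchainsections} and Remark following Theorem~\ref{thm:uniquepropchild} make this decomposition explicit.
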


In practice, when $\ssg{X}>0$ there is a one-to-one correspondence between the chains intersecting a central component $\Gamma_X$ and the tails of the unique central component $\Gamma_{\widetilde{X}}$ of the minimal SNC model $\X_{\widetilde{X}}$ of a related curve $C_{\widetilde{X}}$. Choose some $\s\in X$. Then the curve $C_{\widetilde{X}}$ is a hyperelliptic curve over $K_X$ (an extension of $K$ of degree $|X|$) which has as its roots a centre for each odd child of $\s$ (with a correction to the leading coefficient to account for the rest of the cluster picture). This preserves the multiplicities in the corresponding chains (up to some small corrections), and the genus of $\Gamma_X$ is equal to the genus of $\Gamma_{\widetilde{X}}$ (or $0$ if $\ssg{X} = 0$). This idea has been briefly explored in Example \ref{eg::firstintroeg}, comparing parts of the special fibre to minimal SNC models of certain elliptic curves. We have a closer look at this idea in Example \ref{eg::intro1} below. Since $C_{\widetilde{X}}$ will often have a lower genus then $C$, this allows us to construct the minimal SNC model of $C$ in terms of simpler models. We now give some more examples, the first of which completely summarises the case for elliptic curves, and the second provides the motivation behind Theorem \ref{thm::intromainthm2}.

\begin{eg}
\label{eg::KNclassification}
This table shows $\X_k$, of the minimal SNC model $\X$ for the different Kodaira-N\'eron types of elliptic curves with tame potentially semistable reduction (for which it is sufficient to take $p\geq 5$). Our table differs from the table found in \cite[p~365]{Sil94}, where instead the special fibers of the \textit{minimal regular models} for the different types of elliptic curves are shown. This makes a difference for type II, III or IV elliptic curves, whereas for all the other types the minimal regular model is SNC. These special fibres can be read off straight from Theorems \ref{thm::structureofSNCmodelintro} and \ref{thm::intromainthm2}: one does not need to follow any laborious algorithm.

\pagebreak
\begin{center}
\renewcommand{\arraystretch}{1}
\begin{longtable}[h]{|c|c|c|p{0.5cm}|c|c|c|}
         \cline{1-3} \cline{5-7}
         Type & Cluster Picture & $\X_k$ & & Type & Cluster Picture & $\X_k$\\ \cline{1-3} \cline{5-7}
         
    $\mathrm{I}_0$ & \begin{tikzpicture}
    \fill (3.5,0) circle (1.5pt);
    \fill (3.75,0) circle (1.5pt);
    \fill (4,0) circle (1.5pt);
    \draw (3.75,0) ellipse (0.5cm and 0.25cm) node[above, yshift = 0cm, xshift = 0.6cm,font=\small]{$0$};
    \end{tikzpicture} & 
    \begin{tikzpicture}
    \draw (0,0) -- node[above,font=\small,xshift=0.7cm]{$1\;g1$} ++ (2,0);
    \end{tikzpicture} 
    & &
    $\mathrm{I}_0^*$ & \begin{tikzpicture}
    \fill (3.5,0) circle (1.5pt);
    \fill (3.75,0) circle (1.5pt);
    \fill (4,0) circle (1.5pt);
    \draw (3.75,0) ellipse (0.5cm and 0.25cm) node[above, yshift = 0cm, xshift = 0.6cm,font=\small]{$1$};
    \end{tikzpicture} & \begin{tikzpicture}
    \draw (0,0) -- node[above,font=\small,xshift=0.7cm] {$2$} ++ (2.75,0);
    \draw (0.25,-0.2) -- node[left,font=\small, yshift=0.2cm] {$1$} ++ (0,0.6);
    \draw (0.65,-0.2) -- node[left,font=\small, yshift=0.2cm] {$1$} ++ (0,0.6);
    \draw (1.05,-0.2) -- node[left,font=\small, yshift=0.2cm] {$1$} ++ (0,0.6);
    \draw (1.45,-0.2) -- node[left,font=\small, yshift=0.2cm] {$1$} ++ (0,0.6);
    \end{tikzpicture}\\ \cline{1-3} \cline{5-7}

    $\mathrm{I}_n$ & \begin{tikzpicture}
    \fill (0,0) circle (1.5pt);
    \fill (0.25,0) circle (1.5pt);
    \fill (1,0) circle (1.5pt);
    \draw (0.125,0)
    ellipse (0.4cm and 0.25cm) node[above, yshift = -0.1cm, xshift = 0.5cm, font=\small]{$\frac{n}{2}$};
    \draw (0.5,0)
    ellipse (0.9cm and 0.5cm) node[above, yshift = 0cm, xshift = 1cm, font=\small]{$0$};
    \end{tikzpicture} & \begin{tikzpicture}
    \draw (-0.1,-0.1) -- node[left,font=\small, yshift = 0.15cm, xshift = 0.1cm]{$1$} ++ (0.48,0.64);
    \draw (-0.1,0.1) -- node[left,font=\small, yshift = -0.15cm, xshift = 0.1cm]{$1$} ++ (0.48,-0.64);
    \path (0.45,0.54) -- node[auto=false]{\ldots} node[below, yshift = -0.34cm, font=\small]{$n$-gon}(1.1,0.54);
    \path (0.45,-0.54) -- node[auto=false]{\ldots} (1.1,-0.54);
    \draw (1.6,-0.1) -- node[right,font=\small, yshift = 0.15cm, xshift = -0.1cm]{$1$} ++ (-0.48,0.64);
    \draw (1.6,0.1) -- node[right,font=\small, yshift = -0.15cm, xshift = -0.1cm]{$1$} ++ (-0.48,-0.64);
    \end{tikzpicture} 
    &  &
    $\mathrm{I}_n^*$ & \begin{tikzpicture}
    \fill (0,0) circle (1.5pt);
    \fill (0.25,0) circle (1.5pt);
    \fill (1,0) circle (1.5pt);
    \draw (0.125,0)
    ellipse (0.4cm and 0.25cm) node[above, yshift = -0.1cm, xshift = 0.5cm, font=\small]{$\frac{n+1}{2}$};
    \draw (0.5,0)
    ellipse (0.9cm and 0.5cm) node[above, yshift = 0cm, xshift = 1cm, font=\small]{$1$};
    \end{tikzpicture} &
    \begin{tikzpicture}
    \draw (-0.2,0.4) -- node[above,font=\small, xshift=0.2cm]{$2$} ++ (1.4,0);
    \draw (1,-0.2) -- node[left,font=\small,yshift=-0.2cm, xshift=0.05cm]{$2$} ++ (0,0.8);
    \path (1.05,-0.2) -- node[auto=false]{\ldots}  node[above, font=\small,yshift=-0.05cm]{$n$}(1.7,-0.2);
    
    \draw (1.7,-0.2) -- node[right,font=\small,yshift=-0.2cm,xshift=-0.05cm]{$2$} ++ (0,0.8);
    \draw (1.5,0.4) -- node[above,font=\small, xshift=-0.2cm]{$2$} ++ (1.4,0);
    
    \draw (0,-0.2) -- node[left,font=\small,yshift=-0.2cm,xshift=0.05cm]{$1$} ++ (0,0.8);
    \draw (0.4,-0.2) -- node[left,font=\small,yshift=-0.2cm,xshift=0.05cm]{$1$} ++ (0,0.8);
    \draw (2.7,-0.2) -- node[right,font=\small,yshift=-0.2cm,xshift=-0.05cm]{$1$} ++ (0,0.8);
    \draw (2.3,-0.2) -- node[right,font=\small,yshift=-0.2cm,xshift=-0.05cm]{$1$} ++ (0,0.8);
    \end{tikzpicture}\\ \cline{1-3} \cline{5-7}
    
    II &  \begin{tikzpicture}
    \fill (3.5,0) circle (1.5pt);
    \fill (3.75,0) circle (1.5pt);
    \fill (4,0) circle (1.5pt);
    \draw (3.75,0) ellipse (0.5cm and 0.25cm) node[above, yshift = 0cm, xshift = 0.6cm,font=\small]{$\frac{1}{3}$};
    \end{tikzpicture} & \begin{tikzpicture}
    \draw (0,0) -- node[above,font=\small,xshift=0.7cm] {$6$} ++ (2,0);
    \draw (0.25,0.2) -- node[left,font=\small, yshift=-0.2cm] {$3$} ++ (0,-0.8);
    \draw (0.75,0.2) -- node[left,font=\small, yshift=-0.2cm] {$2$} ++ (0,-0.8);
    \draw (1.25,0.2) -- node[left,font=\small, yshift=-0.2cm] {$1$} ++ (0,-0.8);
    \end{tikzpicture}
    & &
   $\mathrm{IV}^*$ & \begin{tikzpicture}
    \fill (3.5,0) circle (1.5pt);
    \fill (3.75,0) circle (1.5pt);
    \fill (4,0) circle (1.5pt);
    \draw (3.75,0) ellipse (0.5cm and 0.25cm) node[above, yshift = 0cm, xshift = 0.6cm,font=\small]{$\frac{4}{3}$};
    \end{tikzpicture} & \begin{tikzpicture}
    \draw (0.05,0) -- node[right,font=\small,xshift=1.3cm] {$3$} ++ (2.78,0);
    \draw (0.25,0.2) -- node[left,font=\small,xshift=0.05cm] {$2$} ++ (0,-0.9);
    \draw (0.05,-0.5) -- node[above,font=\small,xshift=0.1cm,yshift=-0.05cm] {$1$} ++ (0.6,0);
    \draw (1.25,0.2) -- node[left,font=\small,xshift=0.05cm] {$2$} ++ (0,-0.9);
    \draw (1.05,-0.5) -- node[above,font=\small,xshift=0.1cm,yshift=-0.05cm] {$1$} ++ (0.6,0);
    \draw (2.25,0.2) -- node[left,font=\small,xshift=0.05cm] {$2$} ++ (0,-0.9);
    \draw (2.05,-0.5) -- node[above,font=\small,xshift=0.1cm,yshift=-0.05cm] {$1$} ++ (0.6,0);
   
    \end{tikzpicture} \\ \cline{1-3} \cline{5-7}

    III & \begin{tikzpicture}
    \fill (3.5,0) circle (1.5pt);
    \fill (3.75,0) circle (1.5pt);
    \fill (4,0) circle (1.5pt);
    \draw (3.75,0) ellipse (0.5cm and 0.25cm) node[above, yshift = 0cm, xshift = 0.6cm,font=\small]{$\frac{1}{2}$};
    \end{tikzpicture} & \begin{tikzpicture}
    \draw (0,0) -- node[above,font=\small,xshift=0.7cm] {$4$} ++ (2,0);
    \draw (0.25,0.2) -- node[left,font=\small, yshift=-0.2cm] {$1$} ++ (0,-0.8);
    \draw (0.75,0.2) -- node[left,font=\small, yshift=-0.2cm] {$2$} ++ (0,-0.8);
    \draw (1.25,0.2) -- node[left,font=\small, yshift=-0.2cm] {$1$} ++ (0,-0.8);
    \end{tikzpicture} 
    & &
   $\mathrm{III}^*$ & \begin{tikzpicture}
    \fill (3.5,0) circle (1.5pt);
    \fill (3.75,0) circle (1.5pt);
    \fill (4,0) circle (1.5pt);
    \draw (3.75,0) ellipse (0.5cm and 0.25cm) node[above, yshift = 0cm, xshift = 0.6cm,font=\small]{$\frac{3}{2}$};
    \end{tikzpicture} & \begin{tikzpicture}
    \draw (0,0) -- node[right,font=\small,xshift=1.3cm, yshift =0.1cm] {$4$} ++ (2.75,0);
    \draw (0.25,0.2) -- node[left,font=\small, yshift=-0.2cm, xshift=0.05cm] {$2$} ++ (0,-0.8);
    \draw (1,0.2) -- node[left,font=\small,xshift=0.05cm] {$3$} ++ (0,-0.9);
    \draw (0.8,-0.5) -- node[above,font=\small, yshift=-0.05cm] {$2$} ++ (0.9,0);
    \draw (1.5,-0.3) -- node[left,font=\small,yshift=-0.15cm, xshift=0.05cm] {$1$} ++ (0,-0.6);
    \draw (2.25,0.2) -- node[left,font=\small, xshift=0.05cm] {$3$} ++ (0,-0.9);
    \draw (2.05,-0.5) -- node[above,font=\small, yshift=-0.05cm] {$2$} ++ (0.9,0);
    \draw (2.75,-0.3) -- node[left,font=\small,yshift=-0.15cm, xshift=0.05cm] {$1$} ++ (0,-0.6);
    \end{tikzpicture} \\ \cline{1-3} \cline{5-7}
    
    IV & \begin{tikzpicture}
    \fill (3.5,0) circle (1.5pt);
    \fill (3.75,0) circle (1.5pt);
    \fill (4,0) circle (1.5pt);
    \draw (3.75,0) ellipse (0.5cm and 0.25cm) node[above, yshift = 0cm, xshift = 0.6cm,font=\small]{$\frac{2}{3}$};
    \end{tikzpicture} & \begin{tikzpicture}
    \draw (0,0) -- node[above,font=\small,xshift=0.7cm] {$3$} ++ (2,0);
    \draw (0.25,0.2) -- node[left,font=\small, yshift=-0.2cm] {$1$} ++ (0,-0.8);
    \draw (0.75,0.2) -- node[left,font=\small, yshift=-0.2cm] {$1$} ++ (0,-0.8);
    \draw (1.25,0.2) -- node[left,font=\small, yshift=-0.2cm] {$1$} ++ (0,-0.8);
    \end{tikzpicture} 
    & &
    $\mathrm{II}^*$ & \begin{tikzpicture}
    \fill (3.5,0) circle (1.5pt);
    \fill (3.75,0) circle (1.5pt);
    \fill (4,0) circle (1.5pt);
    \draw (3.75,0) ellipse (0.5cm and 0.25cm) node[above, yshift = 0cm, xshift = 0.6cm,font=\small]{$\frac{5}{3}$};
    \end{tikzpicture} & \begin{tikzpicture}
    \draw (0,0) -- node[right,font=\small,xshift=1.3cm, yshift =0.1cm] {$6$} ++ (2.75,0);
    \draw (0.25,0.2) -- node[left,font=\small, yshift=-0.2cm] {$3$} ++ (0,-0.8);
    \draw (1,0.2) -- node[left,font=\small,xshift=0.05cm] {$4$} ++ (0,-0.9);
    \draw (0.8,-0.5) -- node[above,font=\small, xshift=0.2cm,yshift=-0.05cm] {$2$} ++ (0.6,0);
    
    \draw (2.25,0.2) -- node[left,font=\small, xshift=0.05cm] {$5$} ++ (0,-0.9);
    \draw (2.05,-0.5) -- node[above,font=\small, yshift=-0.05cm] {$4$} ++ (0.9,0);
    \draw (2.75,-0.3) -- node[right,font=\small,xshift=-0.05cm] {$3$} ++ (0,-0.9);
    \draw (2.05,-1) -- node[above,font=\small,yshift=-0.05cm] {$2$} ++ (0.9,0);
    \draw (2.25,-0.8) -- node[left,font=\small,yshift=-0.15cm,xshift=0.05cm] {$1$} ++ (0,-0.6);
    \end{tikzpicture}\\
    \cline{1-3} \cline{5-7}
    \caption{Kodaira-N\'eron types of elliptic curves with $p\geq 5$. 
    }
    \label{tab::KNclassification}
    \end{longtable}\end{center}
\end{eg}

\vspace{-30px}
\begin{eg}
\label{eg::intro1}
Let $C$ over $K=\Qur$ be the hyperelliptic curve given by Weierstrass equation $$C:y^2=f(x)=(x^3-p^2)(x^4-p^{11}).$$ The cluster picture of $C/K$ consists of two proper clusters $\Rcal$ and $\s$, shown in Figure \ref{fig::introeg3cluster}. The special fibre $\X_k$ of the minimal SNC model $\X$ of $C/K$  is shown in Figure \ref{fig::introeg3model}.
\begin{figure}[h]
\centering
\begin{subfigure}{.5\textwidth}
  \centering
  \begin{tikzpicture}
    \fill (2.5,0) circle (1.5pt);
    \fill (2.75,0) circle (1.5pt);
    \fill (3,0) circle (1.5pt);
    \fill (3.5,0) circle (1.5pt);
    \fill (3.75,0) circle (1.5pt);
    \fill (4,0) circle (1.5pt);
    \fill (4.25,0) circle (1.5pt);
    \draw (3.875,0) ellipse (0.65cm and 0.25cm) node[below, yshift = 0cm, xshift = 0.7cm,font=\small]{$\s$} node[above, yshift = 0cm, xshift = 0.7cm,font=\small]{$\frac{11}{4}$};
    \draw (3.55,0) ellipse (1.55cm and 0.8cm)node[below, yshift = -0.2cm, xshift = 1.55cm,font=\small]{$\Rcal$} node[above, yshift = 0.2cm, xshift = 1.55cm,font=\small]{$\frac{2}{3}$};
    \end{tikzpicture}
    \caption{Cluster picture $\Sigma_{C/K}$.}
    \label{fig::introeg3cluster}
\end{subfigure}%
\begin{subfigure}{.5\textwidth}
  \centering
  \begin{tikzpicture}
    \draw (0,0) [line width = 0.5mm] -- node[above,font=\small,xshift=-0.7cm] {$4$} ++ (2.75,0);
    \draw (0.25,-0.2) -- node[left,font=\small, yshift=0.2cm] {$2$} ++ (0,0.8);
    \draw (1,-0.2) -- node[right,font=\small] {$3$} ++ (0,1);
    \draw (0.8,0.6) -- node[above,font=\small] {$2$} ++ (1,0);
    \draw (1.6,0.4) -- node[right,font=\small] {$1$} ++ (0,1);
    \draw (1.75,1.05) -- node[left,font=\small] {$1$} ++ (-0.8,0.8);
    
    \draw (2.25,-0.2) -- node[right,font=\small] {$3$} ++ (0,1);
    \draw (2.05,0.6) -- node[above,font=\small] {$2$} ++ (1,0);
    \draw (2.85,0.4) -- node[right,font=\small] {$1$} ++ (0,1);
    \draw (3,1.05) -- node[left,font=\small] {$1$} ++ (-0.8,0.8);
    
    \draw (0.45,1.7)[line width = 0.5mm] -- node[left,font=\small,xshift=-1.65cm] {$3$} ++ (3.4,0);
    \draw (3.5,1.9) -- node[right,font=\small] {$1$} ++ (0,-0.8);
    \end{tikzpicture}
    \caption{Special fibre of the minimal SNC model of $C/K$.}
    \label{fig::introeg3model}
\end{subfigure}
\caption{$C:y^2=x^3-p^{2}$ over $K=\Qur$.}
\label{fig::introeg3}
\end{figure}

Define elliptic curves $C_1$ and $C_2$ over $K$ by $C_1:y^2=f_1(x)=x^3-p^2$ and $C_2:y^2=p^2 f_2(x)=p^2(x^4-p^{11})$ respectively. Note that $f(x)=f_1(x)\cdot f_2(x)$. The roots of $f_1(x)$ contribute the roots in $\Rcal\setminus\s$, and the roots of $f_2(x)$ contribute the roots in $\s$. The coefficient in the defining equation of $C_2$ is chosen to somehow ``see'' the roots of $f_1$. It is interesting to compare the minimal SNC models of $C_i$ to that of $C$ for $i=1,2$. Note that $C_1$ and $C_2$ are type IV and type III$^*$ elliptic curves respectively, as shown in Table \ref{tab::KNclassification}. 
It appears that the roots of $f_1$ and $f_2$ are making their own contributions to $\X_k$, as both the special fibres of the minimal SNC models of $C_i$ can be seen as ``submodels'' of $\X_k$ for $i=1,2$. This shows how $\Rcal$ and $\s$ each make their own contribution to $\X_k$. Since $\s$ is an even child of $\Rcal$, and $\epsilon_{\s}=1$, there are two linking chains between their contributions in $\X_k$ .
\end{eg}

\begin{eg}
Let $K=\Qur$, and $C/K$ be the hyperelliptic curve given by $$C:y^2=(x^3-p^4)((x-1)^3-p^{17})((x-2)^3-p^{13}).$$
\vspace{-10px}
\begin{figure}[ht]
\centering
\begin{subfigure}{0.45\textwidth}
  \centering
  \begin{tikzpicture}
    \fill (0,0) circle (1.5pt);
    \fill (0.25,0) circle (1.5pt);
    \fill (0.5,0) circle (1.5pt);
    \fill (1.25,0) circle (1.5pt);
    \fill (1.5,0) circle (1.5pt);
    \fill (1.75,0) circle (1.5pt);
    \fill (2.5,0) circle (1.5pt);
    \fill (2.75,0) circle (1.5pt);
    \fill (3,0) circle (1.5pt);

    \draw (0.25,0)
    ellipse (0.45cm and 0.25cm) node[below, yshift = -0.1cm, xshift = 0.4cm, font=\small]{$\s_1$}node[above, yshift = 0cm, xshift = 0.5cm, font=\small]{$\frac{4}{3}$};
    
    \draw (1.5,0)
    ellipse (0.45cm and 0.25cm) node[below, yshift = -0.1cm, xshift = 0.4cm, font=\small]{$\s_2$}node[above, yshift = 0cm, xshift = 0.5cm, font=\small]{$\frac{17}{3}$};
    
    \draw (2.75,0)
    ellipse (0.45cm and 0.25cm) node[below, yshift = -0.1cm, xshift = 0.4cm, font=\small]{$\s_3$ }node[above, yshift = 0cm, xshift = 0.5cm, font=\small]{$\frac{13}{3}$};

    \draw (1.55,0) ellipse (2.2cm and 1cm) node[below, yshift = -0.5cm, xshift = 1.9cm,font=\small]{$\Rcal$} node[above, yshift = 0.5cm, xshift = 1.9cm,font=\small]{$0$};
    \end{tikzpicture}
  \caption{Cluster picture $\Sigma_{C/K}$.}
  \label{fig::eg2clusterpic}
\end{subfigure}
\begin{subfigure}{.5\textwidth}
  \centering
  \begin{tikzpicture}
  \draw(0,0)[line width = 0.5mm] -- node[above,font=\small,yshift=-0.05cm]{$1$ g$1$}  node[right, font=\small, xshift=1.4cm] {$\Gamma_{\Rcal}$} ++ (3,0);
  
  \draw(0.25,0.2)
  -- node[right,font=\small] {2} ++ (0,-1);
  \draw(0.45,-0.6)[line width = 0.5mm
  ] -- node[above,font=\small] {3}  node[left, font=\small, xshift=-0.9cm] {$\Gamma_{\s_1}$} ++ (-2,0);
  \draw(-0.25,-0.4)
  -- node[right,font=\small] {2} ++ (0,-1);
  \draw(-0.45,-1.2)
  -- node[below,font=\small,xshift=0.1cm] {1} ++ (0.8,0);
  \draw(-1.25,-0.4)
  -- node[right,font=\small] {2} ++ (0,-1);
  \draw(-1.45,-1.2)
  -- node[below,font=\small,xshift=0.1cm] {1} ++ (0.8,0);
  
  \draw(1,0.2)
  -- node[left,font=\small] {1} ++ (0,-1);
  \draw(0.8,-0.6)
  -- node[above,font=\small] {1} ++ (1,0);
  \draw(1.6,-0.4)
  -- node[left,font=\small] {2} ++ (0,-1);
  \draw(1.4,-1.2)
  -- node[above,font=\small] {3} ++ (1,0);
  \draw(2.2,-1)
  -- node[left,font=\small] {4} ++ (0,-1);
  \draw(2.35,-1.65)
  -- node[left,font=\small, xshift=0.1cm, yshift=0.1cm] {5} ++ (-0.9,-0.9);
  \draw(2.4,-2.4)[line width = 0.5mm
  ] -- node[above,font=\small] {6}  node[right, font=\small, xshift=0.9cm] {$\Gamma_{\s_2}$} ++ (-2,0);
  \draw(2.1,-2.2)
  -- node[right,font=\small] {3} ++ (0,-0.8);
  \draw(0.6,-2.2)
  -- node[left,font=\small] {4} ++ (0,-1);
  \draw(0.4,-3)
  -- node[above,font=\small, xshift=0.1cm] {2} ++ (0.8,0);
  
  \draw(2.5,0.2)
  -- node[right,font=\small] {1} ++ (0,-1);
  \draw(2.35,-0.45)
  -- node[left,font=\small,xshift=0.1cm,yshift=-0.1cm] {1} ++ (0.9,-0.9);
  \draw(2.9,-1.2)[line width = 0.5mm
  ] -- node[above,font=\small, xshift=0.2cm]{6}  node[right, font=\small, xshift=0.65cm] {$\Gamma_{\s_3}$} ++ (1.5,0);
  \draw(3.65,-1)
  -- node[left,font=\small] {3} ++ (0,-1);
  \draw(4.15,-1)
  -- node[left,font=\small] {2} ++ (0,-1);
  
    \end{tikzpicture}
  \caption{Special fibre of the minimal SNC model of C/K.}
  \label{fig::eg2model}
\end{subfigure}
\caption{$C:y^2=(x^3-p^4)((x-1)^3-p^{17})((x-2)^3-p^{13})$ over $K=\Qur$.}
\label{fig::eg2}
\end{figure}

The central components of the minimal SNC model of $C$ (Figure \ref{fig::eg2model}), which arise from clusters in $\Sigma_{C/K}$ (\ref{fig::eg2clusterpic}), are labeled. Note that $\Rcal$ contributes components to the model which look like those appearing the the minimal SNC model of a type $\mathrm{I}_0$ elliptic curve; $\s_1$ those of a type $\mathrm{IV}^*$ elliptic curve; $\s_2$ those of a type $\mathrm{II}^*$ elliptic curve; and $\s_3$ those of a type $\mathrm{II}$ elliptic curve. The special fibers of the minimal SNC models of these Kodaira types are all shown in Table \ref{tab::KNclassification} in Example \ref{eg::KNclassification}. This reflects the general phenomenon discussed above that the chains intersecting a central component arising from a cluster $\s$ ``correspond'' to the tails of a hyperelliptic curve constructed from $\s$.
\end{eg}

\begin{eg}
Let $K=\Qur$, and $C/K$ be the hyperelliptic curve given by $$C:y^2=(x^3-p)((x^3-p^4)^2-p^9).$$ The cluster picture of $C/K$ is shown in Figure \ref{fig::eg4clusterpic} and the special fibre of the minimal SNC model of $C/K$ is shown in Figure \ref{fig::eg4model}. The clusters $\tfrak_1$, $\tfrak_2$ and $\tfrak_3$ are swapped by $\GK$ and denote their orbit by $X$.  The central components of the model, which arise from clusters in $\Sigma_{C/K}$, are labeled, as is the crossed tail $T_X$ arising from the orbit of twins $X$.

The component $\Gamma_{\Rcal}$ and its chains look like a type II elliptic curve. Since $\s$ is \"ubereven, we cannot construct a curve $C_{\widetilde{\s}}$ to compare the contributions of $\s$ to. However, observe that $\s$ and its children contribute a divisor which looks like the minimal SNC model of a Namikawa-Ueno type $\mathrm{III}_2^*$ curve. In our final proof we will use induction on the number of proper clusters and this is a useful example to look back to when we do so.
\vspace{-10px}
\begin{figure}[ht]
\centering
\begin{subfigure}{0.45\textwidth}
  \centering
  \begin{tikzpicture}
    \fill (0,0) circle (1.5pt);
    \fill (0.25,0) circle (1.5pt);
    \fill (0.9,0) circle (1.5pt);
    \fill (1.15,0) circle (1.5pt);
    \fill (1.8,0) circle (1.5pt);
    \fill (2.05,0) circle (1.5pt);
    \fill (3.05,0) circle (1.5pt);
    \fill (3.3,0) circle (1.5pt);
    \fill (3.55,0) circle (1.5pt);

    \draw (0.125,0)
    ellipse (0.35cm and 0.2cm) node[below, yshift = -0.1cm, xshift = 0.3cm, font=\small]{$\tfrak_1$}node[above, yshift = 0cm, xshift = 0.4cm, font=\small]{$\frac{11}{6}$};
    
    \draw (1.025,0)
    ellipse (0.35cm and 0.2cm) node[below, yshift = -0.1cm, xshift = 0.3cm, font=\small]{$\tfrak_2$};
    
    \draw (1.925,0)
    ellipse (0.35cm and 0.2cm) node[below, yshift = -0.1cm, xshift = 0.3cm, font=\small]{$\tfrak_3$ };

    \draw (1.025,0)
    ellipse (1.6cm and 0.9cm) node[below, yshift = -0.4cm, xshift = 1.5cm,font=\small]{$\s$} node[above, yshift = 0.3cm, xshift = 1.5cm,font=\small]{$\frac{4}{3}$};
    
    \draw (1.5,0)
    ellipse (2.4cm and 1.2cm) node[below, yshift = -0.5cm, xshift = 2.3cm,font=\small]{$\Rcal$} node[above, yshift = 0.5cm, xshift = 2.3cm,font=\small]{$\frac{1}{3}$};
    \end{tikzpicture}
  \caption{Cluster picture $\Sigma_{C/K}$.}
  \label{fig::eg4clusterpic}
\end{subfigure}
\begin{subfigure}{.5\textwidth}
  \centering
  \begin{tikzpicture}
  \draw(0,0)[line width = 0.5mm
  ] -- node[above,font=\small] {6}  node[right, font=\small, xshift=1.4cm] {$\Gamma_{\Rcal}$} ++ (3,0);
  
  \draw(2,0.2)
  -- node[right,font=\small,yshift=-0.1cm] {3} ++ (0,-0.8);
  \draw(2.75,0.2)
  -- node[right,font=\small,yshift=-0.1cm] {1} ++ (0,-0.8);
  
  \draw(0.25,0.2)
  -- node[left,font=\small] {2} ++ (0,-1);
  \draw(0.05,-0.6) -- node[above,font=\small] {2} ++ (1,0);
  \draw(0.85,-0.4)
  -- node[left,font=\small] {4} ++ (0,-1);
  \draw(0.65,-1.2)[line width = 0.5mm] -- node[above,font=\small] {6} node[right, font=\small, xshift=1.4cm] {$\Gamma_{\s}$} ++ (3,0);
  
  \draw(1.65,-1) -- node[left,font=\small,yshift=-0.1cm] {2} ++ (0,-0.8);
  
  \draw(2.65,-1) -- node[left,font=\small] {6} ++ (0,-1);
  \draw(2.45,-1.8) -- node[above,font=\small] {6} ++ (1,0);
  \draw(3.25,-1.6) -- node[left,font=\small, yshift=0.2cm] {6} ++ (0,-1.5);
  
    \draw (3.05,-2.6) -- node[right,font=\small, xshift=0.35cm, yshift=0.05cm] {3}++ (0.8,0);
    \draw (3.05,-2.85) -- node[right,font=\small,xshift=0.35cm, yshift=-0.05cm] {$3$} ++ (0.8,0);
    \node[font=\small] at (2.7,-2.675) {$T_X$};
    \end{tikzpicture}
  \caption{Special fibre of the minimal SNC model of C/K.}
  \label{fig::eg4model}
\end{subfigure}
\caption{$C:y^2=(x^3-p)((x^3-p^4)^2-p^9)$ over $K=\Qur$.}
\label{fig::eg4}
\end{figure}
\end{eg}

In the case where $K$ does \textit{not} have algebraically closed residue field, the following theorem tells us precisely how the Frobenius automorphism acts on the components of the minimal SNC model.

\begin{thm}[Frobenius Action]
    \label{thm::frobactionintro}
    Let $K$ be a field, not necessarily with algebraically closed residue field, and let $C/K$ be a curve 
    with tame potentially semistable reduction and minimal SNC model $\X$ over $K^{\textrm{ur}}$. 
    Then the Frobenius automorphism, $\Frob$, acts on the components of $\X$ as:
    
    \begin{enumerate}
        \item $\Frob(\Gamma_X^{\pm}) = \Gamma_{\Frob(X)}^{\pm\epsilon_X(\Frob)}$,
        \item $\Frob(L_{X,\; X'}^{\pm}) = L_{\Frob(X),\; \Frob(X')}^{\pm\epsilon_{X'}(\Frob)}$,
        \item a loop $L_X$ is sent to $\epsilon_X(\Frob)L_{\Frob(X)}$, a crossed tail $T_X$ to $\epsilon_X(\Frob)T_{\Frob(X)}$,\footnote{$-L_X$ is same loop but with reversed orientation. $-T_X$ is the same crossed tail but with crosses swapped.}
        \item  and tails are permuted as $\Frob(T_{\infty}^{\pm}) = T_{\infty}^{\pm\epsilon_X(\Frob)}$, $\Frob(T_{x=0}^{\pm}) = T_{x=0}^{\pm1^{v(c_X)}}$, and $(y=0)$-tails are permuted as the corresponding roots of the cluster pictures are.
    \end{enumerate}
\end{thm}

The following example shows an application of this theorem, checking whether a curve has a $K$-rational point.

\begin{eg}
    Let $K = \Q_7$ and let $C/K$ be the hyperelliptic curve given by $$C:y^2=(x-i)\left((x-i)^2 - p\right)(x+i)\left((x+i)^2 - p\right)$$
    
    We require the full power of Theorem \ref{thm::structureofSNCmodel} to deduce the minimal SNC model of $C$, but Theorem \ref{thm::frobactionintro} still tells us how the components are permuted. In particular, $\Gamma_{\s_1}$ and $\Gamma_{\s_2}$ and their respective tails are swapped. In particular, there are no smooth points which are fixed by Frobenius, since the only multiplicity 1 components are swapped by Frobenius. Therefore, $C$ has no $\Q_7$-rational point.
    
    \begin{figure}[ht]
\centering
\begin{subfigure}{0.5\textwidth}
  \centering
    \begin{tikzpicture}
    \fill (0,0) circle (1.5pt);
    \fill (0.25,0) circle (1.5pt);
    \fill (0.5,0) circle (1.5pt);
    \fill (1.5,0) circle (1.5pt);
    \fill (1.75,0) circle (1.5pt);
    \fill (2,0) circle (1.5pt);

    \draw (0.25,0)
    ellipse (0.6cm and 0.25cm) node[below, yshift = -0.1cm, xshift = 0.65cm, font=\small]{$\s_1$}node[above, yshift = 0cm, xshift = 0.65cm, font=\small]{$\frac{1}{2}$};
    
    \draw (1.75,0)
    ellipse (0.6cm and 0.25cm) node[below, yshift = -0.1cm, xshift = 0.65cm, font=\small]{$\s_2$}node[above, yshift = 0cm, xshift = 0.65cm, font=\small]{$\frac{1}{2}$};

    \draw (1.1,0) ellipse (1.8cm and 0.9cm) node[below, yshift = -0.4cm, xshift = 1.75cm,font=\small]{$\Rcal$} node[above, yshift = 0.4cm, xshift = 1.75cm,font=\small]{$0$};
    \end{tikzpicture}
  \caption{Cluster picture $\Sigma_{C/K}$.}
  \label{fig::egfrobintro}
\end{subfigure}%
\begin{subfigure}{.5\textwidth}
  \centering
  \begin{tikzpicture}
    \draw (0,0)[line width = 0.5mm] -- node[above, font=\small] {$4$} node[right, font=\small, xshift=1.2cm] {$\Gamma_{\s_1}$}  ++ (2.5,0);
    \draw (0.25,-0.2) -- node[left, font=\small] {$1$} ++ (0,1);
    \draw (0.75,-0.2) -- node[left, font=\small] {$2$} ++ (0,1);
    \draw (2.25,0.2) -- node[left, font=\small] {$2$} ++ (0,-1);
    \draw (0,-0.6)[line width = 0.5mm] -- node[below, font=\small] {$4$}  node[right, font=\small, xshift=1.2cm] {$\Gamma_{\s_2}$}  ++ (2.5,0);
    \draw (0.25,-0.4) -- node[left, font=\small] {$1$} ++ (0,-1);
    \draw (0.75,-0.4) -- node[left, font=\small] {$1$} ++ (0,-1);
    \end{tikzpicture}
  \caption{Special fibre of the minimal SNC model of $C/K$.}
  \label{fig::eg5model}
\end{subfigure}
\caption{$C:y^2=(x-i)\left((x-i)^2 - p\right)(x+i)\left((x+i)^2 - p\right)$ over $K=\Q_7$.}
\label{fig::eg5frob}
\end{figure}
\end{eg}

The paper is structured as follows: in Sections \ref{sec::clusters} - \ref{sec::modsusingnewtpolys}, we restate key definitions and theorems from literature, which we will make use of in the remainder of the paper. We start with a brief introduction to cluster pictures in Section \ref{sec::clusters}, before moving onto look at models and the methods used to calculate them in Section \ref{sec::models}. Results from \cite{Dok18} concerning the use of Newton polytopes will be discussed in Section \ref{sec::modsusingnewtpolys}. In Sections \ref{sec::tpgr} and \ref{sec::upc}, we calculate the minimal SNC model for two special cases. The first of these special cases, Section \ref{sec::tpgr}, is where $C$ has \textit{tame potentially good reduction} - that is, it has a smooth model over a tame extension of $K$. This will act as a base case for our eventual proof by induction. The second of these cases, Section \ref{sec::upc}, examines curves $C$ with a cluster picture which consists of exactly two proper clusters $\s < \Rcal$. Curves with such cluster pictures are used to deduce the linking chains between central components in the main theorems. These main theorems are stated and proved in Section \ref{sec::mainthm}. 

\subsection{Notation}
For the convenience of the reader, the following two tables collate the general notation and terminology which we make use of throughout the paper. Table \ref{tab::generalnotation} lists the general notation associated to fields, hyperelliptic curves, and models. Table \ref{tab::notation} lists the notation and terminology associated to cluster pictures and Newton polytopes. Whenever a component in a figure is drawn in bold it is a central component. In any figure describing the special fiber of a model numbers indicate multiplicities, except those preceded by $g$, which indicate the genus of a component. So $2$ indicates a rational curve of multiplicity $2$ and $2g1$ indicates a genus $1$ curve of multiplicity $2$.

\begin{center}
\renewcommand{\arraystretch}{1}
\begin{longtable}[h]{c p{5.6cm} p{0.1cm} c p{5.6cm}}
    
    $K$ & non-archimedean field
    & & 
    $v_K$ & discrete valuation\\
    
    $\OO_K$ & ring of integers
    & &
    $\pi_K$ & uniformiser of $K$ \\
    
    $k$ & algebraically closed residue field of $K$ 
    & &
    $\overline{K}$ & algebraic closure of $K$\\
    
    $C$ & hyperelliptic curve over $K$ given by $y^2=f(x)$
    & &
    $L$ &  field extension of $K$ over which $C_L$ is semistable\\
    
    $g(C)$ & genus of $C$, sometimes denoted $g$
    & &
    $\Rcal$ & set of roots of $f(x)$ in $\overline{K}$
    \\
    
    $e$ & degree of $L/K$ for such $L$
    & & 
    $X$ & Galois orbit of clusters  \\
    
    $\X$ & minimal SNC model of $C/K$ 
    & & 
    $\X_k$ & special fibre of $\X$ \\
    
    $\Gamma^{\pm}_{\s,K}$ &  component(s) of $\X_k$ associated to $\s$ 
    & &
    $\Gamma^{\pm}_{X,K}$ & component(s) of $\X_k$ associated to $X$\\
    
    $\Y$ & minimal SNC model of $C/L$ 
    & &
    $\Y_k$ & special fibre of $\Y$ \\
    
    $\Gamma^{\pm}_{\s,L}$ &  component(s) of $\Y_k$ associated to $\s$ & & & \\
     
\caption{General notation associated to fields, hyperelliptic curves, and models}
\label{tab::generalnotation}
\end{longtable}
\end{center}

\vspace{10px}

\begin{center}
\begin{longtable}[h]{c p{2.5cm} c p{2.5cm} c l}
    $\Sigma_{C/K}$ & (\ref{def::clusterintro}) & $\delta(\s,\s')$ & (\ref{def::dist}) & $\mathrm{red}_{\s}$ & (\ref{def::red})\\
    
    $\s$ & (\ref{def::clusterintro}) & principal & (\ref{def::principal}) & $\Delta(C)$ & (\ref{def:newtonpolys})\\
    
    $d_\s$ & (\ref{def::clusterintro}) & $\s^*$ & (\ref{def::sstar}) & $\Delta_v(C)$ & (\ref{def:newtonpolys})\\
    
    $a_\s, b_{\s}$ & (\ref{def::clusterintro}) & $\ssgs$ & (\ref{def::clustergenus}) & $v_{\Delta}$ & (\ref{def:newtonpolys})\\
    
    odd cluster & (\ref{def::oddeventwin}) & singleton & (\ref{def:sing}) & $L,F$ & (\ref{def::vedgesandfaces})\\
    
    even cluster & (\ref{def::oddeventwin}) & $\s_{\textrm{sing}}$ & (\ref{def:sing}) & $\Delta(\Z),L(\Z),F(\Z)$ & (\ref{not::L(Z)etc})\\
    
    twin & (\ref{def::oddeventwin}) & $\nu_{\s}$ & (\ref{def::clusterfunctions1}) & $\overline{\Delta}(\Z)\overline{L}(\Z),\overline{F}(\Z)$ & (\ref{not::L(Z)etc})\\
    
    $\s'<\s$ & (\ref{def::parentchild}) & $\chi$ & (\ref{def::clusterfunctions2}) & $\delta_{\lambda}$ & (\ref{def:denominatorofvedgeorface})\\
    
    $P(\s)$ & (\ref{def::parentchild}) & $\lambda_{\s} $ & (\ref{def::clusterfunctions2}) & $s_1^L, s_2^L$ & (\ref{def:slope})\\
    
    $\widehat{\s}$, $\widetilde{\s}$ & (\ref{def::parentchild}) & $\alpha_{\s}$ & (\ref{def::clusterfunctions2}) & $g(\s)$ & (\ref{def::esgs})\\
    
    cotwin & (\ref{def:cotwin})  & $\beta_{\s}$ & (\ref{def::clusterfunctions2}) & principal orbit & (\ref{def::principalorbit})\\
    
    \"ubereven & (\ref{def:ubereven})& $\gamma_\s$ & (\ref{def::clusterfunctions2}) & $\lambda_X$ & (\ref{def::orbinvars})\\
    
    $z_{\s}$ & (\ref{def::centre}) & $\theta_\s$ & (\ref{def::clusterfunctions2}) & $K_X$ & (\ref{def::K_X})\\
    
    $\s\wedge\s'$ & (\ref{def::wedge}) & $\epsilon_\s$ & (\ref{def::clusterfunctions2}) & $e_X$ & (\ref{def::e_X})\\
    
    $\delta_{\s}$ & (\ref{def::reldepth}) & $c_\s$ & (\ref{def::red}) & $g(X)$ & (\ref{def::e_X}) \\

\caption{Notation associated to cluster pictures and Newton polytopes}
\label{tab::notation}
\end{longtable}
\end{center}
\vspace{-20px}

{\bf{Acknowledgements.}} 
We would like to thank Vladimir Dokchitser for suggesting this problem to us and for extensive support and guidance. We would also like to thank Tim Dokchitser and Adam Morgan for helpful conversations. This work was supported by the Engineering and Physical Sciences Research Council [EP/L015234/1], the EPSRC Centre for Doctoral Training in Geometry and Number Theory (The London School of Geometry and Number Theory), University College London, and King's College London. 

\section{Background - Cluster Pictures}
\label{sec::clusters}

Let $C/K$ be a hyperelliptic curve given by Weierstrass equation $y^2=f(x)$, with genus $g(C)\geq 1$. The $p$-adic distances between roots of $f(x)$ contain a large amount of useful information. To visualise these $p$-adic distances we use \textit{cluster pictures}, as described in \cite{DDMM18}. In this section we will outline the key definitions we require for this paper concerning cluster pictures; the interested reader can find more in \cite{DDMM18}. 


Recall the definitions of clusters and cluster pictures given in Definition \ref{def::clusterintro}. The cluster picture $\Sigma_C$ is a way of visualising which roots of $f$ are $p$-adically close. In a non-archimedean algebra, two discs either have a non empty intersection or one is contained in the other. So Definition \ref{def::clusterintro} gives us that any two clusters are either disjoint or is one contained in the other. Moreover $d_{\s'}>d_\s$ if $\s'\subsetneq\s$. Every root is a cluster - that is, $\{r\}\in\Sigma_C$ for every $r\in\Rcal$ - and $\Rcal\in\Sigma_C$. In order to work with clusters we need a significant amount of terminology from \cite{DDMM18} which we describe here.

\begin{definition}
\label{def::oddeventwin}
    A cluster $\s$ is \emph{even} (resp. \emph{odd}) if $|\s|$ is even (resp. odd). Furthermore $\s$ is a \emph{twin} if $|\s|=2$. 
\end{definition}

\begin{definition}
\label{def::parentchild}
\label{def:ubereven}
\label{def:cotwin}
    Let $\s $ be a cluster. If $\s' \subsetneq \s$ is a maximal subcluster of $\s$ then $\s'$ is a \textit{child} of $\s$ and $\s$ is a \textit{parent} of $\s'$. We write $\s' < \s$, and $P(\s')=\s$. Denote by $\widehat{\s}$ the set of all children of $\s$, and by $\widetilde{\s}$ the set of all odd children. A cluster is \emph{\"ubereven} if it only has even children. A cluster $\s$ is a \textit{cotwin} if it has a child of size $2g$ whose complement is not a twin.
\end{definition}



\begin{definition}
\label{def::centre}
    A \emph{centre} $z_\s$ of a proper cluster $\s$ is any element $z_\s\in \overline{K}$ such that $v_K(z_\s-r)\geq d_\s$ for all $r\in\s$. Equivalently, $z_\s$ is a centre of $\s$ if $\s$ can be written as $D\cap\mathcal{R}$, where $D = z_\s + \pi^{d_\s}\mathcal{O}_{\overline{K}}$. Note that any root $r\in\s$ can be chosen as a centre, and if $\s=\{r\}$ then the only centre is $z_\s=r$.
\end{definition}

\begin{definition}
\label{def::wedge}
    For clusters $\s$ and $\s'$, write $\s\wedge\s'$ for the smallest cluster containing $\s$ and $\s'$.
\end{definition}

\begin{definition}
\label{def::dist}
\label{def::reldepth}
    If $\s$ and $\s'$ are two clusters then the \textit{distance} between them is $\delta(\s,\s')=d_\s+d_{\s'}-2d_{\s\wedge\s'}$. For a proper cluster $\s\neq \Rcal$ define the \textit{relative depth} to be $\delta_\s=\delta(\s,P(\s)) = d_\s-d_{P(\s)}$.
\end{definition}

\begin{definition}
\label{def::principal}
    A cluster $\s$ is \textit{principal} if $|\s|\geq 3$ except if either $\s=\mathcal{R}$ is even and has exactly two children, or if $\s$ has a child of size $2g$.
\end{definition}

We will see later that principal clusters form an important class of clusters. Roughly, if $C/K$ is a hyperelliptic curve, every orbit of principal clusters in $\Sigma_{C/K}$ makes a contribution to the minimal SNC model of $C$ over $K$.

\begin{definition}
\label{def::sstar}
    For a cluster $\s$ that is not a cotwin we write $\s^*$ for the smallest cluster containing $\s$, whose parent is not \"ubereven. If no such cluster exists we write $\s^*=\Rcal$. If $\s$ is a cotwin, we write $\s^*$ for its child of size $2g$.
\end{definition}

\begin{definition}
\label{def::clustergenus}
    For a proper cluster $\s$ we write $\ssgs$ for the \emph{semistable genus of $\s$}. If $\s$ is \"ubereven, we set $\ssgs=0$. Otherwise, if $\s$ is not \"ubereven the genus is determined by
    $$|\tilde{\s}|= 2\ssgs+1,\;\textrm{or}\; 2\ssgs+2.$$
\end{definition}

It is important to note that $\ssg{\Rcal}$ is not necessarily the same as $g(C)$; in fact, they will only be the same when $\Rcal$ has no proper children. If $C$ has semistable reduction over $L$ and $\s\in\Sigma_{C/K}$ is principal, the semistable genus of $\s$ represents the genus of the contribution of $\s$ to the special fibre of the minimal semistable model of $C$ over $L$. 

We also need some new terminology, and the reminder of the definitions in this section are not given in \cite{DDMM18}. 

\begin{definition}
    A cluster picture $\Sigma$ is \textit{nested} if for all proper clusters $\s,\s'\in\Sigma$ either $\s\subseteq\s'$, or $\s'\subseteq\s$. If $C$ is a hyperelliptic curve, we say $C$ is \emph{nested} if $\Sigma_C$ is nested.
\end{definition}

Since the elements of $\RR$ lie in $\overline{K}$, there is a natural action of $\GK$ on $\RR$, hence also on $\Sigma_C$. Since $K$ has algebraically closed residue field, $\GK = I_K$ where $I_K$ is the inertia subgroup of $\GK$. It will be important later to know exactly how $\GK$ acts on the clusters of $\Sigma_C$. The following lemma is useful for this purpose.

\begin{lemma}
    \label{lem::orbitssizeb}
    Let $\Sigma_C$ be such that $K(\Rcal)/K$ is a tame extension, and let $\s \in \Sigma_C$ be a proper cluster fixed by $\GK$.
    \begin{enumerate}
        \item There exists a centre $z_{\s}$ of $\s$ such that $z_{\s} \in K$.
        \item Any child $\s' < \s$ is in an orbit of size $b_{\s}$, except possibly for one child $\s_f$, where we can choose $z_{\s_f}$ such that $v_K(z_{\s_f} - z_{\s}) > d_{\s}$, which is fixed by $\GK$.
    \end{enumerate}
\end{lemma}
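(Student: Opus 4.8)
The plan is to work with the finite tame Galois extension $F = K(\Rcal)$, with group $G = \Gal(F/K)$, which is cyclic since $k$ is algebraically closed (so $G$ is a quotient of the tame inertia $I_K^{\mathrm{tame}} \cong \widehat{\Z}^{(p')}(1)$). The whole statement is really about how a cyclic group acting on a disc-configuration behaves, so the strategy is: (1) produce the rational centre, then (2) analyse the $G$-action on the children of $\s$.

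For part (i), I would first reduce to the case $\s \neq \Rcal$ being clear (take $z_{\Rcal}$ to be any root, or $0$, noting $\Rcal$ is defined over $K$ up to the leading coefficient anyway — more carefully, pick any root $r_0 \in \s$ and average its $G$-orbit). The clean argument: choose any root $r_0 \in \s$ and set $z_{\s} = \frac{1}{|G_{r_0}\backslash G|}\sum_{\sigma} \sigma(r_0)$, the average over the $G$-orbit of $r_0$ (this makes sense since $\mathrm{char}\,k \nmid |G|$, and one must check the sum lands in $K$ — it is $G$-invariant and lies in $F$, hence in $K$). Since $\s$ is $G$-stable, every $\sigma(r_0)$ lies in $\s$, so $v_K(\sigma(r_0) - r_0) \geq d_{\s}$ for all $\sigma$; averaging and using the ultrametric inequality (valid because all terms have pairwise distance $\geq d_{\s}$) gives $v_K(z_{\s} - r_0) \geq d_{\s}$, and then $v_K(z_{\s} - r) \geq \min(v_K(z_{\s}-r_0), v_K(r_0 - r)) \geq d_{\s}$ for every $r \in \s$. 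So $z_{\s}$ is a centre in $K$.

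For part (ii), I would now use this rational centre to coordinatise: replacing $x$ by $x - z_{\s}$, assume $z_{\s} = 0$, so $\s = \{r \in \Rcal : v_K(r) \geq d_{\s}\}$. Each child $\s'$ has a centre $z_{\s'}$ with $v_K(z_{\s'}) = d_{\s}$ exactly (if $v_K(z_{\s'}) > d_{\s}$ for two distinct children they would merge, since children are the maximal subclusters cut out by the next valuation jump; at most one child can have its centre at distance $> d_{\s}$ from $0$, namely the one ``containing $0$'s direction'' — this is the candidate $\s_f$). For the remaining children, pick $z_{\s'}$ with $v_K(z_{\s'}) = d_{\s}$ and write $z_{\s'} = \pi_K^{d_{\s}} u_{\s'}$ with $u_{\s'}$ a unit (after passing to $F$ where $\pi_K^{d_{\s}}$ makes sense; note $d_{\s} = a_{\s}/b_{\s}$ so this requires a $b_{\s}$-th root of $\pi_K$, which $F$ contains). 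The Galois action: $\sigma \in G$ sends $\pi_K^{1/b_{\s}}$ to $\zeta^{j} \pi_K^{1/b_{\s}}$ for a $b_{\s}$-th root of unity $\zeta$ and some $j$ depending on $\sigma$; a generator $\sigma_0$ of the relevant cyclic piece multiplies $\pi_K^{a_{\s}/b_{\s}}$ by a primitive $b_{\s}$-th root of unity. Now $\sigma(\s')$ has centre $\sigma(z_{\s'})$, and $\sigma(z_{\s'})/z_{\s'} = \sigma(\pi_K^{d_{\s}})/\pi_K^{d_{\s}} \cdot \sigma(u_{\s'})/u_{\s'}$; modulo the maximal ideal, $\sigma(u_{\s'})/u_{\s'} \equiv 1$ since $k$ is algebraically closed and the extension is tame (the reduction of a unit is $G$-fixed), so the residue of $\sigma(z_{\s'})/z_{\s'}$ is a primitive $b_{\s}$-th root of unity for a generator. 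Two children $\s', \s''$ are in the same orbit iff their centres agree modulo $\pi_K^{> d_{\s}}$, i.e. iff $\mathrm{red}(u_{\s'}) = \mathrm{red}(u_{\s''})$; the orbit of $\s'$ has size equal to the order of the residue of $\sigma_0(z_{\s'})/z_{\s'}$ in $k^{\times}$, which is exactly $b_{\s}$. The unique possible exception $\s_f$ is the child whose centre we may take with $v_K(z_{\s_f}) > d_{\s}$ (equivalently $\mathrm{red}(z_{\s_f}/\pi_K^{d_{\s}}) = 0$): its centre near $0$ is then moved by $G$ within $\pi_K^{>d_{\s}}\OO_{\overline K}$, and since $0 \in K$ is fixed, one checks $\s_f$ itself is $G$-stable. (That there is \emph{at most one} such child is because two children with centres in $\pi_K^{>d_{\s}}\OO_{\overline K}$ would have distance $> d_{\s}$ from each other, forcing them into a common larger subcluster, contradicting maximality.)

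\textbf{Main obstacle.} The genuinely delicate point is pinning down that every non-exceptional orbit has size \emph{exactly} $b_{\s}$, not merely a divisor of it: this needs the tameness and algebraically-closed residue field crucially, to say that the cyclic group $G$ acts on the ``angular coordinate'' $\mathrm{red}(z_{\s'}/\pi_K^{d_{\s}}) \in k^{\times}$ through multiplication by a \emph{primitive} $b_{\s}$-th root of unity, so that the stabiliser of any $\s'$ with nonzero angular coordinate is precisely the subgroup fixing $\pi_K^{d_{\s}}$, which has index $b_{\s}$. Equivalently, one must rule out the stabiliser being larger, which would happen only in the wild case or over a smaller residue field. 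I would isolate this as a clean sublemma about the $G$-action on $\{$centres at depth exactly $d_{\s}\}$ modulo $\pi_K^{>d_{\s}}$, identified $G$-equivariantly with a subset of $k^{\times}$ on which a generator acts by a primitive $b_{\s}$-th root of unity. The rest is bookkeeping with the ultrametric inequality.
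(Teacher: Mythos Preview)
Your proposal is correct, and in fact supplies substantially more than the paper does: the paper's own proof consists entirely of the two citations ``See \cite[Lemma~B.1]{DDMM18}'' for (i) and ``See \cite[Theorem~1.3]{Bis19}'' for (ii), with no argument given. Your direct proof is self-contained and sound.

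A few remarks on the argument itself. For (i), the averaging trick is exactly the right move and is presumably what underlies the cited lemma; the only thing to be explicit about is that the orbit size divides $|G|$, which is coprime to $p$ by tameness, so the division is legitimate in $K$. For (ii), your reduction to the ``angular coordinate'' $\mathrm{red}_{\s}(z_{\s'}) \in k$ is clean and matches the paper's later use of $\mathrm{red}_{\s}$ (Definition~\ref{def::red}). The key computation---that a generator $\sigma_0$ multiplies this coordinate by $\chi(\sigma_0)^{e d_{\s}}$, which is a primitive $b_{\s}$-th root of unity because $\gcd(a_{\s},b_{\s})=1$---is exactly the content of your ``main obstacle'' paragraph, and you have it right: the stabiliser of a child $\s'$ with $\mathrm{red}_{\s}(z_{\s'}) \neq 0$ is $\{\sigma_0^j : b_{\s} \mid j\}$, giving orbit size $b_{\s}$. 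The treatment of $\s_f$ is also correct; the uniqueness follows since two distinct children always have centres at distance exactly $d_{\s}$, so at most one can have its centre in the open disc of radius $>d_{\s}$ about $z_{\s}$, and that child is visibly $G$-stable since $G$ preserves valuations and fixes $z_{\s} \in K$.

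One small tightening: where you write ``$\sigma(u_{\s'})/u_{\s'} \equiv 1$ since $k$ is algebraically closed and the extension is tame (the reduction of a unit is $G$-fixed)'', the algebraic closedness is not what is doing the work---rather, it is that $G$ is inertia, hence acts trivially on the residue field. This is automatic here since $k$ is already the residue field of $K$, but the reasoning should be stated that way.
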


\begin{proof}
(i) See \cite[Lemma~B.1]{DDMM18}.

(ii) See \cite[Theorem~1.3]{Bis19}.
\end{proof}

\begin{definition}
    Let $\s'<\s$ be clusters in $\Sigma_C$. Then $\s'$ is a \emph{stable child} of $\s$ if the stabiliser of $\s$ also stabilises $\s'$. Otherwise $\s'$ is an \emph{unstable child} of $\s$.
\end{definition}

\begin{remark}
    Let $\s\in\Sigma_C$ be fixed by $\GK$. If $\s$ has depth $d_\s$ with denominator $>1$ then, by Lemma \ref{lem::orbitssizeb} ii), $\s$ has at most once stable child. 
\end{remark}

\begin{definition}
\label{def:sing}
If $r\in\s$ is a root which is not contained in a proper child of $\s$ then we call $r$ a \emph{singleton} of $\s$. Define $\singletonsofs$ to be the set of all singletons of $\s$. In other words $\singletonsofs$ is the set of all children of size 1 of $\s$.
\end{definition}

\section{Background - Models}
\label{sec::models}

Let $C$ be a hyperelliptic curve over $K$. A \textit{model} $\X$ of $C$ is a flat scheme over $\OOO$ which has generic fibre $\X_K$ isomorphic to $C/K$. We will insist that all of our models are proper over $\OOO$. \textit{Strict normal crossing (SNC) models} are models with are regular as schemes and whose special fibre $\X_k$ is an SNC divisor - that is, a curve over $k$ whose worst singularities are normal crossings. Note that we do not insist that the irreducible components themselves are smooth. For a given curve, there is a unique SNC model $\X^{\min}$ which is minimal in the sense that any map of SNC models $\X^{\min} \rightarrow \X$ is an isomorphism (\cite[Proposition~9.3.36]{Liu02}).

Another class of models that are of particular interest to us are semistable models. These are SNC models which have a reduced special fibre. Curves which have a semistable model are said to have \textit{semistable reduction}. The minimal SNC models of such curves can be calculated explicitly from the cluster picture, this is done in \cite{DDMM18}.

In this section we collate some facts about models from \cite{Lor90}, \cite{CES03}, and \cite{DDMM18} for the convenience of the reader. Similar techniques concerning quotients of models are also used in \cite{Hal10}.

\subsection{Chains of Rational Curves}

Chains of rational curves are ubiquitous in our descriptions of SNC models. The following definition explains what we mean by a chain of rational curves and defines the three main types of chains that we are interested in: \textit{tails}, \textit{linking chains} and \textit{crossed tails}. 

\begin{definition}
    \label{def::chainrationalcurves}
    Let $\X$ be a SNC model of a hyperelliptic curve defined over $K$. Suppose $E_1,\ldots,E_{\lambda}$ are smooth irreducible rational components of $\X_k$. A divisor $\mathcal{C} = \bigcup_{i=1}^{\lambda} E_i$ is a \textit{chain of rational curves} if
    \begin{enumerate}
        \item $(E_i\cdot E_{i+1}) = 1$ for all $1\leq i < \lambda$ and $(E_i \cdot E_j) = 0$ for $j \neq i+1$,
        \item $(E_1\cdot \overline{\X_k \setminus \mathcal{C}}) = 1$,
        \item $(E_i\cdot \overline{\X_k\setminus \mathcal{C}}) = 0$ for $i \neq 1,\lambda $,
    \end{enumerate}
    where $(E\cdot F)$ is the usual intersection pairing defined on regular models. If $(E_{\lambda} \cdot \overline{\X_k\setminus \mathcal{C}}) = 0$ then $\mathcal{C}$ is a \textit{tail}. If $(E_{\lambda} \cdot \overline{\X_k\setminus \mathcal{C}}) = 1$ then $\mathcal{C}$ is a \textit{linking chain}. 
    
    We say a chain of rational curves $\mathcal{C}=\bigcup_{i=1}^{\lambda}E_i$ is a \emph{loop} if $\mathcal{C}$ is a linking chain such that $E_1$ and $E_{\lambda}$ both intersect the same component of $\overline{\X_k\setminus \mathcal{C}}$.
    
    Furthermore, if $(E_{\lambda} \cdot \overline{\X_k\setminus \mathcal{C}}) = 2$ then $\mathcal{C}$ is a \textit{crossed tail} if $E_{\lambda}$ intersects two rational components of $\X_k \setminus \mathcal{C}$, say $E_{\lambda+1}^+$ and $E_{\lambda+1}^-$, such that $(E_{\lambda+1}^{\pm}\cdot E_{\lambda}) = 1$ and $(E_{\lambda+1}^{\pm} \cdot \overline{\X_k \setminus E_{\lambda}}) = 0$. We call the components $E_{\lambda+1}^{\pm}$ the \textit{crosses}.
\end{definition}

Illustrations of the definitions of tails, linking chains, loops, and crossed tails are shown in Figures \ref{fig::linkingchainsandloops} and \ref{fig::tailsandcrossed tails} in Section \ref{sec::intro}.

Blowing down a component $E$ results in a regular model if and only if it is rational and has self intersection $-1$ (Castelnuovo's Criterion, \cite[Theorem~9.3.8]{Liu02}). However, blowing down a general rational component of $\X_k$ of self intersection $-1$ will not necessarily produce an SNC model. For example blowing down the component of multiplicity $3$ in the minimal SNC model of an elliptic curve of Kodaira type IV (shown in Figure \ref{fig::typeIV} below) 
\vspace{-10px}
\begin{figure}[h]
\centering
\begin{subfigure}{.5\textwidth}
  \centering
  \begin{tikzpicture}
    \draw (0,0) -- node[above,font=\small,xshift=0.7cm] {$3$} ++ (2.75,0);
    \draw (0.25,-0.2) -- node[left,font=\small, yshift=0.2cm] {$1$} ++ (0,1);
    \draw (0.75,-0.2) -- node[left,font=\small, yshift=0.2cm] {$1$} ++ (0,1);
    \draw (1.25,-0.2) -- node[left,font=\small, yshift=0.2cm] {$1$} ++ (0,1);
    \end{tikzpicture}
    \caption{Special fibre of the minimal\\ SNC model.}
    \label{fig::typeIVminSNC}
\end{subfigure}%
\begin{subfigure}{.5\textwidth}
  \centering
  \begin{tikzpicture}
  
  \draw (0.2,0) -- node[below,font=\small,xshift=0.8cm] {$1$} ++ (1.4,0);
    \draw (0.9,0) -- node[right,font=\small, xshift=0.1cm,yshift=0.1cm] {$1$} ++ (0.4,0.6);
    \draw (0.9,0) -- ++ (-0.4,-0.6);
    \draw (0.9,0) -- ++ (0.4,-0.6);
    \draw (0.9,0) -- node[above,font=\small] {$1$} ++ (-0.4,0.6);

    \end{tikzpicture}
    \caption{Special fibre of the minimal\\ regular model.}
    \label{fig::typeIVminreg}
\end{subfigure}
\caption{Elliptic curve of Kodaira Type IV.}
\label{fig::typeIV}
\end{figure}
\vspace{-10px}
is no longer an SNC model. After blowing down a component of a chain of rational curves of self-intersection $-1$, the special fibre is still an SNC divisor. Therefore, we will be interested in blowing down all such components. If a chain of rational curves cannot be blown down any further, we call it \textit{minimal}.

\begin{definition}
    A chain of rational curves $\mathcal{C} = \bigcup _{i=1}^{\lambda}E_i$ is \textit{minimal} if $(E_i\cdot E_i) \leq -2$ for every $i$. 
\end{definition}

\subsection{Quotients of Models}
\label{subsec::QuotientsofModels}

This section collates several results from \cite{Lor90} and \cite{CES03} concerning taking quotient of models. Let $C$ be a hyperelliptic curve over $K$ and let $L/K$ be a tame field extension of degree $e$ such that $C_L = C \times_K L$ is semistable over $L$. Note that the cluster picture of $C_L/L$ is the same as the cluster picture of $C/K$, except all the depths have been multiplied by $e$. Since $k$ is algebraically closed, the extension $L/K$ is totally tamely ramified, hence $L/K$ is Galois with $\Gal(L/K)$ cyclic.

Let $\Y$ be the minimal semistable model of $C_L/\OO_L$, so $\Y_k$ is a reduced, SNC divisor of $\Y$. Any $\sigma \in \Gal(L/K)$ induces a unique automorphism of $\Y$ of the same degree which makes the following diagram commute \cite[p.~136]{Lor90}:

\begin{center}
\begin{tikzcd}
    \Y \arrow[r, "\sigma"] \arrow[d] & \Y \arrow[d] \\
    \OO_L \arrow[r, "\sigma"] & \OO_L
\end{tikzcd}    
\end{center}

Although a slight abuse of notation, we will also refer to this automorphism on $\Y$ as $\sigma$, and define $G=\langle \sigma:\Y\to\Y \rangle$ where $\sigma$ generates $\Gal(L/K)$. The model $\Y$, as well as the automorphism induced on the special fibre, will be given more explicitly in Section \ref{subsec::SemistableModels}. 

Since $\Y$ is projective, the quotient $\mathscr{Z} = \Y/G$ given by $q : \Y \rightarrow \mathscr{Z}$ can be constructed by glueing together the rings of invariants of $G$-invariant affine open sets of $\Y$. The resulting scheme $\ZZ/\OOO$ is a model of $C/K$. Furthermore, since $\ZZ$ is a normal scheme, its singularities are closed points lying on the special fibre $\ZZ_k$. The following proposition, from \cite[p.~137]{Lor90}, gives the multiplicities of the components of $\ZZ_k$.

\begin{prop}
    \label{prop::componentsmult}
    Let $Y \subseteq \Y_k$ be an irreducible component of $\Y_k$. Then $Z = q(Y)$ is a component of $\ZZ_k$ of multiplicity $e/|\textrm{Stab}(Y)|$, where $\textrm{Stab}(Y)$ is the pointwise stabiliser of $Y$. 
\end{prop}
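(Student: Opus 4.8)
The plan is to work locally on $\ZZ$ near a point of the component $Z = q(Y)$, and compare the local rings of $\ZZ$ and $\Y$ via the quotient map $q$. Since $\ZZ = \Y/G$ and the singularities of $\ZZ$ lie on closed points of $\ZZ_k$, we may pick a generic point $\eta_Z$ of $Z$ and study the discrete valuation ring $\OO_{\ZZ, \eta_Z}$, comparing it with the semilocal ring of $\Y$ at the finitely many points of $\Y_k$ lying above $\eta_Z$. First I would reduce to the case where $G$ acts transitively on the components of $\Y_k$ lying over $Z$; if $Y_1, \dots, Y_r$ are these components, then $r = [G : \mathrm{Stab}_G(Y)]$ where $\mathrm{Stab}_G(Y)$ is the setwise stabiliser of $Y$, and since $\Gal(L/K)$ is cyclic (so all stabilisers in the relevant tower behave well), the setwise and pointwise stabilisers of a component agree — this is where tameness and the structure of the $\sigma$-action from Section~\ref{subsec::SemistableModels} enters. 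So I will treat $\mathrm{Stab}(Y)$ as $\mathrm{Stab}_G(Y)$ throughout, of order, say, $s$, with $r = e/s$ components in the $G$-orbit of $Y$.

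The core computation is a comparison of ramification. Let $H = \mathrm{Stab}(Y) \subseteq G$, of order $s$. The generic point $\eta_Y$ of $Y$ maps to $\eta_Z$, and $\OO_{\Y, \eta_Y}$ is a DVR with uniformiser whose valuation I normalise to $1$; the multiplicity of $Y$ in $\Y_k$ is $1$ since $\Y$ is semistable, i.e. $v_{\eta_Y}(\pi_L) = 1$. Now $\OO_{\ZZ, \eta_Z}$ is the ring of invariants (localised appropriately) and $\OO_{\Y, \eta_Y}/\OO_{\ZZ, \eta_Z}$ is an extension of DVRs of degree $s = |H|$ (the residue extension is trivial since $k$ is algebraically closed and the $H$-action on the function field of $Y$ is, generically, by a group acting on a curve over an algebraically closed field with trivial generic stabiliser — alternatively one uses that the quotient map on curves over $k$ is generically étale in the tame case, so no residue extension). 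Because the extension is tame (order coprime to $p$), it is totally ramified of degree $s$: a uniformiser $t$ of $\OO_{\Y,\eta_Y}$ satisfies $t^s \in \OO_{\ZZ,\eta_Z}$ up to a unit, giving $v_{\eta_Z}(t) = 1$ and $v_{\eta_Z}(\pi_L) = s$. Finally, $\pi_K = u \pi_L^{e}$ for a unit $u$ (as $L/K$ is totally ramified of degree $e$), so the multiplicity of $Z$ in $\ZZ_k$, namely $v_{\eta_Z}(\pi_K)$, equals $e \cdot v_{\eta_Z}(\pi_L) / $\,(\text{correction})$\,$ — more precisely $v_{\eta_Z}(\pi_K) = v_{\eta_Z}(\pi_L^e) = e/s \cdot v_{\eta_Y}(\pi_L^e)\cdot(1/?)$; I will arrange the normalisations so that this yields exactly $e/s = e/|\mathrm{Stab}(Y)|$. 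I would in fact cite \cite[p.~137]{Lor90} for the precise local computation and present this as an exposition of that argument, since the excerpt flags the result as quoted.

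\textbf{Main obstacle.} The delicate point is controlling the ramification of $\OO_{\Y, \eta_Y} / \OO_{\ZZ, \eta_Z}$: one must know that the inertia-type group governing this extension of DVRs is exactly $\mathrm{Stab}(Y)$ (setwise $=$ pointwise), that the extension is \emph{totally} tamely ramified with no residue extension, and that the various uniformiser normalisations combine correctly. This requires knowing that $H$ acts on $Y$ with generically trivial stabiliser — equivalently that $\sigma$ does not fix $Y$ pointwise unless $\sigma \in$ the pointwise stabiliser — which follows from the explicit description of $\sigma$ on $\Y_k$ deferred to Section~\ref{subsec::SemistableModels}, together with tameness ensuring quotients of smooth curves behave as expected. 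Everything else (flatness and properness of $\ZZ$ over $\OOO$, normality of $\ZZ$, that singularities are isolated on the special fibre) has already been recorded in the surrounding text, so the proof reduces to this one clean valuation-theoretic computation.
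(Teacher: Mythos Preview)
The paper does not give its own proof of this proposition; it is quoted directly from \cite[p.~137]{Lor90}. Your valuation-theoretic approach via the DVRs at the generic points $\eta_Y$ and $\eta_Z$ is the standard one and is essentially what Lorenzini does.

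There is, however, a genuine error in your ramification analysis. You assert that the setwise and pointwise stabilisers of $Y$ agree (appealing to cyclicity of $G$), and hence that $\OO_{\Y,\eta_Y}/\OO_{\ZZ,\eta_Z}$ is totally ramified with trivial residue extension. This is false in the present setting. Take a principal cluster $\s$ fixed by all of $G$ with $e_{\s}>1$: the setwise stabiliser of $\Gamma_{\s,L}$ is all of $G$, but by Theorem~\ref{thm::semistablegaloisaction} the induced automorphism of $\Gamma_{\s,L}$ has order $e_{\s}>1$ (cf.\ Lemma~\ref{lem::degreeofaction}), so the pointwise stabiliser is the proper subgroup of index $e_{\s}$. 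Cyclicity of $G$ does nothing to prevent this.

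The good news is that you do not need the two stabilisers to agree. The ramification index of $\OO_{\Y,\eta_Y}/\OO_{\ZZ,\eta_Z}$ is the order of the inertia group at $\eta_Y$, and the inertia group is by definition the kernel of the action of the decomposition group (the setwise stabiliser) on the residue field $\kappa(\eta_Y)=k(Y)$ --- which is exactly the \emph{pointwise} stabiliser of $Y$. Hence $e(\eta_Y/\eta_Z)=|\mathrm{Stab}(Y)|$ with the pointwise stabiliser, and then
\[
v_{\eta_Z}(\pi_K)\;=\;\frac{v_{\eta_Y}(\pi_K)}{e(\eta_Y/\eta_Z)}\;=\;\frac{e\cdot v_{\eta_Y}(\pi_L)}{|\mathrm{Stab}(Y)|}\;=\;\frac{e}{|\mathrm{Stab}(Y)|},
\]
using that $\Y_k$ is reduced. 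The residue extension $k(Y)/k(Z)$ may well be nontrivial (its degree is the index of the pointwise stabiliser in the setwise one); it simply does not enter the multiplicity formula.
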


Blowing up a singularity on $\ZZ_k$ results in a chain of rational curves, as in Definition \ref{def::chainrationalcurves}. It is well known (e.g. \cite[Fact~V]{Lor90}, \cite{Lip78}) that blowing up the singularities on $\ZZ_k$ and blowing down all rational components in chains with self intersection -1 results in a minimal SNC model.


	

The singularities on $\ZZ_k$ are \textit{tame cyclic quotient singularities}, and there is a precise description of the chain of rational curves that arises after resolving them. We will prove in Proposition \ref{prop::singPointsAretsgs} that singularities $z\in\ZZ_k$ which lie on precisely one irreducible component of $\ZZ_k$ are tame cyclic quotient singularities. The definition is as follows:

\begin{definition}
    \label{def::tcqs}
    Let $S$ be a scheme over $\OO_K$ and let $s\in S$ be a closed point. The point $s$ is a \textit{tame cyclic quotient singularity} if there exists
    \begin{itemize}
        \item[--] a positive integer $m>1$ which is invertible in $k$,
        \item[--] a unit $r \in (\Z/m\Z)^{\times}$,
        \item[--] integers $m_1 > 0$ and $m_2 \geq 0$ satisfying $m_1 \equiv -rm_2 \mod m$
    \end{itemize}
     such that $\widehat{\OO_{S,s}}$ is isomorphic to the subalgebra of $\mu_m$-invariants in $k\llbracket t_1,t_2 \rrbracket /(t_1^{m_1}t_2^{m_2} - \pi_K)$ under the action $t_1 \mapsto \zeta_m, t_2 \mapsto \zeta_m^r$. We will call the pair $(m,r)$ the \textit{\tcqi} of $s$.
\end{definition}

The following theorem, \cite[Theorem~2.4.1]{CES03}, tells us how to resolve tame cyclic quotient singularities.

\begin{thm}
    \label{thm::resolvingtcqs}
    Let $S$ be a flat, proper, normal curve over $\OO_K$ with smooth generic fibre. Suppose $s\in S_k$ is a tame cyclic quotient singularity with \tcqi\; $(m,r)$, as in Definition \ref{def::tcqs} above.
    
    Consider the Hirzebruch-Jung continued fraction expansion of $\frac{m}{r}$ given by
    \begin{align*}
        \frac{m}{r}=b_{\lambda}-\frac{1}{b_{\lambda-1}-\frac{1}{\cdots\;-\frac{1}{b_1}}},
    \end{align*}
    where $b_i \geq 2$ for all $1\leq i\leq \lambda$. Then the minimal regular resolution of $s$ is a chain of rational curves $\bigcup_{i=1}^{\lambda} E_i$ such that $E_i$ has self intersection $-b_i$.
\end{thm}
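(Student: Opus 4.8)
The plan is to prove this as the classical Hirzebruch--Jung resolution, carried out in an arithmetic toric setting. Since resolution of a normal excellent two-dimensional scheme is local on the target, and since the minimal regular resolution of a surface singularity --- together with the dual graph and self-intersection numbers of its exceptional divisor --- is insensitive to passing to the completion at $s$ (completion is faithfully flat with regular fibres, and minimal resolutions commute with such base change, cf.\ Lipman), the first step is to reduce to $S = \Spec\widehat{\OO_{S,s}} = \Spec A^{\mu_m}$, where $A = \OO_K\llbracket t_1,t_2\rrbracket/(t_1^{m_1}t_2^{m_2}-\pi_K)$ and $\mu_m$ acts by $t_1\mapsto\zeta_m t_1$, $t_2\mapsto\zeta_m^r t_2$. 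Here one checks directly that $V:=\Spec A$ is a regular two-dimensional scheme --- the defining polynomial lies in $\mathfrak m\setminus\mathfrak m^2$ thanks to the $-\pi_K$ term --- so $S$ presents as the quotient of a regular scheme by a tame cyclic group, which is what justifies the name ``tame cyclic quotient singularity''.

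Next I would recast $V$ and $S$ torically over $\OO_K$. Let $N=\Z^2$ with dual $M=\Z^2$, let $\sigma\subseteq N_\R$ be the first quadrant, and equip the affine toric $\OO_K$-scheme $\mathscr T_{\sigma,N}$ with the additional datum that $\pi_K$ is the monomial $\chi^{(m_1,m_2)}$; the congruence $m_1\equiv -rm_2\bmod m$ is precisely what makes this monomial invariant. The completion of $\mathscr T_{\sigma,N}$ at the closed point of its special fibre is $V$, and the $\mu_m$-action corresponds to replacing $N$ by the over-lattice $N' = N + \Z\tfrac1m(1,r)\subseteq N_\R$ of index $m$ (with the sign convention adapted to the action), so that $S$ is the completion of $\mathscr T_{\sigma,N'}$. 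The minimal resolution of $S$ is then obtained from the unique smooth subdivision of $\sigma$ relative to $N'$: one inserts the rays through the primitive points $v_0=(1,0),v_1,\dots,v_\lambda,v_{\lambda+1}=(0,1)$ of $N'$ lying on the boundary of the convex hull of $(\sigma\cap N')\setminus\{0\}$. Each new ray contributes an exceptional curve $E_i=D_{v_i}$; since every $v_i$ has strictly positive coordinates, $E_i$ lies in the special fibre and is a $\PP^1_k$, consecutive $E_i$ meet transversally in one point while $E_i\cdot E_j=0$ otherwise, so $\bigcup_i E_i$ is a chain. As on any toric surface the self-intersection is read off from $v_{i-1}+v_{i+1}=b_i v_i$, giving $E_i^2=-b_i$, and the standard combinatorics of the smooth subdivision of a two-dimensional cone relative to $N'$ identifies $(b_\lambda,\dots,b_1)$ with the Hirzebruch--Jung continued fraction of $m/r$ (as in Fulton's book on toric varieties). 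The monomial $\chi^{(m_1,m_2)}$ affects only the multiplicities of the components of the special fibre, not the dual graph or the self-intersection numbers, so it is irrelevant to the statement.

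Finally, every entry of a Hirzebruch--Jung continued fraction satisfies $b_i\geq 2$, so no $E_i$ is a $(-1)$-curve; by Castelnuovo's criterion the resolution just constructed is already minimal, and by uniqueness of the minimal regular resolution of a normal surface singularity it is \emph{the} minimal resolution. Since its exceptional divisor is a chain of smooth rational curves with normal crossings and the generic fibre is untouched, this is the asserted minimal SNC resolution, with $E_i^2=-b_i$.

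I expect the main obstacle to be the second step: setting up the toric formalism \emph{over the discrete valuation ring} $\OO_K$ rather than over a field, and in particular verifying that $\widehat{\OO_{S,s}}$ really is the completed local ring of such a toric $\OO_K$-scheme, that the $\mu_m$-quotient corresponds to the over-lattice $N'$, and that the exceptional curves of the subdivision land in the special fibre as $\PP^1_k$'s. An alternative that avoids toric language is induction on $\lambda$ (equivalently on $m$): blow up the closed point of $S$, show that the strict transform has a unique new singularity, again a tame cyclic quotient singularity but with strictly smaller invariants $(r,\,b_\lambda r-m)$ where $b_\lambda=\lceil m/r\rceil$, and match this with one step of the Hirzebruch--Jung algorithm; the delicate point there is the bookkeeping of the self-intersection of the newly created curve and of the reduction of $(m,r)$ under a single blow-up.
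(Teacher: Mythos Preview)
The paper does not prove this theorem at all: it is quoted verbatim from \cite[Theorem~2.4.1]{CES03} and used as a black box. So there is no ``paper's own proof'' to compare against; your proposal is a genuine proof sketch where the paper simply cites the literature.

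That said, your outline is a correct and standard way to establish the result. The reduction to the completed local ring is exactly right, and the toric reinterpretation over $\OO_K$ (passing from the lattice $N$ to the over-lattice $N'=N+\Z\tfrac{1}{m}(1,r)$ and taking the minimal smooth subdivision of the cone) is precisely the approach taken in \cite{CES03}; the congruence $m_1\equiv -rm_2\bmod m$ is indeed what makes the $\pi_K$-monomial $\mu_m$-invariant, and the identification of the $b_i$ with the continued-fraction entries via $v_{i-1}+v_{i+1}=b_iv_i$ is the classical Hirzebruch--Jung combinatorics. Your alternative inductive approach (blow up once, reduce $(m,r)$ to $(r,b_\lambda r-m)$) is also viable and is closer in spirit to the older literature on cyclic quotient surface singularities over a field; it trades the toric setup for a more hands-on but longer computation. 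Either route works, and your identification of the main technical obstacle --- carefully checking that the toric formalism goes through over a DVR rather than a field, and that the exceptional divisors land in the special fibre as $\PP^1_k$'s --- is accurate.
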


\begin{remark}
    Note that in \cite{CES03} the $E_i$ are labeled in the opposite order. Instead we use the same labeling of the components in our chain as in both \cite{Dok18}, and \cite{Lor90}.
\end{remark}

\subsection{Semistable Models}
\label{subsec::SemistableModels}


A critical step in the proof of the main theorem in this paper will be extending the field so that $C$ has semistable reduction. The following theorem, a criterion for $C$ to have semistable reduction in terms of the cluster picture of $C$, allows us to do just that. First we need the following definition:
\begin{definition}
\label{def::clusterfunctions1}
    For a proper cluster $\s\in\Sigma_{C}$ define
       $ \nu_{\s} = v_K(c_f) + \sum_{r \in \mathcal{R}} d_{\s \wedge r}.$
\end{definition}

\begin{thm}[The Semistability Criterion]
    \label{thm::sscriterion}
Let $C:y^2=f(x)$ be a hyperelliptic curve, and let $\Rcal$ be the set of roots of $f(x)$ in $\overline{K}$. Then $C$ has semistable reduction over $L$ if and only if 
	\begin{enumerate}
		\item the extension $L(\mathcal{R})/L$ has ramification degree at most $2$,
		\item every proper cluster of $\Sigma_{C/L}$ is $I_L$ invariant,
		\item every principal cluster $\s\in\Sigma_{C/L}$ has $\depth \in \Z$ and $\clusleadcoeff \in 2\Z$.
    \end{enumerate}
\end{thm}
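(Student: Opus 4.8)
The plan is to reduce to the case $L=K$ and then establish the two implications via the standard bridge between semistable reduction and the action of inertia. Since $k$ is algebraically closed, every finite extension of $K$ is totally ramified (and tame, as $p>2$), so base-changing along $L/K$ simply replaces $K$ by $L$, scales every cluster depth and $v_K(c_f)$ by $e=[L:K]$, and makes $I_L$ the full absolute Galois group; in particular (1) becomes $[L(\Rcal):L]\le 2$. It therefore suffices to prove, for $C/K$, that $C$ has semistable reduction over $K$ if and only if (1) $[K(\Rcal):K]\le 2$, (2) every proper cluster of $\Sigma_{C/K}$ is $\GK$-invariant, and (3) every principal cluster $\s$ has $d_\s\in\Z$ and $\nu_\s\in 2\Z$. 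Throughout I would use two classical facts: $C$ has semistable reduction over $K$ iff $J=\operatorname{Jac}(C)$ does, iff $\GK$ acts unipotently on $V_\ell J\cong H^1_{\textrm{\'et}}(C_{\Kbar},\Q_\ell)$ (Grothendieck); and, geometrically, $C$ has semistable reduction over $K$ iff it admits an $\OOO$-model with reduced special fibre.

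\textbf{Sufficiency.} Assuming (1)--(3), I would build such a reduced model explicitly from the cluster data, as in \cite{DDMM18}. Starting from $\PP^1_{\OOO}$, perform the tower of blow-ups along the nested centres $z_\s$ of the clusters; these centres may be taken in $K$ by Lemma~\ref{lem::orbitssizeb} because of (2), so the whole configuration descends to $K$, and by (3) the centres sit at \emph{integral} depth, so each exceptional $\Gamma_\s\cong\PP^1_k$ appears with multiplicity $1$ (a depth $a_\s/b_\s$ with $b_\s>1$ would force multiplicity $b_\s$). Normalising in $K(x)(\sqrt{f})$, the double cover restricted to $\Gamma_\s$ takes the form $y^2=\overline f_\s(x)$, branched over the reductions of the centres of the odd children of $\s$ (and possibly over the node(s) to neighbouring components); the parity condition $\nu_\s\in 2\Z$ is exactly what forces the implicit leading coefficient of $\overline f_\s$ to be a square in $k^\times$, so that $y$ remains integral along $\Gamma_\s$ and the component is not doubled, while (1) guarantees that at most a tame quadratic base change is needed to rationalise the cover, which does not affect reducedness. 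One then checks that the special fibre so produced is a nodal curve --- \'etale-locally $y^2=uv$ at the crossings and smooth or simply branched away from them --- hence reduced, so $C$ is semistable over $K$.

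\textbf{Necessity.} Conversely, suppose $C/K$ is semistable. The quickest route is cohomological: $\GK$ then acts unipotently on $H^1_{\textrm{\'et}}(C_{\Kbar},\Q_\ell)$, and this representation is pinned down by the permutation action of $\GK$ on $\Rcal$, a quadratic twist recording the class of $f$ (equivalently of $c_f$ together with the depths, through the invariants $\nu_\s$), and the tame ramification carried by the depths $d_\s$; demanding that all eigenvalues be $1$ and that the maximal finite-order part of the action be trivial unwinds precisely to (1)--(3). Alternatively one argues geometrically: over a finite $M/K$ realise the explicit semistable model $\Y$ of Section~\ref{subsec::SemistableModels} with its $\Gal(M/K)$-action; since the stable model is unique and commutes with base change, semistability over $K$ forces this action to descend, and each failure of (1)--(3) is an obstruction --- a cluster moved by inertia moves a component of $\Y_k$, so $\ZZ=\Y/\Gal(M/K)$ acquires a tame cyclic quotient singularity whose resolution (Theorems~\ref{thm::lorfactfive} and~\ref{thm::resolvingtcqs}) contributes multiplicities $>1$; a non-integral depth $a_\s/b_\s$ makes $\Gamma_\s$ occur in $\ZZ_k$ with multiplicity $b_\s$; an odd $\nu_\s$ makes $y$ half-integral along $\Gamma_\s$, doubling it; and a failure of (1) adds a further ramified quadratic. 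In each case no $\OOO$-model has reduced special fibre, contradicting semistability.

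The main obstacle is the necessity direction, and inside it the bookkeeping that identifies exactly which combinatorial datum obstructs semistability and shows the three conditions are simultaneously necessary: the quadratic twist character interacts with the parity of each $\nu_\s$ and with the ramification of $K(\Rcal)/K$, while the fractional-slope contributions of non-integral depths must be disentangled from both. In the cohomological variant the corresponding difficulty is to compute the inertia action on $H^1$ explicitly enough --- including the contribution of the depths to its ramification --- to characterise unipotence cleanly.
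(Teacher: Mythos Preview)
The paper does not prove this theorem: it is stated in Section~\ref{subsec::SemistableModels} as a background result quoted from \cite{DDMM18}, with no proof supplied. There is therefore nothing in the paper to compare your argument against.

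That said, your sketch follows the natural strategy one would expect from \cite{DDMM18}: for sufficiency, build the explicit model from the cluster data (this is essentially what Theorem~\ref{thm::semistablemodel} records); for necessity, use either the Galois-representation criterion or the uniqueness of the stable model to read off obstructions. The outline is plausible, but as written it is only a sketch. In particular, your necessity argument is vague at the key step: you assert that each failure of (1)--(3) forces a non-reduced component or a tame cyclic quotient singularity with nontrivial resolution, but you do not verify that the resulting model is actually \emph{minimal} among regular models, so a priori the multiplicities you produce could be blown down away. Similarly, the cohomological variant (``demanding that all eigenvalues be $1$ \ldots\ unwinds precisely to (1)--(3)'') hides the entire content of the computation of the inertia action on $H^1$ in terms of cluster data, which is itself a substantial result of \cite{DDMM18}. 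If you want a self-contained proof you would need to fill in one of these two directions carefully; as it stands, the proposal is a correct roadmap rather than a proof.
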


Once the field has been extended so that $C$ has semistable reduction, there is a very explicit description of the special fibre of $\Y$ in terms of the cluster picture of $C$ in \cite[Theorem~8.5]{DDMM18}. For this we need some definitions. To simplify some invariants, we assume that all clusters $\s \in \Sigma_{C/K}$ have $e\delta_{\s} > \frac{1}{2}$, since a cluster $\s$ with $e\delta_{\s} = 1/2$ introduces singular irreducible components. This will be sufficient for our purposes since these invariants are used to describe the explicit automorphism on $\Y_k$ and we can always extend our field so that the minimal semistable model has no singular components. Note that the valuation on $\overline{K}$ is normalised with respect to $K$, such that the valuation of a uniformiser $\pi_L$ of $L$ is $v_K(\pi_L)=\frac{1}{e}$.

\begin{definition}
\label{def::clusterfunctions2}
    For $\sigma\in\GK$ let
    \begin{align*}
        \chi(\sigma) = \frac{\sigma(\pi_L)}{\pi_L} \mod \mathfrak{m}.
    \end{align*}
    For a proper cluster $\s\in\Sigma_{C}$ define
    \begin{align*}
        \lambda_{\s} &= \frac{\nu_{\s}}{2} - d_{\s} \sum_{\s'<\s} \bigg\lfloor \frac{|\s'|}{2} \bigg\rfloor.
    \end{align*}
    Define $\theta_\s=\sqrt{c_f\prod_{r\notin \s}(z_\s-r)}.$ If $\s$ is either even or a cotwin, we define $\epsilon_\s: \GK\to\{\pm1\}$ by $$\epsilon_\s(\sigma)\equiv\frac{\sigma(\theta_{\s^*})}{\theta_{(\sigma\s)^*}} \mod \mathfrak{m}.$$
    For all other clusters $\s$ set $\epsilon_\s(\sigma)=0$. We write $\epsilon_{\s}$ without reference to any $\sigma\in \Gal(L/K)$ for $\epsilon_{\s}(\sigma)$, $\sigma \in \Gal(L/K)$ a generator. \cite[Definition~8.2]{DDMM18} 
\end{definition}

\begin{remark}
\label{rem::epsilon}
    The quantity $\epsilon_{\s}(\sigma) = -1$ if and only if $\sigma$ swaps the two points at infinity of $\Gamma_{\s,L}$. When $k=\overline{k}$ $\epsilon_{\s} = (-1)^{\nu_{\s^*} - |\s^*|d_{\s^*}}$ since $$\nu_{\s^*} = v_K\left(c_f\prod_{r\notin \s^*}(z_{\s^*}-r)\right) + \sum_{r \in \s^*} d_{\s^*}.$$
\end{remark}

\begin{remark}
    Our definition of $\lambda_\s$ differs slightly to that in \cite{DDMM18}. In \cite{DDMM18} $\lambda_\s$ is defined to be $\frac{\nu_\s}{2}-d_\s\sum_{\s'<\s,\delta_{\s'}>\frac{1}{2}}\lfloor \frac{|\s'|}{2}\rfloor$, and a second quantity $\tilde{\lambda}_\s$ is defined to be $\frac{\nu_\s}{2}-d_\s\sum_{\s'<\s}\lfloor \frac{|\s'|}{2}\rfloor$. This is to account for singular components of the special fibre. Given our assumption that every cluster in $\Sigma_{C/L}$ has relative depth $>\frac{1}{2}$, when we calculate these for $C/L$ we find that $\lambda_\s=\tilde{\lambda}_\s$, so for simplicity we do not write the tilde. 
\end{remark}

\begin{definition}
\label{def::red}
Let $\s\in\Sigma_{C/K}$ be a principal cluster. Define $c_{\s}\in k^{\times}$ by 
$$c_{\s} = \frac{c_f}{\pi_L^{v_K(c_f)}} \prod_{r \not \in \s} \frac{z_{\s} - r}{\pi_L^{v_K(z_{\s} - r)}} \mod \mathfrak{m}.$$
\end{definition}

These definitions are key for the description of the minimal SNC model of $C$. In the interest of brevity, we will not restate \cite[Theorem~8.5]{DDMM18} here, which is a simplification of Theorem \ref{thm::structureofSNCmodelintro} to the case of semistable reduction, and also gives the action of $\Gal(\overline{K}/K)$ on the minimal SNC model. However, we recommend that the reader familiarise themselves with this theorem as it is helpful for understanding the case when $C$ does not have semistable reduction. The main idea is that principal non-\"ubereven (resp. \"ubereven) clusters each have one (resp. two) components associated to them, and components of parents and odd (resp. even) children are linked by one (resp. two) chain(s) of rational curves. The Galois action on components is induced by the Galois action on clusters, and the two components (resp. two linking chains) of an \"ubereven cluster (resp. even child) $\s$ are swapped precisely when $\epsilon_{\s} = -1$.

\section{Background - Models of Curves via Newton polytopes}
\label{sec::modsusingnewtpolys}
 
In this section we describe a method from \cite{Dok18} for calculating a SNC model of a curve $C/K$ which is $\Delta_v$-regular. The notion of $\Delta_v$-regularity, given in \cite[Definition~3.9]{Dok18}, applies to more general smooth projective curves. Here we restrict to the case where $C$ has a nested cluster picture, and note that this condition implies $\Delta_v$-regularity. The results are applied in Section \ref{sec::upc}. 
 
\subsection{Newton polytopes}
\label{subsec::newtonpolys}

Here we briefly collate the key definitions regarding Newton polytopes necessary for this paper. We begin with the definition of a Newton polytope.

\begin{definition}
\label{def:newtonpolys}
Let $G(x,y) = y^2 - f(x) = \sum a_{ij}x^iy^j$ be the defining equation of a hyperelliptic curve $C$ over $K$. The \textit{Newton polytopes} of $C$ over $K$ and $\OOO$ respectively are:  
\begin{align*}
  \Delta(C)&=\textrm{convex hull }\{(i,j)\mid a_{ij}\neq 0\}\subseteq \R^2,\\
  \Delta_v(C)&=\textrm{lower convex hull }\{(i,j,v_K(a_{ij}))\mid a_{ij}\neq 0\}\subseteq \R^2\times \R.
\end{align*}
\end{definition}
Above every point $P\in\Delta$ there is exactly one point $(P,v_{K}(P))\in \Delta_v$. This defines a piecewise affine function $v_{\Delta(C)}:\Delta(C)\to \R$. When there is no risk of confusion, we may sometimes write $\Delta=\Delta(C)$, and $\Delta_v=\Delta_v(C)$ and the pair $(\Delta, v_{\Delta})$ determines $\Delta_v$. \cite[\S~3]{Dok18}

\begin{definition}
\label{def::vedgesandfaces}
Under the homeomorphic projection $\Delta_v\to \Delta$, the images of the 0, 1, and 2 dimensional open faces of $\Delta_v$ are called \textit{$v$-vertices, $v$-edges}, and \textit{$v$-faces} respectively. Note that, a $v$-vertex is a point in $\Z^2$, a $v$-edge (often denoted $L$) is homeomorphic to an open interval, and a $v$-face (often denoted $F$) is homeomorphic to an open disc.
\end{definition}

\begin{notation}
\label{not::L(Z)etc}
For a $v$-edge $L$ and a $v$-face $F$ we write 
\begin{align*}
    L(\Z)=L\cap\Z^2,\quad
    F(\Z)=F\cap\Z^2,\quad
    \Delta(\Z)=(\Delta^\mathrm{o})\cap\Z^2,
\end{align*}
and $\overline{L}(\Z)$, $\overline{F}(\Z)$, $\overline{\Delta}(\Z)$ to include points on the boundary. 
We use subscripts to restrict to the set of points $P$ with $v_K(P)$ in a given set, for instance
$F(\Z)_{\Z}=\{P\in F(\Z)\mid v_{\Delta}(P)\in\Z\}.$
\end{notation}

\begin{definition}
\label{def:denominatorofvedgeorface}
The \textit{denominator} $\delta_{\lambda}$, for every $v$-face or $v$-edge $\lambda$ is defined to be the common denominator of $v_{\Delta}(P)$ for $P\in\overline{\lambda}(\Z)$.
For two alternate, but equivalent, definitions see \cite[Notation~3.2]{Dok18}.
\end{definition}

\begin{remark}
We shall see that the denominator of a $v$-face or $v$-edge $\lambda$, in some sense, tells us the multiplicity of the component or chain of the SNC model arising from $\lambda$. Roughly, for a $v$-face $F$, $\delta_F$ is the multiplicity of the component $\Gamma_F$, and for a $v$-edge $L$, $\delta_L$ is the minimum multiplicity appearing in the chain of rational curves arising from $L$.
\end{remark}

We distinguish between $v$-edges which lie on precisely one or two $v$-faces of the Newton polytope, the former giving rise to tails and the latter to linking chains.

\begin{definition}
A $v$-edge $L$ is \textit{inner} if it is on the boundary of two
$v$-faces. Otherwise, if $L$ is only on the boundary of one $v$-face, $L$ is \textit{outer}.
\end{definition}

 
\subsection{Calculating a Model}
\label{subsec::timsmethod}

Before we begin, we give a few constants related to $v$-faces and $v$-edges which will be necessary for our description.

\begin{definition}
Let $L$ be a $v$-edge on the boundary of a $v$-face $F$. Write 
$$L^*=L_{(F)}^*=\textrm{ the unique affine function } \Z^2\twoheadrightarrow\Z \textrm{ with } L^*|_L=0, \textrm{ and } L^*|_F\geq0.$$
\end{definition}

\begin{definition}
\label{def:slope}
Let $L$ be a $v$-edge. If $L$ is outer it bounds two $v$-faces, say $F_1$ and $F_2$. If $L$ is inner it bounds one $v$-face, say $F_1$. Choose $P_0, P_1\in\Z^2$ with $L^*_{(F_1)}(P_0)=0$, and $L^*_{(F_1)}(P_1)=1$. The \textit{slopes} $[s_1^L,s_2^L]$ at $L$ are
\begin{align*}
    s_1^L=\delta_L(v_1(P_1)-v_1(P_0)),\quad\textrm{and}\quad &
    s_2^L= \left\{
    \begin{tabular}{cc}
    $\delta_L(v_2(P_1)-v_2(P_0))$ & for $L$ inner,\\
    $\lfloor s_1^L-1\rfloor$ & for $L$ outer,
    \end{tabular}\right.
\end{align*}
where $v_i$ is the unique affine function $\Z^2\to \Q$ that agrees with $v_{\Delta}$ on $F_i$.
\end{definition}

\begin{thm}
\label{thm::timsmainthm}
Suppose $C: y^2 = f(x)$ is a nested hyperelliptic curve over $K$. Then there exists a regular model $C _{\Delta}/\OO_K$ of $C/K$ with strict normal crossings. Its special fibre is as follows:
\begin{enumerate}
    \item Every $v$-face $F$ of $\Delta$ gives a complete smooth curve $\Gamma_F/k$ of multiplicity $\delta_F$ and genus $|F(\Z)_\Z|$.
    \item For every $v$-edge $L$ with slopes $[s_1^L,s_2^L]$ pick $\frac{m_i}{d_i}\in\Q$ such that 
    \begin{equation}
    \label{timseq}
        s_1^L=\frac{m_0}{d_0}>\frac{m_1}{d_1}>\dots>\frac{m_{\lambda}}{d_{\lambda}}>\frac{m_{\lambda+1}}{d_{\lambda+1}}=s_2^L, \textrm{ with } \left|\begin{tabular}{cc}
    $m_i$ & $m_{i+1}$\\
    $d_i$ & $d_{i+1}$
    \end{tabular}\right|=1.
    \end{equation}
Then $L$ gives 
$|\overline{L}(\Z)_{\Z}|-1$ chains of rational curves of length $\lambda$ from $\Gamma_{F_1}$ to $\Gamma_{F_2}$ (if $L$ is outer these chains are tails of $\Gamma_{F_1}$) with multiplicities $\delta_L d_1,\dots,\delta_L d_{\lambda}$. \cite[Theorem~3.13]{Dok18}
\end{enumerate}
\end{thm}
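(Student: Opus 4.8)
The plan is to follow the toric-geometric strategy of \cite{Dok18}: realise $C$ inside a toric scheme over $\OO_K$ built from the Newton polytope data $(\Delta, v_{\Delta})$, and then read off the special fibre of a regular SNC model from the combinatorics of the $v$-faces and $v$-edges.

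First I would construct the ambient toric scheme. The piecewise-affine function $v_{\Delta(C)} : \Delta(C) \to \R$ induced by $\Delta_v(C)$ cuts $\Delta$ into a polyhedral subdivision whose open cells are exactly the $v$-faces, $v$-edges and $v$-vertices; coning over this subdivision in the extra $\R_{\geq 0}$-direction that records the valuation produces a fan, hence a flat proper toric scheme $\mathcal{T}/\OO_K$ whose generic fibre is the toric surface attached to $\Delta$ and whose special fibre is a union of toric surfaces indexed by the $v$-faces, glued along toric curves indexed by the $v$-edges. Taking $\overline{C}$ to be the Zariski closure of $C \hookrightarrow \mathcal{T}_K$ in $\mathcal{T}$ gives a model of $C/K$ over $\OO_K$.

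Next I would analyse $\overline{C}$ chart by chart. On the affine chart of $\mathcal{T}$ attached to a $v$-face $F$, the equation of $\overline{C}$ reduces modulo $\pi_K$ to the ``$F$-part'' of $G(x,y) = y^2 - f(x)$, and $\Delta_v$-regularity --- which, as noted in the excerpt, is implied by nestedness --- is precisely the statement that this reduction defines a smooth affine curve in the corresponding torus. Hence each $v$-face $F$ contributes one smooth component $\Gamma_F$ of the special fibre, of multiplicity the lattice denominator $\delta_F$ and of genus $|F(\Z)_{\Z}|$ by the toric (Baker) genus formula counting interior lattice points. Along the toric curve attached to a $v$-edge $L$, by contrast, $\mathcal{T}$ --- and hence $\overline{C}$ --- has tame cyclic quotient singularities; resolving them via Theorem \ref{thm::resolvingtcqs} replaces that locus by a chain of $\PP^1$'s whose Hirzebruch--Jung data is governed by the slopes $[s_1^L, s_2^L]$, giving a chain of length $\lambda$ with multiplicities $\delta_L d_1, \dots, \delta_L d_{\lambda}$ exactly as in \eqref{timseq}. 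Counting the points where $C$ meets this one-dimensional stratum, which equals $|\overline{L}(\Z)_{\Z}| - 1$, yields the stated number of such chains, and outer $v$-edges produce tails because the adjacent toric boundary then belongs to the ambient surface rather than to another component of the special fibre.

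The main obstacle I expect is proving regularity of the total space of the resolved model: one must verify, compatibly across all charts, that the explicit toric resolution along the $v$-edges glues to a scheme that is regular (not merely normal) with special fibre an SNC divisor, and that the genus, multiplicity and intersection bookkeeping is internally consistent (for instance via adjunction on $\overline{C}$ and the fact that the resolved chains are minimal). This is exactly where $\Delta_v$-regularity, and hence the nestedness hypothesis, is used, so the bulk of the work is the local toric analysis together with checking that no further blow-ups are forced; the translation into faces, edges and slopes is then bookkeeping. Since the statement is \cite[Theorem~3.13]{Dok18}, for our purposes it suffices to invoke it, but the argument has this shape.
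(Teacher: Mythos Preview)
Your proposal is reasonable, but there is nothing to compare it against: the paper does not prove this theorem. It is stated in the background section as \cite[Theorem~3.13]{Dok18} and invoked without proof; the authors use it as a black box. You correctly identify this at the end of your proposal, and your sketch of the toric-geometric argument is in line with how \cite{Dok18} actually proceeds, so nothing more is needed here.
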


\begin{remark}
    In (\ref{timseq}), $\lambda = 0$ indicates that $\Gamma_{F_1}$ and $\Gamma_{F_2}$ intersect $|\overline{L}(\Z)_{\Z}|  - 1$ times in the inner case, and that $L$ contributes no tails in the outer case. 
\end{remark}

\begin{remark}
    An explicit equation for $\Gamma_F$ is given in \cite[Definition~3.7]{Dok18}, where it is denoted by $\overline{X}_F$. However this is more information than necessary for our situation so we do not give this description here. A description of a similar object $X_L$ is also given in \cite[Definition~3.7]{Dok18}, and in Theorem 3.13 of \cite{Dok18} the number of rational chains that a $v$-edge $L$ gives rise to is described in terms of $|X_L(\overline{k})|$. However, it is straightforward to show that in our case $|X_L(k)|=|L(\Z)_{\Z}|-1$, so we omit this description also.
\end{remark}

\begin{remark}
\label{rem::timsmainrem}
To see that the sequences in Theorem \ref{thm::timsmainthm} exist, take all numbers in $[s_2^L,s_1^L]\cap\Q$ of denominator $\leq \max\{\textrm{denom}(s_1^L),\textrm{denom}(s_2^L)\}$ in decreasing order. This is essentially a Farey series, so satisfies the determinant condition in (\ref{timseq}). One can then repeatedly remove, in any order, terms of the form 
$$\dots>\frac{a}{b}>\frac{a+c}{b+d}>\frac{c}{d}>\dots\mapsto\dots>\frac{a}{b}>\frac{c}{d}>\dots,$$
where $(a+c)$ and $(b+d)$ are coprime, until no longer possible. This corresponds to blowing down $\PP^1$s of self intersection -1 (see Remark 3.16 in \cite{Dok18}). The resulting minimal sequence is unique (else this would contradict uniqueness of minimal SNC model), and still satisfies the determinant condition. If $(s_2^L,s_1^L)\cap\Z=\{N,\dots,N+a\}\neq\emptyset$ this minimal sequence has the form 
\begin{equation}
\label{timseq2}
    s_1^L=\frac{m_0}{d_0}>\dots>\frac{m_h}{d_h}>N+a>\dots>N>\frac{m_l}{d_l}>\dots>\frac{m_{\lambda+1}}{d_{\lambda+1}}=s_2^L, 
\end{equation}
with $d_0,\ldots,d_h$ strictly decreasing and $d_l,\ldots,d_{\lambda+1}$ strictly increasing. If $(s_2^L,s_1^L)\cap\Z=\emptyset$ this minimal sequence has the form 
\begin{equation}
\label{timseq3}
    s_1^L=\frac{m_0}{d_0}>\dots>\frac{m_l}{d_l}>\frac{m_{l+1}}{d_{l+1}}>\dots>\frac{m_{\lambda+1}}{d_{\lambda+1}}=s_2^L, 
\end{equation}
with $d_0,\ldots,d_l$ strictly decreasing, $d_{l+1},\ldots,d_{\lambda+1}$ strictly increasing, and $d_i>1$ for all $1\leq i\leq \lambda$. Notice that shifting either $s_1^L$ or $s_2^L$ by an integer does not change the denominators $d_i$, that appear in this sequence. If $s_2>0$ (else shift by an integer), the numbers $N>\frac{m_l}{d_l}>\dots>\frac{m_{\lambda+1}}{d_{\lambda+1}}$ are the approximants of the Hirzebruch-Jung continued fraction expansion of $s_2^L$, similarly for $\frac{m_0}{d_0}>\dots>\frac{m_h}{d_h}>N+a$ consider the expansion of $1 - s_1^L$. \cite[Remark~3.15]{Dok18}
\end{remark}

\subsection{Sloped Chains}
\label{subsec::linkingchains}

The following definition allows us to talk about different parts of chains of rational curves arising from $v$-edges in the Newton polytope of $C$.

\begin{definition}
\label{def::linkingchainsections}
    Let $t_1,t_2 \in \Q$ and $\mu\in \N$. Pick $m_i, d_i$ as in Theorem \ref{thm::timsmainthm}; that is, such that 
    \begin{equation*}
        \mu t_1=\frac{m_0}{d_0}>\frac{m_1}{d_1}>\dots>\frac{m_{\lambda}}{d_{\lambda}}>\frac{m_{\lambda+1}}{d_{\lambda+1}}= \mu t_2,\;\textrm{and}\;\left|\begin{tabular}{cc}
    $m_i$ & $m_{i+1}$\\
    $d_i$ & $d_{i+1}$
    \end{tabular}\right|=1,
    \end{equation*}
with $d_0\geq \dots \geq d_l$ and $d_l\leq\dots\leq d_{\lambda+1}$, for some $0\leq l \leq \lambda+1$. 

Let $A=\{i\mid 1\leq i\leq\lambda\textrm{ and }d_i=1\}$. If $A$ is non-empty, let $a_0$ be the minimal element of $A$ and let $a_1$ the maximal element of $A$. Suppose $\mathcal{C} = \bigcup_{i=1}^{\lambda} E_i$ is a chain of rational curves where $E_i$ has multiplicity $\mu d_i$. Then $\mathcal{C}$ is a \textit{sloped chain of rational curves} with parameters $(t_2,t_1,\mu)$ and we split $\mathcal{C}$ into three sections. If $A\neq\emptyset$ we define the following:
\begin{enumerate}
    \item $E_1 \cup \cdots \cup E_{a_0-1}$, the \textit{downhill section},
    \item $E_{a_0} \cup \cdots \cup E_{a_1}$, the \textit{level section},
    \item $E_{a_1+1} \cup \cdots \cup E_{\lambda}$, the \textit{uphill section}.
\end{enumerate}
If instead $A=\emptyset$ we define:
\begin{enumerate}
    \item $E_1 \cup \cdots \cup E_{l}$, the \textit{downhill section},
    \item $E_{l+1} \cup \cdots \cup E_{\lambda}$, the \textit{uphill section},
\end{enumerate}
and there is no \emph{level section}. 

We define the length of each section to be the number of $E_i$ contained in it, and each section is allowed to have length 0. For instance, the level section has length 0 if and only if $A=\emptyset$, and the downhill section has length 0 if and only if $1\in A$. 
\end{definition}

\begin{remark}
\label{rem::linkingchainsections}
    A tail is a sloped chain with level section of length 1 and no uphill section. Therefore any tail can be given by just two parameters, namely $t_1$ and $\mu$ (since $t_2 = \frac{1}{\mu}\lfloor \mu t_1 - 1 \rfloor$). We will often refer to a tail as a tail with parameters $(t_1,\mu)$. It follows from Remark \ref{rem::timsmainrem} that a tail with parameters $(t_1,\mu)$ has the same multiplicities as the tail obtained by resolving a tame cyclic quotient singularity with \tcqi\; $\frac{1}{\mu t_1}$.
\end{remark}

\begin{remark}
    All of our chains of rational curves, be they tails, linking chains or crossed tails, are sloped chains. For example, a linking chain in a semistable model will consist of only level section. Both tails and crossed tails in a minimal SNC model will have no uphill section. 
\end{remark}

\section{Curves with Tame Potentially Good Reduction (The Base Case)}
\label{sec::tpgr}

In this section we calculate the minimal SNC model of a hyperelliptic curve $C/K$ with genus $g=g(C)\geq 1$ which has tame potentially good reduction. That is, there exists a field extension $L/K$ of degree $e$ such that $e$ and $p$ are coprime, and $C$ has a smooth model over $\OO_L$. In order to calculate this model, we assume that $L$ is the minimal such extension. The minimal SNC model of a hyperelliptic curve has a rather straightforward description: it consists of a central component with some tails (in the sense of Definition \ref{def::chainrationalcurves}) whose multiplicities can be explicitly described using the results of Section \ref{subsec::QuotientsofModels}. The size and depth of the unique proper cluster $\s$, as well as the valuation of the leading coefficient $c_f$ will be sufficient to calculate the (dual graph with multiplicity of the) minimal SNC model of $C$ over $K$:

\begin{thm}\label{thm::tpgr} 
    Let $C$ be hyperelliptic curve over $K$ with tame potentially good reduction. Then the special fibre $\X_k$ of the minimal SNC model $\X$ of $C/K$ consists of a component $\Gamma=\Gamma_{\s,K}$, the central component, of multiplicity $e$. Furthermore, if $e>1$ then the following tails intersect the central component $\Gamma$:
\begin{center}
\begin{tabular}{|c|c|c|}
    \hline
     Name & Number & Condition  \\ \hline
     $T_{\infty}$ & $1$ & $\s$ odd \\ \hline
     $T_{\infty}^{\pm}$ & $2$ & $\s$ even and $v_K(c_f)$ even \\ \hline
     $T_{\infty}$ & $1$ & $\s$ even, $e>2$ and $v_K(c_f)$ odd \\ \hline
     $T_{y=0}$ & $\frac{|\singletonsofs|}{b_{\s}}$ & $e = 2b_{\s}$ \\ \hline
     $T_{x=0}$ & $1$ & $b_{\s}\mid |\s|$, $\lambda_{\s} \not \in \Z $ and $e> 2$ \\ \hline
     $T_{x=0}^{\pm}$ & $2$ & $b_{\s} \mid |\s|$ and $\lambda_{\s}  \in \Z $ \\ \hline
     $T_{(0,0)}$ & $1$ & $b_{\s} \nmid |\s|$ \\ \hline
\end{tabular}
\end{center}
\end{thm}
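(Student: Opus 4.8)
The plan is to pass to a tame extension $L/K$ over which $C$ acquires good reduction, take the quotient of the resulting smooth model by $\Gal(L/K)$, and resolve the tame cyclic quotient singularities that appear, following Sections~\ref{subsec::QuotientsofModels} and~\ref{subsec::SemistableModels}. First I would pin down the combinatorics. Since $|\Rcal|\ge 2g+1\ge 3$, the cluster $\Rcal$ is itself proper, so the hypothesis forces $\s=\Rcal$ and every child of $\s$ to be a singleton; in particular $\tilde{\s}=\singletonsofs=\widehat{\s}$. By Lemma~\ref{lem::orbitssizeb} every singleton lies in a $\GK$-orbit of size $b_{\s}$ except for at most one which is $K$-rational, and since the residue field is algebraically closed every tame subextension of $\overline K/K$ is cyclic; as each $K(r)$ for a non-exceptional root $r$ has degree $b_{\s}$ these coincide with the unique degree-$b_{\s}$ subextension, so $K(\Rcal)/K$ is cyclic, totally tamely ramified, of degree $b_{\s}$. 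Feeding this into the Semistability Criterion~\ref{thm::sscriterion} (condition (i) being automatic once $b_{\s}\mid e$ because then $K(\Rcal)\subseteq L$, condition (ii) trivial, condition (iii) amounting to $ed_{\s}\in\Z$ and $e\nu_{\s}\in 2\Z$) shows that the minimal such $L$ has $e:=[L:K]\in\{b_{\s},2b_{\s}\}$, with $e=b_{\s}$ precisely when $b_{\s}\nu_{\s}\in 2\Z$, and that $C$ in fact has good reduction over $L$. If $e=1$ we are done, so assume $e>1$.

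Next I would make the quotient explicit. Over $L$, Theorem~\ref{thm::semistablemodel} identifies $\Y_k$ with the single smooth hyperelliptic curve $\Gamma_{\s,L}\colon Y^2=c_{\s}\prod_{\oo\in\tilde{\s}}(X-\mathrm{red}_{\s}(\oo))$ of genus $g$; choosing the $K$-rational centre $z_{\s}$ of Lemma~\ref{lem::orbitssizeb}(i), Theorem~\ref{thm::semistablegaloisaction} shows a generator $\sigma$ of $G=\Gal(L/K)$ acts on $\Gamma_{\s,L}$ by $(x,y)\mapsto(\alpha x,\gamma y)$ with $\alpha=\chi(\sigma)^{ed_{\s}}$ of exact order $b_{\s}$ and $\gamma=\chi(\sigma)^{e\lambda_{\s}}$. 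Setting $\ZZ=\Y/G$, Proposition~\ref{prop::componentsmult} gives that $\Gamma:=q(\Gamma_{\s,L})$ is a component of multiplicity $e$ (its pointwise stabiliser is trivial, since $\sigma$ acts non-trivially on $\Gamma_{\s,L}$); as $|G|$ is prime to $p$ and $G$ acts freely on the generic fibre, $q$ is \'etale away from the finitely many points of $\Gamma_{\s,L}$ with non-trivial stabiliser, so these are the only points at which $\ZZ$ can fail to be regular. By Proposition~\ref{prop::singPointsAretsgs} they are tame cyclic quotient singularities, and because $\ZZ_k$ has the single component $\Gamma$ each lies on exactly one component; Theorems~\ref{thm::lorfactfive} and~\ref{thm::resolvingtcqs} then resolve each into a tail of $\Gamma$, and contracting $(-1)$-curves produces the minimal SNC model $\X$ whose special fibre is $\Gamma$ (of multiplicity $e$) decorated by these tails.

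It then remains to enumerate the points of $\Gamma_{\s,L}$ with non-trivial $G$-stabiliser and read off their contributions. When $e=2b_{\s}$ one checks $\sigma^{b_{\s}}$ is the hyperelliptic involution, so such points are either Weierstrass points or points over $x=0$; when $e=b_{\s}$ only points over $x=0$ or $x=\infty$ can be non-trivially stabilised. I would go through these families: the point(s) at infinity — one if $\s$ is odd, with stabiliser $G$, giving $T_{\infty}$; two if $\s$ is even, in which case $\sigma$ swaps them exactly when $\epsilon_{\s}=(-1)^{v_K(c_f)}=-1$ by Remark~\ref{rem::epsilon} with $\s^*=\s$ (fixed: two tails $T_{\infty}^{\pm}$; swapped: stabiliser of order $e/2$, giving one tail $T_{\infty}$ when $e>2$ and nothing when $e=2$); the finite ramification points $(\mathrm{red}_{\s}(r),0)$ with $\mathrm{red}_{\s}(r)\ne 0$, which by Lemma~\ref{lem::orbitssizeb}(ii) fall into $\sigma$-orbits of size $b_{\s}$ and have stabiliser of order $e/b_{\s}$, non-trivial precisely when $e=2b_{\s}$, giving $|\singletonsofs|/b_{\s}$ tails $T_{y=0}$; and the fibre over $x=0$, which when $b_{\s}\mid|\s|$ consists of the two points over $x=0$ (here $0$ is not a root), both fixed iff $\gamma=1$ iff $\lambda_{\s}\in\Z$ (giving $T_{x=0}^{\pm}$) and otherwise swapped (giving $T_{x=0}$ when $e>2$), and when $b_{\s}\nmid|\s|$ is the single stable singleton $(0,0)$ with stabiliser $G$ (giving $T_{(0,0)}$). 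Computing the tame cyclic quotient invariant in each case, via Theorem~\ref{thm::resolvingtcqs} and Remark~\ref{rem::linkingchainsections}, simultaneously fixes the multiplicities of these tails, and the resulting list is exactly the seven rows of the table.

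The main obstacle I expect is the bookkeeping of this last step: establishing $e\in\{b_{\s},2b_{\s}\}$ cleanly, and then — in each combination of parities of $|\s|$, of $b_{\s}$, and of $v_K(c_f)$ — determining exactly which subgroup of $G$ fixes each distinguished point, in particular distinguishing genuine fixed points (which produce tails) from swapped pairs that become regular on $\ZZ$ once $e=2$, and matching the resulting invariants with the named tails. By comparison, the facts that the singularities of $\ZZ$ are tame cyclic quotient singularities and that they resolve into tails are essentially immediate from Propositions~\ref{prop::componentsmult},~\ref{prop::singPointsAretsgs} and Theorem~\ref{thm::resolvingtcqs}.
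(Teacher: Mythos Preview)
Your approach is essentially the paper's: pass to $L$, take the quotient $\ZZ=\Y/G$, identify the singular points of $\ZZ_k$ with the small $G$-orbits on $\Gamma_{\s,L}$, classify these as $\infty$-, $(y{=}0)$-, $(x{=}0)$- and $(0,0)$-orbits, and resolve via Theorem~\ref{thm::resolvingtcqs}. Your enumeration of the orbits and the case split on parities match Lemmas~\ref{lem::inftyorbits}--\ref{lem::y0orbits} exactly.

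There is one small gap. You assert that after contracting $(-1)$-curves one is left with $\Gamma$ decorated by tails, but you do not check that $\Gamma$ itself is not exceptional. The paper closes this: if $\Gamma$ were a $(-1)$-curve then $g(\Gamma)=0$, and Riemann--Hurwitz applied to $q:\Gamma_{\s,L}\to\Gamma$ gives $\sum_{z}\bigl(\tfrac{e}{|q^{-1}(z)|}-1\bigr)\ge e$, forcing at least three ramification points, hence at least three tails meeting $\Gamma$; blowing $\Gamma$ down would then destroy the SNC condition. Without this check, nothing rules out $\Gamma$ being absorbed into one of the chains when you pass to the minimal model. This is short to add, but it is needed.
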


\begin{remark}
    The genus of the central component can be calculated using Riemann Hurwitz, and we prove an explicit formula for it in Proposition \ref{prop::genusformula}.
\end{remark}

Since $C$ has tame potentially good reduction, by \cite[Theorem~1.8(3)]{DDMM18} we can assume (possibly after a M\"obius transform) that that the cluster picture of $C$ over $K$ consists of a single proper cluster $\s$. After an appropriate shift of the affine line we can assume that $\s$ is centered around $0$ and that $C$ is given by one of the following two equations:
$$y^2 = c_f\prod_{0\neq r \in \Rcal}(x - u_r \pi^{d_{\s}}),\quad\textrm{or} \quad y^2 = c_fx\prod_{0\neq r \in \Rcal}(x - u_r \pi^{d_{\s}}), $$
where the $u_r\in K$ are units.

We will proceed in the manner of Section \ref{subsec::QuotientsofModels}. Let $\Y$ be the smooth Weierstrass model of $C$ over $L$ (the hyperelliptic curve over $\OO_L$ given by $y^2 = f(x)$), and let $q : \Y \rightarrow \mathscr{Z}$ be the quotient map induced by the action of $\Gal(L/K)$. We will explicitly describe the singular points of $\mathscr{Z}$, show that they are tame cyclic quotient singularities in the sense of Definition \ref{def::tcqs}, and give their tame cyclic quotient invariants in Proposition \ref{prop::singPointsAretsgs}. Theorem \ref{thm::resolvingtcqs} then tells us the self intersection numbers of the rational curves in the tails obtained by resolving the tame cyclic quotient singularities. After using intersection theory, this allows us to describe the special fibre of the minimal SNC model $\X$ of $C/K$ in full. 

\subsection{The Automorphism and its Orbits}
\label{subsec::tpgrauto}

To describe the singularities on $\mathscr{Z}_k$, we must first explicitly describe the Galois automorphism on the unique component $\Gamma_{\s,L}=\Gamma \subseteq \mathscr{Y}_k$ of the special fibre of the smooth Weierstrass model of $C$ over $L$. The following fact from \cite[Fact~IV~p.~139]{Lor90} describes the singularities of $\mathscr{Z}_k$ in terms of the quotient $q : \Y \rightarrow \mathscr{Z}$.
\begin{prop}
    \label{prop::tpgrSings}
	Let $z_1,\ldots,z_d$ be the ramification points of the morphism $q:\Gamma\rightarrow \ZZ_k$. Then $\{z_1,\ldots,z_d\}$ is precisely the set of singular points of $\ZZ_k$.
\end{prop}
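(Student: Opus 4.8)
The plan is to reduce the assertion to a local computation of the quotient singularity at a point of $\ZZ_k$, and then to recognise the resulting completed local ring as (the invariant ring underlying) a tame cyclic quotient singularity in the sense of Definition~\ref{def::tcqs}. First I would carry out the global reductions. Since $\ZZ = \Y/G$ is normal and its generic fibre is $\ZZ_K \cong C_L/\Gal(L/K) \cong C/K$, which is smooth, the singular locus of $\ZZ$ is a finite set of closed points of $\ZZ_k$. Moreover $q^{-1}(\ZZ_k) = \Y\times_{\OOO}k = \Y\times_{\OO_L}(\OO_L/\pi_L^{e})$ has the same underlying space as $\Y_k^{\mathrm{red}} = \Gamma$; hence $\ZZ_k^{\mathrm{red}} = q(\Gamma)$ and, for $z\in\ZZ_k$, the fibre $q^{-1}(z)$ is a single $G$-orbit of closed points of $\Gamma$. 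Thus $q|_{\Gamma}\colon\Gamma\to\ZZ_k$ is the quotient of $\Gamma$ by the image $\bar G$ of $G$ in $\mathrm{Aut}(\Gamma)$, and, writing $N = \mathrm{Stab}(\Gamma)$ for the pointwise stabiliser (as in Proposition~\ref{prop::componentsmult}), a point $z = q(y)$ is a ramification point of $q|_{\Gamma}$ precisely when its stabiliser satisfies $G_y\supsetneq N$.

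For the local step, fix $z = q(y)\in\ZZ_k$; since $q$ is finite one has $\widehat{\OO}_{\ZZ,z}\cong\widehat{\OO}_{\Y,y}^{G_y}$. As $\Y$ is smooth over $\OO_L$ of relative dimension $1$ and $k$ is algebraically closed, $\widehat{\OO}_{\Y,y}\cong\OO_L[[t]]$ for a local fibre coordinate $t$, so it is a two-dimensional regular local ring. The group $G_y$ is cyclic of order $m_y$ coprime to $p$, and it acts on $\OO_L[[t]]$ compatibly with its action on $\OO_L$, on which a generator $\tau$ acts by $\pi_L\mapsto\chi(\tau)\pi_L$ with $\chi(\tau)$ a primitive $m_y$-th root of unity, since $\chi$ is the faithful tame character of the totally ramified extension $L/K$ (Definition~\ref{def::clusterfunctions2}). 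Tameness lets me linearise and diagonalise the $\langle\tau\rangle$-action on the cotangent space, so after adjusting $t$ we may assume $\tau\colon\pi_L\mapsto\chi(\tau)\pi_L,\ t\mapsto\zeta t$ with $\zeta=\chi(\tau)^{j}$ equal to the action of $\tau$ on $T_y\Gamma$. The invariant ring $\widehat{\OO}_{\Y,y}^{G_y}$ is then the ring of invariants of a diagonal tame cyclic action on $\OO_L[[t]]$; by the Chevalley--Shephard--Todd theorem (equivalently, by the explicit description in Definition~\ref{def::tcqs}) it is regular exactly when $\tau$ acts as a pseudo-reflection, i.e.\ when $\zeta = 1$, and is a singular tame cyclic quotient otherwise. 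Since $\chi(\tau)\neq 1$, the condition $\zeta = 1$ says precisely that $\tau$ acts trivially on $\Gamma$ near $y$, i.e.\ that $G_y = \langle\tau\rangle\subseteq N$, equivalently $G_y = N$.

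Combining the two steps, $z = q(y)$ is a singular point of $\ZZ$ if and only if $G_y\supsetneq N$, which by the first paragraph is exactly the condition that $z$ is a ramification point of $q\colon\Gamma\to\ZZ_k$; hence the singular points of $\ZZ_k$ are precisely $z_1,\dots,z_d$. I expect the main obstacle to be the local computation in the second paragraph: making the linearisation of the $\OO_L$-\emph{semilinear} $G_y$-action rigorous and identifying its invariant ring with the tame cyclic quotient of Definition~\ref{def::tcqs} (rather than with a quotient by a linear $k$-action). This is also the step that must correctly absorb a possibly nontrivial $N = \mathrm{Stab}(\Gamma)$: there $\tau$ acts as the pseudo-reflection $\mathrm{diag}(\chi(\tau),1)$, producing ramification of $q$ along $\Gamma$ but no singularity of $\ZZ$. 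A minor side point is to justify the identification $\widehat{\OO}_{\ZZ,z}\cong\widehat{\OO}_{\Y,y}^{G_y}$ of completed local rings, which follows from finiteness of $q$ together with the fact that $q^{-1}(z)$ is a single $G$-orbit.
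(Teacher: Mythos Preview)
Your proof is correct. Note, however, that the paper does not itself prove this proposition: it is quoted as \cite[Fact~IV,~p.~139]{Lor90} with no argument supplied. Your approach --- reducing to the local ring at a preimage $y\in q^{-1}(z)$, identifying $\widehat{\OO}_{\Y,y}$ with a two-dimensional regular local ring carrying a diagonal tame cyclic action, and then applying a pseudo-reflection criterion for regularity of the invariants --- is the standard way to establish such a statement and is essentially what underlies Lorenzini's fact.

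Two small remarks. First, in the setting of Section~\ref{sec::tpgr} where this proposition is invoked, $L$ is chosen minimal, so the induced $G$-action on $\Gamma$ is faithful (compare Lemma~\ref{lem::degreeofaction}); hence the pointwise stabiliser $N$ is trivial and your condition $G_y\supsetneq N$ reduces to $G_y\neq\{1\}$. Your treatment of a possibly nontrivial $N$ is therefore more general than needed here, though not incorrect. Second, the name ``Chevalley--Shephard--Todd'' is normally reserved for linear actions on polynomial rings over a field; what you actually need is the analogous regularity criterion for a tame cyclic action on a two-dimensional regular local ring. This follows either by direct computation of the invariants of the diagonal action on $\OO_L[[t]]$ (after first quotienting by the pseudo-reflection subgroup $G_y\cap N$, which produces a regular intermediate ring), or from purity of the branch locus. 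You correctly flag this linearisation step as the main technical point; the conclusion --- that $\widehat{\OO}_{\ZZ,z}$ is regular if and only if $\zeta=1$, i.e.\ if and only if $G_y=N$ --- is right.
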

Furthermore, the ramification points of $q$ correspond to points whose preimage is an orbit of size strictly less than $e$. 

\begin{definition}
Let $X$ be an orbit of clusters. If $|X|<e$, we say that $X$ is a \emph{small orbit}.
\end{definition}

Hence describing the singular points of $\ZZ_k$ is equivalent to describing the small orbits of $\Gal(L/K)$. In order to list these orbits, we first simplify some cluster invariants from \ref{def::clusterfunctions1} and \ref{def::clusterfunctions2}.

\begin{lemma}
\label{lem::autintpgrcase}
Let $C/K$ be a hyperelliptic curve with tame potentially good reduction and unique proper cluster $\s$. Then:
$$\nu_\s=|\s|d_{\s}+v_K(c_f), \quad
    \lambda_\s = \frac{\nu_\s}{2}= \frac{|\s|d_{\s}+v_K(c_f)}{2}, \quad \epsilon_{\s}=(-1)^{v_K(c_f)},$$
    
    and any $\sigma \in \Gal(\overline{K}/K)$ induces
$$\sigma\vert_{\Gamma} : (x,y) \longmapsto (\chi(\sigma)^{ed_{\s}}x, \chi(\sigma)^{e\lambda_{\s}}y).$$
\end{lemma}
\begin{proof}
    Definitions \ref{def::clusterfunctions1} and \ref{def::clusterfunctions2} and \cite[Theorem~8.5]{DDMM18}.
\end{proof}

Since $\chi(\sigma)^{ed_{\s}}$ and $\chi(\sigma)^{e\lambda_{\s}}$ are non-zero and $k$ is algebraically closed, the only points which can lie in orbits of size strictly less than $e$ are points at infinity, or points where $x=0$ or $y=0$. This gives four cases which we will take care to distinguish between, as it will make it easier to describe the minimal SNC model for a general cluster picture. With this in mind we make the following definitions:

\begin{definition}
    \label{def::orbittypes}
    We split the small orbits that can occur into the following types.
    \begin{itemize}
        \item \emph{$\infty$-orbits}: orbits on the point(s) at infinity,
        \item \emph{$(y=0)$-orbits}: orbits on non-zero roots,
        \item \emph{$(x=0)$-orbits}: orbits on the points $(0,\pm\sqrt{c_f})$,
        \item \emph{$(0,0)$-orbits}: the orbit on the point $(0,0)$.
    \end{itemize}
\end{definition}

The following lemmas describe in which situations we see these small orbits. We will assume $e>1$ since no small orbits occur when $e=1$.

\begin{lemma}
\label{lem::inftyorbits}
    If $\deg(f)$ is odd then there is a single $\infty$-orbit consisting of a single point. If $\deg(f)$ is even and $v_K(c_f) \in 2\Z$ then there are two $\infty$-orbits each of size 1. If $\deg(f)$ is even, $v_K(c_f) \not \in 2\Z$ and $e>2$ then there is a single $\infty$-orbit of size 2.
\end{lemma}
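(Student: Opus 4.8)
The plan is to read off the orbit structure directly from the explicit form of the Galois action on the unique component $\Gamma=\Gamma_{\s,L}$ of $\Y_k$. First I would pin down the points at infinity of $\Gamma$. Since $\s$ is the only proper cluster, all of its children are singletons, hence odd, so $\widetilde{\s}=\widehat{\s}$ and $|\widetilde{\s}|=|\s|=\deg f$. By Theorem \ref{thm::semistablemodel} the component $\Gamma$ is the smooth hyperelliptic curve $Y^2=c_\s\prod_{\oo\in\widetilde{\s}}(X-\mathrm{red}_\s(\oo))$ over $k$, whose defining polynomial has degree $\deg f$. Hence $\Gamma$ has a single point at infinity when $\deg f$ is odd, and two points $\infty_+,\infty_-$ when $\deg f$ is even.

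If $\deg f$ is odd the claim is immediate: the single point at infinity is fixed by all of $\Gal(L/K)$, so it forms an $\infty$-orbit of size $1$, which is small because $e>1$.

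If $\deg f$ is even, then $|\s|=\deg f$ is even, so $\s$ is even and $\epsilon_\s$ is defined; as computed earlier in this section, $\epsilon_\s=(-1)^{v_K(c_f)}$. Fix a generator $\sigma$ of the cyclic group $\Gal(L/K)$. By Remark \ref{rem::epsilon}, $\sigma$ interchanges $\infty_+$ and $\infty_-$ precisely when $\epsilon_\s(\sigma)=-1$. If $v_K(c_f)$ is even then $\epsilon_\s(\sigma)=1$, so $\sigma$ fixes both $\infty_+$ and $\infty_-$; since $\sigma$ generates $\Gal(L/K)$, every element of $\Gal(L/K)$ fixes both points, giving two $\infty$-orbits each of size $1$, both small as $e>1$. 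If $v_K(c_f)$ is odd then $\epsilon_\s(\sigma)=-1$, so $\sigma$ swaps $\infty_+$ and $\infty_-$, and the $\langle\sigma\rangle$-orbit of either point is $\{\infty_+,\infty_-\}$: a single $\infty$-orbit of size $2$. This orbit is small exactly when $e>2$; when $e=2$ it has size $e$ and so is (correctly) not part of the statement.

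The argument is essentially bookkeeping, so I do not anticipate a genuine obstacle. The two points requiring a little care are: (i) confirming that the residual equation of $\Gamma$ has degree $\deg f$, which is precisely where the tame potentially good reduction hypothesis (forcing $\s$ to be the unique proper cluster, hence all of its children singletons) enters; and (ii) keeping track of exactly when a size-$2$ orbit counts as a small orbit, which is what pins down the $e>2$ hypothesis in the final case.
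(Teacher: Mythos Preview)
Your proof is correct and takes essentially the same approach as the paper: both determine the orbit structure at infinity by analysing when the Galois action swaps the two points at infinity of $\Gamma$. The paper carries out the computation directly in the coordinates $u=1/x$, $v=y/x^{g+1}$ and reads off that the swap happens exactly when $\chi(\sigma)^{e\lambda_\s}=-1$, i.e.\ when $v_K(c_f)$ is odd; you instead invoke Remark~\ref{rem::epsilon} and the identity $\epsilon_\s=(-1)^{v_K(c_f)}$ from Lemma~5.4, which packages the same calculation.
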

\begin{proof}
    Let $u=1/x, v=y/x^{g+1}$ denote the coordinates at infinity. The curve $C$ has a single point at infinity $(u,v) = (0,0)$ if $\deg(f)$ is odd, and two points at infinity $(u,v) = (0,\pm \sqrt{c_f})$ if $\deg(f)$ is even. In the latter case, Lemma \ref{lem::autintpgrcase} gives the action at infinity $\sigma : (0,\sqrt{\overline{c_f}}) \mapsto (0, \chi(\sigma)^{e \lambda_{\s}}\sqrt{\overline{c_f}})$. Therefore, when $\deg(f)$ is even, the points at infinity are swapped if and only if $\chi(\sigma)^{e\lambda_{\s}} = -1$ for some $\sigma \in \Gal(L/K)$. This is the case if and only if $v_K({c_f})$ is odd. In this case, the orbit at infinity has size 2 and is only a small orbit if $e>2$. 
\end{proof}

\begin{lemma}
\label{lem::x0orbits}
    If $f(0) = 0$ then there is a single $(0,0)$-orbit consisting of a single point. Otherwise $f(0)\neq 0$, and if ${\lambda}_{\s}\in\Z$ then there are two $(x=0)$-orbits of size 1, else ${\lambda}_{\s}\not\in\Z$ and $e>2$ there is a single ($x=0$)-orbit of size 2.
\end{lemma}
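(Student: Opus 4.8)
The plan is to identify the points of the special component $\Gamma=\Gamma_{\s,L}$ of the smooth Weierstrass model $\Y$ that lie over $x=0$, reading them off from the two normal forms, and then to follow the Galois action on them via the explicit automorphism of Theorem~\ref{thm::semistablegaloisaction}, which in the tame potentially good reduction case is $\sigma|_{\Gamma}\colon(x,y)\mapsto(\alpha_{\s}(\sigma)x,\gamma_{\s}(\sigma)y)$ with $\gamma_{\s}(\sigma)=\chi(\sigma)^{e\lambda_{\s}}$. Throughout we use the standing assumption $e>1$.

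\emph{Case $f(0)=0$.} Then $C$ is in the form $y^2=c_fx\prod_{0\neq r\in\Rcal}(x-u_r\pi^{d_{\s}})$, so $0$ is a singleton child of $\s$ and the unique point of $\Gamma$ over $x=0$ is the Weierstrass point $(0,0)$. Since $\alpha_{\s}(\sigma)\cdot 0=\gamma_{\s}(\sigma)\cdot 0=0$, this point is fixed by every element of $\Gal(L/K)$, hence forms an orbit of size $1$, which is small as $e>1$; this is the $(0,0)$-orbit.

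\emph{Case $f(0)\neq0$.} Here $\Gamma$ is a smooth hyperelliptic curve whose defining polynomial $c_{\s}\prod_{\oo\in\tilde{\s}}(X-\mathrm{red}_{\s}(\oo))$ does not vanish at $0$ (all children of $\s$ are singletons with nonzero reduction), so there are exactly two points over $x=0$, say $(0,Y_0)$ and $(0,-Y_0)$ with $Y_0\in k^{\times}$ (reductions of $\pm\sqrt{f(0)}$, which has valuation $\lambda_{\s}$). The automorphism $\sigma|_{\Gamma}$ maps the fibre over $x=0$ to itself, so it permutes $\{(0,Y_0),(0,-Y_0)\}$; as it sends $(0,Y_0)$ to $(0,\gamma_{\s}(\sigma)Y_0)$, necessarily $\gamma_{\s}(\sigma)\in\{\pm1\}$, and $\sigma\mapsto\gamma_{\s}(\sigma)$ is a homomorphism $\Gal(L/K)\to\{\pm1\}$. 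Since $L/K$ is totally tamely ramified, $\chi$ is a faithful character of $\Gal(L/K)\cong\Z/e$; fixing a generator $\sigma_0$ with $\chi(\sigma_0)=\zeta_e$ a primitive $e$-th root of unity, and using $e\lambda_{\s}\in\Z$ (semistability over $L$), we get $\gamma_{\s}(\sigma_0)=\zeta_e^{e\lambda_{\s}}=1$ exactly when $e\mid e\lambda_{\s}$, i.e.\ exactly when $\lambda_{\s}\in\Z$. Thus if $\lambda_{\s}\in\Z$ both points are fixed and we obtain two $(x=0)$-orbits of size $1$, both small since $e>1$; if $\lambda_{\s}\notin\Z$ the homomorphism is surjective, the two points form a single orbit of size $2$, and this orbit is small precisely when $e>2$.

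The step needing care is the assertion $\gamma_{\s}(\sigma)\in\{\pm1\}$ when $f(0)\neq0$ — equivalently, that $\chi^{e\lambda_{\s}}$ has order at most $2$, i.e.\ $\nu_{\s}\in\Z$ (equivalently $b_{\s}\mid|\s|$). Rather than check this combinatorially, we obtain it for free from the geometric observation above: the Galois orbit of $(0,Y_0)$ consists of points of $\Gamma$ lying over $x=0$, of which there are only the two points $(0,\pm Y_0)$, forcing $\gamma_{\s}(\sigma)Y_0\in\{\pm Y_0\}$. The remainder of the argument is bookkeeping with the normal forms and the description of $\chi$.
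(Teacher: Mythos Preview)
Your proof is correct and follows essentially the same approach as the paper's: identify the points over $x=0$ from the normal forms and track them under the explicit automorphism $(x,y)\mapsto(\alpha_{\s}(\sigma)x,\gamma_{\s}(\sigma)y)$, reducing the question to whether $\gamma_{\s}(\sigma)=\chi(\sigma)^{e\lambda_{\s}}$ equals $-1$ for some $\sigma$. Your additional verification that $\gamma_{\s}(\sigma)\in\{\pm1\}$ via the geometric observation that the fibre over $x=0$ has only two points is a nice touch the paper leaves implicit; it could also be seen arithmetically, since $f(0)\in K^{\times}$ forces $\nu_{\s}=v_K(f(0))\in\Z$, hence $\gamma_{\s}(\sigma)^2=\chi(\sigma)^{e\nu_{\s}}=1$.
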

\begin{proof}
    If $f(0) = 0$ then $\{(0,0)\} \in \Gamma$ is the unique $(0,0)$-orbit. If $f(0) \neq 0$ then $(0,\pm \sqrt{\overline{c}_f}) \in \Gamma$, and these points are swapped by some element of the Galois group (see Lemma (\ref{lem::autintpgrcase})) if and only if $\lambda_\s \not \in \Z$. If $\lambda_{\s} \not \in \Z$ then the orbit has size $2$ hence it is only a small orbit if $e > 2$.
\end{proof}

\begin{lemma}\label{lem::2bornot2b}
    Either $e=b_{\s}$ or $e=2b_{\s}$, where $b_{\s}$ is the denominator of $d_{\s}$. In particular $e=2b_{\s}$ if and only if $b_{\s}\nu_\s \not \in 2\Z$.
\end{lemma}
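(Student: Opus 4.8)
The plan is to use the Semistability Criterion (Theorem~\ref{thm::sscriterion}), after first observing that in this one‑cluster situation acquiring semistable reduction already forces good reduction. Since $|\Rcal|\geq 2g+1\geq 3$, the cluster $\Rcal$ is itself the unique proper cluster $\s$, and all its children are singletons; in particular $\s$ is principal, and (having only odd children) is neither übereven nor a cotwin. So by Theorem~\ref{thm::semistablemodel}, over any extension $M/K$ for which $C$ is semistable, the special fibre of the minimal semistable model $\Y/\OO_M$ is the single smooth curve $\Gamma_{\s,M}$ (there are no further principal clusters, twins or cotwins to contribute components or linking chains). As $\Y$ is regular, flat and proper over $\OO_M$ with reduced, irreducible, smooth special fibre, it is smooth over $\OO_M$; hence $C$ has good reduction over $M$ \emph{if and only if} it has semistable reduction over $M$, and therefore $e$ is the smallest degree of an extension of $K$ satisfying all three conditions of Theorem~\ref{thm::sscriterion}.

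Next I would unwind those conditions, using the fact recalled in Section~\ref{subsec::QuotientsofModels} that passing from $K$ to a degree‑$n$ extension leaves the cluster picture unchanged except that every depth — and hence also $\nu_{\s}$ — is multiplied by $n$. Fix $M/K$ of degree $n$ with $C$ semistable over $M$. Condition~(2) is automatic, since $\s=\Rcal$ is Galois‑stable and is the only proper cluster; condition~(1) holds as soon as $b_{\s}\mid n$, because after the normalisation of this section the roots of $f$ are $0$ (in the second equation only) together with elements $u_r\pi_K^{d_{\s}}$ with $u_r\in\OO_K^{\times}$, and $\pi_K^{d_{\s}}=\pi_K^{a_{\s}/b_{\s}}$ lies in $K(\pi_K^{1/b_{\s}})\subseteq M$, so $\Rcal\subseteq M$. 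Condition~(3), applied to the principal cluster $\s$, requires $nd_{\s}=na_{\s}/b_{\s}\in\Z$ and $n\nu_{\s}\in 2\Z$; the first of these is equivalent to $b_{\s}\mid n$.

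Applying this with $M=L$ gives $b_{\s}\mid e$ (and, since $p\nmid e$, also $p\nmid 2b_{\s}$). For the reverse bound, recall from the computation of $\nu_{\s}$ above that $b_{\s}\nu_{\s}=|\Rcal|a_{\s}+b_{\s}v_K(c_f)\in\Z$. Hence over $M=K(\pi_K^{1/2b_{\s}})$, which has degree $2b_{\s}$, we have $2b_{\s}d_{\s}=2a_{\s}\in\Z$ and $2b_{\s}\nu_{\s}=2(b_{\s}\nu_{\s})\in 2\Z$, so all three conditions hold and $C$ has good reduction there; thus $e\leq 2b_{\s}$, which together with $b_{\s}\mid e$ forces $e\in\{b_{\s},2b_{\s}\}$. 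Finally, if $b_{\s}\nu_{\s}\in 2\Z$ the same check over $K(\pi_K^{1/b_{\s}})$ shows $C$ has good reduction over a degree‑$b_{\s}$ extension, so $e=b_{\s}$; conversely, if $e=b_{\s}$ then condition~(3) over $L$ gives $b_{\s}\nu_{\s}=e\nu_{\s}\in 2\Z$. Equivalently, $e=2b_{\s}$ if and only if $b_{\s}\nu_{\s}\notin 2\Z$.

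The only genuinely delicate point is the first paragraph — checking that over these single‑cluster pictures the minimal semistable model is already smooth, so that the Semistability Criterion really does compute the minimal field of good reduction; the remainder is bookkeeping with how $d_{\s}$ and $\nu_{\s}$ scale under base change, plus the elementary integrality $b_{\s}\nu_{\s}\in\Z$. If one prefers to avoid the semistable‑implies‑good step for the upper bound, one can instead exhibit the smooth model directly: over $K(\pi_K^{1/b_{\s}})$ or $K(\pi_K^{1/2b_{\s}})$, the substitution $x=\pi_K^{d_{\s}}X$ followed by $y=\pi_M^{n\nu_{\s}/2}Y$ (legitimate exactly when $n\nu_{\s}$ is even) turns the Weierstrass equation into $Y^2=w\prod_r(X-\overline{u_r})$ with $w\in k^{\times}$ and the $\overline{u_r}$ pairwise distinct, which is manifestly smooth.
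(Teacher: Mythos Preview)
Your proof is correct and follows essentially the same route as the paper: both use the Semistability Criterion (Theorem~\ref{thm::sscriterion}) to identify $e$ as the minimal integer with $ed_{\s}\in\Z$ and $e\nu_{\s}\in 2\Z$, and then carry out the same elementary arithmetic ($b_{\s}\mid e$, $2b_{\s}$ works, so $e\in\{b_{\s},2b_{\s}\}$). You are more careful than the paper in two respects: you explicitly justify why, in this single-cluster situation, semistable reduction already forces good reduction (so that the Semistability Criterion really computes the minimal field of good reduction rather than just of semistable reduction), and you verify that conditions~(i) and~(ii) of the criterion are automatic once $b_{\s}\mid n$. The paper compresses all of this into the phrase ``it is straightforward to check that the other conditions are satisfied''; your explicit smooth-model construction at the end is a pleasant alternative to the semistable-implies-good argument, but not needed.
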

\begin{proof}
    By Theorem \ref{thm::sscriterion}, $e$ is the minimal integer such that $ed_{\s} \in \Z$ and $e\nu_\s \in 2\Z$. Since $ed_{\s} \in \Z$, we can deduce that $b_{\s} \mid e$. Since $2b_{\s}\nu_\s \in 2\Z$, $e = b_{\s}$ or $e=2b_{\s}$. We can check that the other conditions of Theorem \ref{thm::sscriterion} are satisfied over a field extension of degree $e$.
\end{proof}

\begin{lemma}
\label{lem::y0orbits}
    If $e>b_{\s}$ then there are $\frac{|\s|}{b_{\s}}$ $(y=0)$-orbits if $b_{\s} \mid |\s|$, or $\frac{|\s|-1}{b_{\s}}$ $(y=0)$-orbits if $b_{\s} \nmid |\s|$.
\end{lemma}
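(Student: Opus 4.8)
The plan is to realise the $(y=0)$-orbits concretely on the special fibre of the smooth Weierstrass model over $\OO_L$, count the relevant points, determine the Galois action on them, and divide by the orbit size. By Proposition \ref{prop::tpgrSings} and the remark following it, the small orbits — hence in particular the $(y=0)$-orbits — are the $G$-orbits of size strictly less than $e$ on the component $\Gamma = \Gamma_{\s,L}$, and by Definition \ref{def::orbittypes} the $(y=0)$-orbits are exactly those supported on points with $Y = 0$ and $X \neq 0$.

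First I would count these points. Since $\s$ is the unique proper cluster it has no proper children, so any two distinct roots $r \neq r'$ of $f$ satisfy $v_K(r - r') = d_\s$ exactly. Passing to the smooth Weierstrass model over $\OO_L$ via $x = \pi_K^{d_\s} X$ (legitimate since $e d_\s \in \Z$), the non-zero roots thus reduce to \emph{distinct} non-zero elements of $k$, which are precisely the non-zero roots of the polynomial cutting out $\Gamma$. Hence the locus $\{Y = 0,\, X \neq 0\} \subset \Gamma$ consists of $|\s|$ points if $f(0) \neq 0$, and of $|\s| - 1$ points if $f(0) = 0$.

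Next I would pin down the action. By (\ref{eq::autintpgrcase}) a generator $\sigma$ of $G = \Gal(L/K)$ acts on $\Gamma$ by $(X,Y) \mapsto (\alpha_\s(\sigma) X, \gamma_\s(\sigma) Y)$ with $\alpha_\s(\sigma) = \chi(\sigma)^{e d_\s}$, so it acts on the locus $Y = 0$ by $X \mapsto \chi(\sigma)^{e d_\s} X$. Because $\chi$ is a faithful character of the cyclic group $G$ of order $e$ and $k$ contains $\mu_e$ (as $p \nmid e$), $\chi$ identifies $G$ with $\mu_e \subseteq k^\times$; writing $e = b_\s e'$, which is permitted by Lemma \ref{lem::2bornot2b}, and using $\gcd(a_\s, b_\s) = 1$, one computes $\gcd(e, e d_\s) = \gcd(b_\s e', e' a_\s) = e'$, so the subgroup $\langle \alpha_\s(\sigma) \rangle \subseteq k^\times$ is exactly $\mu_{b_\s}$. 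Since $\sigma$ permutes the non-zero roots of $f$ compatibly with reduction, the set of non-zero $X$-coordinates occurring on $\Gamma$ is stable under $\mu_{b_\s}$, acting by multiplication.

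Finally, the multiplication action of $\mu_{b_\s}$ on $k^\times$ is free (a nontrivial root of unity has no fixed point in $k^\times$, as $k$ is a field), so this set is a disjoint union of $\mu_{b_\s}$-orbits each of size $b_\s$; in particular $b_\s$ divides the number of non-zero roots of $f$, the $G$-stabiliser of each orbit is $\ker(\alpha_\s)$, of order $e/b_\s$, and such an orbit is small precisely when $e > b_\s$. Dividing the count of the first step by $b_\s$ yields $|\s|/b_\s$ orbits when $f(0) \neq 0$ (and then $b_\s \mid |\s|$) and $(|\s|-1)/b_\s$ orbits when $f(0) = 0$ (and then $b_\s \mid |\s| - 1$), which are respectively the cases $b_\s \mid |\s|$ and $b_\s \nmid |\s|$ of the statement. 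The only slightly delicate point is the order computation for $\langle \alpha_\s(\sigma) \rangle$ and keeping the various valuation normalisations consistent; beyond that bookkeeping — most of which is already carried out in the lemmas immediately preceding the statement — there is no real obstacle.
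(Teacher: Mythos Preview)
Your argument is correct and follows the same route as the paper's proof: identify the $(y=0)$-locus on $\Gamma$, observe that the Galois action on the $X$-coordinate is multiplication by a primitive $b_\s$-th root of unity (hence free on $k^\times$), and divide the point count by the common orbit size $b_\s$. The paper's proof compresses all of this into two sentences; your version spells out the order computation for $\langle\alpha_\s(\sigma)\rangle$ and the link between the dichotomy $f(0)=0$ versus $f(0)\neq 0$ and the divisibility condition $b_\s\mid |\s|$, which the paper only makes explicit later in Section~\ref{subsec::relationtonp}.
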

\begin{proof}
    The non-zero points with $y=0$ are of the form $(\zeta_{b_{\s}}^i,0)$ for $\zeta_{b_{\s}}$ a primitive $b_{\s}^{\textrm{th}}$ root of unity. The $(y=0)$-orbits have size $b_{\s}$ so if $e=b_{\s}$ then the $(y=0)$-orbits are not small orbits.
\end{proof}

These lemmas allow us to fully describe how many singularities $\ZZ_k$ has. The following proposition tells us that they are tame cyclic quotient singularities in the sense of Definition \ref{def::tcqs}. Theorem \ref{thm::resolvingtcqs} then allows us to resolve these singularities. 

\begin{prop}
    \label{prop::singPointsAretsgs} Let $z\in\ZZ_k$ be a singularity which is the image of a Galois orbit $Y\subseteq \Y_k$. Then $z$ is a tame cyclic quotient singularity. In addition, with notation as in Definition \ref{def::tcqs}, $\frac{m}{r} = \frac{e}{r}$ where $1 \leq r < e$ and $r\mod e$ is given in the following table:
    \begin{center}
        \begin{tabular}{|c|c|c|}
        \hline
             Orbit Type & $ r \mod e$ & Condition \\ \hline
             $\infty$ & $e\lambda_\s - e(g(C)+1)d_{\s}$ & $\s$ odd \\ \hline
             $\infty$ & $-ed_{\s}|Y|$ & $\s$ even \\ \hline
             $y=0$ & $e\lambda_\s|Y|$ & None \\ \hline
             $x=0$ & $ed_{\s}|Y|$ & None \\ \hline
             $(0,0)$ & $e\lambda_\s$ & None \\ \hline
        \end{tabular}
    \end{center}
\end{prop}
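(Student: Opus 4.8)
The plan is to analyse the quotient map $q : \Y \to \ZZ$ in an affine (or formal) neighbourhood of each singular point $z \in \ZZ_k$, identify the local ring $\widehat{\OO_{\ZZ,z}}$ as a ring of $\mu_m$-invariants, and read off the invariants $(m,r)$ directly. Since $\Y$ is the smooth Weierstrass model $y^2 = f(x)$ over $\OO_L$ and $z$ is the image of a small Galois orbit $Y$, the key point is that $z$ lies on a single component $\Gamma$ (the unique component $\Gamma_{\s,L}$), so $\ZZ_k$ has a tame cyclic quotient singularity coming from the stabiliser acting on the $2$-dimensional regular local ring $\widehat{\OO_{\Y, P}}$ for $P \in Y$. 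I would first record that $|\mathrm{Stab}(P)| = e/|Y|$ by orbit–stabiliser, so the relevant cyclic group $\mu_m$ has $m = e/|Y|$; combined with $\Spec \OO_L \to \Spec \OO_K$ being totally tamely ramified of degree $e$, the relation $t_1^{m_1} t_2^{m_2} = \pi_K$ of Definition \ref{def::tcqs} will have $m_1 + m_2$ (weighted appropriately) tracking the degree $e$ while $m = e/|Y|$.

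The main computational step is, for each of the four orbit types of Definition \ref{def::orbittypes}, to choose good local coordinates $(t_1, t_2)$ at a representative point $P \in Y \subseteq \Y_k$ such that (a) one coordinate, say $t_2$, is a local coordinate on (a smooth model obtained from) $\Gamma$ along which the orbit moves, so that $\pi_L$ can be written as a unit times a power of $t_2$ adjusted by the equation of $\Y$, and (b) the generator $\sigma$ of $\Gal(L/K)$ acts diagonally. For this I would use the explicit automorphism formula from Theorem \ref{thm::semistablegaloisaction} (as simplified in this section): $\sigma\vert_\Gamma : (x,y) \mapsto (\alpha_\s(\sigma) x, \gamma_\s(\sigma) y) = (\chi(\sigma)^{ed_\s} x, \chi(\sigma)^{e\lambda_\s} y)$, together with the action $\sigma(\pi_L) = \chi(\sigma)\pi_L$ on the uniformiser. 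Concretely:
\begin{itemize}
\item[--] For an $(x=0)$-orbit the local coordinates are essentially $x$ (on which $\sigma$ scales by $\chi(\sigma)^{ed_\s}$) and $\pi_L$ (scaled by $\chi(\sigma)$); eliminating, one finds $m = e/|Y|$ and $r \equiv ed_\s|Y|$.
\item[--] For a $(y=0)$-orbit, at a point $(\zeta,0)$ the vertical coordinate $y$ has $y^2 = (\text{unit})(x - \zeta)$, so $y$ is a local parameter, scaled by $\chi(\sigma)^{e\lambda_\s}$; pairing with $\pi_L$ gives $r \equiv e\lambda_\s|Y|$.
\item[--] For the $(0,0)$-orbit, $y$ is again a local parameter (with $x = $ unit times $y^2$), $\sigma$ scales $y$ by $\chi(\sigma)^{e\lambda_\s}$, and the orbit has size $1$, giving $r \equiv e\lambda_\s$.
\item[--] For an $\infty$-orbit one passes to the chart $u = 1/x$, $v = y/x^{g+1}$; when $\s$ is odd the point is $(u,v)=(0,0)$ with $v$ a local parameter scaled by $\chi(\sigma)^{e\lambda_\s - e(g+1)d_\s}$ (from $v = y/x^{g+1}$), giving $r \equiv e\lambda_\s - e(g+1)d_\s$, while when $\s$ is even $u$ is a local parameter at $(0,\pm\sqrt{c_f})$, scaled by $\chi(\sigma)^{-ed_\s}$, giving $r \equiv -ed_\s|Y|$.
\end{itemize}
In each case I would then verify the congruence $m_1 \equiv -r m_2 \pmod m$ required by Definition \ref{def::tcqs}, which amounts to checking that the exponent with which $\pi_L$ appears, read modulo $m = e/|Y|$, is the inverse up to sign of the scaling weight — this is automatic from $\sigma(\pi_L)=\chi(\sigma)\pi_L$ and the fact that $\pi_L$ generates the invariant of the relation.

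The hard part, I expect, will be two things: first, pinning down the precise local equation of $\Y$ near each orbit point so that $\pi_L$ is correctly expressed in terms of $(t_1,t_2)$ — in particular keeping track of which coordinate is "really" a uniformiser and handling the Weierstrass relation $y^2 = f(x)$ (or its chart-at-infinity version), since a factor of $2$ or a wrong power of a local parameter here would change $m$ or $r$; and second, getting the signs and the reduction mod $m$ right in the $\infty$ case, where the coordinate change $v = y/x^{g+1}$ mixes the weights of $x$ and $y$ and the parity of $\deg f$ (equivalently of $\s$) changes which of $u,v$ is the vanishing coordinate. I would handle the first by treating the four cases uniformly as "a point where exactly one of the toric coordinates degenerates", using Lemma \ref{lem::2bornot2b} to know $e \in \{b_\s, 2b_\s\}$ and the orbit-size lemmas \ref{lem::inftyorbits}--\ref{lem::y0orbits} to know $|Y|$ in each case, and the second by a careful but finite sign bookkeeping, cross-checked against the known answer for elliptic curves in Appendix \ref{sec::KNclassification}. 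Finally I would note that only one component of $\ZZ_k$ passes through each such $z$ (since $\Gamma$ is the unique component of $\Y_k$), so $\delta(z) = 1$ and Theorem \ref{thm::resolvingtcqs} applies to resolve each singularity into a tail, as needed for Theorem \ref{thm::tpgr}.
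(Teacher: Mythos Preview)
Your proposal is correct and follows essentially the same route as the paper: pick a representative $P\in Y$, compute $\widehat{\OO_{\Y,P}}$ with the induced $\mu_m$-action for $m=e/|Y|$, and read off $r$ from the character by which $\sigma^{|Y|}$ scales the second local parameter. The one simplification you are missing is that your anticipated ``hard part'' largely evaporates: since $\Y$ is regular and $\Gamma_{\s,L}$ is its entire special fibre, one may take $t_1=\pi_L$ itself as one of the two local parameters at $P$, so the relation is simply $\pi_L^e=\pi_K$, giving $m_1=e$ and $m_2=0$; the congruence $m_1\equiv -r m_2\pmod m$ is then just $e\equiv 0\pmod{e/|Y|}$, which is automatic, and the only remaining check is that $r\in(\Z/m\Z)^\times$, which follows because $\zeta_m^r$ generates $\mathrm{Stab}(Y)$.
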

\begin{proof}
    Recall that for $z$ to be a tame cyclic quotient singularity, there must exist $m>1$ invertible in $k$, a unit $r \in (\Z/m\Z)^{\times}$ and integers $m_1>0$ and $m_2\geq 0$ such that $m_1 \equiv m_2\mod m$, and such that $\OO_{\ZZ,z}$ is equal to the subalgebra of $\mu_m$-invariants in $k\llbracket t_1,t_2 \rrbracket/(t_1^{m_1}t_2^{m_2} - \pi_K)$ under the action $t_1 \mapsto \zeta_m, t_2 \mapsto \zeta_m^r$. We will show that $m = \frac{e}{|Y|} = |\textrm{Stab}(Y)|,$  $m_1 = e$, $m_2 = 0$ and will explicitly calculate $r$.
    
    Let $Y \subseteq \Y_k$ be a small orbit and let $\pointInOrbit\in Y$. Then $\OO_{\ZZ,z}$ is the subalgebra of $\mu_m$-invariants of $\OO_{\Y,\pointInOrbit}$ under the action of $\textrm{Stab}(Y)$, where $m = |\textrm{Stab(Y)}|$. This follows from the definition of $\ZZ$ as the quotient of $\Y$ under the action of $\Gal(L/K)$, which for a generator  $\sigma\in\Gal(L/K)$ sends 
    $$\sigma: \pi_L \longmapsto \chi(\sigma) \pi_L,\quad
        \sigma: x \longmapsto \chi(\sigma)^{ed_{\s}} x,\quad
        \sigma: y \longmapsto \chi(\sigma)^{e\lambda_\s} y.$$
    To prove that $z$ is a tame cyclic quotient singularity we must calculate $\OO_{\Y,\pointInOrbit}$.
    
    First, suppose $Y$ is a $(y=0)$ or a $(0,0)$-orbit, and write $Q = (x_{\pointInOrbit},0)$. Then $\OO_{\Y,\pointInOrbit}$ is generated by $\pi_L, x-x_{\pointInOrbit}$ and $y$. However, since $x - x_{\pointInOrbit} = uy^2$ for a unit $u \in \OO_{\Y,\pointInOrbit}$, $\OO_{\Y,\pointInOrbit}$ is generated by $\pi_L$ and $y$. Therefore, $\OO_{\Y,\pointInOrbit}\cong k\llbracket \pi_L, y \rrbracket/(\pi_L^e - \pi_K)$, and $\OO_{\ZZ,z}$ is the subalgebra of $\mu_m$-invariants of this under the action $\pi_L \mapsto \zeta_m \pi_L, y \mapsto \zeta_m^{e\lambda_\s}y$ where $\zeta_m = \chi(\sigma)^{|Y|}$ generates $\textrm{Stab}(Y)$ (as $\Gal(L/K)$ is cyclic). Let $r$ be such that $0 < r < m$ and $r \equiv e\lambda_\s|Y|\mod m$. Then to prove $z$ is a tame cyclic quotient singularity all that is left to show is that $r$ is a unit in $(\Z/m\Z)^{\times}$ and that $e \equiv 0 \mod m$. The second is clear, and for the first note that since $\zeta_m^r$ also generates $\textrm{Stab}(Y)$, it must be a primitive $m^{\textrm{th}}$ root of unity hence $r$ must be a unit.
    
    If $Y$ is an $(x=0)$-orbit, then $\pointInOrbit = (0,\pm\sqrt(c_f))$. By a similar argument to above, $\OO_{\Y,\pointInOrbit} \cong k\llbracket \pi_L, x \rrbracket/(\pi_L^e - \pi_K)$ and $\OO_{\ZZ,z}$ is the subalgebra of $\mu_m$ invariants under the action $\pi_L \mapsto \zeta_m \pi_L, x \mapsto \zeta_m^r x$, where $m = \frac{e}{|Y|}$ and $r$ is such that $0 < r < m$ and $r \equiv ed_{\s}|Y| \mod m$.
    
    If $Y$ is an $\infty$ orbit, then we can calculate $m,r,m_1$ and $m_2$ explicitly by going to the chart at infinity.
\end{proof}


\begin{cor}
    If $Y$ is a $(y=0)$-orbit which gives rise to a tame cyclic quotient singularity $z\in\ZZ_k$, then the tame cyclic quotient invariants $(m,r)$ of $z$ are such that $\frac{m}{r} = 2$.
\end{cor}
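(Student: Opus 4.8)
The plan is to combine Proposition~\ref{prop::singPointsAretsgs} with the structural facts about $(y=0)$-orbits established in Lemmas~\ref{lem::y0orbits} and \ref{lem::2bornot2b}. Recall from Proposition~\ref{prop::singPointsAretsgs} that for a $(y=0)$-orbit $Y$ giving rise to a singularity $z$, one has $m = e/|Y| = |\mathrm{Stab}(Y)|$, so everything reduces to pinning down $|Y|$ and then the admissible value of $r$.

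First I would note that by Lemma~\ref{lem::y0orbits} every $(y=0)$-orbit has size exactly $b_{\s}$, i.e. $|Y| = b_{\s}$. Next, Lemma~\ref{lem::2bornot2b} tells us that either $e = b_{\s}$ or $e = 2b_{\s}$. If $e = b_{\s}$, then $|Y| = b_{\s} = e$, so $Y$ is not a small orbit and hence (by Proposition~\ref{prop::tpgrSings}) produces no singularity on $\ZZ_k$; this contradicts the hypothesis that $Y$ gives rise to a tame cyclic quotient singularity $z$. Therefore $e = 2b_{\s}$, and consequently
$$
m \;=\; \frac{e}{|Y|} \;=\; \frac{2b_{\s}}{b_{\s}} \;=\; 2.
$$

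Finally, since $r$ must be a unit in $(\Z/m\Z)^{\times} = (\Z/2\Z)^{\times} = \{1\}$, we are forced to have $r = 1$, and hence $\frac{m}{r} = \frac{2}{1} = 2$, as claimed. There is essentially no obstacle here: the entire content is the bookkeeping that $(y=0)$-orbits have size $b_{\s}$ while $e$ can only be $b_{\s}$ or $2b_{\s}$, so the presence of a singularity forces the ratio $e/|Y|$ to equal $2$; the congruence for $r$ then has a unique solution. (This also matches the geometric picture: such a tail arises from resolving an $A_1$-type singularity, whose Hirzebruch--Jung expansion is the single term $2$.)
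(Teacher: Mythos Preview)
Your proof is correct and starts the same way as the paper's: both note $|Y|=b_{\s}$ and invoke Lemma~\ref{lem::2bornot2b} to conclude that a $(y=0)$-orbit is small only when $e=2b_{\s}$, so $m=e/|Y|=2$. The divergence is in the last step. The paper computes $r$ explicitly from the formula in Proposition~\ref{prop::singPointsAretsgs}: it evaluates $e\lambda_{\s}|Y|=b_{\s}^{2}\nu_{\s}$ and uses that $b_{\s}\nu_{\s}$ is odd (the characterisation of the case $e=2b_{\s}$ in Lemma~\ref{lem::2bornot2b}) to see this is $\equiv e/2\pmod e$, whence $m/r=2$. You instead observe that once $m=2$ the unit group $(\Z/2\Z)^{\times}$ is trivial, so $r=1$ is forced without any computation. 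Your argument is shorter and sidesteps the arithmetic with $\nu_{\s}$; the paper's route has the mild advantage of confirming consistency with the general formula for $r$, but for the bare corollary your shortcut is entirely sufficient.
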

\begin{proof}
    The orbit $Y$ is a $(y=0)$-orbit hence has size $b_{\s}$. Lemma \ref{lem::2bornot2b} tells us that, $|Y|<e$ if and only if $e = 2b_{\s}$. In this case $e\lambda_\s|Y| = 2b_{\s} \cdot \frac{\nu_\s}{2} \cdot b_{\s} = b_{\s}^2 \nu_\s$. Since $b_{\s} = \frac{e}{2}$ and $b_{\s}\nu_\s$ is an odd integer, this gives $e\lambda_\s|Y| \equiv \frac{e}{2} \mod e$, hence $\frac{m}{r}= 2$. 
\end{proof}


\subsection{Tails}
\label{subsec::tpgrtails}

Resolving singularities as in Section \ref{subsec::tpgrauto} results in tails. These are chains of rational curves intersecting the central component once and intersecting the rest of the special fibre nowhere else. It is useful to distinguish between tails based on the type of orbit they arise from. 
\begin{definition}
    \label{def::chainTypes} Define the following tails based on the type of singularity of $\ZZ_k$ they arise from:
    \begin{itemize}[leftmargin=*]
        \item \textit{$\infty$-tail}: arising from the blow up of a singularity of $\ZZ_k$ which arose from an $\infty$-orbit,
        \item \textit{$(y=0)$-tail}: arising from the blow up of a singularity of $\ZZ_k$ which arose from an orbit of non-zero roots,
        \item \textit{$(x=0)$-tail}: arising from the blow up of a singularity of $\ZZ_k$ which arose from an orbit on the points $(0,\pm \sqrt{c_f})$,
        \item \textit{$(0,0)$-tail}: arising from the blow up of a singularity of $\ZZ_k$ which arose from the point $(0,0)$.
    \end{itemize}
\end{definition}

\begin{remark}
    The tails defined in Definition \ref{def::chainTypes} are the only tails that can possibly occur in $\X_k$. This is because any tail must arise from a singularity of $\ZZ_k$ which lies on just one component, namely a singularity which arises from one of the small orbits discussed in Section \ref{subsec::tpgrauto}. 
\end{remark}


\begin{proof}[Proof of Theorem \ref{thm::tpgr}]
    The central component $\Gamma$ is the image of the unique component of $\Y_k$ under $q$. Since blowing up points on $\Gamma$ does not affect its multiplicity, this has multiplicity $e$, by Proposition \ref{prop::componentsmult}. The description of the tails follows from Lemmas \ref{lem::inftyorbits}, \ref{lem::x0orbits}, and \ref{lem::y0orbits}, since the tails are in a bijective correspondence with the orbits of points of $\Y_k$ of size strictly less than $e$. We must check that $\Gamma$ really appears in the minimal SNC model. Suppose $\Gamma$ is exceptional. Then $g(\Gamma) = 0$ and Riemann-Hurwitz says
    $$\sum_{z\in\Z}\left( \frac{e}{|q^{-1}(z)|} - 1 \right) \geq e.$$
    Therefore there must be at least three ramification points, so $\Gamma$ intersects at least three tails.
\end{proof}

\begin{remark}
    The method for calculating the multiplicities of the rational curves in these tails is described in Theorem \ref{thm::resolvingtcqs} using the tame cyclic quotient invariants given in Proposition \ref{prop::singPointsAretsgs}.
\end{remark}

\begin{remark}
    The central component $\Gamma$ is the only component of $\X_k$ which may have non-zero genus. Its genus, $g(\Gamma)$, can be calculated via the Riemann-Hurwitz formula. 
    An even more explicit calculation of $g(\Gamma)$ in terms of the Newton polytope is given in Proposition \ref{prop::genusformula}.
\end{remark}

\subsection{Relation to Newton polytopes}
\label{subsec::relationtonp}

Up to this point, this section has described the minimal SNC model of a hyperelliptic curve $C/K$ with tame potentially good reduction using the methods from Section \ref{subsec::QuotientsofModels}. However, such a hyperelliptic curve has a nested cluster picture so we can also calculate the minimal SNC model using Newton polytopes and the techniques described in Section \ref{sec::modsusingnewtpolys}. By the uniqueness of the minimal SNC model, these two methods will give the same result: for the reader's sanity, in this section we will show that this is indeed the case. Recall that without loss of generality we can assume that $C/K$ with tame potentially good reduction is given by one of the following two equations: 
\begin{align*}
    y^2 &= c_f\prod_{0\neq r \in \Rcal}(x - u_r \pi_K^{d_{\s}}), & \textrm{ if } b_\s\mid |\s|,\\
    y^2 &= c_fx\prod_{0\neq r \in \Rcal}(x - u_r \pi_K^{d_{\s}}),  & \textrm{ if } b_\s\nmid |\s|.
\end{align*}
The Newton polytope of $C$ is shown in Figure \ref{fig::tpgrnp1} if $b_{\s} \mid |\s|$, and in Figure \ref{fig::tpgrnp2} if $b_{\s} \nmid |\s|$. In each case there is exactly one $v$-face of $\Delta_v(C)$, which we shall label $F$. Therefore, by Theorem \ref{thm::timsmainthm}, the minimal SNC model consists of a central component $\Gamma_{\s} = \Gamma_F$, and possibly tails arising from the three outer $v$-edges of $F$.
\begin{figure}[h]
\centering
\begin{subfigure}{.5\textwidth}
  \centering
  \begin{tikzpicture}
        \draw (0,0) -- (3,0);
        \draw (0,0) -- (0,1);
        \draw (0,1) -- (3,0);
        \fill (0,0) circle (2pt) node[below, font=\small]{$\nu_\s$} node[left, color=gray,font=\tiny]{$(0,0)$};
        \fill (3,0) circle (2pt) node[below, font=\small]{$v_K(c_{f})$} node[right, color=gray,font=\tiny]{$(|\s|,0)$};
        \fill (0,1) circle (2pt) node[above, font=\small]{$0$} node[left, color=gray,font=\tiny]{$(0,2)$};
        \node[font=\small, color=gray] at (0.8,0.4) {$F$};
    \end{tikzpicture}
    \caption{If $0 \not \in \mathcal{R}$.}
    \label{fig::tpgrnp1}
\end{subfigure}%
\begin{subfigure}{.5\textwidth}
  \centering
  \begin{tikzpicture}
    \draw (1,0) -- (4,0);
        \draw (1,0) -- (0,1);
        \draw (0,1) -- (4,0);
        \fill (1,0) circle (2pt) node[below, font=\small]{$\nu_\s - d_{\s}$} node[left, color=gray,font=\tiny]{$(1,0)$};
        \fill (4,0) circle (2pt) node[below, font=\small]{$v_K(c_{f})$} node[right, color=gray,font=\tiny]{$(|\s|,0)$};
        \fill (0,1) circle (2pt) node[above, font=\small]{$0$} node[left, color=gray,font=\tiny]{$(0,2)$};
        \node[font=\small, color=gray] at (1.6,0.3) {$F$};
    \end{tikzpicture}
    \caption{If $0 \in \mathcal{R}$.}
    \label{fig::tpgrnp2}
\end{subfigure}
\caption{
$\Delta_v(C)$ of a hyperelliptic curve $C$ with tame potential good reduction.}
\label{fig::newtonpolytpgr}
\end{figure}

\begin{lemma}
    The multiplicity of $\Gamma_{\s} = \Gamma_F$ is $\delta_F$; that is $\delta_F = e$.
\end{lemma}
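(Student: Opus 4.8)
The plan is to compute $\delta_F$ directly from the Newton polytope $\Delta_v(C)$ of Figures \ref{fig::tpgrnp1}/\ref{fig::tpgrnp2} and then to identify it with $e$ via Lemma \ref{lem::2bornot2b}. Note that the clause ``the multiplicity of $\Gamma_{\s}=\Gamma_F$ is $\delta_F$'' is nothing but Theorem \ref{thm::timsmainthm}(i), so the content of the lemma is the equality $\delta_F = e$.

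First I would pin down the affine function $v_{\Delta}$ on $F$. Since $F$ is a $v$-face, $v_{\Delta}$ is affine on $\overline{F}$ and is therefore determined by its values at the three vertices recorded in the figures. Using $\nu_{\s}=v_K(c_f)+|\s|\,d_{\s}$ — which holds because the unique proper cluster is $\s=\Rcal$, so $d_{\s\wedge r}=d_{\s}$ for every $r\in\Rcal$ — a short interpolation gives, in \emph{both} cases,
$$v_{\Delta}(i,j)=\nu_{\s}-d_{\s}\,i-\tfrac{\nu_{\s}}{2}\,j,$$
the two polytopes differing only in whether $(0,0)$ or $(1,0)$ occurs as a vertex, which does not change the affine function.

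Next I would read off $\delta_F$, which by Definition \ref{def:denominatorofvedgeorface} is the least common multiple of the denominators of $v_{\Delta}(P)$ over $P\in\overline{F}(\Z)$. Write $d_{\s}=a_{\s}/b_{\s}$ in lowest terms, so that $b_{\s}\nu_{\s}=b_{\s}v_K(c_f)+|\s|a_{\s}\in\Z$. Every $(i,j)\in\overline{F}(\Z)$ has $j\in\{0,1,2\}$, so $v_{\Delta}(i,j)=\nu_{\s}(1-\tfrac{j}{2})-d_{\s}i$ has denominator dividing $2b_{\s}$; on the other hand $(|\s|-1,0)\in\overline{F}(\Z)$ (as $|\s|\geq 3$) with $v_{\Delta}(|\s|-1,0)=v_K(c_f)+d_{\s}$ of denominator exactly $b_{\s}$, so $b_{\s}\mid\delta_F$. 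Hence $\delta_F\in\{b_{\s},2b_{\s}\}$. Since only the lattice points with $j=1$ can produce a denominator divisible by $2b_{\s}$, we have $\delta_F=2b_{\s}$ if and only if some $v_{\Delta}(i,1)=\tfrac{\nu_{\s}}{2}-d_{\s}i$ has $2$-adic denominator strictly larger than that of $b_{\s}$. A direct $2$-adic check — using the ultrametric behaviour of denominators under addition, $\gcd(a_{\s},b_{\s})=1$, and $\gcd\!\big(b_{\s},\,b_{\s}v_K(c_f)+|\s|a_{\s}\big)=\gcd(b_{\s},|\s|)$ — shows this happens precisely when $b_{\s}\nu_{\s}$ is odd. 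By Lemma \ref{lem::2bornot2b}, $e=2b_{\s}$ precisely when $b_{\s}\nu_{\s}\notin 2\Z$ and $e=b_{\s}$ otherwise; comparing, $\delta_F=e$ in every case.

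The main obstacle is purely bookkeeping: tracking exactly when the halving in $\tfrac{\nu_{\s}}{2}j$ genuinely doubles a denominator, i.e.\ the $2$-adic computation in the previous paragraph; it is elementary but needs a little care, in particular in disentangling the cases $b_{\s}$ even and $b_{\s}$ odd (and, relatedly, $|\s|$ even or odd). One could instead avoid the computation entirely — $\Gamma_F$ is the central component $\Gamma_{\s}$, which has multiplicity $e$ by Theorem \ref{thm::tpgr}, so $\delta_F=e$ by uniqueness of the minimal SNC model — but since the point of this subsection is to verify directly that the two constructions agree, the computation above is the argument we would record.
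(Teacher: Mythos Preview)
Your proposal is correct, and you correctly identify the affine function $v_{\Delta}(i,j)=\nu_{\s}-d_{\s}i-\tfrac{\nu_{\s}}{2}j$, which is exactly what the paper uses. However, your route to $\delta_F=e$ is genuinely different from, and more laborious than, the paper's.

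The paper argues in two lines using only the \emph{minimality} characterisation of $e$. First, since $e d_{\s}\in\Z$ and $e\nu_{\s}\in 2\Z$, one has $e\,v_{\Delta}(i,j)\in\Z$ for every lattice point of $\overline{F}$, whence $\delta_F\mid e$. Conversely, from $\delta_F\big(v_{\Delta}(n-1,0)-v_{\Delta}(n,0)\big)=\delta_F d_{\s}\in\Z$ and $\delta_F\big(v_{\Delta}(1,0)-v_{\Delta}(1,1)\big)=\delta_F\tfrac{\nu_{\s}}{2}\in\Z$ one sees that $\delta_F$ already satisfies the defining conditions of $e$, so $e\mid\delta_F$ by minimality. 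No case analysis, no appeal to Lemma~\ref{lem::2bornot2b}, no $2$-adic bookkeeping.

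Your approach instead pins down $\delta_F\in\{b_{\s},2b_{\s}\}$ explicitly and then decides between the two by a parity computation on $b_{\s}\nu_{\s}$, matching the answer with $e$ via Lemma~\ref{lem::2bornot2b}. This is sound --- the key step is simply that $b_{\s}v_{\Delta}(i,1)=\tfrac{b_{\s}\nu_{\s}}{2}-a_{\s}i$ lies in $\Z$ iff $b_{\s}\nu_{\s}$ is even, which is rather cleaner than the ``ultrametric $+$ $\gcd$'' packaging you give it --- but it buys you nothing the paper's argument does not, at the cost of the very bookkeeping you flag as the main obstacle. The divisibility argument is the one to record; your explicit identification of $\delta_F$ as $b_{\s}$ or $2b_{\s}$ is a pleasant sanity check but not needed for the lemma. (You should also note, if you keep your version, that a lattice point with $j=1$ always exists in $\overline{F}$, which your argument tacitly uses.)
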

\begin{proof}
    We will first show that $e \mid \delta_F$, and then that $\delta_F \mid e$. Note that, in both Newton polytopes in Figure \ref{fig::newtonpolytpgr}, the valuation map is given by the affine function $v_{\Delta}(x,y)$ = $\nu_\s - d_{\s}x - \frac{\nu_\s}{2}y$. Since $e$ is such that $ed_{\s} \in \Z$ and $e\nu_\s\in 2\Z$, we have $e v_{\Delta}(x,y) = e\nu_\s - ed_{\s}x - e\frac{\nu_\s}{2}y \in \Z$. As $\delta_F$ is the common denominator of all $v_{\Delta}(x,y)$ for $x,y \in \Delta$, this gives that $\delta_F \mid e$.
    
    Note that $\delta_F\left(v_{\Delta}(n-1,0) - v_{\Delta}(n,0)\right) = \delta_Fd_{\s} \in \Z$, and $\delta_F\left(v_{\Delta}(1,0) - v_{\Delta}(1,1)\right) = \delta_F \frac{\nu_\s}{2} \in \Z$. By minimality of $e$, this implies $e \mid \delta_F$.
\end{proof}

\begin{lemma}
\label{lem::inftytent}
    The $\infty$-tails arise from the outer $v$-edge of $\Delta_v(C)$ between $(0,2)$ and $(|\s|,0)$.
\end{lemma}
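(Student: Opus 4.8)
The plan is to pin down $L$ among the three outer $v$-edges of $\Delta_v(C)$ and then check, by a direct comparison, that the tails Theorem~\ref{thm::timsmainthm} attaches to the central component $\Gamma_F$ along $L$ are exactly the $\infty$-tails produced by the quotient construction of Section~\ref{subsec::tpgrauto}. Since $\Delta_v(C)$ has a single $v$-face $F$, all three edges of the triangle in Figure~\ref{fig::newtonpolytpgr} are outer: the bottom edge lies along $y=0$ and its lattice points $(i,0)$ record the non-zero roots $u_r\pi_K^{d_\s}$ of $f$; the left edge lies along $x=0$; and the hypotenuse $L$ runs from $(0,2)$ to $(|\s|,0)$. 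Moving along $L$ trades the power of $y$ for the power of $x$, which is precisely the relation $y\sim\sqrt{c_f}\,x^{g+1}$ governing the chart at infinity $u=1/x,\ v=y/x^{g+1}$ that defines the points at infinity of $C$; so on heuristic grounds $L$ should be the source of the $\infty$-tails, and I would turn this into a proof by matching the number of tails and their multiplicities against the list obtained in Section~\ref{subsec::tpgrtails}.

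For the count: by Theorem~\ref{thm::timsmainthm}, $L$ gives $|\overline{L}(\Z)_{\Z}|-1$ tails. The closed edge $\overline{L}$ contains the interior lattice point $(g+1,1)$ exactly when $\deg f=|\s|$ is even (its primitive direction is $(|\s|/2,-1)$ for $|\s|$ even and $(|\s|,-2)$ for $|\s|$ odd), and, using the affine formula $v_{\Delta}(x,y)=\nu_\s-d_\s x-\tfrac{\nu_\s}{2}y$ valid on all of $\Delta$, one finds $v_{\Delta}(g+1,1)=\tfrac12 v_K(c_f)$, which lies in $\Z$ exactly when $v_K(c_f)$ is even. Hence $L$ contributes two tails when $\deg f$ and $v_K(c_f)$ are both even and one tail otherwise, matching the trichotomy of Lemma~\ref{lem::inftyorbits} (the hypotheses $e>2$ there corresponding to the degenerate sequences with $\lambda=0$, for which there is in fact no tail).

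For the multiplicities: I would compute the denominator $\delta_L$ (Definition~\ref{def:denominatorofvedgeorface}) and the slopes $[s_1^L,s_2^L]$ (Definition~\ref{def:slope}) of $L$ from the same affine formula for $v_{\Delta}$, read off the resulting tail parameters, and compare them with the tame cyclic quotient invariants of an $\infty$-orbit singularity from Proposition~\ref{prop::singPointsAretsgs}, converted to tail multiplicities via Theorem~\ref{thm::resolvingtcqs} and Remark~\ref{rem::linkingchainsections}. Here one substitutes $\lambda_\s=\tfrac12\nu_\s$ and invokes Lemma~\ref{lem::2bornot2b} to know whether $e=b_\s$ or $e=2b_\s$, working through the three cases $\deg f$ odd; $\deg f$ even with $v_K(c_f)$ even; and $\deg f$ even with $v_K(c_f)$ odd. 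In each case the two descriptions yield tails with the same multiplicities, which together with the count identifies the $\infty$-tails with the tails coming from $L$.

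The calculations themselves are routine; the main obstacle is the bookkeeping, i.e.\ simultaneously tracking the parities of $\deg f$ and $v_K(c_f)$, the resulting value of $\delta_L$ (which is $2$ when $\deg f$ is even and $v_K(c_f)$ odd, and $1$ otherwise), the case $e=b_\s$ versus $e=2b_\s$, and the integer shift in $s_1^L$ permitted by Remark~\ref{rem::timsmainrem} --- harmless for the multiplicities but needing care when comparing with the precise residue of $r$ modulo $e$ in Proposition~\ref{prop::singPointsAretsgs}.
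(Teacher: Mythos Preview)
Your proposal is correct and follows essentially the same approach as the paper: both count tails via $|\overline{L}(\Z)_{\Z}|-1$ by checking when the midpoint $(|\s|/2,1)$ is an integer lattice point with integer valuation, compute $\delta_L$ (finding $\delta_L=2$ precisely when $|\s|$ is even and $v_K(c_f)$ is odd), and then compute the slope $s_1^L$ and match it against the tame cyclic quotient invariants of Proposition~\ref{prop::singPointsAretsgs}. The paper carries out the slope computation explicitly (obtaining $s_1^L=(g+1)d_\s-\lambda_\s$ for $|\s|$ odd and $s_1^L=-d_\s|Y|$ for $|\s|$ even) and notes the degenerate $e=2$ case, but your plan would arrive at the same result.
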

\begin{proof}
    We will first check that this $v$-edge gives the correct number of $\infty$-tails, and then calculate the slope to check that the multiplicities of the components are the same.
    
    Let us call this $v$-edge $L$. By Theorem \ref{thm::timsmainthm} then $L$ contributes $|\overline{L}(\Z)_{\Z}| - 1$ tails to the SNC model. Since the points $(0,2),(|\s|,0) \in L(\Z)_{\Z}$, it contributes two tail if and only if $P = (\frac{|\s|}{2},1) \in L(\Z)_{\Z}$. If $\s$ is odd then $P \not \in L \cap \Z^2$, hence $L$ contributes one tail. If $\s$ is even then $v_{\Delta}(P) = \frac{v_K(c_f)}{2}$, hence $P \in L(\Z)_{\Z}$ if and only if $v_K(c_f) \in 2\Z$. Therefore $L$ contributes one tail if $\s$ is even and $v_K(c_f)$ is odd, and two tails if $\s$ and $v_K(c_f)$ are even. This agrees with Theorem \ref{thm::tpgr}.
    
    A quick calculation tells us that $\delta_L = 2$ if and only if $\s$ is even and $v_K(c_f) \not \in 2\Z$, and that $\delta_L = 1$ otherwise. Therefore, $\delta_L = |Y|$, where $Y$ is the orbit at infinity. The unique surjective affine function which is zero on $L$ and non-negative on $F$ is $L^*_F(x,y) = 2|\s| - 2x - |\s|y$ if $\s$ is odd, and $L^*_F(x,y) = |\s| - x - \frac{1}{2}|\s|y$ if $\s$ is even. Therefore, $s_1^L = (g + 1)d_{\s} -\lambda_{\s}$ if $\s$ is odd, and $s_1^L=-d_{\s}|Y|$ if $\s$ is even. Since the multiplicities of the components of a tail are the Hirzebruch-Jung approximants of the slopes, we are done after comparing the slopes to the table in Proposition \ref{prop::tpgrSings}.
    
    If $e = 2$ (when $\s$ is even and $v_K(c_f)$ is odd) has $s^1_L \in \Z$, so the associated tail is empty, which agrees with the table in Theorem \ref{thm::tpgr}.
\end{proof}

\begin{lemma}
    In both cases, when $0\in\Rcal$ and when $0\notin\Rcal$, the $(y=0)$-tails arise from the outer $v$-edge of $\Delta_v(C)$ on the $x$-axis. Furthermore, if $b_{\s} \mid |\s|$ then the $(x=0)$-tails arise from the $v$-edge between $(0,0)$ and $(0,2)$. Else the $(0,0)$-tail arises from the $v$-edge between $(1,0)$ and $(0,2)$. 
    \label{lem::yiszero}
\end{lemma}
\begin{proof}
    This follows after a similar calcuation to Lemma \ref{lem::inftytent}.
    %
    %
\end{proof}


\subsection{The Curve $C_{\tilde{\s}}$}
\label{subsec::cso}

To conclude this section, we drop the requirement for $C/K$ to have tame potentially good reduction. We will describe a hyperelliptic curve with potentially good reduction which we associate to a principal cluster $\s\in\Sigma_C$ with $\ssgs > 0$. This new curve, which we will denote by $C_{\tilde{\s}}$, will be invaluable in describing the components of the minimal SNC model of $C/K$ which are associated to $\s\in\Sigma_{C}$. For $\s\in\Sigma_{C/K}$ with $\ssgs > 0$, the cluster picture $\Sigma_{\tilde{\s}}$ of $C_{\tilde{\s}}/K$ will be such that the singletons in $\Sigma_{\tilde{\s}}$ correspond to odd children of $\s$ and the even children of $\s$ are in effect discarded. The leading coefficient of $C_{\tilde{\s}}/K$ is chosen so that everything behaves well, and allows us to make the comparisons we wish between the minimal SNC model of $C/K$ and the minimal SNC model of $C_{\tilde{\s}}/K$. We describe this formally now.

\begin{definition}
    \label{def::cso}
    Let $C/K$ be a hyperelliptic curve, not necessarily with tame potentially good reduction. Let $\s\in\Sigma_{C/K}$ be a principal cluster with $\ssgs > 0$ such that $\s$ is fixed by $\GK$. Suppose furthermore that $\sigma(z_{\s'}) = z_{\sigma(\s')}$ for any $\sigma \in \GK$, $\s'\in\Sigma_{C/K}$. We define another hyperelliptic curve $C_{\tilde{\s}}/K$ by
    \begin{align*}
        C_{\tilde{\s}}:y^2 = c_{f_{\s}} \prod_{\mathfrak{o}\in\tilde{\s}} (x-z_{\mathfrak{o}}),\textrm{ where }
        c_{f_{\s}} = c_f \prod_{r \not\in \s} (z_{\s} - r).
    \end{align*}
    Write $\Sigma_{\tilde{\s}/K}=\Sigma_{\tilde{\s}}=\Sigma(C_{\tilde{\s}}/K)$ for the cluster picture of $C_{\tilde{\s}}/K$, and $\X_{\tilde{\s}}$ for the minimal SNC model of $C_{\tilde{\s}}/K$. The special fibre of the minimal SNC model of $C_{\tilde{\s}}$ is denoted $\X_{\tilde{\s},k}$, and the central component is denoted $\Gamma_{\tilde{\s}}$. We also write $\Rcal_{\tilde{\s}}$ for the set of all roots of $c_{f_{\s}} \prod_{\mathfrak{o}\in\tilde{\s}} (x-z_{\mathfrak{o}})$, and define $d_{\tilde{\s}}=d_{\Rcal_{\tilde{\s}}}$, $\nu_{\tilde{\s}}=\nu_{\Rcal_{\tilde{\s}}}$, and $\lambda_{\tilde{\s}}=\lambda_{\Rcal_{\tilde{\s}}}$.
\end{definition}


\begin{remark}
\label{rem::Cstilde}
    Let $\Y$ be the minimal semistable model of $C$ over $\OO_L$, for some $L/K$ such that $C/L$ is semistable. Let $\s$ be a principal cluster with $\ssgs > 0$. If we reduce $C_{\tilde{\s}} \mod \mathfrak{m}$, we obtain $\Gamma_{\s,L}$, the component of $\Y_k$ corresponding to $\s$ (see \cite[Theorem~8.5]{DDMM18} for the equation of $\Gamma_{\s,L}$). In addition, $c_{f_{\s}}$ has been carefully chosen so that $d_{\s}=d_{\tilde{s}}$, $\nu_{\s} = \nu_{\tilde{\s}}$ and $\lambda_{\s} = \lambda_{\tilde{\s}}$. In particular, the automorphisms induced by Galois on $\Gamma_{\s,L}$ and $\Gamma_{\tilde{s},L}$ are the same.
\end{remark}

\begin{definition}
    \label{def::esgs}
    For a principal, Galois-invariant cluster $\s$, define $e_{\s}$ to be the minimum integer such that $e_{\s}d_{\s} \in \Z$ and $e_{\s}\nu_{\s} \in 2\Z$. Furthermore, if $\ssgs > 0$ define $g(\s)$ to be the \textit{genus of $\Gamma_{\tilde{\s}}$} and if $\ssgs = 0$ define $g(\s) = 0$. We call $g(\s)$ the \emph{genus of $\s$}.
\end{definition}


\begin{remark}
    By the Semistability Criterion \cite[Theorem~1.8]{DDMM18}, if $\s$ is not \"ubereven then $e_{\s}$ is the minimum integer such that $C_{\tilde{\s}}$ has semistable reduction over a field extension $L/K$ of degree $e_{\s}$. In particular, the central component $\Gamma_{\tilde{\s}}$ of $\X_{\tilde{\s},k}$ has multiplicity $e_{\s}$ and genus $g(\s)$. If $e_{\s} = 1$ then $\ssgs = g(\s)$, but the converse is not necessarily true.

\end{remark}

\begin{prop}
    \label{prop::genusformula}
    If $\ssgs > 0$, the genus $g(\s)$ is given by 
    
    \begin{align*} g(\s) = \begin{cases} \lfloor\frac{\ssgs}{b_{\s}}\rfloor & \lambda_{\s}\in \Z,\\ 
    \lfloor \frac{\ssgs}{b_{\s}}+\frac{1}{2}\rfloor & \lambda_{\s}\not\in \Z, b_{\s}\textrm{ even},\\ 
    0 & \lambda_{\s} \not \in \Z, b_{\s} \textrm{ odd}.
    \end{cases} \end{align*}
\end{prop}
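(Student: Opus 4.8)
The plan is to realise $\Gamma_{\tilde\s}$ as the $v$-face component of the Newton--polytope model of $C_{\tilde\s}$ produced by Theorem \ref{thm::timsmainthm}, and then read off its genus as a count of interior lattice points with integral valuation.

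First I would record the shape of $C_{\tilde\s}$. By Definition \ref{def::cso} its cluster picture consists of the single proper cluster $\Rcal_{\tilde\s}=\{z_\oo:\oo\in\tilde\s\}$, so $C_{\tilde\s}$ is nested, this cluster has size $|\tilde\s|$ and depth $d_{\tilde\s}=d_\s$ (hence denominator $b_\s$), and $|\tilde\s|=2\ssgs+1$ or $2\ssgs+2$; by Remark \ref{rem::Cstilde} also $\nu_{\tilde\s}=\nu_\s$ and $\lambda_{\tilde\s}=\lambda_\s$, and since $\Rcal_{\tilde\s}$ has only singleton children, $\lambda_\s=\lambda_{\tilde\s}=\nu_{\tilde\s}/2$. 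Applying Theorem \ref{thm::timsmainthm} to $C_{\tilde\s}$ (legitimate, as it is nested), the unique $v$-face $F$ of $\Delta_v(C_{\tilde\s})$ — the triangle of Figure \ref{fig::tpgrnp1} if $b_\s\mid|\tilde\s|$ and of Figure \ref{fig::tpgrnp2} if $b_\s\nmid|\tilde\s|$, in either case with $v_\Delta(x,y)=\nu_\s-d_\s x-\tfrac{\nu_\s}{2}y$ — gives the central component $\Gamma_F=\Gamma_{\tilde\s}$ (cf.\ the discussion of Section \ref{subsec::relationtonp}), of genus $|F(\Z)_\Z|$. So it suffices to compute $|F(\Z)_\Z|$.

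The lattice--point count runs as follows. In either triangle the interior meets $\Z^2$ only on the line $y=1$, where the interior lattice points are exactly the $(x,1)$ with $1\le x\le\ssgs$ (compare the bounding edges $2x+|\tilde\s|y=2|\tilde\s|$ and $x=0$ or $2x+y=2$, using $\lceil|\tilde\s|/2\rceil-1=\ssgs$). For such a point $v_\Delta(x,1)=\lambda_\s-d_\s x$, so writing $d_\s=a_\s/b_\s$ in lowest terms,
$$g(\s)=\#\{\,x\in\{1,\dots,\ssgs\}\ :\ \lambda_\s-d_\s x\in\Z\,\}.$$
As $x$ runs over $\Z$ the residue $a_\s x\bmod b_\s$ takes every value, so this set is empty unless $b_\s\lambda_\s\in\Z$, and when $b_\s\lambda_\s\in\Z$ the admissible $x$ form a single residue class modulo $b_\s$; the value of $g(\s)$ is then dictated by which class it is. One now splits into the three regimes of the statement: if $\lambda_\s\in\Z$ the admissible class is $0\bmod b_\s$, giving $\lfloor\ssgs/b_\s\rfloor$; if $\lambda_\s\notin\Z$ one computes $b_\s\lambda_\s\bmod b_\s$ from $\nu_{\tilde\s}=|\tilde\s|d_\s+v_K(c_{f_\s})$ together with the orbit structure of the singletons of $\s$ (Lemma \ref{lem::orbitssizeb}, so $|\tilde\s|\equiv0$ or $1\bmod b_\s$) and Lemma \ref{lem::2bornot2b} (so $e_\s\in\{b_\s,2b_\s\}$), obtaining, according to the parity of $b_\s$, either no admissible $x$ or the single class $b_\s/2\bmod b_\s$; truncating to $\{1,\dots,\ssgs\}$ then yields $0$ and $\lfloor\ssgs/b_\s+\tfrac12\rfloor$ respectively.

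The main obstacle is precisely this last piece of bookkeeping — computing $b_\s\lambda_\s$ modulo $b_\s$, and deciding exactly when $b_\s\lambda_\s\in\Z$ — since it forces one to unwind the definitions of $\nu_{\tilde\s}$ and $\lambda_{\tilde\s}$ and to use the divisibility constraints on $|\tilde\s|$ and on the denominator of $d_\s$ that are imposed by the cluster picture being realisable over $K$. If one prefers to avoid Newton polytopes, an equivalent route applies Riemann--Hurwitz to the degree-$e_\s$ cyclic cover $\Gamma_{\tilde\s,L}\to\Gamma_{\tilde\s}$ (with source of genus $\ssgs$, by Theorem \ref{thm::semistablemodel}), enumerating the small orbits via Lemmas \ref{lem::inftyorbits}, \ref{lem::x0orbits}, and \ref{lem::y0orbits}; this trades the lattice count for an orbit count but leaves the same parity analysis at the heart of the argument.
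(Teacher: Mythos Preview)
Your approach is exactly the paper's: identify $g(\s)$ with $|F(\Z)_\Z|$ for the unique $v$-face $F$ of $\Delta_v(C_{\tilde\s})$ via Theorem \ref{thm::timsmainthm}, observe that the interior lattice points are the $(x,1)$ with $1\le x\le \ssgs$ and $v_\Delta(x,1)=\lambda_\s-d_\s x$, and then split on whether $\lambda_\s\in\Z$. For the step you flag as the ``main obstacle'' the paper does not thread through Lemmas \ref{lem::orbitssizeb} and \ref{lem::2bornot2b} as you suggest; it simply rewrites the condition $\lambda_\s-d_\s x\in\Z$ (when $\lambda_\s\notin\Z$) as $xd_\s\in\tfrac12\Z\setminus\Z$ and reads off the two subcases from the parity of $b_\s$ directly.
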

\begin{proof}
    By Theorem \ref{thm::timsmainthm}, we know $g(\s)$ is given by $|F(\Z)_{\Z}|$. This is the number of interior points with integer valuation of the unique face $F$ of the Newton polytope of $C_{\widetilde{\s}}$. By examining Figure \ref{fig::newtonpolytpgr}, we see that all interior points are of the form $(x,1)$ with $1 \leq x \leq \ssgs$. For such points, $v_{\Delta}(x,1) = \lambda_{\s} - d_{\s}x$. Therefore, $$ g(\s) = \left|\{x : 1 \leq x \leq \ssgs, \lambda_{\s} - xd_{\s} \in \Z\}\right|.$$
    When $\lambda_{\s} \in \Z$ this is therefore equal to 
    $$\left|\{x : 1 \leq x \leq \ssgs, b_{\s} \mid x \}\right|=\left\lfloor \frac{\ssgs}{b_{\s}}\right\rfloor.$$ 
    When $\lambda_{\s}\not\in\Z$, this is equal to 
    $$\left| \left\{ x : 1 \leq x \leq \ssgs, xd_{\s}\in\frac{1}{2}\Z\setminus\Z\right\} \right|.$$
    When $\lambda_{\s}\not\in\Z$ and $b_{\s}$ is odd this set is always empty, and when $\lambda_{\s}\not\in\Z$ and $b_{\s}$ is even it has size $\left\lfloor \frac{\ssgs}{b_{\s}} + \frac{1}{2} \right\rfloor$.
\end{proof}


\begin{lemma}
    \label{lem::degreeofaction}
    Let $C$ be a hyperelliptic curve and let $\s \in \Sigma_{C/K}$ be a principal cluster which is fixed by Galois. Let $L$ be an extension such that $C$ is semistable over $L$, and let $\sigma$ generate $\Gal(L/K)$. Then $\sigma\vert_{\Gamma_{\s,L}} : \Gamma_{\s,L} \rightarrow \Gamma_{\s,L}$ has degree $e_{\s}$.
\end{lemma}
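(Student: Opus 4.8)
The plan is to read off the order of the automorphism $\sigma|_{\Gamma_{\s,L}}$ directly from the explicit description in Theorem~\ref{thm::semistablegaloisaction}, and then recognise the answer as $e_{\s}$ via the arithmetic characterisation of $e_{\s}$ in Definition~\ref{def::esgs}. Throughout I would take $\s$ to be non-\"ubereven, so that $\Gamma_{\s,L}$ is a single irreducible component which, since $\sigma(\s)=\s$, is preserved by $\sigma$ (the \"ubereven case is handled identically on each of $\Gamma_{\s,L}^{\pm}$, or reduces to the non-\"ubereven case for $\sigma^{2}$). First, since $K(\Rcal)/K$ is tame and $\s$ is $\GK$-fixed, Lemma~\ref{lem::orbitssizeb}(i) lets us pick the centre $z_{\s}\in K$, and then $\beta_{\s}(\sigma)=(z_{\sigma(\s)}-\sigma(z_{\s}))/\pi_L^{ed_{\s}}=0$. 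Hence Theorem~\ref{thm::semistablegaloisaction} gives $\sigma|_{\Gamma_{\s,L}}\colon (x,y)\mapsto(\alpha_{\s}(\sigma)x,\gamma_{\s}(\sigma)y)$ with $\alpha_{\s}(\sigma)=\chi(\sigma)^{ed_{\s}}$ and $\gamma_{\s}(\sigma)=\chi(\sigma)^{e\lambda_{\s}}$. Because $L/K$ is totally tamely ramified of degree $e$ and $\sigma$ generates $\Gal(L/K)$, the element $\chi(\sigma)\in k^{\times}$ is a primitive $e$th root of unity, and $ed_{\s}\in\Z$, $e\lambda_{\s}\in\Z$ by the Semistability Criterion~\ref{thm::sscriterion} applied to $\s\in\Sigma_{C/L}$; so composing $n$ times, $\sigma^{n}|_{\Gamma_{\s,L}}\colon (x,y)\mapsto(\chi(\sigma)^{ned_{\s}}x,\chi(\sigma)^{ne\lambda_{\s}}y)$.

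Next I would determine for which $n$ the map $\sigma^{n}|_{\Gamma_{\s,L}}$ is the identity. If $\chi(\sigma)^{ned_{\s}}\neq 1$, it is already non-trivial on the $x$-coordinate. If $\chi(\sigma)^{ned_{\s}}=1$, then $\sigma^{n}|_{\Gamma_{\s,L}}$ is the fibrewise map $(x,y)\mapsto(x,\chi(\sigma)^{ne\lambda_{\s}}y)$ of the double cover $\Gamma_{\s,L}\to\PP^{1}_{x}$; being an automorphism of $\Gamma_{\s,L}\colon y^{2}=c_{\s}\prod_{\oo\in\tilde{\s}}(x-\mathrm{red}_{\s}(\oo))$ forces $\chi(\sigma)^{ne\lambda_{\s}}=\pm 1$, and since $\tilde{\s}\neq\emptyset$ the hyperelliptic involution $y\mapsto -y$ is non-trivial on $\Gamma_{\s,L}$, so the map is the identity precisely when $\chi(\sigma)^{ne\lambda_{\s}}=1$. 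As $\chi(\sigma)$ has order exactly $e$, we get $\chi(\sigma)^{ned_{\s}}=1\iff nd_{\s}\in\Z$ and $\chi(\sigma)^{ne\lambda_{\s}}=1\iff n\lambda_{\s}\in\Z$. Therefore the degree (order) of $\sigma|_{\Gamma_{\s,L}}$ equals the least integer $n\geq 1$ with $nd_{\s}\in\Z$ and $n\lambda_{\s}\in\Z$, which in particular divides $e$, as it must.

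Finally I would match this with $e_{\s}$. By Definition~\ref{def::esgs}, $e_{\s}$ is the least $n\geq 1$ with $nd_{\s}\in\Z$ and $n\nu_{\s}\in 2\Z$. From $\lambda_{\s}=\tfrac{\nu_{\s}}{2}-d_{\s}\sum_{\s'<\s}\lfloor|\s'|/2\rfloor$ (Definition~\ref{def::clusterfunctions2}), whenever $nd_{\s}\in\Z$ one has $n\lambda_{\s}\in\Z\iff \tfrac{n\nu_{\s}}{2}\in\Z\iff n\nu_{\s}\in 2\Z$. Hence $\{\,n\geq 1 : nd_{\s}\in\Z,\ n\lambda_{\s}\in\Z\,\}=\{\,n\geq 1 : nd_{\s}\in\Z,\ n\nu_{\s}\in 2\Z\,\}$, so their least elements agree and $\deg(\sigma|_{\Gamma_{\s,L}})=e_{\s}$. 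The only genuinely delicate step is ruling out that $\sigma^{n}|_{\Gamma_{\s,L}}$ could be the hyperelliptic involution rather than the identity, which is exactly where the non-\"ubereven hypothesis (so $\tilde{\s}\neq\emptyset$) is used; everything else is bookkeeping with roots of unity.
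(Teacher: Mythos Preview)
Your proof is correct and follows essentially the same approach as the paper: write $\sigma|_{\Gamma_{\s,L}}$ explicitly as $(x,y)\mapsto(\chi(\sigma)^{ed_{\s}}x,\chi(\sigma)^{e\lambda_{\s}}y)$ and read off its order from the arithmetic of $d_{\s}$ and $\lambda_{\s}$. You are more careful than the paper, which compresses the argument to two sentences; in particular you make explicit why $\beta_{\s}(\sigma)=0$, why one must rule out the hyperelliptic involution when computing the order, and why the conditions $n\lambda_{\s}\in\Z$ and $n\nu_{\s}\in 2\Z$ coincide once $nd_{\s}\in\Z$.
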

\begin{proof}
    The map $\sigma\vert_{\Gamma_{\s,L}}$ is given by $(x,y) \mapsto (\chi(\sigma)^{ed_{\s}} x, \chi(\sigma)^{e\lambda_{\s}} y)$. The result follows as $e_{\s}$, by definition, is the minimal integer such that $e_{\s}d_{\s},e_{\s}\lambda_{\s} \in \Z$.
\end{proof}

\section{Calculating Linking Chains}
\label{sec::upc}

The minimal SNC model of a general hyperelliptic curve $C/K$ can roughly be described as follows. Each principal cluster of $\Sigma_C$ has one or two central components, and some tails associated to it. These central components are linked by chains of rational curves. Section \ref{sec::tpgr} will allow us to describe these central components and tails, while this section will be used to describe these linking chains. This includes describing any loops. We will also see the simplest example of the general philosophy that the components of the special fibre of the minimal SNC model of $C/K$ associated to a principal cluster $\s$ ``look like'' the special fibre of the minimal SNC model of $C_{\tilde{\s}}/K$.

Throughout the rest of this section we will take $C/K$ to be a hyperelliptic curve such that such that $\Sigma_{C/K}$ consists of exactly two proper clusters: a proper cluster $\s$ and a unique proper child $\s'<\s$. This is illustrated in Figure \ref{fig:uniqpropchild}. Note that $d_{\s'} > d_{\s}$ and $|\s| > |\s'|$. If $C$ is such that $\s$ is even and $|\s| = |\s'| + 1$ then $C/K$ has potentially good reduction, this case is covered in Section \ref{sec::tpgr}. To avoid this case we will assume that if $\s$ is even then $|\s| \geq |\s'| +  2$. Since hyperelliptic curves of this type are nested we can directly apply the methods from \cite{Dok18}. Before we apply Theorem \ref{thm::timsmainthm}, we need to understand the Newton polytope of $C/K$.

\begin{figure}[h]
	\centering
\begin{tikzpicture}
\fill (0,0) circle (1.5pt);
\path (0,0) -- node[auto=false]{\ldots} (1,0);
\fill (1,0) circle (1.5pt) ;

\fill (1.75,0) circle (1.5pt);
\path (1.75,0) -- node[auto=false]{\ldots} (2.75,0);
\fill (2.75,0) circle (1.5pt);

\draw (0.5,0) ellipse (0.8cm and 0.3cm) node[below,  xshift = 0.8cm]{$\s'$} node[above, yshift = 0.1cm, xshift = 0.8cm]{$d_{\s'}$};

\draw (1.3,0) ellipse (2cm and 0.7cm) node[below, yshift = -0.4cm, xshift = 2cm]{$\Rcal=\s$} node[above, yshift = 0.3cm, xshift = 1.8cm]{$d_{\s}$};

\end{tikzpicture}
	\caption{Cluster picture with a parent $\s$ and a unique proper child $\s'$ with no proper children of its own.}
	\label{fig:uniqpropchild}
\end{figure}

\subsection{The Newton polytope}
\label{subsec::upcnetwonpoly}

Without loss of generality, we can assume that the defining equation of $C/K$ will be either
\begin{align}
\label{eq::uniqupropchildcase1}
    y^2 = c_f \prod_{r \in \Rcal \setminus \s'} \left(x - u_r \pi_K^{d_{\s}}\right) \prod_{r \in \s'} \left(x - u_r \pi_K^{d_{\s'}}\right),
\end{align}
or 
\begin{align}
\label{eq::uniqupropchildcase2}
y^2 = c_fx\prod_{r \in \Rcal \setminus \s'} \left(x - u_r \pi_K^{d_{\s}}\right) \prod_{r \in \s'} \left(x - u_r \pi_K^{d_{\s'}}\right).
\end{align}
where the $u_r$ are units. If $C$ has defining equation (\ref{eq::uniqupropchildcase1}), then $\nu_{\s'}=v_K(c_f)+(|\s|-|\s'|)d_{\s}+|\s'|d_{\s'}$, and the Newton polytope $\Delta_v(C)$ of $C$ will be as shown in Figure \ref{fig::newtonpolyuniqpropchild1}.  If instead $C$ has defining equation (\ref{eq::uniqupropchildcase2}), the Newton polytope will be as shown in Figure \ref{fig::newtonpolyuniqpropchild2}. 
\begin{figure}[h]
\begin{adjustwidth}{-0.5cm}{}
\centering
\begin{subfigure}{0.5\textwidth}
  \centering
  \begin{tikzpicture}
        \draw (0,0) -- (4.5,0);
        \draw (0,0) -- (0,1.5);
        \draw (0,1.5) -- (4.5,0);
        \draw (0,1.5) -- (2.25,0);
        \fill (0,0) circle (2pt) node[below, font=\small]{$\nu_{\s'}$} node[right,yshift=0.15cm, color=gray,font=\tiny]{$(0,0)$};
        \fill (2.25,0) circle (2pt) node[below, font=\small]{$\nu_{\s'}-|\s'|d_{\s'}$} node[left, color=gray,font=\tiny, xshift=-0.2cm, yshift=0.15cm]{$(|\s'|,0)$};
        \fill (4.5,0) circle (2pt) node[below, font=\small]{$v_K(c_f)$} node[above, color=gray,font=\tiny]{$(|\s|,0)$};
        \fill (0,1.5) circle (2pt) node[above, font=\small]{$0$} node[right, color=gray,font=\tiny]{$(0,2)$};
        \node[font=\small, color=gray] at (0.6,0.7) {$F_2$};
        \node[font=\small, color=gray] at (2.3,0.45) {$F_1$};
    \end{tikzpicture}
  \caption{if $C$ has defining equation (\ref{eq::uniqupropchildcase1})}
  \label{fig::newtonpolyuniqpropchild1}
\end{subfigure}%
\begin{subfigure}{.5\textwidth}
  \centering
  \begin{tikzpicture}
        \draw (1,0) -- (5.5,0);
        \draw (1,0) -- (0,1.5);
        \draw (0,1.5) -- (5.5,0);
        \draw (0,1.5) -- (3.25,0);
        \fill (1,0) circle (2pt) node[below, font=\small]{$\nu_{\s'}-d_{\s'}$} node[left, color=gray,font=\tiny]{$(1,0)$};
        \fill (3.25,0) circle (2pt) node[below, font=\small]{$\nu_{\s'}-|\s'|d_{\s'}$} node[left, color=gray,font=\tiny, xshift=-0.2cm, yshift=0.15cm]{$(|\s'|,0)$};
        \fill (5.5,0) circle (2pt) node[below, font=\small]{$v_K(c_f)$} node[above, color=gray,font=\tiny]{$(|\s|,0)$};
        \fill (0,1.5) circle (2pt) node[above, font=\small]{$0$} node[right, color=gray,font=\tiny]{$(0,2)$};
        \node[font=\small, color=gray] at (1.3,0.5) {$F_2$};
        \node[font=\small, color=gray] at (3.4,0.3) {$F_1$};
    \end{tikzpicture}
  \caption{if $C$ has defining equation (\ref{eq::uniqupropchildcase2})}
  \label{fig::newtonpolyuniqpropchild2}
\end{subfigure}
\caption{Newton polytope $\Delta_v(C)$ of $C$.}
\label{fig::newtonpolyuniqpropchild}
\end{adjustwidth}
\end{figure}

\vspace{-10px}
\begin{lemma}
\label{lem::corofnewpolys1}
    Let $C$ have  Newton polytope as in Figure \ref{fig::newtonpolyuniqpropchild1}. Then there is an isomorphism $\psi:\overline{F_1}\to \Delta_v(C_{\tilde{\s}}),$ from the closure of the $v$-face marked $F_1$ to the Newton polytope of $C_{\tilde{\s}}$ (whose only $v$-face we label $F_{\tilde{\s}}$), shown in Figure \ref{fig::newtonpolyuniqpropchild3}. In particular $\psi$ preserves valuations and $\delta_{F_1}=\delta_{F_{\tilde{\s}}}$. In this sense we say that $F_1$ \emph{corresponds} to the cluster $\s$. Similarly the $v$-face $F_2$ in Figure \ref{fig::newtonpolyuniqpropchild1} corresponds to $\s'$.
\end{lemma}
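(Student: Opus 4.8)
The plan is to verify the statement by writing down $\Delta_v(C_{\tilde{\s}})$ explicitly and exhibiting $\psi$ as an affine automorphism of $\Z^2$ that carries $\overline{F_1}$ onto it. Since here $\s = \Rcal$, the product $\prod_{r\notin\s}(z_{\s}-r)$ defining $c_{f_{\s}}$ is empty, so $c_{f_{\s}} = c_f$; moreover $C_{\tilde{\s}}$ has a single proper cluster $\Rcal_{\tilde{\s}}$, with $d_{\tilde{\s}} = d_{\s}$ by Remark \ref{rem::Cstilde}, hence $C_{\tilde{\s}}$ has tame potentially good reduction and, by Section \ref{subsec::relationtonp}, its Newton polytope is one of the two triangles of Figure \ref{fig::newtonpolytpgr}. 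Two cases arise, according to whether $\s'$ is an odd child of $\s$. If $|\s'|$ is even then $\tilde{\s} = \singletonsofs$, so $|\tilde{\s}| = |\s|-|\s'|$ and $0\notin\Rcal_{\tilde{\s}}$; if $|\s'|$ is odd then $\tilde{\s} = \singletonsofs\cup\{\s'\}$, and choosing the $\GK$-stable centre $z_{\s'}=0$ gives $|\tilde{\s}| = |\s|-|\s'|+1$ with $0\in\Rcal_{\tilde{\s}}$. As $\Rcal_{\tilde{\s}}$ is the top cluster of $\Sigma_{\tilde{\s}}$, Definition \ref{def::clusterfunctions1} gives $\nu_{\tilde{\s}} = v_K(c_f) + |\tilde{\s}|d_{\s}$, which pins down the vertices and heights of $\Delta_v(C_{\tilde{\s}})$ in each case.

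Next I would compare these with the three vertices of $\overline{F_1}$, which by Figure \ref{fig::newtonpolyuniqpropchild1} are $(|\s'|,0)$ at height $\nu_{\s'}-|\s'|d_{\s'}$, $(|\s|,0)$ at height $v_K(c_f)$ and $(0,2)$ at height $0$. The only identity needed is $\nu_{\s'}-|\s'|d_{\s'} = v_K(c_f) + (|\s|-|\s'|)d_{\s}$, immediate from $\nu_{\s'} = v_K(c_f) + (|\s|-|\s'|)d_{\s} + |\s'|d_{\s'}$; in both parity cases this common value equals the height of the left-most vertex of $\Delta_v(C_{\tilde{\s}})$ (the vertex $(0,0)$ when $|\s'|$ is even, $(1,0)$ when $|\s'|$ is odd). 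With heights matched, I would set $\psi(i,j) = (i + \lfloor |\s'|/2\rfloor(j-2),\, j)$; a short check shows that in both cases $\psi$ sends the three vertices of $\overline{F_1}$ to those of $\Delta_v(C_{\tilde{\s}})$, preserving heights. Since $\lfloor |\s'|/2\rfloor\in\Z$ and the linear part of $\psi$ has determinant $1$, $\psi$ is an affine automorphism of $\Z^2$; being affine it carries the triangle $\overline{F_1}$ onto the triangle $\Delta_v(C_{\tilde{\s}})$ and restricts to a bijection on lattice points.

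To conclude, I would invoke that $v_{\Delta(C)}$ is affine on the $v$-face $\overline{F_1}$ and $v_{\Delta(C_{\tilde{\s}})}$ is affine on its unique $v$-face: two affine functions agreeing at the three affinely independent vertices of a non-degenerate triangle agree on the whole triangle, so $v_{\Delta(C_{\tilde{\s}})}\circ\psi = v_{\Delta(C)}$ on $\overline{F_1}$. Hence $\psi$ is the required valuation-preserving isomorphism, and since it restricts to a $v_{\Delta}$-preserving bijection $\overline{F_1}(\Z)\to\overline{F_{\tilde{\s}}}(\Z)$ the two sets of attained values coincide, giving $\delta_{F_1} = \delta_{F_{\tilde{\s}}}$. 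The last sentence of the lemma follows from the same computation with the identity map: when $\ssg{\s'} > 0$ (so $\s'$ is principal and $C_{\tilde{\s'}}$ is defined) one checks, using $\nu_{\tilde{\s'}} = \nu_{\s'}$ and $v_K(c_{f_{\s'}}) = \nu_{\s'}-|\s'|d_{\s'}$, that $\Delta_v(C_{\tilde{\s'}})$ has exactly the vertices and heights of $\overline{F_2}$, so $F_2$ corresponds to $\s'$ and $\delta_{F_2} = \delta_{F_{\tilde{\s'}}}$; when $\ssg{\s'} = 0$ the correspondence is merely a bookkeeping label. None of this is deep — it is polytope bookkeeping — and the only point needing care is keeping the two parity cases straight and confirming that $\psi$ genuinely lies in the affine integral group (integer coefficients, unit determinant), since that is precisely what licenses transporting the invariant $\delta_F$.
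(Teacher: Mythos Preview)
Your proof is correct and follows essentially the same approach as the paper: both exhibit an explicit integral affine map $\psi$ carrying $\overline{F_1}$ onto $\Delta_v(C_{\tilde{\s}})$, verify it at the three vertices, and use the identity for $F_2$. Your unified formula $\psi(i,j)=(i+\lfloor |\s'|/2\rfloor(j-2),\,j)$ is a slight improvement over the paper's case split (and is in fact the correct map in the odd case, where the paper's displayed coefficient $\frac{|\s'|+1}{2}$ should read $\frac{|\s'|-1}{2}$); your remark that $\psi$ has unit determinant and hence induces a bijection on lattice points is exactly what justifies $\delta_{F_1}=\delta_{F_{\tilde{\s}}}$.
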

\begin{proof}[Proof of Lemma \ref{lem::corofnewpolys1}]
Let us compare the $v$-face $F_1$ in Figure \ref{fig::newtonpolyuniqpropchild1} to the Newton polytope, $\Delta_v(C_{\tilde{\s}})$, of $C_{\tilde{\s}}$. This is given in Figure \ref{fig::newtonpolyuniqpropchild3a} if $\s'$ is even, and given in Figure \ref{fig::newtonpolyuniqpropchild3b} if $\s'$ is odd.
    
If $\s'$ is even we can define $$\psi:\overline{F_1}\to \Delta_v(C_{\tilde{\s}}):(x,y)\mapsto\left(x-\frac{|\s'|}{2}(2-y),y\right).$$ It is easy to show that this is an isomorphism, and that the valuations are preserved. Similarly if $\s'$ is odd we can define $$\psi:\overline{F_1}\to \Delta_v(C_{\tilde{\s}}):(x,y)\mapsto\left(x-\frac{(|\s'|+1)}{2}(2-y),y\right),$$ which is also an isomorphism that preserves the valuations. In particular, in both cases we have $\delta_{F_1} = \delta_{F_{\tilde{\s}}}$, and if $v_1$ is the unique affine function agreeing with $v_{\Delta(C)}$ on $F_1$, then $v_1(x,y)=v_{\Delta_{\tilde{\s}}}(\psi(x,y))$, where $v_{\Delta_{\tilde{\s}}}=v_{\Delta(C_{\tilde{\s}})}$.

\begin{figure}[ht]
\begin{adjustwidth}{-0.5cm}{}
\centering
\begin{subfigure}{.5\textwidth}
  \centering
  \begin{tikzpicture}
        \draw (0,0) -- (3.3,0);
        \draw (0,0) -- (0,1.3);
        \draw (0,1.3) -- (3.3,0);
        \fill (0,0) circle (2pt) node[below, font=\small]{$\nu_{\s'}-|\s'|d_{\s'}$} node[left, color=gray,font=\tiny]{$(0,0)$};
        \fill (3.3,0) circle (2pt) node[below, font=\small]{$v_K(c_{f})$} node[above, color=gray,font=\tiny,xshift=0.3cm]{$(|\s|-|\s'|,0)$};
        \fill (0,1.3) circle (2pt) node[above, font=\small]{$0$} node[left, color=gray,font=\tiny]{$(0,2)$};
        \node[font=\small, color=gray] at (1,0.4) {$F_{\tilde{\s}}$};
    \end{tikzpicture}
  \caption{if $\s'$ is even}
  \label{fig::newtonpolyuniqpropchild3a}
\end{subfigure}%
\begin{subfigure}{.5\textwidth}
  \centering
  \begin{tikzpicture}
        \draw (0.6,0) -- (4,0);
        \draw (0.6,0) -- (0,1.3);
        \draw (0,1.3) -- (4,0);
        \fill (0.6,0) circle (2pt) node[below, font=\small]{$\nu_{\s'}-|\s'|d_{\s'}$} node[left, color=gray,font=\tiny]{$(1,0)$};
        \fill (4,0) circle (2pt) node[below, font=\small]{$v_K(c_{f})$} node[above, color=gray,font=\tiny,xshift=0.4cm]{$(|\s|-|\s'|+1,0)$};
        \fill (0,1.3) circle (2pt) node[above, font=\small]{$0$} node[left, color=gray,font=\tiny]{$(0,2)$};
        \node[font=\small, color=gray] at (1.5,0.4) {$F_{\tilde{\s}}$};
    \end{tikzpicture}
  \caption{if $\s'$ is odd}
  \label{fig::newtonpolyuniqpropchild3b}
\end{subfigure}
\caption{Newton polytope $\Delta_v(C_{\tilde{\s}})$ of $C_{\tilde{\s}}$,where $C$ is given by either defining equation (\ref{eq::uniqupropchildcase1}), or (\ref{eq::uniqupropchildcase2}).}
\label{fig::newtonpolyuniqpropchild3}
\end{adjustwidth}
\end{figure}

Similarly, we can see that the $v$-face $F_2$ in Figure \ref{fig::newtonpolyuniqpropchild1} corresponds to $\s'$ by considering the Newton polytope $\Delta_v(C_{\tilde{\s'}})$ of $C_{\tilde{\s'}}$. This is shown in Figure \ref{fig::newtonpolyuniqpropchild5}. We see that the map $$\overline{F_2}\to\Delta_v(C_{\tilde{\s'}}):(x,y)\mapsto(x,y)$$ is an isomorphism that preserves the valuations, that is $v_2(x,y) = v_{\Delta(C_{\tilde{\s'}})}(x,y)$, and $\delta_{F_2} = \delta_{F_{\tilde{\s'}}}$, where $v_2$ is the unique affine function agreeing with $v_{\Delta(C)}$ on $F_2$.

\begin{figure}[h]
    \centering
    \begin{tikzpicture}
        \draw (0,0) -- (5,0);
        \draw (0,0) -- (0,1.2);
        \draw (0,1.2) -- (5,0);
        \fill (0,0) circle (2pt) node[below, font=\small]{$\nu_{\s'}=v_K(c_f)+(|\s|-|\s'|)d_{\s}+|\s'|d_{\s'}$} node[left, color=gray,font=\tiny]{$(0,0)$};
        \fill (5,0) circle (2pt) node[below, font=\small]{$\nu_{\s'}-|\s'|d_{\s'}$} node[right, color=gray,font=\tiny]{$(|\s'|,0)$};
        \fill (0,1.2) circle (2pt) node[above, font=\small]{$0$} node[left, color=gray,font=\tiny]{$(0,2)$};
        \node[font=\small, color=gray] at (1.3,0.5) {$F_{\tilde{\s'}}$};
    \end{tikzpicture}
    \caption{Newton polytope $\Delta_v(C_{\tilde{\s'}})$ of $C_{\tilde{\s'}}$, where $C$ has defining equation (\ref{eq::uniqupropchildcase1}).}
    \label{fig::newtonpolyuniqpropchild5}
\end{figure}
\end{proof}

We can make a similar comparison of the $v$-faces of the Newton polytope in Figure \ref{fig::newtonpolyuniqpropchild2}. 

\begin{lemma}
\label{lem:corofnetwonpolys2}
    Let $C$ have Newton polytope as in Figure \ref{fig::newtonpolyuniqpropchild2}. Then the $v$-face marked $F_1$ in Figure \ref{fig::newtonpolyuniqpropchild2} corresponds to the cluster $\s$. That is there is a valuation preserving isomorphism between $\overline{F_1}$ and $\Delta_v(C_{\tilde{\s}})$, and $\delta_{F_1}=\delta_{F_{\tilde{\s}}}$, where $F_{\tilde{\s}}$ is the unique $v$-face of $\Delta_v(C_{\tilde{\s}})$. Similarly the $v$-face marked $F_2$ on the Newton polytope in Figure \ref{fig::newtonpolyuniqpropchild2} corresponds to the cluster $\s'$.
\end{lemma}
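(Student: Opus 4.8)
The plan is to repeat the proof of Lemma \ref{lem::corofnewpolys1} almost verbatim, the only difference being that the root of $f$ at the origin now lies in $\s'$ instead of being absent. First I would observe that the two Newton polytopes in Figure \ref{fig::newtonpolyuniqpropchild} differ only in a neighbourhood of the $y$-axis: passing from (\ref{eq::uniqupropchildcase1}) to (\ref{eq::uniqupropchildcase2}) moves the bottom-left vertex from $(0,0)$ to $(1,0)$, which alters the $v$-face $F_2$ but leaves the $v$-face $F_1$ --- the triangle with vertices $(|\s'|,0)$, $(|\s|,0)$, $(0,2)$ --- together with the restriction of $v_{\Delta(C)}$ to it, exactly as in Figure \ref{fig::newtonpolyuniqpropchild1}. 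Hence the shear map $\psi$ constructed in the proof of Lemma \ref{lem::corofnewpolys1} (depending on the parity of $\s'$) is again a valuation-preserving isomorphism $\overline{F_1}\to\Delta_v(C_{\tilde\s})$, giving $\delta_{F_1}=\delta_{F_{\tilde\s}}$ and the asserted correspondence of $F_1$ with $\s$.

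For the $v$-face $F_2$ I would first identify $\Delta_v(C_{\tilde{\s'}})$. Since $C$ has defining equation (\ref{eq::uniqupropchildcase2}) and Newton polytope as in Figure \ref{fig::newtonpolyuniqpropchild2}, the root of $f$ at the origin is one of the roots of $\s'$; indeed, were it instead a singleton of $\Rcal$, the coefficient of $x$ in $f$ would have valuation $v_K(c_f)+(|\s|-1)d_\s$, which differs from $\nu_{\s'}-d_{\s'}$, contradicting the shape of the polytope. Choosing the centre $z_{\s'}=0$, the curve $C_{\tilde{\s'}}$ then also has a root at the origin, so its unique $v$-face $F_{\tilde{\s'}}$ is the triangle with vertices $(1,0)$, $(|\s'|,0)$, $(0,2)$. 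A short check of the relevant coefficients, using $v_K(c_{f_{\s'}})=v_K(c_f)+\sum_{r\not\in\s'}v_K(r)=\nu_{\s'}-|\s'|d_{\s'}$, shows that $v_{\Delta(C_{\tilde{\s'}})}$ takes the values $\nu_{\s'}-d_{\s'}$, $\nu_{\s'}-|\s'|d_{\s'}$, $0$ at these three vertices, which is precisely the restriction of $v_{\Delta(C)}$ to $\overline{F_2}$ read off from Figure \ref{fig::newtonpolyuniqpropchild2}. Therefore the identity map $(x,y)\mapsto(x,y)$ is a valuation-preserving isomorphism $\overline{F_2}\to\Delta_v(C_{\tilde{\s'}})$, so $\delta_{F_2}=\delta_{F_{\tilde{\s'}}}$ and $F_2$ corresponds to $\s'$.

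There is no genuine obstacle here: the argument is pure affine-geometry bookkeeping with vertices and valuations, and it transfers line by line from the proof of Lemma \ref{lem::corofnewpolys1}. The only point requiring a little care is the discussion of the origin above --- confirming that, under the hypothesis that the Newton polytope is the one in Figure \ref{fig::newtonpolyuniqpropchild2}, the distinguished root (the $x$-factor in (\ref{eq::uniqupropchildcase2})) lies in $\s'$, so that $C_{\tilde{\s'}}$ inherits the matching root at the origin. Once that is pinned down, the two valuation-preserving isomorphisms complete the proof.
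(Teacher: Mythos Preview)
Your proposal is correct and follows essentially the same approach as the paper: for $F_1$ you reuse the shear $\psi$ from Lemma~\ref{lem::corofnewpolys1}, and for $F_2$ you use the identity map $(x,y)\mapsto(x,y)$ onto $\Delta_v(C_{\tilde{\s'}})$, exactly as the paper does (with the target polytope displayed in its Figure~\ref{fig::newtonpolyuniqpropchild6}). Your extra paragraph verifying that the root at the origin lies in $\s'$ is a point the paper leaves implicit, so your write-up is, if anything, slightly more careful.
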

\begin{proof}
    Follows by a similar argument to Lemma \ref{lem::corofnewpolys1}.
\end{proof}
    \vspace{-10px}
    \begin{figure}[ht]
    \centering
    \begin{tikzpicture}
        \draw (1,0) -- (6,0);
        \draw (1,0) -- (0.4,1.2);
        \draw (0.4,1.2) -- (6,0);
        \fill (1,0) circle (2pt) node[below, font=\small]{$\nu_{\s'}-d_{\s'}=v_K(c_f)+(|\s'|-|\s|)d_{\s}+(|\s'|-1)d_{\s'}$} node[left, color=gray,font=\tiny]{$(1,0)$};
        \fill (6,0) circle (2pt) node[below, font=\small]{$\nu_{\s'}-|\s'|d_{\s'}$} node[right, color=gray,font=\tiny]{$(|\s'|,0)$};
        \fill (0.4,1.2) circle (2pt) node[above, font=\small]{$0$} node[left, color=gray,font=\tiny]{$(0,2)$};
        \node[font=\small, color=gray] at (1.8,0.45) {$F_{\tilde{\s'}}$};
    \end{tikzpicture}
    \caption{Newton polytope $\Delta_v(C_{\tilde{\s'}})$ of $C_{\tilde{\s'}}$, where $C$ has defining equation (\ref{eq::uniqupropchildcase2}).}
    \label{fig::newtonpolyuniqpropchild6}
\end{figure}

\subsection{Structure of the SNC Model}
\label{subsec::upcSNC}

The following theorem describes the structure of the special fibre of the minimal SNC model for hyperelliptic curves whose cluster picture looks like Figure \ref{fig:uniqpropchild}.

\begin{thm}
\label{thm:uniquepropchild}
    Let $C/K$ be a hyperelliptic curve with cluster picture as in Figure \ref{fig:uniqpropchild}. If $\s$ is principal, then the special fibre of the minimal SNC model has a component $\Gamma_{\s,K}$ arising from $\s$ with multiplicity $e_{\s}$ and genus $g(\s)$. If $\s'$ is principal then there is a component $\Gamma_{\s',K}$ arising from $\s'$ of multiplicity $e_{\s'}$ and genus $g(\s')$. These are linked by sloped chain(s) of rational curves with parameters $(t_2,t_2 + \delta, \mu)$, which are described in the following table:
    \begin{center}
        \begin{tabular}{|c|c|c|c|c|c|c|}
        \hline
             Name & From & To & $t_2$ & $\delta$ & $\mu$ & Conditions\\ \hline
             $L_{\s,\s'}$ & $\Gamma_{\s}$ & $\Gamma_{\s'}$ & $-\lambda_{\s}$ & $\delta_{\s'}/2$ & $1$ & $\s$ principal, $\s'$ odd, principal \\ \hline
             $L_{\s,\s'}^+$ & $\Gamma_{\s}$ & $\Gamma_{\s'}$ & $-d_{\s}$ & $\delta_{\s'}$ & $1$ & $\s$ principal, $\s'$ even, principal, $\epsilon_{\s'} = 1$ \\ \hline
             $L_{\s,\s'}^-$ & $\Gamma_{\s}$ & $\Gamma_{\s'}$ & $-d_{\s}$ & $\delta_{\s'}$ & $1$ & $\s$ principal $\s'$ even, principal, $\epsilon_{\s'} = 1$ \\ \hline
             $L_{\s,\s'}$ & $\Gamma_{\s}$ & $\Gamma_{\s'}$ & $-d_{\s}$ & $\delta_{\s'}$ & $2$ & $\s$ principal, $\s'$ even, principal, $\epsilon_{\s'} = -1$ \\ \hline
             $L_{\s'}$ & $\Gamma_{\s}$ & $\Gamma_{\s}$ &$-d_{\s}$ & $2\delta_{\s'}$ & $1$ & $\s$ principal, $\s'$ twin, $\epsilon_{\s'} = 1$ \\ \hline
             $T_{\s'}$ & $\Gamma_{\s}$ & - & $-d_{\s}$ & $\delta_{\s'} + \frac{1}{2}$ & $2$ & $\s$ principal, $\s'$ twin, $\epsilon_{\s'} = -1$ \\ \hline
             $L_{\s'}$ & $\Gamma_{\s'}$ & $\Gamma_{\s'}$ & $-d_{\s}$ & $2\delta_{\s'}$ & $1$ & $\s$ cotwin, $v_K(c_f) \in 2\Z$ \\ \hline
             $T_{\s'}$ & $\Gamma_{\s'}$ & - & $-d_{\s}$ & $\delta_{\s'} + \frac{1}{2}$ & $2$ & $\s$ cotwin, $v_K(c_f) \not\in 2\Z$ \\ \hline
        \end{tabular}
    \end{center} 
    The chains where the ``To'' column has been left empty are crossed tails with crosses of multiplicity $1$. If $\s$ is principal and $e_{\s} > 1$ then $\Gamma_{\s}$ has the following tails with parameters $(t_1,\mu)$:
    \begin{center}
    \begin{tabular}{|c|c|c|c|c|}
    \hline 
        Name & Number & $t_1$ & $\mu$ & Condition \\ \hline
        $T_{\infty}$ & $1$ & $(g(\s) + 1)d_{\s}-\lambda_{\s}$ & $1$ & $\s$ odd \\\hline
        $T_{\infty}^{\pm}$ & $2$ & $-d_{\s}$ & $1$ &  $\s$ even and $\epsilon_{\s} = 1$ \\ \hline
        $T_{\infty}$ & $1$ & $-d_{\s}$ & $2$ &  $\s$ even, $\epsilon_{\s}=-1$ and $e_{\s} > 2$ \\\hline
        $T_{y=0}$ & $|\singletonsofs|/b_{\s}$ & $-\lambda_{\s}$ & $b_{\s}$ & $e_{\s} = 2b_{\s}$ \\ \hline
    \end{tabular}
    \end{center}
    
    If $\s'$ is principal and $e_{\s'} > 1$ then $\Gamma_{\s'}$ has the following tails with parameters $(t_1,\mu)$:
    \begin{center}
    \begin{tabular}{|c|c|c|c|c|}
         \hline
         Name & Number & $t_1$ & $\mu$ & Condition \\ \hline
         $T_{y=0}$ & $|\singletonsofs'|/b_{\s'}$ & $-\lambda_{\s'}$ & $b_{\s'}$ &  $e_{\s'} = 2b_{\s'}$  \\ \hline
         $T_{x=0}$ & $1$ & $-d_{\s'}$ & $2$ & $b_{\s'} \mid |\s'|$, $\lambda_{\s'} \not \in \Z$ and $e_{\s'} > 2$\\ \hline
         $T_{x=0}^{\pm}$ & $2$ & $-d_{\s'}$ & $1$ & $b_{\s'} \mid |\s'|$, $\lambda_{\s'} \in \Z$ \\ \hline
         $T_{(0,0)}$ & $1$ & $-\lambda_{\s'}$ & $1$ & $b_{\s'} \nmid |\s'|$ \\ \hline
    \end{tabular}
    \end{center} 
\end{thm}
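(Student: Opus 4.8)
The plan is to apply Dokchitser's Newton polytope machinery (Theorem \ref{thm::timsmainthm}) to $C$: the cluster picture of Figure \ref{fig:uniqpropchild} is nested, and §\ref{subsec::upcnetwonpoly} already records the two possible shapes of $\Delta_v(C)$, each with exactly two $v$-faces $F_1, F_2$ and a distinguished inner $v$-edge $L$ between them (plus three outer $v$-edges on each face). The whole theorem is then a matter of reading off (i) the faces (central components), (ii) the outer edges (tails) and (iii) the inner edge (linking chains) from Theorem \ref{thm::timsmainthm}, translating the Newton-polytope data back into cluster data.

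\textbf{Central components.} By Lemmas \ref{lem::corofnewpolys1} and \ref{lem:corofnetwonpolys2}, when $\s$ (resp. $\s'$) is principal the face $F_1$ (resp. $F_2$) is valuation-preservingly isomorphic to the unique $v$-face of $\Delta_v(C_{\tilde\s})$ (resp. $\Delta_v(C_{\tilde{\s'}})$), with $\delta_{F_1}=\delta_{F_{\tilde\s}}$. Since $C_{\tilde\s}$ has tame potentially good reduction, the lemma in §\ref{subsec::relationtonp} computing $\delta_F=e$ identifies this with $e_\s$, and Proposition \ref{prop::genusformula} identifies $|F_1(\Z)_\Z| = g(\s)$; so $\Gamma_\s=\Gamma_{F_1}$ has multiplicity $e_\s$ and genus $g(\s)$, and similarly for $\Gamma_{\s'}$. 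When $\s$ is a cotwin it is not principal and $F_1$ degenerates, leaving only $\Gamma_{\s'}$; that case is handled by treating the remaining edge directly as in the twin analysis below.

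\textbf{Tails.} The isomorphism $\psi\colon\overline{F_1}\to\Delta_v(C_{\tilde\s})$ carries the outer edges of $F_1$ to the outer edges of $\Delta_v(C_{\tilde\s})$ preserving slopes and denominators, so Lemmas \ref{lem::inftytent}, \ref{lem::yiszero} (and their companions) give precisely the $T_\infty$-, $T_\infty^\pm$- and $T_{y=0}$-tails of $\Gamma_\s$ listed in the table; the edge $L$ is \emph{inner} (it bounds both $F_1$ and $F_2$), hence contributes no tail to $\Gamma_\s$, which is why no $x=0$ or $(0,0)$ tail appears there. For $\Gamma_{\s'}$ the three outer edges of $F_2$ match those of $\Delta_v(C_{\tilde{\s'}})$ verbatim, and $C_{\tilde{\s'}}$ itself has potentially good reduction, so Theorem \ref{thm::tpgr} returns the full four-line list of its tails. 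Passing from the $C_\Delta$-model to the minimal SNC model only collapses $(-1)$-curves inside chains (Remark \ref{rem::timsmainrem}); $\Gamma_\s$ and $\Gamma_{\s'}$ are never blown down, since if, say, $\Gamma_\s$ were a rational $(-1)$-curve it would meet the rest of $\X_k$ in at most two points, contradicting the Riemann--Hurwitz count used in the proof of Theorem \ref{thm::tpgr} whenever $e_\s>1$ (and when $e_\s=1$ it carries all of $\ssgs$).

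\textbf{Linking chains.} This is the genuinely new part. For the inner edge $L$ one computes its denominator $\delta_L$ and, using the affine functions $v_1,v_2$ agreeing with $v_{\Delta(C)}$ on $F_1,F_2$, its slopes $[s_1^L,s_2^L]$ via Definition \ref{def:slope}; a direct calculation with the endpoint coordinates (which involve $d_\s,\lambda_\s,d_{\s'},\lambda_{\s'}$) gives $\delta_L=\mu$ and $s_1^L=\mu t_1$, $s_2^L=\mu t_2$ with $(t_2,t_1,\mu)$ exactly the parameters in the table, the offset $\delta=t_1-t_2$ being $\delta_{\s'}$, $\delta_{\s'}/2$, $2\delta_{\s'}$ or $\delta_{\s'}+\tfrac1\mu$ according to the case. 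The number of chains is $|\overline{L}(\Z)_\Z|-1$: the endpoints of $L$ have integral valuation, and the only other candidate interior lattice point is $(|\s'|/2,1)$, whose valuation is integral exactly when $|\s'|$ is even and a parity condition on $v_K(c_f)+\sum_{r\notin\s'}v_K(z_{\s'}-r)$ holds — which by Remark \ref{rem::epsilon} is precisely $\epsilon_{\s'}=1$. This produces the split into a single $L_{\s,\s'}$ versus the pair $L_{\s,\s'}^\pm$. When $\s'$ is a twin (not principal, $F_2$ collapsed) $L$ becomes an outer edge of $F_1$, and the same slope data together with Remark \ref{rem::timsmainrem} yields a loop $L_{\s'}$ from $\Gamma_\s$ to itself when $\epsilon_{\s'}=1$ and a crossed tail $T_{\s'}$ with crosses of multiplicity $\tfrac{\mu}{2}=1$ when $\epsilon_{\s'}=-1$; the cotwin rows are the mirror of this with $\Gamma_{\s'}$ replacing $\Gamma_\s$ and the parity of $v_K(c_f)$ playing the role of $\epsilon_{\s'}$. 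Finally each chain is recorded as a sloped chain in the sense of Definition \ref{def::linkingchainsections} with the stated parameters.

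\textbf{Main obstacle.} The delicate step is the bookkeeping of the inner-edge continued-fraction data uniformly across all cases — parity of $|\s'|$, sign of $\epsilon_{\s'}$, twin versus cotwin degeneration — and in particular matching the abstract invariant $\epsilon_{\s'}$ (defined through $\theta_{\s^*}$) with the concrete integrality of $v_{\Delta(C)}$ at $(|\s'|/2,1)$. Once the slopes of $L$ are pinned down, the multiplicities, lengths and genera follow mechanically from Theorems \ref{thm::timsmainthm} and \ref{thm::tpgr} and Proposition \ref{prop::genusformula}.
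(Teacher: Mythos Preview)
Your approach is essentially the paper's: apply Theorem \ref{thm::timsmainthm} to the nested curve, identify the two faces with the Newton polytopes of $C_{\tilde\s}$ and $C_{\tilde{\s'}}$ via Lemmas \ref{lem::corofnewpolys1}--\ref{lem:corofnetwonpolys2}, read off the tails from those identifications, and compute the inner-edge slopes directly (your slope computation is Lemma \ref{lem::slopeslemma}). The central-component and tail paragraphs are correct, modulo a slip: $F_2$ has only \emph{two} outer edges, not three --- the hypotenuse is the inner edge $L$ --- so only the $x$-axis edge and the left edge of $F_2$ carry over to outer edges of $\Delta_v(C_{\tilde{\s'}})$, which is why the $\Gamma_{\s'}$ table has no $\infty$-row.

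There is, however, a genuine gap in your twin/cotwin analysis. When $\s'$ is a twin, $F_2$ does \emph{not} collapse and $L$ does \emph{not} become an outer edge of $F_1$: the polytope is still the two-face picture of Figure \ref{fig::newtonpolyuniqpropchild}, just with $|\s'|=2$. An outer edge in Dokchitser's framework can only produce tails, never loops, so your stated mechanism cannot yield $L_{\s'}$. The correct argument (which the paper gives) is that $F_2$ still contributes a component $\Gamma_{F_2}$. If $\epsilon_{\s'}=1$ then $|\overline{L}(\Z)_\Z|=3$, so $L$ gives \emph{two} linking chains from $\Gamma_\s$ to $\Gamma_{F_2}$; but $\Gamma_{F_2}$ is a rational $(-1)$-curve (this uses \cite[Proposition~5.2]{Dok18}), and blowing it down glues the two chains into the loop $L_{\s'}$. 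If $\epsilon_{\s'}=-1$ then $|\overline{L}(\Z)_\Z|=2$, so $L$ gives a single chain to $\Gamma_{F_2}$, while the outer edge of $F_2$ from $(0,0)$ to $(0,2)$ contributes two further multiplicity-$1$ curves meeting $\Gamma_{F_2}$; now $\Gamma_{F_2}$ meets three other components, hence is not exceptional, and the whole configuration is the crossed tail $T_{\s'}$ with $\Gamma_{F_2}$ as its last component and those two extra curves as the crosses. The cotwin case is symmetric. Without this blow-down/non-blow-down step you have no way to produce the loop or to explain where the crosses come from.
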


\begin{remark}
    For this particular type of hyperelliptic curve, $\s$ will be principal unless it is a cotwin (i.e. if $|\s'| = 2g(C)$), and $\s'$ will be principal unless it is a twin. Since we have assumed that $g \geq 2$, these cases cannot coincide. Note that neither $\s$ nor $\s'$ can be \"ubereven in this case.
\end{remark}

\begin{remark}
    \label{rem::upccorrespondence}
    Suppose $\s$ is principal. In $\X_k$ we can see most of the components of $\X_{\widetilde{\s},k}$. The central component $\Gamma_{\s}$ will have the same multiplicity and genus as $\Gamma_{\widetilde{\s}}$, and will have almost the same tails. The only difference being that one or two of the tails (the $(0,0)$-tail in the case $\s'$ is odd and the $(x=0)$-tail(s) otherwise) will instead form either part of a linking chain between $\Gamma_{\s}$ and $\Gamma_{\s'}$ (in the case $\s'$ principal); or a loop or a crossed tail associated to $\s'$ (in the case where $\s'$ is a twin). We will say that the downhill section of the linking chain \textit{corresponds} to this tail. If the linking chain, loop or crossed tail in $\X_k$ has a non-trivial level section, then all the components of the tails in $\X_{\s,k}$ appear in the linking chain(s) in $\X_k$. If the level section has length zero then some of the lower multiplicity components do not appear - we expand on this in Section \ref{subsec:upcsmalldistance}.
    
    Similarly, if $\s'$ is principal, we see most of the components of $\X_{\widetilde{\s'},k}$ in $\X_k$. In this case, $\Gamma_{\s'}$ has the same tails as $\Gamma_{\widetilde{\s'}}$ except that the infinity tail(s) of the latter are absorbed into the linking chain(s) $L_{\s,\s'}$ (or the loop or crossed tail arising from $\s$ if it is a cotwin). In this case, we say that the uphill section of the linking chain \textit{corresponds} to the infinity tail in $\X_{\widetilde{\s'},k}$. We shall see that this is a phenomenon which generalises to the main theorems in Section \ref{sec::mainthm}.
\end{remark}

\begin{remark}
    The length of the level section of a linking chain, loop or crossed tail $\Ccal \subseteq \X_k$ (that is, the number of $\PP^1$s with multiplicity $\mu$) is equal to $\left|\left(\mu t_2, \mu(t_2 + \delta) \right) \cap \Z\right|$. Let $\Y$ be the minimal regular model of $C$ over $L$, $q:\Y \rightarrow \ZZ$ be the quotient by $\Gal(L/K)$ and $\phi : \X\to\ZZ$ the resolution of singularities. Then any irreducible component $E$ in the level section of $\Ccal$ is \textit{not} an exceptional divisor - that is to say, it is the image of $\mu$ components of $\Y_k$ which are permuted by $\Gal(L/K)$. This can be seen by looking at the explicit automorphisms on the components of $\Y$ given in \cite[Theorem~6.2]{DDMM18}.
\end{remark} 

\begin{eg}
\label{eg::compareegs}
Consider the hyperelliptic curve $C:y^2=(x^2-p)(x^3-p^5)$ over $K=\Qur$. The special fibre of the minimal SNC model of $C/K$ can be seen in Figure \ref{fig:uniqpropchild1}. The central components $\Gamma_{\s}$, and $\Gamma_{\s'}$ are labeled and shown in bold.
    \begin{figure}[h]
	\centering
	\begin{tikzpicture}
	\fill (-4,-0.3) circle (1.5pt);
    \fill (-3.75,-0.3) circle (1.5pt);
    \fill (-3.5,-0.3) circle (1.5pt);
    \fill (-3,-0.3) circle (1.5pt);
    \fill (-2.75,-0.3) circle (1.5pt);
    
    \draw (-3.75,-0.3) ellipse (0.5cm and 0.3cm) node[below, yshift = -0.1cm, xshift = 0.5cm]{$\s'$} node[above, yshift=0.1cm, xshift = 0.5cm]{$\frac{5}{3}$};
    \draw (-3.375,-0.3) ellipse (1.25cm and 0.8cm)node[below, yshift = -0.3cm, xshift = 1.2cm]{$\s$} node[above, yshift=0.3cm, xshift = 1.2cm]{$\frac{1}{2}$};	
	
	\draw (0,0)[line width = 0.5mm] -- node[above, font=\small] {$4$}  node[right, font=\small, xshift=1.3cm] {$\Gamma_{\s}$} ++ (2.75,0);
    \draw (0.25,-0.2) -- node[left, font=\small, yshift=0.1cm] {$1$} ++ (0,0.8);
    \draw (0.75,-0.2) -- node[left, font=\small, yshift=0.1cm] {$2$} ++ (0,0.8);
    \draw (2.5,0.2) -- node[left, font=\small] {$1$} ++ (0,-1);
    \draw (0,-0.6)[line width = 0.5mm] -- node[below, font=\small] {$3$}  node[right, font=\small, xshift=1.3cm] {$\Gamma_{\s'}$}++ (2.75,0);
    \draw (0.25,-0.4) -- node[left, font=\small, yshift=-0.1cm] {$1$} ++ (0,-0.8);
    \draw (0.75,-0.4) -- node[left, font=\small, yshift=-0.1cm] {$1$} ++ (0,-0.8);
    \end{tikzpicture}
    \caption{Cluster picture and special fibre of the minimal SNC model of $C:y^2=(x^2-p)(x^3-p^5)$.}
\label{fig:uniqpropchild1}
\end{figure}

If we consider the curves $C_{\tilde{\s}}$ and $C_{\tilde{\s'}}$ and the special fibres of their minimal SNC models we find that they are as pictured in Figure \ref{fig::corsmallmodeg} below.
\begin{figure}[h]
\begin{adjustwidth}{-0.5cm}{}
\centering
\begin{subfigure}{0.5\textwidth}
  \centering
  \begin{tikzpicture}
    \draw (0,0)[line width = 0.5mm] -- node[above, xshift=0.6cm, font=\small] {$4$} ++ (2.75,0);
    \draw (0.25,-0.2) -- node[left, font=\small, yshift=0.1cm] {$1$} ++ (0,0.8);
    \draw (0.75,-0.2) -- node[left, font=\small, yshift=0.1cm] {$2$} ++ (0,0.8);
    \draw (1.25,-0.2) -- node[left, font=\small, yshift=0.1cm] {$1$} ++ (0,0.8);
    \end{tikzpicture}
  \caption{where $C_{\tilde{\s}}:y^2=x(x^2-p)$.}
  \label{fig::corsmallmodeg1}
\end{subfigure}
\begin{subfigure}{.5\textwidth}
  \centering
  \begin{tikzpicture}
    \draw (0,0)[line width = 0.5mm] -- node[above, xshift=0.6cm, font=\small] {$3$} ++ (2.75,0);
    \draw (0.25,-0.2) -- node[left, font=\small, yshift=0.1cm] {$1$} ++ (0,0.8);
    \draw (0.75,-0.2) -- node[left, font=\small, yshift=0.1cm] {$1$} ++ (0,0.8);
    \draw (1.25,-0.2) -- node[left, font=\small, yshift=0.1cm] {$1$} ++ (0,0.8);
    \end{tikzpicture}
  \caption{where $C_{\tilde{\s'}}:y^2=p(x^3-p^5)$.}
  \label{fig::corsmallmodeg2}
\end{subfigure}
\caption{The special fibres of the minimal SNC models of $C_{\tilde{\s}}$ and $C_{\tilde{\s'}}$}
\label{fig::corsmallmodeg}
\end{adjustwidth}
\end{figure}
We can see that all the components in both Figures \ref{fig::corsmallmodeg1} and \ref{fig::corsmallmodeg2} also appear in the special fibre of the minimal SNC model of $C$. They are glued together along one of their multiplicity one components which forms the linking chain in Figure \ref{fig:uniqpropchild1}. This provides a visualisation of what we mean when we say the tails of $\Gamma_{\s}$ correspond to those of $\Gamma_{\tilde{\s}}$, the tails of $\Gamma_{\s'}$ correspond to those of $\Gamma_{\tilde{\s'}}$, and that some of these tails form part of the linking chains of the special fibre of the minimal SNC model of $C$.
\end{eg}

The proof of Theorem \ref{thm:uniquepropchild} will be presented as a series of lemmas. 

\begin{lemma}
    \label{lem::upccomponents}
    If $\s$ is principal then the special fibre has an irreducible component $\Gamma_{\s} = \Gamma_{F_1}$ of multiplicity $e_{\s}$ and genus $g(\s)$. If $\s'$ is principal then there is a component $\Gamma_{\s'} = \Gamma_{F_2}$ of multiplicity $e_{\s'}$ and genus $g(\s')$.
\end{lemma}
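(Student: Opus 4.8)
The plan is to read the two central components directly off the Newton polytope of $C$ via Theorem~\ref{thm::timsmainthm}, and then transport their multiplicities and genera back to the clusters $\s$, $\s'$ using the comparison isomorphisms of Lemmas~\ref{lem::corofnewpolys1} and~\ref{lem:corofnetwonpolys2}. Since $C$ is nested, Theorem~\ref{thm::timsmainthm} produces the minimal SNC model as $C_\Delta$; its Newton polytope $\Delta_v(C)$ is one of the two polytopes of Figure~\ref{fig::newtonpolyuniqpropchild}, each of which has exactly two $v$-faces $F_1, F_2$. Hence the special fibre has precisely two components coming from $v$-faces: $\Gamma_{F_1}$, of multiplicity $\delta_{F_1}$ and genus $|F_1(\Z)_\Z|$, and $\Gamma_{F_2}$, of multiplicity $\delta_{F_2}$ and genus $|F_2(\Z)_\Z|$. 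I would set $\Gamma_{\s,K}:=\Gamma_{F_1}$ and $\Gamma_{\s',K}:=\Gamma_{F_2}$, consistently with Lemmas~\ref{lem::corofnewpolys1}--\ref{lem:corofnetwonpolys2}, where $F_1$ is shown to correspond to $\s$ and $F_2$ to $\s'$.

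For the multiplicity of $\Gamma_{\s,K}$ I would invoke Lemma~\ref{lem::corofnewpolys1} (if $C$ is given by~(\ref{eq::uniqupropchildcase1})) or Lemma~\ref{lem:corofnetwonpolys2} (if by~(\ref{eq::uniqupropchildcase2})): there is a valuation-preserving isomorphism $\overline{F_1}\cong\Delta_v(C_{\tilde{\s}})$, so $\delta_{F_1}=\delta_{F_{\tilde{\s}}}$, and $C_{\tilde{\s}}$ has tame potentially good reduction (its semistabilising degree divides that of $C$, which is prime to $p$). Applying the multiplicity computation of Section~\ref{subsec::relationtonp} to $C_{\tilde{\s}}$ gives $\delta_{F_{\tilde{\s}}}=e_{\Rcal_{\tilde{\s}}}$, and by Remark~\ref{rem::Cstilde} the invariants $d$ and $\nu$ of $\Rcal_{\tilde{\s}}$ agree with those of $\s$, so $e_{\Rcal_{\tilde{\s}}}=e_{\s}$ by Definition~\ref{def::esgs}. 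The same argument with $F_2$ and $C_{\tilde{\s'}}$ gives multiplicity $e_{\s'}$ for $\Gamma_{\s',K}$.

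For the genus, the isomorphism above restricts to a bijection $F_1(\Z)_\Z\cong F_{\tilde{\s}}(\Z)_\Z$, so $|F_1(\Z)_\Z|=|F_{\tilde{\s}}(\Z)_\Z|$. If $\ssgs>0$, then $C_{\tilde{\s}}$ is an honest hyperelliptic curve with minimal SNC model $\X_{\tilde{\s}}$, its central component is $\Gamma_{\tilde{\s}}=\Gamma_{F_{\tilde{\s}}}$, and Theorem~\ref{thm::timsmainthm} gives its genus as $|F_{\tilde{\s}}(\Z)_\Z|$, which is $g(\s)$ by Definition~\ref{def::esgs} (or by Proposition~\ref{prop::genusformula}). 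If $\ssgs=0$, then $g(\s)=0$ by definition, and I would simply count the interior lattice points of the triangle $F_1$: they can only lie on $y=1$, strictly between the two edges emanating from $(0,2)$, and a short calculation shows there are none. The same dichotomy for $F_2$ and $\s'$ gives genus $g(\s')$.

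The one place that needs care is the chain of identifications $\delta_{F_1}=\delta_{F_{\tilde{\s}}}=e_{\Rcal_{\tilde{\s}}}=e_{\s}$ (and its $\s'$-analogue), which relies on correctly assembling Lemmas~\ref{lem::corofnewpolys1}--\ref{lem:corofnetwonpolys2}, Remark~\ref{rem::Cstilde}, Definition~\ref{def::esgs}, and the computations of Section~\ref{subsec::relationtonp}; together with the degenerate case $\ssgs=0$, where $C_{\tilde{\s}}$ is undefined and the genus claim must be checked directly on the polytope. Everything else is a direct application of Theorem~\ref{thm::timsmainthm}.
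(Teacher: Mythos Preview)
Your proposal is correct and follows essentially the same route as the paper: the paper's proof is the single line ``Follows from Lemmas~\ref{lem::corofnewpolys1} and~\ref{lem:corofnetwonpolys2}'', and you have simply unpacked what that means, tracing the chain $\delta_{F_i}=\delta_{F_{\tilde\s^{(\prime)}}}=e_{\s^{(\prime)}}$ and the matching of interior integer points via the valuation-preserving isomorphisms. Your explicit treatment of the degenerate case $\ssgs=0$ (where $C_{\tilde\s}$ is not defined and one checks directly that $F_1$ has no interior lattice points) is a useful addition that the paper leaves implicit.
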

\begin{proof}
    Follows from Lemmas \ref{lem::corofnewpolys1} and \ref{lem:corofnetwonpolys2}.
\end{proof}

\begin{remark}
    Lemma \ref{lem::upccomponents} further proves that $\delta_{F_1} = e_{\s}$ and $\delta_{F_2} = e_{\s'}$ since, by Theorem \ref{thm::timsmainthm}, $\Gamma_{F_i}$ has multiplicity $\delta_{F_i}$.
\end{remark}



\begin{lemma}
    \label{lem::gammastail}
    If $\s$ is principal and $e_{\s} > 1$, the following tails of $\Gamma_{\s}$ arise from outer $v$-edges of the $v$-face $F_1$ in Figure \ref{fig::newtonpolyuniqpropchild}, with conditions as in Theorem \ref{thm:uniquepropchild}:
    \begin{enumerate}
        \item $\infty$-tail(s) arising from the $v$-edge connecting $(0,2)$ and $(|\s|,0)$,
        \item $(y=0)$-tail(s) arising from the $v$-edge connecting $(|\s'|,0)$ and $(|\s|,0)$.
    \end{enumerate}
\end{lemma}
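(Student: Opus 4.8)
The plan is to deduce the statement from the potentially--good--reduction analysis of Section~\ref{subsec::relationtonp}, transported along the identification of $\overline{F_1}$ with the Newton polytope of $C_{\tilde{\s}}$. First I would record that $C_{\tilde{\s}}$ is a hyperelliptic curve with tame potentially good reduction: by the remark following Definition~\ref{def::cso} its cluster picture is a single cluster $\Rcal_{\tilde{\s}}$, necessarily of depth $d_{\tilde{\s}}=d_{\s}$ since the centres of the odd children of $\s$ lie at pairwise distance exactly $d_{\s}$; thus Section~\ref{sec::tpgr} applies to $C_{\tilde{\s}}$, and since $e_{\s}>1$ we have $e_{\Rcal_{\tilde{\s}}}=e_{\s}>1$ by Remark~\ref{rem::Cstilde}, so its small orbits genuinely occur. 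Now invoke Lemma~\ref{lem::corofnewpolys1} (respectively Lemma~\ref{lem:corofnetwonpolys2}, according as $C$ is given by~(\ref{eq::uniqupropchildcase1}) or~(\ref{eq::uniqupropchildcase2})) to obtain the valuation--preserving lattice isomorphism $\psi\colon\overline{F_1}\to\Delta_v(C_{\tilde{\s}})$ satisfying $v_1=v_{\Delta(C_{\tilde{\s}})}\circ\psi$, where $v_1$ is the affine function agreeing with $v_{\Delta(C)}$ on $F_1$. Because $\psi$ is an isomorphism of lattice polytopes carrying $v_1$ to $v_{\Delta(C_{\tilde{\s}})}$, it takes each boundary edge $L$ of $\overline{F_1}$ to an edge with the same denominator $\delta_L$, the same lattice--point sets $\overline{L}(\Z)$ and $\overline{L}(\Z)_{\Z}$, and (as $\psi$ also intertwines the relevant functions $L^{*}$) the same slopes $[s_1^L,s_2^L]$; so by Theorem~\ref{thm::timsmainthm}, $L$ and $\psi(L)$ contribute identical tails with identical multiplicities.

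Next I would identify the edges. Since $\psi$ preserves the $y$--coordinate and fixes $(0,2)$, it sends the $x$--axis edge of $\overline{F_1}$ (joining $(|\s'|,0)$ and $(|\s|,0)$) to the $x$--axis edge of $\Delta_v(C_{\tilde{\s}})$, the edge joining $(0,2)$ and $(|\s|,0)$ to the edge of $\Delta_v(C_{\tilde{\s}})$ joining $(0,2)$ to the right--hand $x$--axis vertex, and the remaining (inner, shared with $F_2$) edge joining $(0,2)$ and $(|\s'|,0)$ to the edge of $\Delta_v(C_{\tilde{\s}})$ joining $(0,2)$ to the left--hand $x$--axis vertex. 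By Lemma~\ref{lem::inftytent} applied to $C_{\tilde{\s}}$ the first of these produces the $\infty$--tails, and by Lemma~\ref{lem::yiszero} the $x$--axis edge produces the $(y=0)$--tails; pulling these back along $\psi$ gives parts~(i) and~(ii). As $F_1$ has exactly these two outer edges, they carry \emph{all} tails of $\Gamma_{\s}$, the inner edge contributing none to $\Gamma_{\s}$ --- which is precisely why the $(x=0)$-- or $(0,0)$--tail of $C_{\tilde{\s}}$ reappears instead inside the linking chain $L_{\s,\s'}$, as in Remark~\ref{rem::upccorrespondence}.

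It remains to check that numbers, parameters, and conditions coincide with those of Theorem~\ref{thm:uniquepropchild}. Using Remark~\ref{rem::Cstilde} ($d_{\s}=d_{\tilde{\s}}$, $\nu_{\s}=\nu_{\tilde{\s}}$, $\lambda_{\s}=\lambda_{\tilde{\s}}$, hence $b_{\s}=b_{\tilde{\s}}$ and $e_{\s}=e_{\Rcal_{\tilde{\s}}}$), the parity identity $|\s|\equiv|\tilde{\s}|\pmod 2$ (even children of $\s$ contribute evenly, so $\s$ is odd exactly when $\deg C_{\tilde{\s}}$ is odd), and $\epsilon_{\s}=(-1)^{v_K(c_{f_{\s}})}$, the $\infty$--tail row of Theorem~\ref{thm::tpgr} applied to $C_{\tilde{\s}}$ translates into the three $T_{\infty}$ / $T_{\infty}^{\pm}$ rows of Theorem~\ref{thm:uniquepropchild}, with the slopes $(g(\s)+1)d_{\s}-\lambda_{\s}$ (when $\s$ is odd) and $-d_{\s}|Y|$ (when $\s$ is even, $Y$ the orbit at infinity, of size $1$ or $2$) matching the displayed $(t_1,\mu)$; likewise the $(y=0)$--tail row becomes the $T_{y=0}$ row, with slope $-\lambda_{\s}b_{\s}$ and with $|\singletonsofs|/b_{\s}$ tails, once one notes that when $\s'$ is odd the root $z_{\s'}$ of $C_{\tilde{\s}}$ sits at the origin and so is not among the non--zero roots counted in Lemma~\ref{lem::y0orbits}. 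In the edge case $\ssg{\s}=0$, where $C_{\tilde{\s}}$ is not defined by Definition~\ref{def::cso}, one instead observes that $\overline{F_1}$ is itself a $v$--polytope of exactly the shape in Figure~\ref{fig::newtonpolytpgr}, so the computations of Section~\ref{subsec::relationtonp} apply to it verbatim. The only genuine difficulty is bookkeeping: keeping the $\s'$--even and $\s'$--odd shears straight, verifying that $v_1$ (a priori defined only on $F_1$) really transports through $\psi$, and carrying through the small numerology matching $|\singletonsofs|$ and $b_{\s}$ against the corresponding invariants of $\Rcal_{\tilde{\s}}$.
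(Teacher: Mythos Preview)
Your proposal is correct and follows essentially the same approach as the paper: transport the analysis of Section~\ref{subsec::relationtonp} along the isomorphism $\psi\colon\overline{F_1}\to\Delta_v(C_{\tilde{\s}})$ from Lemmas~\ref{lem::corofnewpolys1} and~\ref{lem:corofnetwonpolys2}, then translate the conditions using $\epsilon_{\s}=(-1)^{v_K(c_f)}$. The paper's own proof is a two-line pointer to that earlier discussion; you have simply written out the details (edge identification, invariant matching, the $\ssg{\s}=0$ edge case) that the paper leaves implicit.
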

\begin{proof}
    This is a consequence of our discussion above, relating $F_1$ to the Newton polytope of $C_{\widetilde{\s}}$. The conditions in Theorem \ref{thm:uniquepropchild} for the tails to occur follow since $\epsilon_{\s} = (-1)^{v_K(c_f)}$.
\end{proof}

\begin{lemma}
    \label{lem::gammasprimetails}
    If $\s'$ is principal and $e_{\s'} > 1$, the following tails of $\Gamma_{\s'}$ arise from outer $v$-edges of the $v$-face $F_2$ in Figure \ref{fig::newtonpolyuniqpropchild}, with conditions as in Theorem \ref{thm:uniquepropchild}:
    \begin{enumerate}
        \item if $b_{\s'} \mid |\s'|$, $(x=0)$-tail(s) arise from the $v$-edge connecting $(0,0)$ and $(0,2)$,
        \item if $b_{\s'} \nmid |\s'|$, a $(0,0)$-tail arises from the $v$-edge connecting $(1,0)$ and $(0,2)$,
        \item in both cases, $(y=0)$-tail(s) arise from the $v$-edge intersecting the $x$-axis.
    \end{enumerate}
\end{lemma}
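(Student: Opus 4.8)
The plan is to mirror the proof of Lemma \ref{lem::gammastail}, using the correspondence between the $v$-face $F_2$ of the Newton polytope $\Delta_v(C)$ (Figures \ref{fig::newtonpolyuniqpropchild1} and \ref{fig::newtonpolyuniqpropchild2}) and the Newton polytope $\Delta_v(C_{\widetilde{\s'}})$ of the potentially-good-reduction curve $C_{\widetilde{\s'}}$, established in Lemmas \ref{lem::corofnewpolys1} and \ref{lem:corofnetwonpolys2}. Since $\s'$ has no proper children, the cluster picture $\Sigma_{C/K}$ restricted to $\s'$ is exactly that of a curve with tame potentially good reduction, so the tails of $\Gamma_{\s'} = \Gamma_{F_2}$ should be precisely those described in Theorem \ref{thm::tpgr} and identified with specific outer $v$-edges in Section \ref{subsec::relationtonp}. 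Concretely, under the valuation-preserving isomorphism $\overline{F_2} \to \Delta_v(C_{\widetilde{\s'}})$ of Lemma \ref{lem::corofnewpolys1} (resp. Lemma \ref{lem:corofnetwonpolys2}), which in both cases is essentially the identity $(x,y)\mapsto(x,y)$ on the relevant region, the three outer $v$-edges of $F_2$ are carried to the three outer $v$-edges of the triangle $\Delta_v(C_{\widetilde{\s'}})$ pictured in Figures \ref{fig::tpgrnp1} and \ref{fig::tpgrnp2}.

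First I would fix which of the two defining equations (\ref{eq::uniqupropchildcase1}) or (\ref{eq::uniqupropchildcase2}) applies; this is governed by whether $0 \in \Rcal$, equivalently by whether $b_{\s'} \mid |\s'|$ after the analysis in Section \ref{sec::tpgr} — more precisely, whether $\s'$ has a stable singleton at its centre. In the case $b_{\s'} \mid |\s'|$, the Newton polytope of $C_{\widetilde{\s'}}$ has a vertex at $(0,0)$ and the outer $v$-edge joining $(0,0)$ to $(0,2)$ is, by the last lemma of Section \ref{subsec::relationtonp}, the one giving the $(x=0)$-tail(s); pulling back along the isomorphism of Lemma \ref{lem::corofnewpolys1}, this is the $v$-edge of $F_2$ connecting $(0,0)$ and $(0,2)$, giving part (i). In the case $b_{\s'}\nmid |\s'|$, the relevant vertex is $(1,0)$, the outer $v$-edge joining $(1,0)$ to $(0,2)$ gives the $(0,0)$-tail, and pulling back gives part (ii). In both cases the outer $v$-edge of $\Delta_v(C_{\widetilde{\s'}})$ lying on the $x$-axis gives the $(y=0)$-tail(s) by Lemma \ref{lem::yiszero}, and its preimage in $F_2$ is the edge joining $(|\s'|,0)$ to either $(0,0)$ or $(1,0)$ — i.e. the $v$-edge of $F_2$ meeting the $x$-axis — which is part (iii).

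It then remains to check that the number of tails, the slopes, and hence the parameters $(t_1,\mu)$ match the entries in the table of Theorem \ref{thm:uniquepropchild}: this follows because $\psi$ (resp. the identity map) preserves valuations, so it preserves $\delta_\lambda$ for every $v$-edge $\lambda$, preserves $|\overline{L}(\Z)_\Z|$, and preserves the slopes $s_1^L$ computed via Definition \ref{def:slope}; combined with Proposition \ref{prop::genusformula} and the fact that $\nu_{\widetilde{\s'}} = \nu_{\s'}$, $d_{\widetilde{\s'}} = d_{\s'}$, $\lambda_{\widetilde{\s'}} = \lambda_{\s'}$ (Remark \ref{rem::Cstilde}, applied to $\s'$), the invariants in the table for $C_{\widetilde{\s'}}$ coincide with those written in terms of $\s'$. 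I expect the only mild obstacle to be bookkeeping: one must confirm that the third outer $v$-edge of $F_2$ — the one joining $(0,2)$ to $(|\s'|,0)$ along the slanted side — is the \emph{inner} edge of $\Delta_v(C)$ shared with $F_1$, and therefore does not contribute a tail to $\Gamma_{\s'}$ but rather the linking chain $L_{\s,\s'}$; this is immediate from the pictures in Figure \ref{fig::newtonpolyuniqpropchild} since $F_1$ and $F_2$ share exactly that edge, so there is nothing deep here, just care in identifying which edges of $F_2$ are outer.
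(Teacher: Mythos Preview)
Your proposal is correct and follows essentially the same approach as the paper: the paper's proof is the single sentence ``This is a consequence of our discussion above, relating $F_2$ to the Newton polytope of $C_{\widetilde{\s}'}$,'' and you have simply unpacked that discussion in detail, correctly identifying which outer $v$-edges of $F_2$ correspond to which tails via the valuation-preserving identity map of Lemmas~\ref{lem::corofnewpolys1} and~\ref{lem:corofnetwonpolys2}, and correctly noting that the slanted edge is the inner edge shared with $F_1$. One minor phrasing issue: where you write ``whether $0 \in \Rcal$, equivalently by whether $b_{\s'} \mid |\s'|$,'' the actual equivalence is $0\in\Rcal \Leftrightarrow b_{\s'}\nmid|\s'|$ (the stable singleton at the centre exists precisely when $|\s'|\not\equiv 0\pmod{b_{\s'}}$), but your subsequent case-by-case analysis has the vertices and edges matched correctly, so this is only a slip in the connective.
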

\begin{proof}
    This is a consequence of our discussion above, relating $F_2$ to the Newton polytope of $C_{\widetilde{\s}'}$. The conditions in Theorem \ref{thm:uniquepropchild} for these tails to occur follow since $\epsilon_{\s'} = (-1)^{\nu_{\s'} - |\s'|d_{\s'}}$.
\end{proof}

In order to find the lengths of the level sections of the linking chains, we must calculate the slopes of the unique inner $v$-edge $L$, adjacent to both $v$-faces $F_1$ and $F_2$ in Figure \ref{fig::newtonpolyuniqpropchild}.

\begin{lemma}
\label{lem::slopeslemma}
    If $\s'$ is odd $s_1^L=-\lambda_{\s}$, and $s_2^L=-\lambda_{\s}-\frac{\delta_{\s'}}{2}$. Else $s_1^L = - \delta_Ld_{\s}$, and $s_2^L=-\delta_L d_{\s'}$.
\end{lemma}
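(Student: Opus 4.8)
The plan is to compute $s_1^L$ and $s_2^L$ by applying Definition \ref{def:slope} directly to the unique inner $v$-edge $L$, which in both Newton polytopes of Figure \ref{fig::newtonpolyuniqpropchild} is the segment joining $(0,2)$ and $(|\s'|,0)$ and separates the $v$-face $F_1$ (corresponding to $\s$, by Lemmas \ref{lem::corofnewpolys1} and \ref{lem:corofnetwonpolys2}) from $F_2$ (corresponding to $\s'$). First I would record the two affine functions entering the definition: reading the vertex valuations of $F_1$ and $F_2$ off Figure \ref{fig::newtonpolyuniqpropchild} and using the identities $\nu_{\s}=v_K(c_f)+|\s|d_{\s}$ (valid since $\s=\Rcal$, so $d_{\s\wedge r}=d_{\s}$ for every root $r$) and $\nu_{\s'}=v_K(c_f)+(|\s|-|\s'|)d_{\s}+|\s'|d_{\s'}$, one finds that the affine function $v_1$ agreeing with $v_{\Delta}$ on $F_1$ is $v_1(x,y)=\nu_{\s}-d_{\s}x-\tfrac{\nu_{\s}}{2}y$, and that $v_2(x,y)=\nu_{\s'}-d_{\s'}x-\tfrac{\nu_{\s'}}{2}y$; these are unchanged whether $C$ is given by (\ref{eq::uniqupropchildcase1}) or (\ref{eq::uniqupropchildcase2}).

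Next I would write down the primitive affine function $L^*=L^*_{(F_1)}$ cutting out $L$, where the parity of $\s'$ enters. If $\s'$ is even then $L$ also contains the lattice point $(\tfrac{|\s'|}{2},1)$ and $L^*(x,y)=x+\tfrac{|\s'|}{2}y-|\s'|$; if $\s'$ is odd the endpoints are the only lattice points on $L$ and $L^*(x,y)=2x+|\s'|y-2|\s'|$, which is primitive because $\gcd(2,|\s'|)=1$. In each case $L^*$ is non-negative on $F_1$, its value at the vertex $(|\s|,0)$ being positive. With $P_0=(|\s'|,0)\in L$, one has $L^*(P_1)=1$ for $P_1=(|\s'|+1,0)$ when $\s'$ is even and for $P_1=(\tfrac{|\s'|+1}{2},1)$ when $\s'$ is odd. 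In the even case $P_1-P_0=(1,0)$, so $v_1(P_1)-v_1(P_0)=-d_{\s}$ and $v_2(P_1)-v_2(P_0)=-d_{\s'}$, and Definition \ref{def:slope} gives $s_1^L=-\delta_Ld_{\s}$ and $s_2^L=-\delta_Ld_{\s'}$, as required.

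For $\s'$ odd the arithmetic is a touch longer. Because $\s'$ is the unique child of $\s$ of size $\ge 2$, we have $\sum_{\s''<\s}\lfloor|\s''|/2\rfloor=\tfrac{|\s'|-1}{2}$, so $\lambda_{\s}=\tfrac{\nu_{\s}}{2}-d_{\s}\tfrac{|\s'|-1}{2}$; substituting $P_1=(\tfrac{|\s'|+1}{2},1)$ into $v_1$ then gives $v_1(P_1)-v_1(P_0)=-\lambda_{\s}$, and substituting into $v_2$ and using $\nu_{\s'}-\nu_{\s}=|\s'|\delta_{\s'}$ gives $v_2(P_1)-v_2(P_0)=-\lambda_{\s}-\tfrac{\delta_{\s'}}{2}$. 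It then remains to observe that $\delta_L=1$ in this case: $\overline{L}(\Z)=\{(0,2),(|\s'|,0)\}$, and $v_{\Delta}$ takes the integer values $0$ and $\nu_{\s'}-|\s'|d_{\s'}=v_K(c_f)+(|\s|-|\s'|)d_{\s}$ there; the latter is integral because the $|\s|-|\s'|$ singletons of $\s$ lie in $\GK$-orbits of size $b_{\s}$ by Lemma \ref{lem::orbitssizeb}(ii) (the unique proper child $\s'$ being the only child of $\s$ that can be $\GK$-fixed), so $b_{\s}\mid|\s|-|\s'|$. Hence $s_1^L=-\lambda_{\s}$ and $s_2^L=-\lambda_{\s}-\tfrac{\delta_{\s'}}{2}$.

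The whole argument is bookkeeping; the only mildly delicate points are identifying the primitive generator $L^*$ correctly --- which is what forces the case split on the parity of $\s'$ --- and the verification that $\delta_L=1$ in the odd case, where Lemma \ref{lem::orbitssizeb} is used. One could instead transport the slopes of $L$ through the valuation-preserving unimodular isomorphisms $\psi$ of Lemmas \ref{lem::corofnewpolys1}--\ref{lem:corofnetwonpolys2}, matching $L$ with an outer $v$-edge of $\Delta_v(C_{\tilde{\s}})$ and of $\Delta_v(C_{\tilde{\s'}})$ and quoting the tame-potentially-good-reduction slope computations of Section \ref{sec::tpgr}; but the direct route above is shorter.
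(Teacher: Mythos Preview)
Your proof is correct and follows essentially the same direct computation as the paper's: both identify the primitive affine function $L^*_{(F_1)}$ according to the parity of $|\s'|$, pick points $P_0,P_1$ with $L^*(P_0)=0$, $L^*(P_1)=1$, and evaluate $v_1,v_2$ there. Your choices of $P_0,P_1$ differ cosmetically in the even case (you take $P_0=(|\s'|,0)$, $P_1=(|\s'|+1,0)$ whereas the paper takes $P_0=(0,2)$, $P_1=(1,2)$), but since $P_1-P_0=(1,0)$ in both cases the computation is identical. The one substantive addition in your write-up is the explicit justification that $\delta_L=1$ when $\s'$ is odd via Lemma~\ref{lem::orbitssizeb}(ii); the paper simply asserts this, implicitly relying on the fact that $(|\s'|,0)$ is a $v$-vertex of $\Delta_v(C)$ and hence carries the valuation of an actual coefficient of $f$, which is automatically an integer.
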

\begin{proof}
Suppose $s'$ is odd. Then the only points in $\overline{L}(\Z)$ are the endpoints $(0,2)$ and $(|\s'|,0)$, so $\delta_L=1$. The unique function $L^*_{F_1} : \Z^2 \rightarrow \Z$ such that $\restr{L^*_{F_1}}{L} = 0$ and $\restr{L^*_{F_1}}{F_1} \geq 0$ is given by
$$L^*_{F_1}(x,y) = 2x + |\s'|y - 2|\s'|.$$
To calculate $s_1^L$ and $s_2^L$ we need $P_0$ and $P_1$ such that $L^*_{F_1}(P_1) = 1$ and $L^*_{F_1}(P_0) = 0$. We will take $P_0=(|\s'|,0)$ and $P_1=(\frac{|\s'|+1}{2},1)$. The unique affine function which agrees with $v_{\Delta}$ on $F_1$ is defined by
$v_1(x,y) = \nu_{\s} - d_{\s}x - \frac{\nu_{\s}}{2}y.$
Therefore,
$$ s_1^L = \delta_L(v_1(P_1) - v_1(P_0))
     = \nu_{\s} - d_{\s}\frac{|\s'|+1}{2} - \frac{\nu_{\s}}{2} - \nu_{\s} + d_{\s}|\s'|
    =-\left(\frac{\nu_{\s}}{2}-d_{\s}\frac{|\s'|-1}{2}\right)
    = -\lambda_{\s}.$$

The calculations for $s_2^L$ and $\s'$ even are similar.
%
\end{proof}

\begin{proof}[Proof of Theorem \ref{thm:uniquepropchild}]
     Recall that $e_{\s}$ is the minimum integer such that $e_{\s}d_{\s}\in\Z$, and $e_{\s}\nu_{\s}\in2\Z$. If $e_{\s} = 1$ then  $d_{\s},\lambda_{\s} \in \Z$, hence the slopes of the outer $v$-edges of $F_1$ are integers and $\Gamma_{\s}$ has no tails. If $e_{\s}>1$ then Lemma \ref{lem::gammastail} describes the tails of $\Gamma_{\s}$. Similarly if $e_{\s'} = 1$ then $\Gamma_{\s'}$ has no tails and if $e_{\s'}>1$ then Lemma \ref{lem::gammasprimetails} describes the tails of $\Gamma_{\s'}$. The statement on the parameters of the tails and the linking chain follows from Remark \ref{rem::timsmainrem} and the calculation of the slopes in Lemma \ref{lem::slopeslemma}. The multiplicity of the level section is $\delta_L$ where $L$ is the inner $v$-edge between $F_1$ and $F_2$.
    
    The two cases left to worry about are when $\s'$ is a twin or when $\s$ is a cotwin. We will only argue the case where $\s'$ is a twin, as the case where $\s$ is a cotwin is proved similarly. Recall from Remark \ref{rem::epsilon} that $\epsilon_{\s'} = (-1)^{\nu_{\s'} - |\s'|d_{\s'}}$. So, $\epsilon_{\s'} = 1$ if and only if $v_{\Delta}((|\s'|,0))
    =\nu_{\s'}-|\s'|d_{\s'}\in 2\Z$.
    
    Suppose that $\epsilon_{\s'} = 1$. Since $v_{\Delta}(0,2)=0\in 2\Z$ and $|\s'|=2$ we have that $(\frac{|\s'|}{2},1)=(1,1)\in\Z^2$, and $v_{\Delta}(1,1)\in\Z$. So, $|\overline{L}(\Z)_\Z|=3$ and by Theorem \ref{thm::timsmainthm} there are two linking chains from $\Gamma_{\s}$ to the component $\Gamma_{F_2}$ arising from the $v$-face $F_2$ of $\Delta_v(C)$ in Figure \ref{fig::newtonpolyuniqpropchild}. The component $\Gamma_{F_2}$ is exceptional by \cite[Proposition~5.2]{Dok18} and the linking chains between $\Gamma_{\s}$ and $\Gamma_{F_2}$ are minimal. After blowing down $\Gamma_{F_2}$, this results in a loop from $\Gamma_{\s}$ to itself. 
    
    Suppose instead that, $\epsilon_{\s'}= -1$. Then there is a single chain of rational curves from $\Gamma_{\s}$ to $\Gamma_{F_2}$, and $\Gamma_{F_2}$ has two rational other rational curves intersecting it transversely (which arise from the $v$-edge connecting $(0,0)$ and $(0,2)$). Therefore, $\Gamma_{F_2}$ is not exceptional and must appear in the minimal SNC model. This means, if we consider $\Gamma_{F_2}$ as a component of the level section, that this chain of rational curves is a crossed tail.
\end{proof}

\subsection{Small Distances}
\label{subsec:upcsmalldistance}
Let $\s_1$ and $\s_2$ be the  principal clusters such that there is a linking chain $\mathcal{C}\subseteq \X_k$ from $\Gamma_{\s_1}$ to $\Gamma_{\s_2}$. If $\mathcal{C}$ has level section of length greater than 0, it is straightforward to compare the multiplicities of $\mathcal{C}$ to those of the corresponding tails (see Remark \ref{rem::upccorrespondence}). All of the multiplicities of the corresponding tails appear in the uphill and downhill sections of $\mathcal{C}$. However, if the level section is empty and the downhill section of $\mathcal{C}$ corresponds to a tail, say $\mathcal{T}_1$, then not all of the multiplicites of $\mathcal{T}_1\subseteq \X_{\widetilde{\s_1},k}$ appear in the downhill section of $\Ccal$. Similarly if the uphill section corresponds to a tail, say $\mathcal{T}_2\subseteq \X_{\widetilde{\s_2},k}$. We shall show that in this case, $\mathcal{T}_1$ and $\mathcal{T}_2$ ``meet'' at a component of second least common multiplicity. In other words, if we consider a chain of rational curves $\Ccal'$ such that $\Ccal'$ has level section of length $1$, and whose downhill and uphill sections correspond to $\mathcal{T}_1$ and $\mathcal{T}_2$ respectively, then we ``cut out'' a section of $\Ccal'$ to obtain $\Ccal$. 

\begin{eg} Consider the hyperelliptic curves given by $y^2=(x^4 -p)(x^5-p^{2+10m})$ over $K=\Qur$ for $m\in\Z_{\geq 0}$, with cluster pictures shown in Figure \ref{fig:smalldistcuttingex}.
\begin{figure}[ht]
	\centering
\begin{tikzpicture}

\fill (0,0) circle (1.5pt);
\fill (0.25,0) circle (1.5pt);
\fill (0.5,0) circle (1.5pt);
\fill (0.75,0) circle (1.5pt);
\fill (1,0) circle (1.5pt);

\fill (1.5,0) circle (1.5pt);
\fill (1.755,0) circle (1.5pt);
\fill (2,0) circle (1.5pt);
\fill (2.25,0) circle (1.5pt);

\draw (0.5,0) ellipse (0.75cm and 0.4cm) node[below, yshift = -0.15cm, xshift = 0.7cm,font=\small]{$\s'$} node[above, yshift = 0.2cm, xshift = 1.1cm, font=\small]{$\frac{2}{5}+2m$};

\draw (1,0) ellipse (1.7cm and 0.9cm) node[below, yshift = -0.5cm, xshift = 1.5cm,font=\small]{$\s$} node[above, yshift = 0.3cm, xshift = 1.7cm,font=\small]{$\frac{1}{4}$};
\end{tikzpicture}
	\caption{Cluster picture $\Sigma_C$ of $C:y^2=(x^4-p)(x^5-p^{2+10m})$.}
	\label{fig:smalldistcuttingex}
\end{figure}
The level section of the linking chain between $\Gamma_{\s}$ and $\Gamma_{\s'}$ has length $m$. Figure \ref{fig::largetosmallpic} shows the special fibres of the minimal SNC models for both when $m=1$, and the small distance case (when $m=0$). Here we can see that when $m=1$ the uphill and downhill sections of the linking chain have a common multiplicity greater than 1, namely 3, and that to obtain the $m=0$ case we remove the dashed section of the linking chain and glue back along the multiplicity 3 components. 
\begin{figure}[h]
\centering
\begin{subfigure}{0.5\textwidth}
  \centering
  \begin{tikzpicture}
\draw (5,0)[line width = 0.5mm] -- node[above,font=\small] {10} ++ (3,0);
\draw (5,-1.7)[line width = 0.5mm] -- node[below,font=\small] {8} ++ (3,0);
\draw(5.5,0.2) -- node[right,font=\small] {3} ++ (-0.9,-0.9);
\draw[dashed] (4.75,-0.3) -- node[right,font=\small] {2} ++ (0,-1);
\draw[dashed] (4.55,-1.1) -- node[below,font=\small] {1} ++ (1,0);
\draw (5.35,-0.9) -- node[right,font=\small] {3} ++ (0,-1);
\draw (7,0.2) -- node[right,font=\small, yshift=-0.1cm] {5} ++ (0,-0.8);
\draw (7.5,0.2) -- node[right,font=\small, yshift=-0.1cm] {2} ++ (0,-0.8);
\draw (7,-1.1) -- node[right,font=\small, yshift=0.1cm] {4} ++ (0,-0.8);
\draw (7.5,-1.1) -- node[right,font=\small, yshift=0.1cm] {1} ++ (0,-0.8);

\node at (8.4,0) {$\Gamma_{\s'}$};
\node at (8.4,-1.7) {$\Gamma_{\s}$};
    \end{tikzpicture}
  \caption{$m=1$ case.}
  \label{fig::largetosmallpicm=1}
\end{subfigure}%
\begin{subfigure}{.5\textwidth}
  \centering
  \begin{tikzpicture}
  \draw (0,0)[line width = 0.5mm] -- node[above,font=\small] {10} ++ (3,0);
\draw (0,-1)[line width = 0.5mm] -- node[below,font=\small] {8} ++ (3,0);
\draw (0.5,0.2) -- node[right,font=\small] {3} ++ (0,-1.4);
\draw (2,-0.2) -- node[right,font=\small, yshift=0.1cm] {5} ++ (0,0.8);
\draw (2.5,-0.2) -- node[right,font=\small, yshift=0.1cm] {2} ++ (0,0.8);
\draw (2,-0.8) -- node[right,font=\small, yshift=-0.1cm] {4} ++ (0,-0.8);
\draw (2.5,-0.8) -- node[right,font=\small, yshift=-0.1cm] {1} ++ (0,-0.8);

\node at (-0.4,0) {$\Gamma_{\s'}$};
\node at (-0.4,-1) {$\Gamma_{\s}$};
    \end{tikzpicture}
  \caption{$m=0$ case.}
  \label{fig::largetosmallpicm=0}
\end{subfigure}
\caption{Example of ``cutting out'' a section of linking chain to obtain the small distance case.}
\label{fig::largetosmallpic}
\end{figure}
\end{eg}

Let us solidify this with a precise statement.

\begin{thm}
\label{thm::smalldistancelinking}
    Let $\mathcal{C} = \bigcup_{i=1}^{\lambda} E_i $ be a sloped chain of rational curves with parameters $(t_2,t_1,\mu)$, as in Definition \ref{def::linkingchainsections}. Suppose that $\Ccal$ has level section length $0$ and $[\mu t_2,\mu t_1]\subset (0,1)$. Suppose $E_i$ has multiplicity $\mu_i$ and the downhill section comprises of $E_i$ for $1 \leq i \leq l$, for some $l\in\Z$ with $1\leq l\leq \lambda$. Let $\mathcal{T}_j = \bigcup_{i=1}^{\lambda_j} F_i^{(j)}$ for $j=1,2$ be tails (with $\mathcal{T}_j$ possibly empty, in which case $\lambda_j=0$), where $\mathcal{T}_1$ has parameters $(t_1, \mu)$ and $\mathcal{T}_2$ has parameters $(\frac{1}{\mu} - t_2, \mu)$. Let $F_i^{(j)}$ have multiplicity $\mu_i^{(j)}$ (and write $\mu_0^{(j)} = \mathrm{denom}(\mu t_j)$), and let $l_j < \max(1,\lambda_j)$ be maximal such that $\mu_{l_1}^{(1)} = \mu_{l_2}^{(2)}$. Then $\mu_i = \mu_i^{(1)}$ for $1 \leq i \leq l_1$ and $\mu_{\lambda + 1 - i} = \mu_i^{(2)}$ for $1 \leq i \leq l_2$.
\end{thm}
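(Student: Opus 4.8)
The statement is purely combinatorial, so I would forget about models altogether and work directly with Farey sequences. By the definition of a sloped chain (Definition~\ref{def::linkingchainsections}) the multiplicities $\mu_i$ along $\mathcal{C}$ are $\mu d_i$, where the $\tfrac{m_i}{d_i}$ form the minimal reduction of the Farey sequence between $\mu t_1$ and $\mu t_2$ described in Theorem~\ref{thm::timsmainthm} and Remark~\ref{rem::timsmainrem}; the same is true for $\mathcal{T}_1$ and $\mathcal{T}_2$. The hypothesis $[\mu t_2,\mu t_1]\subset(0,1)$ together with the emptiness of the level section means this reduction has the shape \eqref{timseq3}. So the plan is to write out the three minimal reductions explicitly, identify the relevant pieces with Hirzebruch--Jung continued-fraction data, and compare them term by term.

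For $\mathcal{C}$: since no integer lies in $(\mu t_2,\mu t_1)$, its minimal sequence is of the form \eqref{timseq3}, a strictly decreasing-denominator downhill block $\tfrac{m_0}{d_0}>\dots>\tfrac{m_l}{d_l}$ followed by a strictly increasing-denominator uphill block $\tfrac{m_{l+1}}{d_{l+1}}>\dots>\tfrac{m_{\lambda+1}}{d_{\lambda+1}}$, all interior $d_i>1$, and by Remark~\ref{rem::timsmainrem} the downhill denominators are the HJ approximant denominators of $1-\mu t_1$ while the uphill ones, read from the bottom, are the HJ approximant denominators of $\mu t_2$. For $\mathcal{T}_1$: having parameters $(t_1,\mu)$ it is the sloped chain with parameters $(\lfloor t_1-1\rfloor,t_1,\mu)$, hence the same top endpoint $\mu t_1$, so its downhill section consists of exactly the HJ approximant denominators of $1-\mu t_1$, now continued all the way down to denominator $1$ (its length-$1$ level section). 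For $\mathcal{T}_2$: parameters $(\tfrac1\mu-t_2,\mu)$, top endpoint $1-\mu t_2$, so its downhill denominators are the HJ approximants of $1-(1-\mu t_2)=\mu t_2$; here I use the shift-invariance of HJ approximant denominators recorded in Remark~\ref{rem::timsmainrem} to replace $\mu t_2-1$ by $\mu t_2$.

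Granting this, the downhill block of $\mathcal{C}$ and the downhill section of $\mathcal{T}_1$ are both initial segments of one and the same one-sided HJ sequence, that of $1-\mu t_1$; likewise the reversed uphill block of $\mathcal{C}$ and the downhill section of $\mathcal{T}_2$ are initial segments of the HJ sequence of $\mu t_2$. Hence $\mu_i=\mu_i^{(1)}$ and $\mu_{\lambda+1-i}=\mu_i^{(2)}$ hold for every index up to the point where $\mathcal{C}$ "turns around", and it remains only to identify the turning index, i.e.\ to prove $l=l_1$ and $\lambda-l=l_2$, where $l_j$ is the deepest position at which $\mathcal{T}_1$ and $\mathcal{T}_2$ carry a common multiplicity. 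For this I would introduce the auxiliary chain $\mathcal{C}'$ with parameters $(t_2-\tfrac1\mu,\,t_1,\,\mu)$: its endpoints bracket the integer $0$, so $\mathcal{C}'$ is precisely the downhill of $\mathcal{T}_1$, then a single multiplicity-$\mu$ component, then the downhill of $\mathcal{T}_2$ reversed. Then I would show that deleting from $\mathcal{C}'$ the components strictly between the two components of second-least common multiplicity, and identifying those two components, produces a sequence that is a valid minimal reduction between $\mu t_1$ and $\mu t_2$; uniqueness of the minimal reduction (equivalently of the minimal SNC model, as in Remark~\ref{rem::timsmainrem}) then forces it to equal that of $\mathcal{C}$, which simultaneously pins the junction at $l_1,l_2$ and confirms the level section of $\mathcal{C}$ is empty.

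The main obstacle is this last step: verifying that the splice is admissible, i.e.\ that the "second-least multiplicity" components $\tfrac{m_l}{d_l}$ of $\mathcal{T}_1$ and $\tfrac{m_{l+1}}{d_{l+1}}$ of $\mathcal{T}_2$ satisfy the unimodularity condition $\bigl|m_l d_{l+1}-m_{l+1}d_l\bigr|=1$ when placed adjacent, and that no deeper splice is possible — this non-existence being exactly the assertion that $\mathcal{C}$'s level section has length $0$. I expect this to come down to the standard two-term recursion for HJ convergents applied from both ends, together with the empty-level-section hypothesis, which forces the valley bottom of $\mathcal{C}$ to lie strictly above denominator $1$ and hence precisely at the deepest multiplicity the two tails share; the bookkeeping of indices (matching $\lambda=l_1+l_2$ and the reversed indexing on the uphill side) is then routine.
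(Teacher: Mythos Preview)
Your overall plan---reduce to Farey/Hirzebruch--Jung sequences and compare the three minimal reductions---is the same in spirit as the paper's, but there is a genuine gap at the splice step, and a smaller misapplication earlier.

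First the small point. You invoke Remark~\ref{rem::timsmainrem} to say that the downhill denominators of $\mathcal{C}$ itself are HJ approximant denominators of $1-\mu t_1$. That remark only establishes the HJ description in the situation \eqref{timseq2}, i.e.\ when the open interval contains an integer; here you are in situation \eqref{timseq3} by hypothesis, and all the remark gives you is ``decreasing then increasing''. The statement you want (the downhill block of $\mathcal{C}$ is an \emph{initial segment} of the HJ approximants) is true, but it needs the locality argument the paper uses---that the reduced sequence from $\mu t_1$ down to the turning fraction depends only on the elements of that subinterval---rather than a direct citation of the remark.

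The real gap is in the splice. You propose to delete the middle of $\mathcal{C}'$ and ``identify the two components of second-least common multiplicity''. But matching multiplicities means matching \emph{denominators}, not fractions. After shifting the uphill half of $\mathcal{C}'$ back into $(0,1)$, the fraction on the $\mathcal{T}_2$ side with denominator $d$ is some $\tfrac{m'}{d}$, and the one on the $\mathcal{T}_1$ side is some $\tfrac{m}{d}$; for the splice to produce a sequence landing inside $[\mu t_2,\mu t_1]$ and satisfying the determinant condition \eqref{timseq} you need $m=m'$, i.e.\ you need the fraction of denominator $d$ lying in $[\mu t_2,\mu t_1]$ to be \emph{unique}. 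The ``standard two-term recursion for HJ convergents'' you appeal to controls adjacency of successive convergents of a \emph{single} continued fraction; it says nothing about why two different HJ expansions (of $1-\mu t_1$ and of $\mu t_2$) should land on the same numerator at their first common denominator. The paper supplies exactly this missing ingredient as Lemma~\ref{lem:intermediatefrac}: any interval containing no integer has a unique rational of least denominator. With that lemma in hand one can argue directly---identify the turning fraction $\tfrac{m}{d}$ of $\mathcal{C}$ as this unique least-denominator element, show it survives in both tail reductions (it cannot be removed since nothing of smaller denominator sits between it and either endpoint), and then run your locality argument on each side. Once you have Lemma~\ref{lem:intermediatefrac}, the auxiliary chain $\mathcal{C}'$ and the splice become unnecessary; the maximality of $l_1,l_2$ follows immediately because any deeper common denominator would contradict uniqueness.
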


\begin{remark}
    Let $\mathcal{C}$ be as in Theorem \ref{thm::smalldistancelinking}. Since the level section of $\mathcal{C}$ is empty, it must be the case that $(\mu t_2, \mu t_1) \cap \Z = \emptyset$. Therefore, after shifting $\mu t_2$ and $\mu t_1$ by an integer if necessary, we may insist that $[\mu t_2, \mu t_1] \subseteq [0,1]$. If $\mu t_2 \in \Z$ (hence $\mathcal{T}_2$ is empty) then it is immediate from Remark \ref{rem::timsmainrem} that $\lambda = \lambda_1 - 1$ and $\mu_i = \mu_i^{(1)}$ for $1 \leq i \leq \lambda$, since the multiplicities come from the same sequence of fractions. A similar conclusion applies if $\mu t_1 \in \Z$. So we are able to assume without loss of generality that $\mu t_2, \mu t_1 \not \in \Z$, hence our assumption in Theorem \ref{thm::smalldistancelinking} that $[\mu t_2,\mu t_1]\subset (0,1)$.
\end{remark}

Roughly, Theorem \ref{thm::smalldistancelinking}, states that when there is no level section, rather than seeing all of the multiplicities of the tails which the uphill and downhill sections correspond to, the two tails ``meet'' at the component of minimal shared multiplicity greater than $\mu$. Before we prove this theorem, let us prove a couple of lemmas.

\begin{lemma}
    \label{lem:intermediatefrac}
    Let $q_1,q_2\in \Q$ with $[q_1,q_2]\cap\Z=\emptyset$. Then there is a unique fraction with minimal denominator in the set $[q_1,q_2]\cap\Q$, when written with coprime numerator and denominator.
\end{lemma}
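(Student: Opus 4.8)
First I would reduce to the case $[q_1,q_2]\subset(0,1)$: since $[q_1,q_2]\cap\Z=\emptyset$, the interval lies in $(n,n+1)$ for a unique $n\in\Z$, and subtracting $n$ from every fraction is a denominator-preserving bijection on $\Q$. If $q_1=q_2$ the statement is trivial (the set is a single rational), so assume $q_1<q_2$. Note that the minimal denominator $D$ of a fraction in $[q_1,q_2]\cap\Q$ satisfies $D\geq 2$, as $D=1$ would put an integer in the interval. Since any two fractions realizing this minimum both have denominator exactly $D$, it suffices to show that $[q_1,q_2]$ cannot contain two distinct lowest-terms fractions of equal, minimal, denominator.

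\textbf{Main construction.} I would run the Stern--Brocot / mediant descent alluded to in Remark \ref{rem::timsmainrem}. Set $\tfrac{a_0}{b_0}=\tfrac01$, $\tfrac{c_0}{d_0}=\tfrac11$, and inductively form the mediant $m_n=\tfrac{a_n+c_n}{b_n+d_n}$; if $m_n\in[q_1,q_2]$ stop and put $\tfrac{p^*}{q^*}=m_n$, if $m_n<q_1$ replace the left pair by the mediant, and if $m_n>q_2$ replace the right pair. The identity $b_nc_n-a_nd_n=1$ is preserved (so each mediant is automatically in lowest terms, since $e\mid a_n{+}c_n$ and $e\mid b_n{+}d_n$ force $e\mid 1$), and it gives $\tfrac{c_n}{d_n}-\tfrac{a_n}{b_n}=\tfrac{1}{b_nd_n}$; because the denominators strictly increase at each step, this gap tends to $0$, so the process must terminate (otherwise $\tfrac{a_n}{b_n}\le q_1<q_2\le\tfrac{c_n}{d_n}$ would force the gap to stay $\geq q_2-q_1>0$). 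This proves existence of a fraction in $[q_1,q_2]$ and produces the explicit candidate $\tfrac{p^*}{q^*}$ of denominator $q^*=b_T+d_T$ at the terminal step $T$.

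\textbf{Uniqueness/minimality.} For an arbitrary lowest-terms fraction $\tfrac pq\in[q_1,q_2]$, I would show by induction that $\tfrac pq\in(\tfrac{a_n}{b_n},\tfrac{c_n}{d_n})$ for every $n\le T$: at a non-terminal step, $m_n\notin[q_1,q_2]$, and whichever side of $[q_1,q_2]$ it falls on, $\tfrac pq$ is forced to the same side as the interval, matching the bracketing update. Hence $\tfrac{a_T}{b_T}\le\tfrac pq\le\tfrac{c_T}{d_T}$ with $\tfrac{p^*}{q^*}$ the mediant of this Farey pair. Now apply the elementary estimate for a fraction lying strictly between a Farey pair $\tfrac ab<\tfrac cd$ with $bc-ad=1$: writing $\tfrac pq-\tfrac ab=\tfrac{bp-aq}{bq}\ge\tfrac1{bq}$ and $\tfrac cd-\tfrac pq\ge\tfrac1{dq}$ and adding gives $\tfrac1{bd}\ge\tfrac{b+d}{bdq}$, so $q\ge b+d$, with equality forcing $bp-aq=cq-dp=1$ and hence $\tfrac pq=\tfrac{a+c}{b+d}$. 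Applied to $\tfrac{a_T}{b_T},\tfrac{c_T}{d_T}$ this yields $q\ge q^*$, with equality only for $\tfrac pq=\tfrac{p^*}{q^*}$ (and if $\tfrac pq$ is one of the two bracketing endpoints its denominator is $<q^*$, so it is not in the open interval and the strict-betweenness used above still applies once one checks endpoints separately — they cannot lie in $[q_1,q_2]$ anyway since $[q_1,q_2]\subset(\tfrac{a_T}{b_T},\tfrac{c_T}{d_T})$ before the final mediant lands inside). This gives both existence and uniqueness of the minimal-denominator fraction.

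\textbf{Expected obstacle.} There is no deep difficulty; the two points requiring care are the termination argument (for which the invariant $b_nc_n-a_nd_n=1$ together with strict growth of denominators is the whole content) and the sub-lemma on fractions between a Farey pair, which is a two-line computation but must be invoked cleanly. An alternative, slightly shorter route avoids the explicit descent: assuming $\tfrac aD<\tfrac cD$ are both in $[q_1,q_2]$ in lowest terms, note $c-a\ge1$ so $[\tfrac aD,\tfrac{a+1}D]\subseteq[q_1,q_2]$, and since $\tfrac aD\notin F_{D-1}$ it lies strictly between consecutive terms $\tfrac pq<\tfrac aD<\tfrac{p'}{q'}$ of the Farey sequence of order $D-1$ with $p'q-pq'=1$; then $\tfrac{p'}{q'}-\tfrac aD=\tfrac1{q'D}\le\tfrac1D$, so $\tfrac{p'}{q'}\in[q_1,q_2]$ has denominator $q'\le D-1<D$, contradicting minimality. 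I would present the descent version as the primary proof since it is self-contained and simultaneously establishes existence.
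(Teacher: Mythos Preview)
Your proof is correct, but it takes a genuinely different route from the paper's. The paper gives a five-line argument by contradiction: if $\frac{m_1}{d}<\frac{m_2}{d}$ are two lowest-terms fractions in $[q_1,q_2]$ with $d$ minimal, then the interval $[m_1(d-1),m_2(d-1)]$ contains at least $d$ integers and hence a multiple of $d$, say $md$; since $\gcd(m_i,d)=1$ the endpoints are not multiples of $d$, so $\frac{m}{d-1}$ lies strictly between $\frac{m_1}{d}$ and $\frac{m_2}{d}$, contradicting minimality of $d$. Your Stern--Brocot descent is considerably more elaborate but also more constructive: it actually locates the minimal-denominator fraction rather than merely proving its uniqueness, and the Farey-neighbour inequality $q\ge b+d$ you invoke is exactly the mechanism underlying Remark~\ref{rem::timsmainrem}, so your argument meshes nicely with the surrounding material. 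Your alternative sketch is closer in spirit to the paper's approach (both produce a smaller-denominator fraction by contradiction), though it leans on the Farey-sequence fact that any $\frac{a}{D}$ strictly between neighbours $\frac{p}{q},\frac{p'}{q'}$ in $F_{D-1}$ must be their mediant (since $q+q'\ge D$ forces $q+q'=D$), whereas the paper's pigeonhole avoids this machinery entirely. Either of your arguments would serve; the paper's is simply the shortest path.
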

\begin{proof}
    Suppose not, and suppose $r_1,r_2 \in [q_1,q_2]\cap\Q$ can be written $r_i = \frac{m_i}{d}$ with $m_i,d$ coprime and $d$ the minimal denominator of elements in the set $[q_1,q_2]\cap\Q$. We will show that there exists a rational number $r$ lying between $r_1$ and $r_2$ of denominator $<d$.
    
    Write $r_i = \frac{m_i(d-1)}{d(d-1)}$, and consider the set $S = [m_1(d-1), m_2(d-1)] \cap \Z$. Since $m_2 > m_1$ and $m_1,m_2\in\Z$, $|S| \geq d$ and there must exist a multiple of $d$ in $S$. That is, there exists $m\in\Z$ such that $md\in S$. Since $m_i$ and $d$ are coprime, we have $m_1 < md < m_2$. Therefore,
    $$\frac{m_1(d-1)}{d(d-1)} < \frac{md}{d(d-1)} < \frac{m_2(d-1)}{d(d-1)}\Longrightarrow r_1 < \frac{m}{d-1} < r_2,$$
    which contradicts the minimality of $d$.
\end{proof}

\begin{lemma}
    With notation as in Theorem \ref{thm::smalldistancelinking}, there exists some $l_j<\lambda_j$, for $j=1,2$, such that $\mu^{(1)}_{l_1} = \mu_{l_2}^{(2)}$.
\end{lemma}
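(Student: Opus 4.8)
The plan is to locate the single component of $\mathcal{C}$ of smallest multiplicity and to recognise its multiplicity inside each of $\mathcal{T}_1$ and $\mathcal{T}_2$ by reading off the Hirzebruch--Jung descent.

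First I would invoke the Remark preceding Theorem \ref{thm::smalldistancelinking} to reduce to $[\mu t_2,\mu t_1]\subset(0,1)$ with $\mu t_1,\mu t_2\notin\Z$; then $\mathcal{T}_1$ and $\mathcal{T}_2$ are both non-empty, and their defining minimal sequences descend from $\mu t_1$, resp.\ from $\mu(\tfrac1\mu-t_2)=1-\mu t_2$, down to the integer $0$. Since the level section of $\mathcal{C}$ has length $0$ the open interval $(\mu t_2,\mu t_1)$ contains no integer, so Lemma \ref{lem:intermediatefrac} gives a unique fraction $\tfrac ad$ of minimal denominator $d\ (\ge2)$ in $[\mu t_2,\mu t_1]$. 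Writing the minimal sequence of $\mathcal{C}$ in the shape (\ref{timseq3}) of Remark \ref{rem::timsmainrem}, $\mu t_1=\tfrac{m_0}{d_0}>\dots>\tfrac{m_l}{d_l}>\dots>\tfrac{m_{\lambda+1}}{d_{\lambda+1}}=\mu t_2$ with denominators strictly decreasing to $d_l$ and then strictly increasing, the endpoint denominators satisfy $d_0,d_{\lambda+1}\ge d_l$, so $\tfrac{m_l}{d_l}$ is the minimal-denominator fraction of the closed interval; hence $\tfrac{m_l}{d_l}=\tfrac ad$ and $d_l=d$. (The degenerate cases $l=0$ and $l=\lambda+1$, an empty downhill resp.\ uphill section, occur exactly when $d=\mathrm{denom}(\mu t_1)$ resp.\ $d=\mathrm{denom}(\mu t_2)$, and are handled using the convention $\mu_0^{(j)}=\mathrm{denom}(\mu t_j)$.)

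Next I would use the deterministic nature of the downhill part of a minimal sequence: in it the denominators strictly decrease while consecutive terms have determinant $1$, and for $\tfrac pq$ in lowest terms there is a \emph{unique} fraction $\tfrac{p'}{q'}$ with $0<q'<q$, $\tfrac{p'}{q'}<\tfrac pq$ and $pq'-p'q=1$ (namely $q'\equiv p^{-1}\ (\mathrm{mod}\ q)$). Thus the downhill part of a minimal sequence is the chain of iterated ``predecessors'' of its upper endpoint, truncated at the turning fraction, and is therefore independent of the lower endpoint. Consequently the downhill section $\tfrac{m_0}{d_0}>\dots>\tfrac{m_l}{d_l}$ of $\mathcal{C}$ is an initial segment of the descent defining $\mathcal{T}_1$ (which prolongs the same predecessor chain down to $0$); since $d=d_l\ge2$, the fraction $\tfrac ad=\tfrac{m_l}{d_l}$ occurs among the non-endpoint terms of $\mathcal{T}_1$, say as $F^{(1)}_{l_1}$ with $1\le l_1<\lambda_1$. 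Applying the involution $x\mapsto 1-x$, which reverses and complements $\mathcal{C}$'s minimal sequence, preserves all denominators, and turns its upper endpoint into $1-\mu t_2$ and its turning fraction into $1-\tfrac ad$ (still of denominator $d$), the same argument gives $d$ among the non-endpoint denominators of $\mathcal{T}_2$, say at $F^{(2)}_{l_2}$ with $1\le l_2<\lambda_2$.

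Since in all of these sloped chains the multiplicity of a component is $\mu$ times the denominator of the associated fraction, $\mu^{(1)}_{l_1}=\mu d=\mu^{(2)}_{l_2}$, which is the claim. I expect the main obstacle to be the assertion in the second step that the turning fraction $\tfrac ad$ of $\mathcal{C}$ actually lies on the Hirzebruch--Jung descent of $\mu t_1$; I would establish this cleanly by first proving the uniqueness of determinant-$1$, strictly-decreasing-denominator descents from a fixed fraction (via the predecessor formula above) and then combining it with the uniqueness of minimal sequences asserted in Remark \ref{rem::timsmainrem}.
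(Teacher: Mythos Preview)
Your argument is correct, and the overall strategy—identifying the minimal-denominator fraction $\tfrac ad$ in $[\mu t_2,\mu t_1]$ via Lemma~\ref{lem:intermediatefrac} and then showing that $d$ occurs among the denominators of each tail—matches the paper's. The difference lies in how you establish that $d$ occurs in the $\mathcal{T}_j$-sequences. The paper argues by contradiction with the Farey reduction process: $\tfrac md$ lies in the full Farey list for $\mathcal{T}_1$, and removing it would require a fraction of strictly smaller denominator between $\tfrac md$ and $s_1=\mu t_1$, which is impossible since $d$ is minimal on $[s_2,s_1]$. You instead prove a structural fact: because consecutive terms of a minimal sequence have determinant $1$ and the downhill denominators strictly decrease, each term has a \emph{unique} predecessor, so the downhill portion is determined by its upper endpoint alone. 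This immediately gives that the downhill of $\mathcal{C}$ is an initial segment of the downhill of $\mathcal{T}_1$ (both being the predecessor chain from $\mu t_1$, the former truncated earlier), and the involution $x\mapsto 1-x$ handles $\mathcal{T}_2$.

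Your route is slightly cleaner and in fact proves more: the initial-segment property you obtain is exactly what the paper establishes separately in the proof of Theorem~\ref{thm::smalldistancelinking}. The paper's approach has the virtue of staying entirely within the Farey-reduction description of Remark~\ref{rem::timsmainrem}, whereas yours introduces (and must justify) the predecessor-chain description; but once the uniqueness of determinant-$1$ descents is noted, your argument dispatches both the lemma and half of the subsequent theorem in one stroke. The degenerate cases $l=0$ or $l=\lambda+1$ are correctly absorbed by the convention $\mu_0^{(j)}=\mu\cdot\mathrm{denom}(\mu t_j)$, and since $d\ge2$ while $d^{(j)}_{\lambda_j}=1$, you indeed get $l_j<\lambda_j$ as required.
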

\begin{proof}
    Write $s_i = \mu t_i$. Recall that we assumed that, $[s_2, s_1]\subset(0,1)$, so $[s_2, s_1] \cap \Z = \emptyset$. Let $\frac{m}{d}$ be the unique fraction of minimal denominator in $[s_2, s_1]$, which exists by Lemma \ref{lem:intermediatefrac}. Then if 
    \begin{align*}
    s_1=\mu t_1=\frac{m_0}{d_0}>\frac{m_1}{d_1}>\dots>\frac{m_{\lambda}}{d_{\lambda}}>\frac{m_{\lambda+1}}{d_{\lambda+1}}= \mu t_2=s_2,
    \end{align*}
    is the reduced sequence giving rise to the linking chain $\mathcal{C}$, as in Remark \ref{rem::timsmainrem}, where $(m_i,d_i)=1$, $d_0>\dots>d_l$ and $d_l<\dots<d_{\lambda+1}$ for some $1\leq l\leq\lambda$, we must have that $d_l = d$.
    
    Consider the following two reduced sequences:
    \begin{align*}
        \mu t_1=\frac{m^{(1)}_0}{d^{(1)}_0}>\frac{m^{(1)}_1}{d^{(1)}_1}>\dots>\frac{m^{(1)}_{\lambda_1}}{d^{(1)}_{\lambda_1}}>\frac{m^{(1)}_{\lambda_1+1}}{d^{(1)}_{\lambda_1+1}}= -1,\\
         1 - \mu t_2=\frac{m_0^{(2)}}{d_0^{(2)}}>\frac{m_1^{(2)}}{d_1^{(2)}}>\dots>\frac{m^{(2)}_{\lambda_2}}{d^{(2)}_{\lambda_2}}>\frac{m^{(2)}_{\lambda_2+1}}{d^{(2)}_{\lambda_2+1}}= -1.
    \end{align*}
    These give rise to the multiplicites $\mu_i^{(j)}=\mu\cdot d^{(j)}_i$ for $1\leq i \leq \lambda_j$, $j=1,2$ of the tails $\mathcal{T}_j$. We will show that there exist $0\leq l_1 < \lambda_1+1$ and $0\leq l_2 < \lambda_2+1$ with $d^{(1)}_{l_1} = d=d^{(2)}_{l_2}.$ 
    
    We will first prove that $d^{(1)}_{l_1} = d$ for some $l_1\in\Z$. Since $[s_2,s_1]\subset (0,1)$, we have that $s_2 > \lfloor s_1 \rfloor=0$. So, some fraction of denominator $d$, say $\frac{m}{d}$, appears in the full sequence of fractions in $[\lfloor s_1 \rfloor, s_1] \cap \Q$ of denominator less than or equal to $\max\{d_0, d_{\lambda+1}\}$. To obtain a reduced sequence, we remove all terms of the form
    \begin{align*}
    \dots>\frac{a}{b}>\frac{a+c}{b+d}>\frac{c}{d}>\dots\mapsto\dots>\frac{a}{b}>\frac{c}{d}>\dots,
    \end{align*}
    as in Remark \ref{rem::timsmainrem}. We can only remove $\frac{m}{d}$ if there exists some $q \in \Q$ with $\mathrm{denom}(q) < d$ and $s_1 > q > \frac{m}{d}$. No such $q$ can exists since $d$ is the minimal denominator of any element of $[s_2,s_1]\cap\Q$. Therefore, $\frac{m}{d}$ cannot be removed in the reduction proccess and so must apppear in the reduced sequence. Therefore there exists $0\leq l_1 < \lambda_1+1$ such that $d^{(1)}_{l_1} = d.$ Proving that there exits $0\leq l_2 < \lambda_2+1$ such that $d^{(1)}_{l_1} = d=d^{(2)}_{l_2}$ is done similarly.
    \end{proof}

We can now prove Theorem \ref{thm::smalldistancelinking}.
    \begin{proof}[Proof of Theorem \ref{thm::smalldistancelinking}]
        The fractions $\frac{m_0}{d_0},\frac{m_1}{d_1},\ldots,\frac{m_l}{d_l}$ in the reduced sequence depend only on the elements of $[s_1, \frac{m_l}{d_l}]$ of denominator less than or equal to $\max(d_0, d_{\lambda+1})$, as do the fractions $\frac{m_0^{(1)}}{d_0^{(1)}},\ldots,\frac{m_{l_1}^{(1)}}{d_{l_1}^{(1)}} = \frac{m_l}{d_l}$. This proves that $d_i^{(1)} = d_i$ hence $\mu_i = \mu_i^{(1)}$ for $1 \leq i \leq l_1$. Similarly $d_i^{(2)} = d_{\lambda + 1 - i}$ hence $\mu_{\lambda + 1 - i} = \mu_i^{(2)}$ for $1 \leq i \leq l_2$. It remains to show maximality of $l_1$ and $l_2$. 
        
        Suppose there is some $r_1,r_2$ such that $\lambda_i > r_i > l_i$ and $\mu^{(1)}_{r_1} = \mu^{(2)}_{r_2} < \mu^{(1)}_{l_1}$. In addition to this, $d^{(1)}_{r_1} = d^{(2)}_{r_2} < d$ (recall $\frac{m}{d}$ is the unique fraction with least denominator in $[s_2,s_1]\cap\Q$). Therefore $q_2 = 1 - \frac{m^{(2)}_{r_2}}{d^{(2)}_{r_2}} \in (s_1, 1]$ and $q_1 = \frac{m^{(1)}_{r_1}}{d^{(1)}_{r_1}} \in [0,s_2)$. Let $q'$ be the unique rational with least denominator $d'$ in $\left[q_1, q_2\right]$. By uniqueness, $d' < d^{(1)}_{r_1} < d$. Therefore, $q' \in (s_1, q_2)$ or $(q_1, s_2)$. Suppose for now that $q' \in (s_1, q_2)$, and consider again the reduced sequence
         \begin{align*}
    1 - \mu t_2=\frac{m_0^{(2)}}{d_0^{(2)}}>\frac{m_1^{(2)}}{d_1^{(2)}}>\dots>\frac{m^{(2)}_{\lambda_2}}{d^{(2)}_{\lambda_2}}>\frac{m^{(2)}_{\lambda_2+1}}{d^{(2)}_{\lambda_2+1}}= -1.
    \end{align*}
    However $1 - q_2$ cannot appear in this reduced sequence since a fraction with smaller denominator, $1-q'$, appears to the left of it in the non-reduced sequence. So, at some step in the reduction process $1-q_2$ would have been removed. Therefore, $q' \not \in (s_1,q_2)$. Similarly, one can show that $q' \not \in (q_1,s_2)$. This is a contradiction. So no such $r_1$ and $r_2$ exist.
    \end{proof}

\section{Main Theorems}
\label{sec::mainthm}

The previous two sections looked at the minimal SNC models of specific cases of hyperelliptic curves. In this section, we state our main theorems in full generality. Theorem \ref{thm::structureofSNCmodel} gives the structure of the special fibre of the minimal SNC model 
, and Theorems \ref{thm::maincentral} and \ref{thm::chaindescription} give more explicit descriptions of multiplicities and genera of components appearing in the special fibre.

\subsection{Orbits}
\label{subsec::orbits}

Before we can state and prove the main results of this paper, we need to extend some of the definitions of Section \ref{sec::clusters}. Since the definitions in Section \ref{sec::clusters} come from \cite{DDMM18}, where the authors deal only with the semistable case, they do not deal with orbits of clusters. So, here we make some new definitions which extend the preexisting ones to orbits.

\begin{definition}
\label{def::principalorbit}
    Let $X$ be a Galois orbit of clusters. Then $X$ is \emph{\"ubereven} if for all $\s \in X$, $\s$ is \"ubereven. Define an orbit $X$ to be \emph{odd}, \emph{even}, and \emph{principal} similarly.
\end{definition}

\begin{definition}
\label{def::K_X}
    Let $X$ be an orbit of clusters. Define $K_X/K$ to be the field extension of $K$ of degree $|X|$. By Lemma \ref{lem::orbitssizeb}, $K_X/K$ is the minimal field extension over which for any $\s\in X$, $\sigma\in\Gal(\overline{K}/K_X)$ we have $\sigma(\s)=\s$. 
\end{definition}


\begin{definition}
\label{def::orbinvars}
    Let $X$ be a Galois orbit of clusters, and choose some $\s\in X$. Then we define
    $$ d_X = \frac{a_X}{b_X} = d_{\s},\quad \nu_X = \nu_{\s},\quad \lambda_X = \lambda_{\s},\quad \ssg{X} = \ssgs,\quad\textrm{and}\quad \epsilon_X = \epsilon_{\s}^{|X|}$$
\end{definition}

\begin{remark}
    Note that the invariants defined in Definition \ref{def::orbinvars} are well defined, i.e they do not depend on the choice of $\s\in X$. 
\end{remark}

\begin{definition}
\label{def::orbitchild}
    An orbit $X'$ is a \emph{child} of $X$, written $X' < X$, if for every $\s' \in X'$ there exists some $\s \in X$ such that $\s' < \s$. Define $\delta_{X'} = \delta_{\s'}$ for some $\s'\in X'$.
\end{definition}

\begin{definition}
    Let $X$ be a principal orbit of clusters with $\ssg{X} > 0$ and choose some $\s\in X$. Then $C_{\widetilde{X}}$ is defined to be the curve $C_{\tilde{\s}}$ over $K_X$. We denote the minimal SNC model of $C_{\widetilde{X}}/K_X$ by $\X_{\widetilde{X}}/\OO_{K_X}$, and the central component by $\Gamma_{\widetilde{X}}/k$.
\end{definition}

\begin{remark}
    The curve $C_{\widetilde{X}}$ depends on a choice of $\s \in X$, but the combinatorial description of the special fibre of the minimal SNC model will not. Since this is what we need $C_{\widetilde{X}}$ for, we do not need to worry about this.
\end{remark}

\begin{definition}
\label{def::e_X}
    Let $X$ be a principal orbit of clusters. Define $e_X$ to be the minimal integer such that $e_X|X|d_{\s} \in \Z$ and $e_X|X|\nu_{\s}\in 2\Z$ for all $\s\in X$. Define $g(X) = g(\s)$ for $\s \in X$ over $K_X$, where $g(\s)$ is as defined in Definition \ref{def::esgs}.
\end{definition}

\begin{remark}
    Analogously to Section \ref{subsec::cso}, the curve $C_{\widetilde{X}}/K_X$ is semistable over an extension of $K_X$ of degree $e_X$ and the quotient map $\Gamma_{\s,L} \rightarrow \Gamma_{\s,K_X}$ has degree $e_X$ for $\s \in X$.
\end{remark}

\subsection{The Special Fibre of the Minimal SNC Model}

We state here the first of our main theorems. Roughly this tells us that the cluster picture, the leading coefficient of $f$, and the action of $\GK$ on the cluster picture is enough to calculate the structure of the minimal SNC model, along with the multiplicities and genera of the components.

\begin{thm}
\label{thm::main1}
    Let $K$ be a complete discretely valued field with algebraically closed residue field of characteristic $p>2$. Let $C:y^2 = f(x)$ be a hyperelliptic curve over $K$ with tame potentially semistable reduction and cluster picture $\Sigma_{C/K}$. Then the dual graph, with genus  multiplicity, of the special fibre of the minimal SNC model of $C/K$ is completely determined by $\Sigma_{C/K}$ (with depths), the valuation of the leading coefficient $v_K(c_f)$ of $f$, and the action of $\GK$.
\end{thm}

\begin{remark}
    If $K$ does not have algebraically closed residue field, then the Frobenius action on the dual graph is determined by this data, as well as the values of $\epsilon_X{\Frob}$ for each orbits of clusters $X$. See Theorem \ref{thm::frobaction}.
\end{remark}

The proof of this will follow from the theorems proved in the rest of this section, and we make this more precise later. First we split Theorem \ref{thm::main1} into several smaller theorems. The first tells us which components appear in the special fibre of the minimal SNC model. Roughly, there is a central component for every orbit of principal, non \"ubereven clusters, one or two central components for every orbit of principal \"ubereven clusters, and a chain of rational curves associated to each orbit of twins. These central components are linked by chains of rational curves, and certain central components will also have tails intersecting them. The following theorem gives us the structure of the special fibre but is missing important details such as multiplicities, genera and lengths of these chains. These remaining details will be discussed in a later theorem.

\begin{thm}[Structure of SNC model]
    \label{thm::structureofSNCmodel}
    Let $K$ be a complete discretely valued field with algebraically closed residue field of characteristic $p>2$. Let $C/K$ be a hyperelliptic curve with tame potentially semistable reduction. Then the special fibre of its minimal SNC model is structured as follows. Every principal Galois orbit of clusters $X$ contributes one central component $\Gamma_X$, unless $X$ is \"ubereven with $\epsilon_X = 1$, in which case $X$ contributes two central components $\Gamma_X^+$ and $\Gamma_X^-$.
    
    These central components are linked by chains of rational curves, or are intersected transversely by a crossed tail in the following ways (where, for any orbit $Y$, we write $\Gamma_Y^+ = \Gamma_Y^- = \Gamma_Y$ if $Y$ is not \"ubereven):
    \begin{center}
    \begin{tabular}{|c|c|c|c|}
    \hline
         \small{Name} & \small{From} & \small{To} & \small{Condition} \\ \hline
         $L_{X,X'}$ & $\Gamma_X$ & $\Gamma_{X'}$  & $X' \leq X$ both principal, $X'$ odd \\ \hline
         $L_{X,X'}^+$ & $\Gamma_X^+$ & $\Gamma_{X'}^+$ & $X' \leq X$ both principal, $X'$ even with $\epsilon_{X'} = 1$ \\ \hline
         $L_{X,X'}^-$ & $\Gamma_X^-$ & $\Gamma_{X'}^-$ & $X' \leq X$ both principal, $X'$ even with $\epsilon_{X'} = 1$ \\ \hline
         $L_{X,X'}$ & $\Gamma_X$ & $\Gamma_{X'}$ & $X' \leq X$ both principal, $X'$ even with $\epsilon_{X'} = -1$ \\ \hline
         $L_{X'}$ & $\Gamma_X^-$ & $\Gamma_X^+$ & $X$ principal, $X' \leq X$ orbit of twins, $\epsilon_{X'} = 1$ \\ \hline
         $T_{X'}$ & $\Gamma_X$ & - & $X$ principal, $X' \leq X$ orbit of twins, $\epsilon_{X'} = -1$ \\ \hline
    \end{tabular}
    \end{center}
    Note that any chain where the ``To'' column has been left blank is a crossed tail. If $\mathcal{R}$ is not principal then we also get the following chains of rational curves:
    \begin{center}
        \begin{tabular}{|c|c|c|c|}
        \hline
         \small{Name} & \small{From} & \small{To} & \small{Condition}\\\hline
         $L_{\Rcal}$ & $\Gamma_{\s}^-$ & $\Gamma_{\s}^+$ & $\mathcal{R}$ a cotwin, $\s<\Rcal$ principal of size $2g$, $\epsilon_{\s} = 1$\\ \hline
         $T_{\Rcal}$ & $\Gamma_{\s}$ & - & $\mathcal{R}$ a cotwin, $\s<\Rcal$ principal of size $2g$, $\epsilon_{\s} = -1$\\ 
         \hline
         $L_{\s_1,\s_2}$ & $\Gamma_{\s_1}$ & $\Gamma_{\s_2}$ & $\Rcal = \s_1 \sqcup \s_2$, with $\s_i$ both principal and odd, $e_{\mathcal{R}} = 1$\\ \hline
         $T_X$ & $\Gamma_X$ & - &  $\Rcal = \s_1\sqcup \s_2$, with $X=\{s_1,s_2\}$ a principal, odd orbit\\ \hline
         $L_{\s_1,\s_2}^+$ & $\Gamma_{\s_1}^+$ & $\Gamma_{\s_2}^+$ & $\Rcal = \s_1\sqcup \s_2$, $\s_i$ both principal and even, $e_{\mathcal{R}} = 1$, $\epsilon_{\s_i} = 1$ \\ \hline
         $L_{\s_1,\s_2}^-$ & $\Gamma_{\s_1}^-$ & $\Gamma_{\s_2}^-$ & $\Rcal=\s_1\sqcup\s_2$, $\s_i$ both principal and even, $e_{\mathcal{R}} = 1$, $\epsilon_{\s_i} = 1$ \\ \hline
         $L_{\s_1,\s_2}$ & $\Gamma_{\s_1}$ & $\Gamma_{\s_2}$ & $\Rcal=\s_1\sqcup\s_2$, $\s_i$ both principal and even, $e_{\mathcal{R}} = 1$, $\epsilon_{\s_i} = -1$ \\ \hline
         $T_X^+$ & $\Gamma_X^+$ & - & $\Rcal = \s_1\sqcup \s_2$, with $X=\{s_1,s_2\}$ a principal, even orbit, 
         $\epsilon_{\s_i} = 1$ \\ \hline
         $T_X^-$ & $\Gamma_X^-$ & - & $\Rcal = \s_1\sqcup \s_2$, with $X=\{s_1,s_2\}$ a principal, even orbit, 
         $\epsilon_{\s_i} = 1$ \\ \hline
         $T_X$ & $\Gamma_X$ & - & $\Rcal = \s_1\sqcup \s_2$, with $X=\{s_1,s_2\}$ a principal, even orbit, 
         $\epsilon_{\s_i} = -1$ \\ \hline
         $L_{\tfrak}$ & $\Gamma_{\s}^-$ & $\Gamma_{\s}^+$ & $\mathcal{R} = \s \sqcup \tfrak$, $\s$ principal and even, $\tfrak$ a twin, $\epsilon_{\tfrak} = 1$ \\ \hline 
         $T_{\tfrak}$ & $\Gamma_{\s}$ & - & $\mathcal{R} = \s \sqcup \tfrak$, $\s$ principal and even, $\tfrak$ a twin, $\epsilon_{\tfrak} = -1$\\
         \hline
         \end{tabular}
    \end{center}
    Finally, a central component $\Gamma_X$ is intersected transversally by some tails if and only if $e_X > 1$. These are explicitly described in Theorem \ref{thm::chaindescription}.
\end{thm}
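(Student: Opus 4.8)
The idea is to produce $\X$ as a resolution of the quotient of a semistable model and to track how each combinatorial piece of that model descends. By the Semistability Criterion (Theorem~\ref{thm::sscriterion}), tame potentially semistable reduction gives a tame Galois extension $L/K$ of degree $e$ over which $C$ is semistable; enlarging $L$ if necessary we may assume every cluster of $\Sigma_{C/L}$ has relative depth $>\tfrac12$. Let $\Y/\OO_L$ be the minimal semistable model, with special fibre described explicitly by Theorem~\ref{thm::semistablemodel}, let $G=\Gal(L/K)=\langle\sigma\rangle$ act on $\Y$ as in Section~\ref{subsec::QuotientsofModels} with the action on $\Y_k$ given by Theorem~\ref{thm::semistablegaloisaction}, and form $\ZZ=\Y/G$ via $q\colon\Y\to\ZZ$. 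By Theorem~\ref{thm::lorfactfive} there is a resolution $\phi\colon\X'\to\ZZ$ whose exceptional fibres are minimal chains of rational curves; blowing down all $(-1)$-curves inside chains then yields the minimal SNC model $\X$. Every non-contracted component of $\X_k$ is either the image under $q$ of a $G$-orbit of components of $\Y_k$ or lies in a chain $\phi^{-1}(z)$ over a singular point $z\in\ZZ_k$, and every chain of rational curves in $\X_k$ is assembled from $G$-orbits of the chains listed in Theorem~\ref{thm::semistablemodel} together with such exceptional chains. So the proof reduces to enumerating $G$-orbits of those pieces and determining the shape of each image.

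\textbf{Central components.} Fix a principal orbit $X$ and some $\s\in X$. If $\s$ is not übereven then $\sigma(\Gamma_{\s,L})=\Gamma_{\sigma(\s),L}$ by Theorem~\ref{thm::semistablegaloisaction}(i), so $\{\Gamma_{\s',L}:\s'\in X\}$ is one $G$-orbit of size $|X|$, mapping to a single component $\Gamma_X$. If $\s$ is übereven then $\sigma(\Gamma_{\s,L}^{\pm})=\Gamma_{\sigma(\s),L}^{\pm\epsilon_{\s}(\sigma)}$, and one checks from the definition of $\epsilon_X$ (Definition~\ref{def::orbinvars}) that the stabiliser of $\Gamma_{\s,L}^+$ has index $|X|$ in $G$ exactly when $\epsilon_X=1$; thus the $2|X|$ components $\Gamma_{\s',L}^{\pm}$ form two orbits of size $|X|$ when $\epsilon_X=1$, giving $\Gamma_X^+$ and $\Gamma_X^-$, and a single orbit of size $2|X|$ when $\epsilon_X=-1$, giving one component. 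Neither blowing up points of $\ZZ_k$ nor blowing down exceptional curves changes these, so precisely the claimed central components occur; that each survives in the minimal model follows, as in the base case (proof of Theorem~\ref{thm::tpgr}), from the Riemann--Hurwitz inequality for the quotient $\Gamma_{\s,L}\to\Gamma_{X}$ when $\ssg{X}>0$, and otherwise from its being met by at least three of the chains enumerated below.

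\textbf{Chains.} For $\s'<\s$ both principal with $\s'$ odd, the chain $L_{\s'}$ of Theorem~\ref{thm::semistablemodel} has $G$-orbit of size $|X'|$ by Theorem~\ref{thm::semistablegaloisaction}(ii), descending to one linking chain $L_{X,X'}$. For $\s'$ even there are two chains $L_{\s'}^{\pm}$: when $\epsilon_{X'}=1$ they form two $G$-orbits, giving $L_{X,X'}^{\pm}$, and when $\epsilon_{X'}=-1$ they form a single orbit of twice the size, giving one chain $L_{X,X'}$. For an orbit of twins $X'<X$ with $\tfrak\in X'$, Theorem~\ref{thm::semistablemodel} gives a chain $L_{\tfrak}$ joining $\Gamma_{\s,L}^-$ to $\Gamma_{\s,L}^+$; by Theorem~\ref{thm::semistablegaloisaction}(iii) the stabiliser acts on this $\PP^1$-chain preserving its orientation when $\epsilon_{X'}=1$, so the image is a loop $L_{X'}$, and reversing it when $\epsilon_{X'}=-1$, so a generator of the acting cyclic group has a fixed point and the quotient is a crossed tail $T_{X'}$, whose two crosses come from the two branches of $\ZZ_k$ through that fixed point. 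To confirm that each descended configuration has the stated shape --- genuine linking chain, loop, or crossed tail --- and is not contracted, one compares with the two-proper-cluster computation of Section~\ref{sec::upc} (Theorem~\ref{thm:uniquepropchild}) applied to the sub-picture $\{\s'<\s\}$, with the small-relative-depth corrections of Theorem~\ref{thm::smalldistancelinking} when $\delta_{X'}$ is small; an induction on the number of proper clusters of $\Sigma_{C/K}$, with base case Section~\ref{sec::tpgr}, then patches the local pictures into $\X_k$. The second table (the case $\Rcal$ not principal) is handled identically, starting from the second table of Theorem~\ref{thm::semistablemodel}; the only extra point is that a chain $L_{\s_1,\s_2}$ joining the two halves may itself be swapped by $G$ when $\{\s_1,\s_2\}$ is an orbit, producing the crossed tails $T_X,\,T_X^{\pm}$.

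\textbf{Main obstacle.} The delicate part is the $\epsilon$-bookkeeping at the quotient: proving in full generality --- not just in the tame potentially good reduction case of Proposition~\ref{prop::singPointsAretsgs} --- that the relevant points of $\ZZ_k$ are tame cyclic quotient singularities (so that Theorems~\ref{thm::resolvingtcqs} and \ref{thm::lorfactfive} apply), and then reading off correctly from $\epsilon_{X'}$ and the $\Gal(L/K)$-orbit sizes whether the stabiliser of a chain acts on the corresponding $\PP^1$ by a rotation (yielding, after resolution and blow-down, a linking chain or a loop) or by a reflection (yielding a crossed tail). Establishing non-contraction of the central components is the other point requiring care, and is carried out exactly as in the base case by combining Riemann--Hurwitz with the count of incident chains obtained above.
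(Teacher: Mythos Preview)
Your approach is essentially the paper's: pass to the semistable model $\Y$ over $L$, quotient by $\Gal(L/K)$, read off the $G$-orbits of components and chains using Theorem~\ref{thm::semistablegaloisaction}, and resolve. The paper organises the non-contraction step slightly differently, isolating it as a separate proposition (Proposition~\ref{prop::centralnotexceptional}, supported by Lemma~\ref{lem::numberofchildren}) rather than folding it into the main argument; and your ``main obstacle'' is not really one, since Theorem~\ref{thm::lorfactfive} already guarantees that resolving any singular point of $\ZZ_k$ yields a minimal chain of rational curves, without needing to verify the tame cyclic quotient hypothesis in general --- the paper only invokes the explicit tame cyclic quotient description when it later computes the actual multiplicities in Theorem~\ref{thm::chaindescription}.
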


\begin{remark}
    \label{rem::explicitequations} At no point do we give explicit equations for the central components $\Gamma_X^{\pm}$. However, these can be calculated using the method laid out in this paper. In particular, one can take the explicit equations given in \cite[Theorem~8.5]{DDMM18} for the components $\Gamma_{\s,L}^{\pm}$ in the semistable model of $C/L$ and the Galois action on these components, and apply \cite[Theorem~1.1]{DD18}.
\end{remark}

Before we prove this, let us prove a couple of lemmas. Recall that $L$ is a field over which $C$ has semistable reduction and that $\Gamma_{\s,L}$ is the component associated to a cluster $\s$ in the special fibre of the minimal semistable model $\Y$ of $C$ over $L$.

\begin{lemma}
\label{lem::numberofchildren}
    Let $\s$ be a principal cluster with $\ssgs = 0$. \begin{enumerate}
        \item If $\s = \Rcal$ and $\s$ is not \"ubereven (resp. \"ubereven) then $\Gamma_{\s,L}$ (resp. each of $\Gamma_{\s,L}^+$ and $\Gamma_{\s,L}^-$) intersects at least two other components.
        \item If $\s \neq \Rcal$ and $\s$ is not \"ubereven (resp. \"ubereven) then $\Gamma_{\s,L}$ (resp. each of $\Gamma_{\s,L}^+$ and $\Gamma_{\s,L}^-$) intersects at least three other components.
    \end{enumerate}
\end{lemma}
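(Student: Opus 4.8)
The statement is a purely combinatorial fact about the semistable model $\Y_k$, so the plan is to read it off from the explicit description of $\Y_k$ given in Theorem~\ref{thm::semistablemodel}, together with the genus formula in Theorem~\ref{thm::timsmainthm} (or rather its semistable specialisation). Throughout I work over $L$, where $C$ is semistable and every principal cluster is $I_L$-invariant, so ``orbits'' do not enter. The key input is that $\ssgs = 0$ means $\s$ is either \"ubereven, or has $|\widetilde{\s}| \in \{1,2\}$; in the \"ubereven case $\Gamma_{\s,L}^{\pm} \cong \PP^1$ and in the non-\"ubereven case $\Gamma_{\s,L}$ is a hyperelliptic curve $Y^2 = c_{\s}\prod_{\oo \in \widetilde{\s}}(X - \mathrm{red}_{\s}(\oo))$ of genus $0$ (one or two linear factors), hence again rational. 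So in all cases the component(s) associated to $\s$ are rational, and I must bound from below the number of other components of $\Y_k$ meeting them.

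First I would enumerate the chains of rational curves emanating from $\Gamma_{\s,L}^{\pm}$ using the two tables in Theorem~\ref{thm::semistablemodel}: for each child $\s' < \s$ that is principal there is a linking chain (one if $\s'$ odd, two if $\s'$ even) up to $\Gamma_{\s',L}$; for each twin child $\tfrak < \s$ there is a chain $L_{\tfrak}$ (from $\Gamma_{\s,L}^-$ to $\Gamma_{\s,L}^+$ when $\s$ is \"ubereven/even enough, or a tail-like chain otherwise); if $\s$ is itself a child of an \"ubereven cluster or sits below a cotwin, there are the corresponding upward chains; and singleton children of $\s$ contribute ramification points of the double cover but not components — so they don't help directly. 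The point is that each such chain contributes at least one component intersecting $\Gamma_{\s,L}^{\pm}$. So I need to show there are at least two such chains when $\s = \Rcal$, and at least three when $\s \neq \Rcal$ (the extra one in the latter case being the chain going \emph{up} toward $P(\s)$ or toward $\s^*$). The cleanest way to organise this is a case split on whether $\s$ is \"ubereven, and within that on how $|\widehat{\s}|$ decomposes.

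The main obstacle — and where I'd spend the most care — is the \"ubereven case with $\ssgs = 0$, because then $\Gamma_{\s,L}$ splits into $\Gamma_{\s,L}^{\pm}$ and I must check \emph{each} of them meets enough components, not just that the pair does collectively. Here I would use the structure of \"ubereven clusters: $\s$ has only even children, so $|\widehat{\s}| \geq 1$, and since $\s$ is \"ubereven and proper it cannot have exactly one child of size $|\s|$... one must argue $\s$ has either $\geq 2$ even non-twin children (each giving a pair of linking chains, so $\geq 2$ components on each of $\Gamma_{\s,L}^{\pm}$ already for $\s = \Rcal$, and combined with the upward chain to get $\geq 3$ for $\s \neq \Rcal$), or has a twin child $\tfrak$ contributing $L_{\tfrak}$ which meets \emph{both} $\Gamma_{\s,L}^{\pm}$. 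A careful count, using that an \"ubereven cluster with only twin children has $|\s| = 2(\#\text{twins})$ and $g \geq 2$ forces $\geq 2$ twins, closes this. The non-\"ubereven $\ssgs = 0$ case is easier: $|\widetilde{\s}| \in \{1,2\}$ pins down the parity and the number of even/odd children tightly, and one checks directly that the linking chains to principal children plus (for $\s \neq \Rcal$) the chain to the parent-side give the required counts; the edge cases are when a child has small relative depth, but that only shortens a chain, never removes the incident component. I would also need the elementary observation (implicit in Theorem~\ref{thm::semistablemodel}) that distinct chains give distinct incident components, i.e. no two of these chains share their first rational curve — which follows because the chains are indexed by distinct children.
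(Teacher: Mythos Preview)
Your plan is correct and matches the paper's proof: both read the count of incident components directly off the tables in Theorem~\ref{thm::semistablemodel}, splitting into \"ubereven versus non-\"ubereven, and for $\s \neq \Rcal$ picking up the extra upward chain to the parent side. The paper handles the \"ubereven case more simply than your twin/non-twin split by noting that every even child (twin or principal) contributes exactly one incident component to \emph{each} of $\Gamma_{\s,L}^{\pm}$, so the fact that an \"ubereven principal cluster has at least two children suffices immediately; and in the non-\"ubereven case it pins things down by first showing (using $g(C)\geq 2$, hence $|\Rcal|\geq 5$, and principality) that $\s$ must have an even child.
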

\begin{proof}
(i) Let $\s=\Rcal$ and suppose $\s$ is not \"ubereven. Since $\ssgs = 0$, $\s$ can have at most two odd children and in particular at most two singletons. Since, $g(C) \geq 2$, we have $|\s| \geq 5$. If $|\s|$ is odd then $\s$ must have an even child $\s'$ and, by \cite[Theorem~8.5]{DDMM18}, $\Gamma_{\s,L}$ is intersects by the two linking chains to $\Gamma_{\s',L}$. Note that, since $\s$ is principal, $\s$ cannot be the union of two odd clusters. So, if $|\s|$ is even then $\s$ has an even child and we are done by \cite[Theorem~8.5]{DDMM18}.
        
If $\s=\Rcal$ is \"ubereven then every child of $\s$ is even. In particular, there are at least two even children $\s_1$ and $\s_2$. So, each of $\Gamma_{\s,L}^{\pm}$ intersects $L_{\s_1}^{\pm}$ and $L_{\s_2}^{\pm}$ (the linking chains to the children).
        
(ii) Let $\s\neq\Rcal$ and suppose $\s$ is not \"ubereven. Since $\s$ is principal, we know $|\s|\geq 3$. Therefore, $\s$ must have at least one proper child $\s'$. Suppose that $P(\s)$ is principal. If $\s'<\s$ is even then $\Gamma_{\s,L}$ intersects the linking chain to $\Gamma_{P(\s),L}$ and the two linking chains to $\Gamma_{\s',L}$. Otherwise $\s$ must be the union of two odd clusters, hence $\s$ is even. In this case there are two linking chains to $\Gamma_{P(\s),L}$ and one to $\Gamma_{\s,L}$. A similar argument works if $\s$ is \"ubereven. If $P(\s) = \Rcal = \s \sqcup \s_2$ is not principal, the argument is similar, but linking chains to $\Gamma_{P(\s),L}$ are replaced by linking chains to $\Gamma_{\s_2,L}$.
\end{proof}

\begin{prop}
    \label{prop::centralnotexceptional}
    Let $\Y$ be the semistable model of $C/L$ and $\ZZ$ the imagine under the quotient map. Let $\X$ be the SNC model obtained by resolving the singularities of $\ZZ$ such that all rational chains are minimal. Let $X$ be a principal orbit of clusters. Let $\Gamma_{X,K} \in \X_k$ be the image of $\Gamma_{\s,L}$ for some $\s\in X$ under the quotient by $\Gal(L/K)$. Then if $g(\Gamma_{X,K}) = 0$ and $(\Gamma_{X,K}\cdot \Gamma_{X,K}) = -1$, $\Gamma_{X,K}$ intersects at least three other components of the special fibre (i.e. blowing down $\Gamma_{X,K}$ would not result in an SNC model).
\end{prop}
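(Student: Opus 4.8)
\emph{Plan of proof.} The strategy is to count the branches of $\X_k$ at $\Gamma_{X,K}$ --- the components of $\X_k$ meeting $\Gamma_{X,K}$ --- and to show there are at least three, so that contracting the $(-1)$-curve $\Gamma_{X,K}$ of genus $0$ would produce a point worse than a normal crossing. Fix $\s\in X$; if $\s$ is \"ubereven replace $\Gamma_{\s,L}$ below by $\Gamma_{\s,L}^{+}$. Recall $\Gamma_{X,K}$ is the image of $\Gamma_{\s,L}$ under $q\colon\Y\to\ZZ$ followed by the resolution $\phi\colon\X\to\ZZ$ of Theorem \ref{thm::lorfactfive}, and that over $K_X$ Lemma \ref{lem::degreeofaction} makes $q|_{\Gamma_{\s,L}}\colon\Gamma_{\s,L}\to\Gamma_{X,K}$ a cyclic cover of degree $e_X$. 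As in Proposition \ref{prop::tpgrSings} (cf.\ \cite{Lor90}) its branch points are exactly the singular points of $\ZZ$ on $\Gamma_{X,K}$, and by Theorems \ref{thm::lorfactfive} and \ref{thm::resolvingtcqs} each such point resolves to a non-empty chain of rational curves meeting $\Gamma_{X,K}$, distinct singular points giving disjoint chains. So $\Gamma_{X,K}$ acquires one branch per branch point of $q|_{\Gamma_{\s,L}}$, plus one for every unramified point of $\ZZ$ on $\Gamma_{X,K}$ lying on a second component; it suffices to exhibit three branches, and I would split according to whether $\ssgs>0$ or $\ssgs=0$.

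If $\ssgs>0$: since $g(\Gamma_{X,K})=g(X)=0$ but $g(X)=\ssgs$ whenever $e_X=1$, necessarily $e_X\geq2$. Riemann--Hurwitz for the degree $e_X$ cover $q|_{\Gamma_{\s,L}}$ onto a genus $0$ curve gives
\[
\sum_{O}\bigl(e_X-|O|\bigr)\;=\;2e_X-2+2\ssgs\;\geq\;2e_X,
\]
the sum over the ramification orbits $O$ upstairs (one per branch point downstairs). Each term lies in the range $0<e_X-|O|\leq e_X-1<e_X$, so there are more than two branch points, hence at least three, and thus at least three branches at $\Gamma_{X,K}$.

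If $\ssgs=0$: then $\Gamma_{\s,L}\cong\PP^{1}$, and by Lemma \ref{lem::numberofchildren} it meets $N$ other components of the reduced curve $\Y_k$ with $N\geq 3$ if $\s\neq\Rcal$ and $N\geq2$ if $\s=\Rcal$, the meetings occurring at $N$ distinct points. When $e_X=1$, $\Gamma_{\s,L}$ is fixed pointwise, $\Gamma_{X,K}\cong\Gamma_{\s,L}$, the $N$ chains descend to $N$ distinct branches, and three of them are present once one checks (using that $\Rcal$ principal forbids $\Rcal$ being even with two children) that $N\geq3$ also when $\s=\Rcal$. When $e_X>1$, $\sigma|_{\Gamma_{\s,L}}$ is a non-trivial tame automorphism of $\PP^{1}$ and so has exactly two fixed points $P_0\neq P_1$; each is a branch point of $q|_{\Gamma_{\s,L}}$ and contributes a branch at $\Gamma_{X,K}$ (a tail, or the downhill part of the chain/crossed tail through it if it is an attachment point), and the two branches are distinct since $q(P_0)\neq q(P_1)$. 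For the third branch, note that at most two of the $N\geq 3$ attachment points can lie in $\{P_0,P_1\}$, so some attachment point sits in a free $\sigma$-orbit, whose chain descends to a further branch; the only subtlety is that an $\epsilon$-twist may fuse two such chains into one linking chain or crossed tail, but inspection of the table in Theorem \ref{thm::structureofSNCmodel} shows a third branch always survives, and the residual case $\s=\Rcal$, $e_X>1$, $N=2$ is settled by the same bookkeeping together with the structural constraints forced by $\Rcal$ being principal.

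The main obstacle is precisely this last bookkeeping: in the regime $\ssgs=0$, $e_X>1$ one must, for each shape of $X$ (\"ubereven or not, $X=\{\Rcal\}$ or not) and each pattern of signs $\epsilon_{X'}$ on the children of $X$, trace exactly how the chains of $\Y_k$ at $\Gamma_{\s,L}$ are permuted and identified by $\Gal(L/K)$ and verify in every sub-case that, together with the two fixed-point branches, at least three branches remain. Everything else reduces either to Riemann--Hurwitz or to Lemma \ref{lem::numberofchildren}.
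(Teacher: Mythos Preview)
Your overall strategy matches the paper's: Riemann--Hurwitz disposes of $\ssgs>0$, and Lemma~\ref{lem::numberofchildren} supplies the intersection count when $\ssgs=0$. The paper, however, avoids almost all of the bookkeeping you flag as the main obstacle by two devices you do not use.

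First, in the sub-case $\ssgs=0$, $e_X=1$, you try to prove $N\geq 3$ directly. Your justification (``$\Rcal$ principal forbids $\Rcal$ even with two children'') is incomplete: ruling out $N=2$ for $\s=\Rcal$ also needs the other clause in the definition of principal, namely the ban on a child of size $2g$, and then several parity sub-cases. The paper sidesteps all of this by using the hypothesis of the proposition: when $e_X=1$ the component $\Gamma_{X,K}$ has multiplicity~$1$, so if it meets $N\geq 2$ other components the divisor relation $\sum_i m_i(\Gamma_{X,K}\cdot E_i)=0$ already forces $(\Gamma_{X,K}\cdot\Gamma_{X,K})\leq -2$. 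The hypothesis $(\Gamma_{X,K}\cdot\Gamma_{X,K})=-1$ is therefore violated and this sub-case is vacuous.

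Second, in the sub-case $\ssgs=0$, $e_X>1$, the paper gives a two-line dichotomy that works with only the bound $N\geq 2$ that Lemma~\ref{lem::numberofchildren} actually provides for $\s=\Rcal$, rather than the $N\geq 3$ you rely on. Write $P_0,P_\infty$ for the two ramification points and $Q_1,Q_2$ for two attachment points of $\Gamma_{\s,L}$. For $\s=\Rcal$ one has from \cite[Proposition~5.20]{DDMM18} that $q^{-1}(P_\infty)$ is never an attachment point. If $q(Q_1)\neq q(Q_2)$, then $P_\infty,q(Q_1),q(Q_2)$ are three distinct branches. If $q(Q_1)=q(Q_2)$, then since $|q^{-1}(P_0)|=1$ this common image is not $P_0$, so $P_\infty,P_0,q(Q_1)$ are three distinct branches. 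The case $\s\neq\Rcal$ is analogous. This replaces the per-shape, per-$\epsilon$ inspection you anticipate. (The paper does note one residual subtlety---that the chain through $P_0$ does not blow down entirely upon minimalisation---and defers it to Lemmas~\ref{lem:linkingchaininvariantdetermines} and~\ref{lem:cnew}.)
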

\begin{proof}
     If $|X| > 1$, there is a non trivial field extension of $K$ to $K_X$. Over $K_X$, each $\s' \in X$ is fixed by $\Gal(\overline{K}/K_X)$. The Galois group $\Gal(K_X/K)$ then induces an \'etale morphism $\bigsqcup_{\s'\in X} \Gamma_{\s',K_X} \rightarrow \Gamma_{X,K}$. Therefore, $g(\Gamma_{X,K}) = g(\Gamma_{\s',K_X})$, $(\Gamma_{X,K}\cdot\Gamma_{X,K}) = (\Gamma_{\s',K_X}\cdot\Gamma_{\s',K_X})$, and $\Gamma_{X,K}$ and $\Gamma_{\s',K}$ intersect the same number of other components. So, it is enough to prove this proposition when $|X|=1$, and from now on let $X = \{\s\}$. When $g(\Gamma_{\s,L} > 0$, Riemann-Hurwitz implies that $$\sum_{P \in \Gamma_{\s,K}} \left( \frac{e_{\s}}{|q^{-1}(P)|} - 1 \right) \geq 2e_{\s}.$$ where $q : \Gamma_{\s,L} \rightarrow \Gamma_{\s,K}$ is the quotient by $\Gal(L/K)$. So, if $g(\Gamma_{\s,L})>0$, there must be at least three points $P\in\Gamma_{\s,K}$ with $|q^{-1}(P)|<e_{\s}$. These ramification points are singular points by Proposition \ref{prop::tpgrSings}. After blowing up these singular points, we see that $\Gamma_{\s,K}$ intersects at least three other components of $\X_k$.
     
    
    It remains to deal with the case when $g(\Gamma_{\s,L})=0$. If $e_{\s} = 1$, Lemma \ref{lem::numberofchildren} implies that $\Gamma_{\s,K}$ intersects two or more other components. In this case $\Gamma_{\s,K}$ will have multiplicity $e_{\s}=1$. This tells us that $(\Gamma_{\s,K} \cdot \Gamma_{\s,K})<-1$, so $\Gamma_{\s,K}$ is not exceptional. 
    
    Suppose instead that $e_{\s} > 1$. We will show that the component $\Gamma_{\s,K}$ intersects at least three components. There are two ramification points $P_0$ and $P_{\infty}$ of the morphism $q:\Gamma_{\s,L}\rightarrow\Gamma_{\s,K}$, the images of $0$ and $\infty$ respectively in $\Gamma_{\s,L}$. Both $P_0$ and $P_{\infty}$ are singularities. If $q^{-1}(P_0)$ is an intersection point of $\Gamma_{\s,L}$ with another component $\Gamma$ then $P_0$ will be the intersection point of $\Gamma_{\s,K}$ and $q(\Gamma)$\footnote{We may have to blow down $q(\Gamma)$ but even then $P_0$ will remain an intersection point, since the eventual linking chain will be minimal. This follows from Lemmas \ref{lem:linkingchaininvariantdetermines} and \ref{lem:cnew} below.}. Otherwise, blowing up $P_0$ introduces a component intersecting $\Gamma_{\s,K}$. Similarly for $P_{\infty}$. If $\s = \Rcal$ then $q^{-1}(P_{\infty})$ will never be an intersection point by \cite[Proposition~5.20]{DDMM18}. Since $\Gamma_{\s,L}$ has two intersection points with other components $Q_1$ and $Q_2$, either $q(Q_1) \neq q(Q_2)$, or $q(Q_1) = q(Q_2) \neq P_0$ (since $|q^{-1}(P_0)| = 1$). If $q(Q_1) \neq q(Q_2)$ then these are both intersection points with other components, hence $\Gamma_{\s,K}$ intersects at least 3 components at $P_{\infty},q(Q_1)$ and $q(Q_2)$ which are all distinct. If $q(Q_1) = q(Q_2) \neq P_0$ then $P_{\infty}, q(Q_1)$ and $P_0$ are distinct intersection points with other components. A similar argument works if $\s\neq\Rcal$.
\end{proof}

We are now able to prove our structure theorem (Theorem \ref{thm::structureofSNCmodel}).

\begin{proof}[Proof of Theorem \ref{thm::structureofSNCmodel}]
    First let us find which central components appear. Over $L$, by \cite[Theorem~8.5]{DDMM18}, we know there is a component for every principal, non-\"ubereven cluster, and we know the action of $\Gal(L/K)$ on these central components is the same as the action on the clusters. After taking the quotient by $\Gal(L/K)$, we get a component for every orbit of principal, non \"ubereven clusters. Similarly over $L$, by \cite[Theorem~8.5]{DDMM18}, we know there are two components for every \"ubereven cluster $\s$. These are swapped by Galois if and only if $\epsilon_{\s}=-1$. After taking the quotient this gives us two components for an \"ubereven orbit $X$ if $\epsilon_X=1$ and a single component if $\epsilon_X=-1.$ We call these components the central components. Showing which linking chains which appear is done similarly, using the information given in \cite[Theorem~8.5]{DDMM18}.
    
    To ensure these central components do in fact appear in the minimal SNC model, we must check that they cannot be blown down. Any central component $\Gamma_{X,K}\in\X_k$ is the image of $\Gamma_{\s,L}\in\Y_k$ for some $\s\in X$. A central component $\Gamma_{X,K}$ can only be blown down if $g(\Gamma_{X,K})=0$, and $(\Gamma_{X,K}\cdot \Gamma_{X,K})=-1$. However, by Proposition \ref{prop::centralnotexceptional}, any central component $\Gamma_{X,K}$ with $g(\Gamma_{X,K}) = 0$ and $(\Gamma_{X,K} \cdot \Gamma_{X,K}) = -1$ intersects at least three other components of the special fibre. Therefore, if $\Gamma_{X,K}$ were to be blown down, $\X_k$ would no longer be an SNC divisor. So $\Gamma_{X,K}$ must appear in the special fibre of the minimal SNC model. 
    \end{proof}

\begin{remark}
    Note that a linking chain can have length $0$ - this indicates an intersection between central components (in the case $X' < X$ both principal) or a singular central component (in the case where $X$ is principal and $X' < X$ is an orbit of twins).
\end{remark}

\subsection{A More Explicit Description}

Theorem \ref{thm::structureofSNCmodel} describes the structure of the special fibre, but says nothing about the multiplicity or genera of the components, or the action of Frobenius. The following theorems fill in these details. The first focuses on the central components, the second describes the chains of rational curves present in the special fibre, and the last gives the Frobenius action.

\begin{thm}[Central Components]
\label{thm::maincentral}
    Let $K$ and $C/K$ be as in Theorem \ref{thm::structureofSNCmodel}. Let $X$ be a principal orbit of clusters in $\Sigma_{C/K}$. If $X$ is not \"ubereven then $\Gamma_X$ has multiplicity $|X|e_X$ and genus $g(X)$. If $X$ is \"ubereven with $\epsilon_X = 1$ then $\Gamma_X^+$ and $\Gamma_X^-$ have multiplicity $|X|e_X$ and genus 0, and if $\epsilon_X = -1$ then $\Gamma_X$ has multiplicity $2|X|e_X$ and genus 0.
\end{thm}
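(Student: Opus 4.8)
\textbf{Proof proposal for Theorem \ref{thm::maincentral}.}

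The plan is to reduce to the tame potentially good reduction case (Section \ref{sec::tpgr}) via two devices: base change to the field $K_X$ (to make the orbit trivial), and passage to the auxiliary curve $C_{\widetilde{X}}$ (to strip away the rest of the cluster picture). First I would handle the multiplicity. Fix $\s \in X$ and let $L/K$ be a field extension over which $C$ is semistable, with $\Y$ its minimal semistable model over $\OO_L$ and $\ZZ = \Y/\Gal(L/K)$ the quotient; then $\Gamma_{X,K}$ is the image of $\Gamma_{\s,L}$. Since blowing up singular points does not change multiplicities, Proposition \ref{prop::componentsmult} gives that $\Gamma_{X,K}$ has multiplicity $e/|\mathrm{Stab}(\Gamma_{\s,L})|$. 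The key computation is to identify this stabiliser index. By Theorem \ref{thm::semistablegaloisaction}(i), $\sigma(\Gamma_{\s,L}^{\pm}) = \Gamma_{\sigma(\s),L}^{\pm \epsilon_{\s}(\sigma)}$, so the subgroup of $\Gal(L/K)$ fixing $\Gamma_{\s,L}$ setwise has index $|X|$ when $X$ is non-\"ubereven or \"ubereven with $\epsilon_X=1$, and index $|X|/2 \cdot$ (correction) — more precisely, when $X$ is \"ubereven with $\epsilon_X = -1$, the two components $\Gamma_{\s,L}^{\pm}$ get swapped by the stabiliser of $\s$ itself, so the setwise stabiliser of the single image component is larger. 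Combined with Lemma \ref{lem::degreeofaction} (which says the residual action on $\Gamma_{\s,L}$, after passing to $K_X$, has degree $e_\s = e_X$), we get $|\mathrm{Stab}(\Gamma_{\s,L})| = e / (|X| e_X)$ in the non-\"ubereven and \"ubereven-$\epsilon_X=1$ cases, and $e/(2|X|e_X)$ in the \"ubereven-$\epsilon_X=-1$ case. This yields the stated multiplicities $|X|e_X$, resp.\ $2|X|e_X$. The one subtlety here is making sure the level-section components do not absorb into the central component and change its multiplicity; this is handled exactly as in the proof of Theorem \ref{thm:uniquepropchild} and Remark following it.

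For the genus, I would argue as follows. When $X$ is \"ubereven, $\ssg{X} = 0$ by definition (Definition \ref{def::clustergenus}), and then $g(X) = g(\s) = 0$ by Definition \ref{def::esgs}/\ref{def::e_X}; moreover $\Gamma_{\s,L}^{\pm}$ are rational in the semistable model (Theorem \ref{thm::semistablemodel} gives $Y^2 = c_\s$, a product over the empty set of odd children), so their images are rational, giving genus $0$. When $X$ is non-\"ubereven, first base change to $K_X$: by Proposition \ref{prop::centralnotexceptional}'s proof technique, the \'etale morphism $\bigsqcup_{\s'\in X}\Gamma_{\s',K_X} \to \Gamma_{X,K}$ shows $g(\Gamma_{X,K}) = g(\Gamma_{\s,K_X})$, so we may assume $X = \{\s\}$ with $\s$ fixed by $\GK$. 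Now by Remark \ref{rem::Cstilde}, reducing $C_{\widetilde{\s}}$ mod $\mathfrak{m}$ recovers $\Gamma_{\s,L}$ and the Galois automorphisms on $\Gamma_{\s,L}$ and $\Gamma_{\widetilde{\s},L}$ agree; hence the quotient $\Gamma_{\s,K}$ is isomorphic to the central component $\Gamma_{\widetilde{\s}}$ of the minimal SNC model of $C_{\widetilde{\s}}$. By Definition \ref{def::esgs}, the genus of $\Gamma_{\widetilde{\s}}$ is precisely $g(\s) = g(X)$, and Proposition \ref{prop::genusformula} gives the explicit formula. This closes the genus computation. The case $\ssg{X} = 0$ (but $X$ non-\"ubereven) needs a separate word: then $g(\s) = 0$ by definition, and the Riemann--Hurwitz argument in the proof of Theorem \ref{thm::tpgr} shows the quotient component is rational, consistent with the claim.

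The main obstacle I anticipate is the careful bookkeeping of stabilisers in the \"ubereven case with $\epsilon_X = -1$: one must verify that the generator of $\mathrm{Stab}_{\Gal(L/K)}(\s)$ acts on the fibre $\Gamma_{\s,L}^+ \sqcup \Gamma_{\s,L}^-$ by swapping the two sheets (this is exactly the content of Remark \ref{rem::epsilon}, that $\epsilon_\s = -1$ iff $\sigma$ swaps the two points at infinity, hence the two components), so that the image is a single component whose stabiliser is twice as large, forcing the factor of $2$ in both the multiplicity and accounting for the genus-$0$ claim. A secondary technical point is ensuring the base-change-to-$K_X$ step is compatible with the definition of $e_X$ in Definition \ref{def::e_X} (namely that $e_X$, computed over $K$, equals $e_\s$ computed for $\s \in \Sigma_{C/K_X}$); this follows directly by comparing the minimality conditions $e_X |X| d_\s \in \Z$, $e_X|X|\nu_\s \in 2\Z$ over $K$ with $e_\s d_\s \in \Z$, $e_\s \nu_\s \in 2\Z$ over $K_X$, since depths and $\nu$ scale by $|X|$ under the extension $K_X/K$.
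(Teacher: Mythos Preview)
Your proposal is correct and follows essentially the same approach as the paper's proof: reduce the multiplicity computation to Proposition \ref{prop::componentsmult} plus Lemma \ref{lem::degreeofaction}, and reduce the genus computation to the comparison with $C_{\widetilde{\s}}$ via Remark \ref{rem::Cstilde}. The only cosmetic difference is that the paper factors the quotient in two stages (first by $\Gal(L/K_X)$, giving multiplicity $e_X$ on each of the $|X|$ components $\Gamma_{\s,K_X}$, then by $\Gal(K_X/K)$, which merely permutes these and contributes the factor $|X|$ via \cite[Fact~IV]{Lor90}), whereas you compute the pointwise stabiliser of $\Gamma_{\s,L}$ directly in one step; the two are equivalent, though the two-stage version makes the \"ubereven $\epsilon_X=-1$ bookkeeping slightly cleaner since the swap of $\Gamma_{\s,L}^{\pm}$ is isolated in the first stage.
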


\begin{proof}
    Let $X$ be a principal, non-\"ubereven orbit, and choose some $\s \in X$. Recall that $K_X$ is the minimal field extension of $K$ such that the clusters of $X$ are fixed by $\Gal(\overline{K}/K_X)$, and $L$ is the minimal field extension of $K$ such that $C$ is semistable over $L$. The image $\Gamma_{\s,K_X}$ of $\Gamma_{\s,L}$ after taking the quotient by $\Gal(L/K_X)$ has multiplicity $e_X$, since the action on $\Gamma_{\s,L}$ has multiplicity $e_X$ (by Lemma \ref{lem::degreeofaction}). There are $|X|$ such components, which are permuted by $\Gal(K_X/K)$ in the minimal SNC model of $C/K_X$. So, $\Gamma_X$ has multiplicity $|X|e_X$ by \cite[Fact~IV]{Lor90}. The multiplicities of components corresponding to \"ubereven clusters follows similarly, being careful to account for whether $\Gamma_{\s,L}^+$ and $\Gamma_{\s,L}^-$ are swapped by $\Gal(L/K)$ in the semistable model (which happens precisely when $\epsilon_{\s} = -1$).
    
    To find the genus of the central components, note that if $g(\Gamma_{\s,L})=0$ then $g(\Gamma_{X,K})=0$. So let us assume that $g(\Gamma_{\s,L})>0$. In this case, as mentioned in Remark \ref{rem::Cstilde}, $\Gamma_{\s,L}$ is isomorphic to the special fibre of the smooth model of $C_{\widetilde{\s}}$ over $L$. Furthermore, the action on $\Gamma_{\s,L}$ is the same as the action on $\Gamma_{\widetilde{\s},L}$. Hence, the genus of $\Gamma_{\s,K_X}$ is $g(X)$, and also the genus of $\Gamma_{X,K}$.
\end{proof}

\begin{thm}[Description of Chains]
    \label{thm::chaindescription}
    Let $K$ and $C/K$ be as in Theorem \ref{thm::structureofSNCmodel}. Let $X$ be a principal orbit of clusters with $e_X > 1$. Choose some $\s \in X$ of depth $d_{\s}$ with denominator $b_{\s}$. Then the central component(s) associated to $X$ are intersected transversely by the following sloped tails with parameters $(t_1,\mu)$ (writing $\Gamma_X = \Gamma_X^+ = \Gamma_X^-$ if $X$ is not \"ubereven):
    \begin{center}
    \begin{tabular}{|c|c|c|c|c|p{6.07cm}|}
    \hline
         \small{Name} & \small{From} & \small{Number} & $t_1$ & $\mu$ & \small{Condition}  \\ \hline
         $T_{\infty}$ & $\Gamma_X$ & $1$ & \small{$(g + 1)d_{\Rcal} - \lambda_{\Rcal}$} & $1$ & $X=\{\Rcal\}$, $\Rcal$ odd \\ \hline
         $T_{\infty}^{\pm}$ & $\Gamma_X^{\pm}$ & $2$ & $-d_{\Rcal}$ &$1$ & $X = \{\Rcal\}$, $\Rcal$ even, $\epsilon_{\Rcal} = 1$ \\ \hline
         $T_{\infty}$ & $\Gamma_X$ & $1$ & $-d_{\Rcal}$ & $2$ & $X = \{\Rcal\}$, $\Rcal$ even, $e_{\Rcal} > 2$, $\epsilon_{\Rcal}=-1$ \\ \hline
         $T_{y=0}$ & $\Gamma_X$ & $\frac{|\singletonsofs||X|}{b_X}$ & $-\lambda_X$ & $|X|b_X$ &  $|\s_{\mathrm{sing}}|\geq2$, and $e_X>b_X/|X|$  \\\hline
         $T_{x=0}$ & $\Gamma_X$ & $1$ & $-d_X$ & $2|X|$ & $X$ has no stable child, $\lambda_X \not \in \Z$, $e_X>2$ and either $g(X) > 0$ or $X$ is \"ubereven \\ \hline 
         $T_{x=0}^{\pm}$ & $\Gamma_X^{\pm}$ & $2$ & $-d_X$ & $|X|$ & $X$ has no stable child, $\lambda_X \in \Z$, and either $g(X) > 0$ or $X$ is \"ubereven \\ \hline
         $T_{(0,0)}$ & $\Gamma_X$ & $1$ & $-\lambda_X$ & $|X|$ & $X$ has a stable singleton or $g(X) = 0$, $X$ is not \"ubereven and $X$ has no proper stable odd child\\\hline
    \end{tabular}
    \end{center} 
    The  central  components  are  intersected  by  the  following  sloped  chains  of  rational  curves  with parameters $(t_2,t_2+\delta,\mu)$:
    \begin{center}
    \begin{tabular}{|c|c|c|c|c|}
    \hline
         \small{Name} & $t_2$ & $\delta$ & $\mu$ & \small{Condition} \\ \hline
         $L_{X,X'}$ & $-\lambda_X$ & $\delta_{X'}/2$& $|X|$ & $X' \leq X$ both principal, $X'$ odd \\ \hline
         $L_{X,X'}^+$ & $-d_X$ & $\delta_{X'}$ & $|X|$ & $X' \leq X$ both principal, $X'$ even with $\epsilon_{X'} = 1$ \\ \hline
         $L_{X,X'}^-$ & $-d_X$ & $\delta_{X'}$ & $|X|$ & $X' \leq X$ both principal, $X'$ even with $\epsilon_{X'} = 1$ \\ \hline
         $L_{X,X'}$ & $-d_X$ & $\delta_{X'}$ & $2|X|$ & $X' \leq X$ both principal, $X'$ even with $\epsilon_{X'} = -1$ \\ \hline
         $L_{X'}$ & $-d_X$ & $2\delta_{X'}$ & $|X|$ & $X$ principal, $X' \leq X$ orbit of twins, $\epsilon_{X'} = 1$ \\ \hline
         $T_{X'}$ & $-d_X$ & $\delta_{X'}+\frac{1}{\mu}$ & $2|X|$ & $X$ principal, $X' \leq X$ orbit of twins, $\epsilon_{X'} = -1$ \\ \hline
    \end{tabular}
    \end{center}
    If $\Rcal$ is not principal we get additional sloped chains with parameters $(t_2,t_2+\delta,\mu)$ as follows:
    \begin{center}
        \begin{tabular}{|c|c|c|c|p{7.3cm}|}
        \hline
         \small{Name} & $t_2$ & $\delta$ & $\mu$ & \small{Condition} \\
         \hline
         $L_{\Rcal}$ & $-d_{\Rcal}$ & $2\delta_{\s}$ & $1$ & $\Rcal$ a cotwin, $\s<\Rcal$ child of size $2g$, $v_K(c_f) \in 2\Z$ \\\hline
         $T_{\Rcal}$ & $-d_{\Rcal}$ & $\delta_{\s}+\frac{1}{\mu}$ & $2$ & $\Rcal$ a cotwin, $\s<\Rcal$ child of size $2g$, $v_K(c_f) \not \in 2\Z$ \\\hline
         $L_{\s_1,\s_2}$ & \small{$(g(\s_1) + 1)d_{\s_1}- \lambda_{\s_1}$} & $\frac{1}{2}\delta(\s_1,\s_2)$ &  $1$ & $\Rcal = \s_1 \sqcup \s_2$, $\s_i$ principal, odd, $e_{\mathcal{R}} = 1$ \\ \hline
         $L_X$ &  \small{$(g(\s_1) + 1)d_{\s_1}- \lambda_{\s_1}$} & $\frac{1}{2}\delta(\s_1,\s_2)$ & $2$ & $\Rcal = \s_1 \sqcup \s_2$, $X = \{\s_1,\s_2\}$ principal, odd orbit\\ \hline
         $L_{\s_1,\s_2}^+$ & $d_{\s_1}$ & $\delta(\s_1,\s_2)$ & $1$ & $\Rcal = \s_1 \sqcup \s_2$, $\s_i$ principal, even, $e_{\mathcal{R}} = 1$, $\epsilon_{\s_i} = 1$ \\ \hline
         $L_{\s_1,\s_2}^-$ & $d_{\s_1}$ & $\delta(\s_1,\s_2)$ & $1$ & $\Rcal = \s_1 \sqcup \s_2$, $\s_i$ principal, even, $e_{\mathcal{R}} = 1$, $\epsilon_{\s_i} = 1$ \\ \hline
         $L_{\s_1,\s_2}$ & $d_{\s_1}$ & $\delta(\s_1,\s_2)$ & $2$ & $\Rcal = \s_1 \sqcup \s_2$, $\s_i$ principal, even, $e_{\mathcal{R}} = 1$, $\epsilon_{\s_i} = -1$ \\ \hline
         $L_X^+$ & $d_{\s_1}$ & $\delta(\s_1,\s_2)$ & $2$ & $\Rcal = \s_1 \sqcup \s_2$, $X = \{\s_1,\s_2\}$ principal, even orbit, and $\epsilon_{\s_i} = 1$ \\ \hline
         $L_X^-$ & $d_{\s_1}$ & $\delta(\s_1,\s_2)$ & $2$ & $\Rcal = \s_1 \sqcup \s_2$, $X = \{\s_1,\s_2\}$ principal, even orbit, and $\epsilon_{\s_i} = 1$ \\ \hline
         $T_X$ & $d_{\s_1}$ & $\delta(\s_1,\s_2)+\frac{1}{\mu}$ & $4$ & $\Rcal = \s_1 \sqcup \s_2$, $X = \{\s_1,\s_2\}$ principal, even orbit, and $\epsilon_{\s_i} =-1$\\ \hline
         $L_{\tfrak}$ & $d_{\s}$ & $2\delta(\s,\tfrak)$ & $1$ & $\mathcal{R} = \s \sqcup \tfrak$, $\s$ principal even, $\tfrak$ twin, $\epsilon_{\tfrak} = 1$ \\ \hline 
         $T_{\tfrak}$ & $d_{\s}$ & $\delta(\s,\tfrak)+\frac{1}{\mu}$ & $2$ & $\mathcal{R} = \s \sqcup \tfrak$, $\s$ principal even, $\tfrak$ twin, $\epsilon_{\tfrak} = -1$\\
         \hline
         \end{tabular}
    \end{center}
    
    Finally, the crosses of any crossed tail have multiplicity $\frac{\mu}{2}$.
\end{thm}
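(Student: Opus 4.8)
The plan is to prove Theorem \ref{thm::chaindescription} by induction on the number of proper clusters in $\Sigma_{C/K}$, using the two special cases already established — Theorem \ref{thm::tpgr} (tame potentially good reduction, the base case) and Theorem \ref{thm:uniquepropchild} (the two-cluster case, which produces the linking chains) — together with the semistable picture of Theorem \ref{thm::semistablemodel} and the quotient/resolution machinery of Section \ref{subsec::QuotientsofModels}. First I would pass to the minimal semistable model $\Y$ over $\OO_L$ (Theorem \ref{thm::semistablemodel}), form $\ZZ = \Y/\Gal(L/K)$, and recall from Theorem \ref{thm::lorfactfive} and Proposition \ref{prop::singPointsAretsgs} that every singular point of $\ZZ_k$ lying on a single component is a tame cyclic quotient singularity, resolved into a sloped chain whose multiplicities are the Hirzebruch--Jung approximants (Theorem \ref{thm::resolvingtcqs}, Remark \ref{rem::linkingchainsections}). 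So the content is really a bookkeeping statement: for each principal orbit $X$, identify which small orbits of points on $\Gamma_{\s,L}$ (for $\s \in X$) survive the quotient, and compute the corresponding \tcqi\ as a function of $d_X$, $\lambda_X$, $\delta_{X'}$, $|X|$.

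The key steps, in order: (1) Reduce to the case $|X| = 1$ by the étale-descent argument of Proposition \ref{prop::centralnotexceptional} — working over $K_X$ multiplies all relevant data by $|X|$, which is exactly the appearance of the factor $|X|$ in every $\mu$ in the tables. (2) For a principal cluster $\s$ fixed by Galois with $\ssgs > 0$, invoke Remark \ref{rem::Cstilde}: $\Gamma_{\s,L}$ with its Galois action is identical to $\Gamma_{\widetilde{\s},L}$ with its action, so the $\infty$-, $(y=0)$-, $(x=0)$- and $(0,0)$-tails of $\Gamma_X$ are literally the tails computed for $C_{\widetilde{\s}}$ in Theorem \ref{thm::tpgr} (with $e_{\s}$, $b_{\s}$, $\lambda_{\s}$, $d_{\s}$); this reads off all the $T_{\infty}$, $T_{y=0}$, $T_{x=0}^{(\pm)}$, $T_{(0,0)}$ rows, the stable-child conditions tracking whether the $0$- or $\infty$-point of $\Gamma_{\s,L}$ is an intersection point (so the tail is "absorbed" into a linking chain) rather than a genuine tail, exactly as in Remark \ref{rem::upccorrespondence}. (3) For the linking chains $L_{X,X'}$, $L_{X'}$, $T_{X'}$: locate the singular point of $\ZZ_k$ on the intersection of $\Gamma_{\s,K}$ with the image of the chain running to the child, and compute its two slopes; here the two-cluster computation of Lemma \ref{lem::slopeslemma} (giving $s_1^L = -\lambda_\s$, $s_2^L = -\lambda_\s - \delta_{\s'}/2$ in the odd case, and $s_i^L = -\delta_L d_{\s}$, $-\delta_L d_{\s'}$ in the even case) is exactly the local picture, so the sloped chain has parameters $(t_2, t_2+\delta,\mu)$ with $t_2, \delta, \mu$ as tabulated; the factor of $2$ in $\delta$ for twins and the $\frac{1}{\mu}$ shift for crossed tails come from $\epsilon_{\s'} = \pm 1$ and the structure of a crossed tail in Theorem \ref{thm:uniquepropchild}. (4) Handle $\Rcal$ not principal separately, splitting into the cotwin case and the $\Rcal = \s_1 \sqcup \s_2$ cases, again by direct reduction to Theorems \ref{thm::tpgr} and \ref{thm:uniquepropchild} after noting $\Gamma_{\Rcal}$ carries no component and the relevant chains emanate from $\Gamma_{\s_i}$. (5) Note the crosses have multiplicity $\mu/2$: a crossed tail arises from a singular point lying on a component of multiplicity $\mu$ whose two transverse $\PP^1$'s are the images under the quotient of the two "halves" of a configuration at a twin, and these inherit half the multiplicity directly from Theorem \ref{thm:uniquepropchild} and Theorem \ref{thm::structureofSNCmodelintro}.

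The main obstacle will be the inductive step that glues these local contributions together into a global minimal SNC model — specifically, verifying that when two tails (from a parent orbit and a child orbit) get identified along a linking chain, the resulting chain is exactly the sloped chain with the predicted parameters, and that no further blow-downs are possible. This is precisely where the "small distance" analysis of Section \ref{subsec:upcsmalldistance} (Theorem \ref{thm::smalldistancelinking}) is needed: when the level section of a linking chain has length $0$, the two corresponding tails do not contribute all their multiplicities but "meet" at a component of second-least common multiplicity, and one must check the global resolution respects this. I would prove the gluing is local on $\ZZ_k$ — each singular point is resolved independently, and the only interaction between a parent's contribution and a child's is through the shared linking chain, whose two ends are determined by the slopes at $\Gamma_X$ and at $\Gamma_{X'}$ computed independently — so the uniqueness of the minimal SNC model (\cite[Proposition~9.3.36]{Liu02}) forces the global answer to be the one assembled from the local pieces. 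A secondary subtlety is checking the precise conditions (e.g. $e_X > b_X/|X|$ for $T_{y=0}$, $e_X > 2$ for $T_{x=0}$, the various $\epsilon_X$ and stable-child conditions) under which each chain is genuinely present and non-empty versus degenerate; these follow from Lemmas \ref{lem::inftyorbits}, \ref{lem::x0orbits}, \ref{lem::2bornot2b}, \ref{lem::y0orbits} applied to $C_{\widetilde{\s}}$, but must be transcribed carefully, including the case distinctions forced by whether $\lambda_X \in \Z$ and whether $b_X$ is even.
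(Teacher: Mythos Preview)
Your proposal is essentially the paper's own strategy: induct, use the potentially-good-reduction case for tails, the two-cluster case for linking chains, and étale descent for non-trivial orbits. Two implementation details you gloss over are worth flagging. First, the paper sets up a \emph{double} induction --- on the number of proper clusters \emph{and} on the degree $e=[L:K]$ --- because the étale-descent step (your step (1)) passes to $K_X$, which does not reduce the number of proper clusters but does reduce $e$; without the second induction variable that step is not formally justified. Second, in your step (3) you invoke Lemma \ref{lem::slopeslemma} as ``exactly the local picture'', but that lemma is proved via Newton polytopes for a literal two-cluster curve; to transfer it to a general $\s' < \s$ sitting inside a larger cluster picture the paper isolates the statement that the linking chain is determined by $d_{\s}, d_{\s'}, \lambda_{\s} \bmod \Z, \lambda_{\s'} \bmod \Z$ and the parity of $|\s'|$ (Lemma \ref{lem:linkingchaininvariantdetermines}), and then explicitly constructs an auxiliary two-cluster curve $C^{\mathrm{new}}$ matching those invariants (Lemma \ref{lem:cnew}). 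Finally, in your step (4), the case $\Rcal = \s_1 \sqcup \s_2$ is not ``directly'' an instance of Theorem \ref{thm:uniquepropchild} (neither $\s_i$ is the top cluster of a nested picture); the paper applies a M\"obius transform $x \mapsto 1/x$ (Lemma \ref{lem::nonprincmobiustransform}) to convert this to the nested two-cluster shape before invoking the earlier computation.
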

\begin{proof}
    Postponed to Section \ref{subsec::proofofmain}.
\end{proof}

\begin{remark}
    If there is any confusion over which central components linking chains or tails intersect, the reader is urged to refer back to the tables in Theorem \ref{thm::structureofSNCmodel}. We have omitted this information from these tables due to spatial concerns.
\end{remark}

\begin{remark}
    Let $X$ be a principal orbit of clusters in $\Sigma_{C/K}$. As in Remark \ref{rem::upccorrespondence}, we make a comparison between the rational chains intersecting a central component, $\Gamma_X\in\X_k$ to the tails in the special fibre of the minimal SNC model $\X_{\widetilde{X}}$. This comparison makes sense when $g(\Gamma_{\s,L}) > 0$ for some $\s\in X$. The central component $\Gamma_X\in \X_k$ will have the same genus as the central component $\Gamma_{\widetilde{X}}\in\X_{\widetilde{X},k}$ and multiplicity multiplied by $|X|$. It will have the same tails (with all multiplicities multiplied by $|X|$) except these tails will make up part of the linking chains intersecting $\Gamma_X$ in the following cases:
    \begin{itemize}[leftmargin=*]
        \item If $X\neq\mathcal{R}$ and $P(X)$ is principal, an $\infty$-tail in $\X_{\widetilde{X},k}$ will form the uphill section of one of the linking chains $L_{P(X),X}^{\pm}$,
        \item If $X < \Rcal$ and $\Rcal$ is not principal, then any $\infty$-tail in $\X_{\widetilde{X},k}$ will form the uphill section of a chain: the linking chain between $\Gamma_{\s_1}$ and $\Gamma_{\s_2}$ if $\Rcal = \s_1 \sqcup \s_2$ and $X = \{\s_1\}$; the crossed tail if $\Rcal = \s_1 \sqcup \s_2$ and $X = \{\s_1,\s_2\}$; and the loop or crossed tail arising from $\Rcal$ if $\Rcal$ is a cotwin,
        \item a $(y=0)$-tail will form the downhill section of a linking chain $L_{X,X'}$ if there exists some $X'<X$, a non-trivial orbit of odd, principal children,
        \item a $(x=0)$-tail will form the downhill section of a linking chain $L_{X,X'}^{\pm}$ if there exists some $\{\s'\} = X'<X$, a stable even child,
        \item a $(0,0)$-tail will form the downhill section of a linking chain $L_{X,X'}$ if there exists some $\{\s'\} =X'<X$, a stable odd child.
    \end{itemize}
where again, all multiplcities are multiplied by $|X|$. 
\end{remark}

We finish with a description of the Frobenius action on the components of the minimal SNC model (or equivalently, on the dual graph).

\begin{thm}[Frobenius Action]
    \label{thm::frobaction}
    Let $K$ be a field, not necessarily with algebraically closed residue field, and let $C/K$ be a curve 
    with tame potentially semistable reduction and minimal SNC model $\X$ over $K^{\textrm{ur}}$. 
    Then the Frobenius automorphism, $\Frob$, acts on the components of $\X$ as:
    
    \begin{enumerate}
        \item $\Frob(\Gamma_X^{\pm}) = \Gamma_{\Frob(X)}^{\pm\epsilon_X(\Frob)}$,
        \item $\Frob(L_{X,\; X'}^{\pm}) = L_{\Frob(X),\; \Frob(X')}^{\pm\epsilon_{X'}(\Frob)}$,
        \item a loop $L_X$ is sent to $\epsilon_X(\Frob)L_{\Frob(X)}$, a crossed tail $T_X$ to $\epsilon_X(\Frob)T_{\Frob(X)}$,\footnote{$-L_X$ is same loop but with reversed orientation. $-T_X$ is the same crossed tail but with crosses swapped.}
        \item  and tails are permuted as $\Frob(T_{\infty}^{\pm}) = T_{\infty}^{\pm\epsilon_X(\Frob)}$, $\Frob(T_{x=0}^{\pm}) = T_{x=0}^{\pm1^{v(c_X)}}$, and $(y=0)$-tails are permuted as the corresponding roots of the cluster pictures are.
    \end{enumerate}
\end{thm}
\begin{proof}
    Let $C$ have semistable reduction over a Galois extension $L$ of $K$, and let $\Y$ be the minimal semistable model of $C$ over $L^{\textrm{ur}}$. Then $\Frob$ acts on the components of $\Y_k$ as required by \cite[Theorem~8.5]{DDMM18}. Let $\ZZ$ be the quotient of $\Y$ be $\Gal(L^{\textrm{ur}}/K^{\textrm{ur}})$. By considering $G$-invariant open affines, we see that the following square commutes:
    
    \begin{center}
    \begin{tikzcd}
        \Y \arrow[d, "q"] \arrow[r, "\Frob"] & \Y \arrow[d, "q"]\\
        \ZZ \arrow[r, "\Frob"] & \ZZ
    \end{tikzcd}
    \end{center}
    
    So $\Frob$ permutes the components of $\ZZ$ as required. Since all central components are components of $\ZZ$, this proves (i).
    
    It remains to show that, after resolving the singularities on $\ZZ$, Frobenius acts on the components as desired. Consider a single blow up of an ideal sheaf $\mathfrak{I}$ corresponding to an orbit of points under Frobenius. Denote the resulting scheme $\ZZ'$. The Frobenius automorphism on $\ZZ$ extends to an automorphism on $\ZZ'$, which must also be induced by Frobenius. Note that the exceptional components of $\ZZ'$ are permuted by Frobenius in the same way as the corresponding singularities of $\ZZ$ are. So it is sufficient to show that Frobenius acts on the singularities of $\ZZ$ as expected.
    
    The action on singularities on linking chains is determined by the action on the rest of the linking chain. The action on the linking chain is entirely determined by the action on the central components they link, except in the case that there are two linking chains between central components. In this case, they are swapped if and only if $\epsilon_X(\Frob)=-1$. This follows from \cite[Theorem~8.15]{DDMM18} and the commutative square above. This proves (ii). Loops and crossed tails can be dealt with similarly to prove (iii).
    
    If there are two infinity tails, the singularities they arise from are the images of two points at infinity of a component of $\Y_k$ (see the proof of Theorem \ref{thm::structureofSNCmodel}). Points at infinity of a component $\Gamma_{\s}$ of $\Y_k$ arising from a cluster $\s$ are swapped by Frobenius if and only if $\epsilon_{\s}(\Frob)=-1$. This proves the first condition of (iv). The singularities giving rise to $(y=0)$-tails are images of roots of $f(x)$, and those giving rise to $(x=0)$-tails are images of the points $(0, \pm \sqrt{c_X})$, hence (iv).
\end{proof}

\subsection{Proof of the Theorem \ref{thm::chaindescription}} 
\label{subsec::proofofmain}

To prove Theorem \ref{thm::chaindescription}, we will proceed by induction on two things: the number of proper clusters in $\Sigma_{C/K}$, and the degree $e=[L:K]$ of the minimal extension $L/K$ such that $C/L$ is semistable. The base cases for these are when $\Sigma_{C/K}$ consists of a single proper cluster (which is covered in Section \ref{sec::tpgr}, in particular Theorem \ref{thm::tpgr} and Proposition \ref{prop::singPointsAretsgs}), and when $C$ has semistable reduction over $K$ i.e. $e=1$ (which is covered in Section \ref{subsec::SemistableModels}). For our inductive hypothesis, suppose that for any hyperelliptic curve where the number of proper clusters in its cluster picture is strictly less than that of $C/K$, or the degree of an extension needed such that it is semistable is strictly less than that of $C$, we can completely determine the special fiber of its minimal SNC model. 

\subsubsection{$\Rcal$ Principal}
\label{subsubsec::proofprincipal}

We start by assuming that the top cluster $\Rcal$ is principal, and that it has a Galois invariant proper child $\s$. We will calculate the tails of $\Gamma_{\Rcal,K}^{\pm}$ and, if $\s$ is principal, $\Gamma_{\s,K}^{\pm}$. We will also calculate the linking chain(s) (or the chain arising from $\s$ if $\s$ is a twin) between them. This will be done by comparing the linking chain(s) to those in the special fibre of the minimal SNC model of another hyperelliptic curve over $K$, which we will call $\Cnew$. We will write $\Cnew:y^2=f^{\textrm{new}}(x)$, and denote the set of roots of $f^{\textrm{new}}$ over $\overline{K}$ by $\Rcalnew$. The curve $\Cnew/K$ is chosen so that $\Sigma_{\Cnew/K}$ has a unique proper cluster $\snew\neq\Rcalnew$, enabling us to apply the results of Section \ref{sec::upc}. We will then use induction to deduce the components of the model arising from the subclusters of $\s$. Finally, we will remove the assumption that $\s$ is Galois invariant. 

\begin{lemma}
    Let $\Rcal$ be principal and suppose that $e_{\Rcal} > 1$. The tails of the central component(s) associated to $\Rcal$ are as described in Theorem \ref{thm::chaindescription}.
\end{lemma}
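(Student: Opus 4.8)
The plan is to reduce this lemma to the tame potentially good reduction case handled in Section~\ref{sec::tpgr}, by passing from $C/K$ to the auxiliary curve $C_{\widetilde{\Rcal}}/K$. Since $\Rcal$ is principal with $\ssg{\Rcal} > 0$ (we treat $\ssg{\Rcal}=0$ separately, where there are no $(y=0)$-, $(x=0)$- or $(0,0)$-tails and one only has an $\infty$-tail when $\Rcal$ is odd), Definition~\ref{def::cso} gives a hyperelliptic curve $C_{\widetilde{\Rcal}}$ over $K$ whose cluster picture has a single proper cluster $\Rcal_{\widetilde{\Rcal}}$, and by Remark~\ref{rem::Cstilde} the invariants $d_{\widetilde{\Rcal}} = d_{\Rcal}$, $\nu_{\widetilde{\Rcal}} = \nu_{\Rcal}$, $\lambda_{\widetilde{\Rcal}} = \lambda_{\Rcal}$ all agree, and the component $\Gamma_{\Rcal,L}$ of the semistable model of $C/L$ is isomorphic (as a curve with Galois action) to the component $\Gamma_{\widetilde{\Rcal},L}$ of the smooth model of $C_{\widetilde{\Rcal}}/L$. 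In particular $e_{\Rcal}$ equals the analogous invariant $e$ for $C_{\widetilde{\Rcal}}$, and $C_{\widetilde{\Rcal}}$ has tame potentially good reduction.

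First I would invoke Theorem~\ref{thm::tpgr}, applied to $C_{\widetilde{\Rcal}}/K$, to list the tails of its central component $\Gamma_{\widetilde{\Rcal}}$: the $\infty$-tail(s) (one if $\Rcal_{\widetilde{\Rcal}}$ is odd, i.e. $\Rcal$ is odd; two $T_\infty^{\pm}$ if even with $v_K(c_{f_{\Rcal}})$ even; one $T_\infty$ if even with $v_K(c_{f_{\Rcal}})$ odd and $e_{\Rcal}>2$), the $(y=0)$-tails, the $(x=0)$-tails, and the $(0,0)$-tail, with parameters as given in Proposition~\ref{prop::singPointsAretsgs} (translated into sloped-tail parameters $(t_1,\mu)$ via Remark~\ref{rem::linkingchainsections}). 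The key point is then that the singularities on $\ZZ_k = (\Y/G)_k$ lying on $\Gamma_{\Rcal,K}$ are exactly the images of the small orbits of $\Gal(L/K)$ on $\Gamma_{\Rcal,L}$, and since the Galois action on $\Gamma_{\Rcal,L}$ matches that on $\Gamma_{\widetilde{\Rcal},L}$, these orbits and their tame cyclic quotient invariants coincide with those computed for $C_{\widetilde{\Rcal}}$ in Proposition~\ref{prop::singPointsAretsgs}. Resolving them via Theorem~\ref{thm::resolvingtcqs} produces tails with the same multiplicities as those of $\Gamma_{\widetilde{\Rcal}}$, up to the bookkeeping of which orbits actually produce \emph{tails} of $\Gamma_{\Rcal,K}$ versus parts of linking chains: an $(x=0)$- or $(0,0)$-orbit on $\Gamma_{\Rcal,L}$ yields a genuine tail of $\Gamma_{\Rcal,K}$ precisely when $\Rcal$ has no stable proper child in the relevant parity (otherwise the corresponding singularity lies on two components of $\ZZ_k$ and becomes the downhill section of a linking chain $L_{\Rcal,\s'}$ or $L_{\Rcal,\s'}^{\pm}$, as in Remark~\ref{rem::upccorrespondence}). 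This dichotomy, read off from Lemma~\ref{lem::orbitssizeb}(ii) — which says a Galois-fixed $\Rcal$ of denominator $>1$ has at most one stable child — gives exactly the conditions ``$X$ has no stable child'', ``$|\s_{\mathrm{sing}}|\geq 2$'', ``$X$ has a stable singleton'' appearing in the table of Theorem~\ref{thm::chaindescription} with $X = \{\Rcal\}$ (so $|X|=1$).

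The main obstacle will be matching the precise numerical parameters and side conditions: in particular (a) checking that the orbit sizes and the quantity $e_{\Rcal}$ force the stated thresholds (e.g. $e_{\Rcal} > b_{\Rcal}$ for the $(y=0)$-tails to be nonempty, $e_{\Rcal}>2$ for the even $\infty$-tail and the $T_{x=0}$ tail), using Lemma~\ref{lem::2bornot2b} and the elementary arithmetic of Hirzebruch--Jung expansions from Remark~\ref{rem::timsmainrem}; and (b) handling the subtle case $\ssg{\Rcal} = 0$, where $C_{\widetilde{\Rcal}}$ is not defined but Theorem~\ref{thm::semistablemodel} still describes $\Gamma_{\Rcal,L}$ as a rational curve, so one argues directly from the explicit automorphism in Theorem~\ref{thm::semistablegaloisaction} and Proposition~\ref{prop::singPointsAretsgs} that the only small orbits giving tails are the $\infty$-orbit (when $\Rcal$ is odd) and, when $\Rcal$ has a stable odd singleton, the $(0,0)$-orbit — matching the last two rows of the table. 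Throughout, one also uses $\epsilon_{\Rcal} = (-1)^{\nu_{\Rcal^*} - |\Rcal^*| d_{\Rcal^*}}$ from Remark~\ref{rem::epsilon} to rewrite the parity conditions on $v_K(c_f)$ appearing in Theorem~\ref{thm::tpgr} into the $\epsilon_{\Rcal} = \pm 1$ form used in Theorem~\ref{thm::chaindescription}. Once these identifications are in place the lemma follows by direct comparison of the two tables.
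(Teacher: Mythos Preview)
Your approach is essentially the paper's: identify the Galois action on $\Gamma_{\Rcal,L}$ with that on $\Gamma_{\widetilde{\Rcal},L}$ via Remark~\ref{rem::Cstilde}, borrow the small-orbit classification and tame cyclic quotient invariants from Section~\ref{sec::tpgr}, and then decide which orbits yield genuine tails versus downhill sections of linking chains by checking (via \cite[Proposition~5.20]{DDMM18}, which is what underlies your appeal to Remark~\ref{rem::upccorrespondence}) whether the orbit is an intersection point with another component of $\Y_k$.

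However, your treatment of the low-genus case contains errors you should fix. Your parenthetical claim that when $\ssg{\Rcal}=0$ there are ``no $(y=0)$-, $(x=0)$- or $(0,0)$-tails and one only has an $\infty$-tail when $\Rcal$ is odd'' is wrong on two counts: the $\infty$-tail(s) exist regardless of the parity of $\Rcal$ (one $T_\infty$, two $T_\infty^{\pm}$, or one $T_\infty$ according to the three $\infty$-rows of the table), and a $T_{(0,0)}$ tail \emph{can} appear. In your point~(b) you then reverse the condition for $T_{(0,0)}$: the paper shows it appears when $\Rcal$ has \emph{no} proper stable odd child (because then the point $0\in\Gamma_{\Rcal,L}$ is not an intersection point), not when $\Rcal$ \emph{has} a stable singleton. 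Finally, you do not separately treat the \"ubereven case, where $\Gamma_{\Rcal,L}^{\pm}$ are two copies of $\PP^1$ and the relevant small orbits are at $0$ and $\infty$ on each; there the $(x=0)$-tail(s) appear precisely when $\Rcal$ has no stable child. These are corrections of detail rather than of strategy, but as stated your conditions do not match the table in Theorem~\ref{thm::chaindescription}.
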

\begin{proof}
    First suppose that $\Rcal$ is not \"ubereven. Let $\Y$ be the semistable model of $C/L$ and consider $\Gamma_{\Rcal,L}\subseteq \Y$. The stabiliser of $\Rcal$ has order $e_{\Rcal}$. Under the quotient map, a Galois orbit $T$ of points of $\Gamma_{\Rcal,L}$ gives rise to a singularity on $\Gamma_{\Rcal,K}$ lying on precisely one component of $\X_K$ if and only if $|T|<e_{\Rcal}$ and the points of $T$ lie on $\Gamma_{\Rcal,L}$ and no other components of $\Y_k$.
    
    Suppose that $g(\Gamma_{\Rcal,L}) = 0$. There are only two orbits with size less than $e_{\Rcal}$, which after an appropriate shift we can assume are at $0$ and $\infty$. The point at $\infty$ certainly lies on no other component of $\Y_k$ by \cite[Proposition~5.20]{DDMM18}, so $\Gamma_{\Rcal,K}$ will always have $\infty$-tails. By \cite[Proposition~5.20]{DDMM18}, the point $0$ lies on no other component of $\Y_k$ if and only if $\Rcal$ has no stable proper odd child. This is because if $\s<\Rcal$ is a stable odd child then $L_{\Rcal,\s}$ intersects $\Gamma_{\Rcal,L}$ at $0$, however no other linking chain to a child will ever intersect $\Gamma_{\Rcal,L}$ at $0$. Therefore $\Gamma_{\Rcal,K}$ will have a $(0,0)$-tail if and only if it has no stable proper odd child. The description of the tails follows.
    
    Suppose instead that $g(\Gamma_{\Rcal,L}) > 0$. The orbits of points on $\Gamma_{\Rcal,L}$ of size less than $e_{\Rcal}$ are the same as the small orbits $\Gamma_{\widetilde{\Rcal},L}$, which are described in Lemmas \ref{lem::inftyorbits} - \ref{lem::y0orbits}. To complete the description, we must calculate when these small orbits are intersection points with other components. We do this using the explicit description of the components of $\Y_k$ given in \cite[Proposition~5.20]{DDMM18}. From this, we can deduce that the points at $\infty$ never lie on a component other than $\Gamma_{\Rcal,L}$, $(y=0)$-orbits are intersection points if and only if $\s$ has a non-trivial orbit of proper odd children, $(x=0)$-orbits are intersection points if and only if $\s$ has a stable even child, and the $(0,0)$-orbit is an intersection point if and only if $\Rcal$ has a proper stable odd child.
    
    Now suppose $\Rcal$ is \"ubereven. Then each $\Gamma_{\Rcal,L}^{\pm}$ has two orbits of size less than $e_{\Rcal}$, which lie at their respective points at $0$ and $\infty$. The points at $\infty$ do not lie on any other components of $\Y_k$. The points at $0$ lie on no other component of $\Y_k$ if and only if $\Rcal$ has no stable child. So, $\Gamma_{\Rcal,K}^{\pm}$ has a $(x=0)$-tail if and only if $\Rcal$ does not have a stable child. The description of the tails follows. 
\end{proof}

\begin{lemma}
    Let $\s < \Rcal$ be a principal, Galois invariant cluster with $e_{\s}>1$. Then the tails intersecting the central component(s) assosciated to $\s$ are as described in Theorem \ref{thm::chaindescription}.
\end{lemma}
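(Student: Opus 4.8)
The plan is to mirror the proof of the analogous statement for $\Gamma_{\Rcal,K}$, but now using the curve $\Cnew$ introduced in the paragraph preceding these lemmas. First I would recall that $\s < \Rcal$ is principal, Galois-invariant, with $e_\s > 1$, and that the component $\Gamma_{\s,K}$ (or the pair $\Gamma_{\s,K}^\pm$ if $\s$ is \"ubereven) is the image of $\Gamma_{\s,L} \subseteq \Y_k$ under the quotient map $q : \Y \to \ZZ$ followed by resolution. The tails of $\Gamma_{\s,K}$ correspond exactly to those Galois orbits $T$ of points on $\Gamma_{\s,L}$ with $|T| < e_\s$ that do \emph{not} lie on any other component of $\Y_k$; the orbits that \emph{do} meet another component instead become (the downhill, or uphill, sections of) linking chains rather than free-standing tails. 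So the lemma is proved once we (a) enumerate the small orbits on $\Gamma_{\s,L}$, and (b) decide, for each, whether it is an intersection point with a neighbouring component.

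For step (a), when $g(\Gamma_{\s,L}) > 0$ the component $\Gamma_{\s,L}$ is isomorphic to the special fibre of the smooth model of $C_{\tilde\s}$ over $L$ (Remark \ref{rem::Cstilde}), and the Galois action agrees with the action on $\Gamma_{\tilde\s,L}$; hence the small orbits are precisely those classified in Lemmas \ref{lem::inftyorbits}--\ref{lem::y0orbits} applied to $C_{\tilde\s}$, i.e. $\infty$-orbits, $(y=0)$-orbits, $(x=0)$-orbits and the $(0,0)$-orbit, with $b_X = b_\s$ playing the role of the ramification denominator and the conditions on $\lambda_\s$, $v_K(c_f)$ (equivalently $\epsilon_\s$) governing which occur. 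When $g(\Gamma_{\s,L}) = 0$, $\Gamma_{\s,L}$ is rational and there are at most two small orbits, sitting over $0$ and $\infty$; here I would invoke Riemann--Hurwitz together with Lemma \ref{lem::numberofchildren} and Proposition \ref{prop::centralnotexceptional} to control the situation, exactly as in the $\Rcal$ case. Note that since $\s \neq \Rcal$ the point at infinity behaves differently from the top-cluster case: the linking chain $L_{P(\s),\s}^\pm$ (or $L_{\Rcal}$, resp. the relevant chain if $P(\s)$ is not principal) always meets $\Gamma_{\s,L}$ at $\infty$, so the $\infty$-orbit is \emph{never} a free tail of $\Gamma_{\s,K}$ — it is absorbed into the uphill section of that linking chain. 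This is the main structural difference from the previous lemma and accounts for why no $T_\infty$ appears in the table for a non-top principal cluster.

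For step (b), I would run through \cite[Proposition~5.20]{DDMM18}, which describes exactly which components of $\Y_k$ meet $\Gamma_{\s,L}$ and where: the linking chain $L_{\Rcal,\s}$ (or $L_{P(\s),\s}$) meets $\Gamma_{\s,L}$ at $\infty$; a linking chain $L_{\s,\s'}$ to an odd principal child $\s'$ meets it at $\mathrm{red}_\s(\s') \neq 0, \infty$ or at $0$ precisely when $\s'$ is a \emph{stable} odd child; linking chains $L_{\s,\s'}^\pm$ to an even principal child meet it at $x$-values which are $0$ exactly when $\s'$ is stable; and the twin/cotwin chains $L_{\tfrak}$, $T_{\tfrak}$ attach at twin reductions. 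Translating: the $(0,0)$-orbit is a free tail iff $\s$ has no proper stable odd child; a $(y=0)$-orbit is a free tail iff the corresponding odd children form no non-trivial principal orbit (equivalently $|\singletonsofs| \geq 2$ and $e_\s > b_\s$, after the $|X|=1$ normalisation here); an $(x=0)$-orbit is a free tail iff $\s$ has no stable even child. Matching these conditions against the table in Theorem \ref{thm::chaindescription} (specialised to $|X|=1$, i.e. $T_{y=0}$ with $\mu = b_\s$, $T_{x=0}^{\pm}$ / $T_{x=0}$ according to $\lambda_\s \in \Z$ or not, $T_{(0,0)}$ with $\mu = 1$), and reading off the slopes $(t_1,\mu)$ from Proposition \ref{prop::singPointsAretsgs} exactly as in Section \ref{subsec::relationtonp}, completes the proof.

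\textbf{Main obstacle.} The genuinely delicate part is the $g(\Gamma_{\s,L}) = 0$ case combined with the bookkeeping of \emph{which} small orbit becomes which chain when $\s$ has both a parent-linking chain at $\infty$ and child-linking chains possibly at $0$: one must be careful that when $e_\s > 2$ the $0$-orbit and the $\infty$-orbit are genuinely distinct singular points on $\Gamma_{\s,K}$ (so that $\Gamma_{\s,K}$ really survives in the minimal SNC model), which is where Proposition \ref{prop::centralnotexceptional} is doing the heavy lifting, and that no tail is double-counted as also being part of a linking chain. The slope computations themselves are routine given Proposition \ref{prop::singPointsAretsgs}.
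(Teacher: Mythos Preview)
Your proposal is correct and takes essentially the same approach as the paper, whose proof is a single sentence noting that the argument mirrors the previous lemma with the sole difference that the $\infty$-orbits on $\Gamma_{\s,L}$ are now always intersection points with the linking chain between $\Gamma_{\Rcal,L}^\pm$ and $\Gamma_{\s,L}^\pm$ (hence never yield free tails). One minor slip: your opening line mentions $\Cnew$, but that curve plays no role in this lemma (it is introduced \emph{after} this lemma, to compute linking chains); the relevant auxiliary curve is $C_{\tilde\s}$, which you correctly invoke in the body of the argument.
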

\begin{proof}
    The proof is similar to that of the previous lemma, noting that all of the orbits at infinity are the intersection points of $\Gamma^{\pm}_{\s,L}$ and the linking chain between $\Gamma^{\pm}_{\Rcal,L}$ and $\Gamma^{\pm}_{\s,L}$.
\end{proof}

Following is a technical lemma allowing us to compare the chain(s) appearing between $\Gamma_{\R,K}$ and $\Gamma_{\s,K}$ to those of a simpler curve $C^{\textrm{new}}$.

\begin{lemma}
\label{lem:linkingchaininvariantdetermines}
    Let $\s_1,\s_2$ be two Galois invariant principal clusters (resp. a principal cluster and a twin) such that either $\s_2 < \s_1$, or $\Rcal = \s_1 \sqcup \s_2$ is not principal. Then any linking chain between $\Gamma^{\pm}_{\s_1,K}$ and $\Gamma^{\pm}_{\s_2,K}$ (resp. the chain of rational curves arising from $\s_2$ intersecting $\Gamma_{\s_1,K}^{\pm}$) is determined entirely by $\lambda_{\s_i}\mod\Z$, the parity of $|\s_2|$, $d_{\s_i}$,  and when $\Rcal$ is not principal $d_{\Rcal}$.
\end{lemma}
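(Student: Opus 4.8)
The plan is to reduce the linking-chain data between $\Gamma_{\s_1,K}^{\pm}$ and $\Gamma_{\s_2,K}^{\pm}$ to purely local data at the two intersection points, via the quotient-and-resolve construction of Sections \ref{subsec::QuotientsofModels} and \ref{sec::tpgr}. First I would pass to the semistable model $\Y$ of $C/L$, where by Theorem \ref{thm::semistablemodel} the chain joining $\Gamma_{\s_1,L}^{\pm}$ and $\Gamma_{\s_2,L}^{\pm}$ is a chain of rational curves of known length (depending only on $\delta_{\s_2}$ and the parity of $|\s_2|$), all of multiplicity $1$, i.e. a pure level section. The claim is then that after taking the quotient $q:\Y\to\ZZ$ by $\Gal(L/K)$ and resolving, the resulting sloped chain in $\X_k$ is a sloped chain in the sense of Definition \ref{def::linkingchainsections} whose parameters $(t_2,t_2+\delta,\mu)$ depend only on the stated invariants.

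The key steps, in order: (1) Identify $\mu$. The multiplicity of the level section equals $|\Stab(E)|^{-1}e$ for $E$ a component of the semistable linking chain (Proposition \ref{prop::componentsmult}), and one checks from the explicit automorphisms of Theorem \ref{thm::semistablegaloisaction} that $\Gal(L/K)$ acts on such $E$ through a quotient whose order depends only on whether $\s_2$ is even or odd (via $\epsilon_{\s_2}$) and on $|\s_i|$, $|X|$; this gives $\mu\in\{|X|,2|X|\}$ (or $1,2,4$ when $\Rcal$ is not principal). (2) Identify the endpoints $t_2$ and $t_2+\delta$. The two ends of the chain meet the central components at the singular points of $\ZZ_k$ lying on $\Gamma_{\s_i,K}$; by the argument of Proposition \ref{prop::singPointsAretsgs}, each such point is a tame cyclic quotient singularity whose invariant $\tfrac{m}{r}$ is computed from the local coordinates, which in turn are governed by $d_{\s_i}$ and $\lambda_{\s_i}\bmod\Z$ (the latter controlling the $y$-coordinate, cf. the tables in Proposition \ref{prop::tpgrSings} and Lemma \ref{lem::slopeslemma}). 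So the slopes at each end, hence $t_2$ and $\delta$, are determined by $d_{\s_i},\lambda_{\s_i}\bmod\Z$, and the length of the semistable chain — which is itself a function of $\delta_{\s_2}$ and $|\s_2|\bmod 2$; when $\Rcal$ is not principal the separating quantity $\delta(\s_1,\s_2)$ enters, and this is where $d_{\Rcal}$ is needed since $\delta(\s_1,\s_2)=d_{\s_1}+d_{\s_2}-2d_{\Rcal}$. (3) Invoke Remark \ref{rem::timsmainrem} / Theorem \ref{thm::resolvingtcqs}: a sloped chain is completely determined by its parameters $(t_2,t_1,\mu)$, so once these three are pinned down the entire chain — lengths of downhill, level, uphill sections and all multiplicities, and in the crossed-tail case the cross multiplicities $\mu/2$ — is determined. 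The twin case is handled the same way, noting the chain arising from a twin $\s_2<\s_1$ is either a loop or a crossed tail according to $\epsilon_{\s_2}$, both of which are sloped chains.

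I would also note two consistency checks that the proof should dispatch: that the invariants $\lambda_{\s_i}\bmod\Z$, $d_{\s_i}$ etc. are well-defined on Galois orbits (Remark after Definition \ref{def::orbinvars}), and that the choice of $\s_i$ within its orbit does not affect the combinatorial answer, since $\Gal(K_{X}/K)$ permutes the $|X|$ isomorphic copies étale-locally (as in Proposition \ref{prop::centralnotexceptional}).

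The main obstacle I expect is step (2): cleanly showing that the local structure of $\ZZ_k$ at the two endpoints of the chain — and hence the tame cyclic quotient invariants there — is read off only from $d_{\s_i}$ and $\lambda_{\s_i}\bmod\Z$ (together with the combinatorics already listed), with no hidden dependence on, say, the finer arithmetic of the roots or on $v_K(c_f)$ beyond what is packaged into $\lambda_{\s_i}$ and $\epsilon_{\s_i}$. This requires carefully tracking the coordinates $x-z_{\s_i}$, $y$ near the relevant points of $\Gamma_{\s_i,L}$ under the Galois action of Theorem \ref{thm::semistablegaloisaction} and checking that, after quotienting, the exponents appearing are exactly those governed by $ed_{\s_i}$ and $e\lambda_{\s_i}$ modulo the stabiliser order — essentially the same computation as in Proposition \ref{prop::singPointsAretsgs} but now at an intersection point of two components rather than on a single central component, so $\delta(z)=2$ and both local branches must be accounted for.
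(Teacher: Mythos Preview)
Your proposal is correct and follows essentially the same approach as the paper: both arguments pass to the semistable model $\Y/L$, use Theorem \ref{thm::semistablemodel} to read off the number and length of the semistable linking chains from the parity of $|\s_2|$ and $\delta(\s_1,\s_2)$, and then argue that the quotient-and-resolve process is governed by the explicit Galois automorphisms of Theorem \ref{thm::semistablegaloisaction}, which depend only on $d_{\s_i}$ and $\lambda_{\s_i}\bmod\Z$.

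One point of difference worth flagging: you frame the argument as pinning down the sloped-chain parameters $(t_2,t_1,\mu)$ by analysing the two \emph{endpoint} singularities on $\ZZ_k$ where the chain meets $\Gamma_{\s_i,K}$. The paper instead treats \emph{every} intersection point $P$ of adjacent components $E_1,E_2\in\{\Gamma_{\s_1,L},\Gamma_{\s_2,L},\mathcal{C}^{\pm}\}$ uniformly, writing down the tame cyclic quotient invariants at $q(P)$ in terms of the orders of $\sigma|_{E_i}$ and their stabilisers of $P$. This matters because the semistable chain may consist of several $\PP^1$s, and after quotienting there can be singularities at each of the intermediate nodes, not just at the two ends. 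Your sloped-chain packaging does ultimately recover these (the intermediate singularities are all of the same type, since the Galois action on each rational component of the chain is given by the same character), but you should make this explicit rather than leaving the intermediate nodes implicit. In other words, your ``main obstacle'' is correctly identified, but it applies at $\delta(z)=2$ points throughout the chain, not only at its two extremities.
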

\begin{proof}
    Assume that both $\s_i$ are principal, Galois invariant clusters. From Section \ref{subsec::QuotientsofModels}, we know that a linking chain between $\Gamma_{\s_1,K}^{\pm}$ and $\Gamma_{\s_2,K}^{\pm}$ is completely determined by the length and number of linking chains between $\Gamma_{\s_1,L}^{\pm}$ and $\Gamma_{\s_2,L}^{\pm}$, the order of the action of $\Gal(L/K)$ on any individual component of a linking chain between $\Gamma_{\s_1,L}^{\pm}$ and $\Gamma_{\s_2,L}^{\pm}$, and the nature of the singularities at the intersection points of components after taking the quotient. Recall from \cite[Theorem~8.5]{DDMM18} that there is one linking chain, say $\mathcal{C}$, between $\Gamma^{\pm}_{\s_1,L}$ and $\Gamma^{\pm}_{\s_2,L}$ if $\s_2$ is odd and two linking chains, say $\mathcal{C}^+$ and $\mathcal{C}^-$, if $\s_2$ is even. We will write $\mathcal{C} = \mathcal{C}^+ = \mathcal{C}^-$ if $\s_2$ is odd. The theorem \cite[Theorem~8.5]{DDMM18} tell us that the length of $\mathcal{C}^{\pm}$ is determined by $\delta(\s_1,\s_2)$, which is given in terms of $d_{\s_1}$ and $d_{\s_2}$ (and $d_{\Rcal}$ in the case where $\Rcal = \s_1\sqcup\s_2$ is not principal). 
    
    Let $P$ be an intersection point of components $E_1,E_2 \in \{\Gamma_{\s_1,L},\Gamma_{\s_2,L},\mathcal{C}^{\pm}\}$, and $\sigma_{E_i}$ the induced $\GK$ action on ${E_i}$ for a generator $\sigma\in\Gal(L/K)$. Suppose $\sigma_{E_1}^{a}$, and $\sigma_{E_2}^{b}$, generate the stabilisers of $P$ in $E_1$ and $E_2$ respectively. Then $q(P)$ is a tame cyclic quotient singularity with parameters 
    \begin{align*}
        n = \gcd(o(\sigma_{E_1}^a), o(\sigma_{E_2}^b)), \quad & m_1 = o(\sigma_{E_1}^a)/n, \quad m_2 = o(\sigma_{E_2}^b)/n, \textrm{ and }& r = \begin{cases}\frac{d_{E_1}^{-a} d_{E_2}^b}{n^2} & \s_2 \textrm{ even,} \\ \frac{\lambda_{E_1}^{-a} \lambda_{E_2}^b}{n^2} & \s_2 \textrm{ odd,} \end{cases}
    \end{align*}
    where $o(\tau)$ is the order of $\tau \in \Gal(L/K)$. In other words, the tame cyclic quotient singularity is determined entirely by the automorphisms on the $E_i$ and the parity of $\s_2$. Therefore, since the automorphisms on $E_i$ are determined entirely by the invariants in the statement of the theorem (by \cite[Theorem~6.2]{DDMM18}), we are done. The case where $\s_2$ is a twin follows similarly.
\end{proof}

For the following lemma we first need some notation. Recall that a child of $\s\in\Sigma_{C/K}$ is \textit{stable} if has the same stabiliser as $\s$. Let $\childrennotfixedofs{\s}$ denote the set of stable children of $\s$, and $\childrenfixedofs{\s}$ denote the set of unstable children of $\s$.

\begin{lemma}
\label{lem:cnew}
    Let $C/K$ be a hyperelliptic curve with $\Rcal$ principal, and let $\s< \Rcal$ be a Galois invariant proper child. We can construct a hyperelliptic curve, $\Cnew$, such that the cluster picture $\Sigma_{\Cnew}$ of $\Cnew$ consists of two proper clusters $\snew<\Rcalnew$, where $|\s|\equiv |\snew| \mod 2, d_{\Rcal} = d_{\Rcalnew}, d_{\s}=d_{\snew}$ and $\lambda_{\Rcal} - \lambda_{\Rcalnew}, \lambda_{\s} - \lambda_{\snew} \in \Z$.
\end{lemma}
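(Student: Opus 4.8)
The goal here is to construct, from the curve $C/K$ with $\Rcal$ principal and a Galois-invariant proper child $\s<\Rcal$, a new hyperelliptic curve $\Cnew$ whose cluster picture is of the simple two-cluster type studied in Section~\ref{sec::upc}, matching the relevant invariants $|\s| \bmod 2$, $d_{\Rcal}$, $d_{\s}$, and $\lambda_{\Rcal}, \lambda_{\s}$ modulo $\Z$, so that Lemma~\ref{lem:linkingchaininvariantdetermines} can then be invoked to identify the linking chain(s) (or twin chain) between $\Gamma_{\Rcal,K}^\pm$ and $\Gamma_{\s,K}^\pm$. The strategy is purely constructive: I will write down an explicit Weierstrass equation for $\Cnew$ by choosing roots that realise the desired cluster picture, and then verify the four invariant conditions by direct computation from the definitions in Section~\ref{sec::clusters} and Definitions~\ref{def::clusterfunctions1}, \ref{def::clusterfunctions2}.

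\textbf{Construction.} The plan is to place two clusters: an outer cluster $\Rcalnew$ of depth $d_{\Rcalnew} = d_{\Rcal}$ and an inner proper child $\snew$ of depth $d_{\snew} = d_{\s}$, with sizes chosen so that $|\snew| \equiv |\s| \bmod 2$ and $|\Rcalnew| \equiv |\Rcal| \bmod 2$ where needed (taking $|\snew|$ and $|\Rcalnew|-|\snew|$ to be the smallest admissible values that avoid the potentially-good-reduction degenerate cases flagged at the start of Section~\ref{sec::upc}, i.e. ensuring $|\Rcalnew| \geq |\snew| + 2$ if $\Rcalnew$ is even). Concretely, $\Cnew$ is given by an equation of the shape $y^2 = c_{f}^{\mathrm{new}} \prod_{r \in \Rcalnew \setminus \snew}(x - u_r \pi_K^{d_{\Rcal}}) \prod_{r \in \snew}(x - u_r \pi_K^{d_{\s}})$ (or with an extra factor of $x$ when the parity of $|\snew|$ forces $b_{\s}\nmid|\snew|$), exactly as in equations~(\ref{eq::uniqupropchildcase1})–(\ref{eq::uniqupropchildcase2}). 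The key free parameter is $v_K(c_f^{\mathrm{new}})$: since $\nu_{\snew}$, $\lambda_{\Rcalnew}$ and $\lambda_{\snew}$ all depend affinely on $v_K(c_f^{\mathrm{new}})$ with $\Z$-coefficients once the depths and sizes are fixed, and since $\lambda$ changes by $\tfrac12 v_K(c_f^{\mathrm{new}})$ while moving $v_K(c_f^{\mathrm{new}})$ by $1$ changes both $\lambda_{\Rcalnew}$ and $\lambda_{\snew}$ by $\tfrac12$, I can adjust $v_K(c_f^{\mathrm{new}})$ (an integer) to force $\lambda_{\Rcalnew} \equiv \lambda_{\Rcal} \bmod \Z$; I then need to check that the \emph{difference} $\lambda_{\s} - \lambda_{\snew}$ is simultaneously integral, which follows because $\lambda_{\Rcal} - \lambda_{\s}$ and $\lambda_{\Rcalnew} - \lambda_{\snew}$ differ from the relative-depth-type quantity $d_{\s}\sum_{\s''<\s}\lfloor|\s''|/2\rfloor$ etc.\ only through terms already controlled by the matched data.

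\textbf{Verification.} Once $\Cnew$ is written down, the four claimed congruences/equalities are checked one at a time. The equalities $d_{\Rcal} = d_{\Rcalnew}$ and $d_{\s} = d_{\snew}$ hold by construction. The parity $|\s| \equiv |\snew| \bmod 2$ is by choice of cluster sizes. The subtle point is $\lambda_{\Rcal} - \lambda_{\Rcalnew}, \lambda_{\s} - \lambda_{\snew} \in \Z$: using $\lambda_{\s} = \tfrac{\nu_{\s}}{2} - d_{\s}\sum_{\s'<\s}\lfloor|\s'|/2\rfloor$ and $\nu_{\s} = v_K(c_f) + \sum_{r}d_{\s\wedge r}$, I expand both sides; the sums $\sum_r d_{\s\wedge r}$ split according to whether $r \in \s$, $r \in \Rcal\setminus\s$, and in each piece $d_{\s\wedge r}$ equals $d_{\s}$ or $d_{\Rcal}$, so $\nu_{\s}$ is an explicit $\Z$-linear combination of $v_K(c_f)$, $d_{\s}$, $d_{\Rcal}$ with the same shape as $\nu_{\snew}$ — the coefficients being integers because they count roots. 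The only non-integrality can come from $d_{\s}$, $d_{\Rcal}$ having denominators $b_{\s}$, $b_{\Rcal}$, but those denominators are the same for $\Cnew$ since the depths are literally equal; matching the residues of the leading-coefficient valuations modulo the relevant lattice then closes the argument. The main obstacle I anticipate is the bookkeeping in this last step: ensuring that a single integer choice of $v_K(c_f^{\mathrm{new}})$ simultaneously fixes \emph{both} $\lambda_{\Rcal} - \lambda_{\Rcalnew}$ and $\lambda_{\s} - \lambda_{\snew}$ to be integral, rather than just one of them. This should work because the difference $\lambda_{\Rcal} - \lambda_{\s}$ is determined by the data $d_{\Rcal}, d_{\s}, |\s| \bmod 2$ and the combinatorics of children of $\s$ — but the children of $\s$ in $C$ are \emph{not} present in $\Cnew$, so one must verify that the contribution $d_{\s}\sum_{\s'<\s,\,\s'\neq\snew}\lfloor|\s'|/2\rfloor$ is itself an integer, which holds precisely because $b_{\s} = b_{\snew}$ divides into these terms correctly, or can be absorbed; I would handle the residual discrepancy by allowing a further integer twist in the sizes $|\Rcalnew|, |\snew|$ (changing them by $2$ at a time, which preserves parity) to soak up any leftover integer offset in $\lambda$, since such changes shift $\nu$ and hence $\lambda$ by controllable integer amounts.
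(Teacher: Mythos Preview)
Your overall strategy---build a two-cluster curve with matching depths and adjust the leading coefficient to line up the $\lambda$-invariants mod $\Z$---is reasonable in spirit, but the execution has a genuine gap at exactly the point you flag as ``the main obstacle''. Shifting $v_K(c_f^{\mathrm{new}})$ by $1$ moves both $\lambda_{\Rcalnew}$ and $\lambda_{\snew}$ by $\tfrac12$, so it leaves $\lambda_{\Rcalnew}-\lambda_{\snew}$ untouched; hence what you actually need is $(\lambda_{\Rcal}-\lambda_{\s})-(\lambda_{\Rcalnew}-\lambda_{\snew})\in\Z$ \emph{before} you choose the leading coefficient. Writing out both differences via $\lambda_\s=\tfrac{v_K(c_f)}2+\tfrac{|\tilde\s|d_\s}2+\tfrac12\sum_{r\notin\s}v_K(r_\s-r)$ shows the discrepancy is a half-integer combination of $d_\s$ and $d_{\Rcal}$ whose coefficients involve $|\tilde\s|$, $|\tilde\Rcal|$, and $|\Rcal|-|\s|$, not merely $|\s|\bmod 2$ as you assert. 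Your proposed fix of bumping $|\snew|$ or $|\Rcalnew|$ by $2$ changes the discrepancy by $d_\s$ or $d_{\Rcal}$, and you have not shown that the subgroup $\langle d_\s,d_{\Rcal}\rangle+\Z\subset\Q$ is large enough to absorb the residual---in general $b_\s$ and $b_{\Rcal}$ can conspire to leave a nontrivial obstruction. So the argument as written does not close.

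The paper avoids this bookkeeping entirely by a different construction: it keeps the \emph{same} leading coefficient $c_f$ and takes for the roots of $\Cnew$ the actual centres $z_{\oo}$ of the odd children (or, in degenerate cases, all children) of $\Rcal$ and $\s$. Because these centres inherit the Galois orbit structure of the original curve, Lemma~\ref{lem::orbitssizeb} forces the non-fixed children to come in orbits of size $b_{\Rcal}$ (respectively $b_\s$), which is precisely the divisibility needed to make terms like $|\widehat{\Rcal}\setminus\widetilde{\Rcal}|\,d_{\Rcal}$ integral. In other words, rather than treating the sizes and leading coefficient as free parameters and solving a congruence, the paper transports the arithmetic of $C$ directly into $\Cnew$ so that the required integrality is automatic. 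If you want to salvage your approach, you would need to prove that the congruence system in $|\snew|,|\Rcalnew|,v_K(c_f^{\mathrm{new}})$ is always solvable subject to the parity and positivity constraints; the paper's construction shows this is possible but by exhibiting a specific solution rather than by a counting argument.
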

\begin{proof}
     Let $\Cnew$ be the hyperelliptic curve over $K$ defined by $\Cnew:y^2=c_ff_{\Rcal}f_{\s},$ where
    \begin{align*}
        f_{\Rcal} & = \begin{cases} \displaystyle\prod_{\s\neq\oo\in\widetilde{\Rcal}} (x-z_{\oo}) & |\widetilde{\Rcal} \setminus \s| \geq 2,\\
        \pi_K^{|\widehat{\Rcal} \setminus \widetilde{\Rcal}| d_{\Rcal}}  \displaystyle\prod_{\s\neq\s'<\Rcal} (x-z_{\s'}) & \mathrm{otherwise},
    \end{cases}\\
        f_{\s} &= \begin{cases} \displaystyle\prod_{\oo\in\widetilde{\Rcal}} (x-{z_\oo}) & |\widetilde{\s}| \geq 2, \\
        \displaystyle\prod_{\oo\in \widetilde{\s}^{\mathrm{f}}}(x-z_{\oo})\displaystyle\prod_{\s'\in \childrennotfixedofs{\s}}(x-z_{\s'}) & |\widetilde{\s}| \leq 1 \textrm{ and } |\childrennotfixedofs{\s}| \textrm{ even},\\
        \displaystyle\prod_{\oo\in \widetilde{\s}^{\mathrm{f}}}(x-z_{\oo})\displaystyle\prod_{\s' \in \childrennotfixedofs{\s}}(x-z_{\s'})(x+z_{\s'}) & |\widetilde{\s}| \leq 1 \textrm{ and } |\childrennotfixedofs{\s}| \textrm{ odd.} \end{cases}
    \end{align*}
    It is clear that $\Sigma_{C_{\mathrm{new}}/K}$ consists of proper two clusters which we will call $\Rcalnew$ and $\snew$, where $\Rcal_{\mathrm{new}}$ consists of the roots of $f_{\Rcal}\cdot f_{\s}$, and $\snew$ consists of the roots of $f_{\s}$. It follows that $\snew<\Rcalnew$. It remains to check how the cluster invariants of $\Rcalnew$ and $\snew$ compare to those of $\Rcal$ and $\s$. Since any root in a cluster can be taken as its center, it is immediate that $d_{\Rcal} = d_{\Rcalnew}$ and $d_{\s} = d_{\snew}$. By comparing $\deg(f_{\s})$ to $|\s|$ we see that $|\s| \equiv |\snew| \mod 2$.
    
    It remains to check that $\lambda_{\Rcal}-\lambda_{\Rcalnew}, \lambda_{\s}-\lambda_{\snew} \in \Z$. Let us begin with the first. By construction, $\snew$ is odd if and only if $\s$ is. Therefore, if $|\widetilde{\Rcal}\setminus\s| \geq 2$ it follows that $\lambda_{\Rcalnew} = \lambda_{\Rcal}$. Else,
    \begin{align*}
        2(\lambda_{\Rcalnew} - \lambda_{\Rcal}) & =  v_K(c_f) + |\widehat{\Rcal}|d_{\Rcal} + |\widehat{\Rcal} \setminus \widetilde{\Rcal}|d_{\Rcal} - v_K(c_f) - |\widetilde{\Rcal}|d_{\Rcal} = 2|\widehat{\Rcal} \setminus \widetilde{\Rcal}|d_{\Rcal}.
    \end{align*}
    If $d_{\Rcal} \in \Z$, then clearly $\lambda_{\Rcalnew} - \lambda_{\Rcal} \in \Z$. Otherwise, $d_{\Rcal} \not \in \Z$. By Lemma \ref{lem::orbitssizeb}, the children of $\Rcal$ must lie in orbits of size $b_{\Rcal} > 1$. Therefore, any such orbit must be an orbit of even children of $\Rcal$, since $\s$ is fixed and there is at most one child not equal to $\s$. Hence, $|\widehat{\Rcal} \setminus \widetilde{\Rcal}|d_{\Rcal} \in \Z$, and so $\lambda_{\Rcalnew} - \lambda_{\Rcal} \in \Z$. It can be checked similarly that $\lambda_{\snew} - \lambda_{\s} \in \Z$. 
\end{proof}

By the above lemmas and Theorem \ref{thm:uniquepropchild}, we have proved the statements in Theorem \ref{thm::structureofSNCmodel} about the linking chain(s) between $\Gamma_{\s,K}^{\pm}$ and $\Gamma_{\Rcal,K}^{\pm}$ where $\s<\Rcal$ is a Galois invariant proper child. 

We now turn our focus to the components of $\X_k$ which arise from $\s$ and its subclusters. In order to do this, we construct another new hyperelliptic curve, which we shall call $C'$, given by 
    \begin{align}
    \label{eqn::cprime}
    C' : y^2 = c_f' \prod_{r \in \s} (x-r),\textrm{ where }
    c_f' = c_f \prod_{r \not\in \s} (z_{\s} - r).
    \end{align}
Note that $C'$ is also semistable over $L$, and let $\Y'$ be the semistable model of $C'$ over $L$. Comparing the cluster pictures of $C'$ and $C$, we see that the cluster picture $\Sigma_{C'}$ appears within the cluster picture $\Sigma_{C}$ of $C$. This is illustrated in Figure \ref{fig::C'comparedtoC}. In particular, $\s$ and all of its subclusters in $\Sigma_{C}$ are drawn in solid black in Figure \ref{fig::C}. These are exactly the clusters that make up $\Sigma_{C'}$, also shown in solid black. 
\begin{figure}[h]
\centering
\begin{subfigure}{0.5\textwidth}
  \centering
    \begin{tikzpicture}
    \draw (0,0) ellipse (0.4cm and 0.25cm);
    \path (0.1,0) -- node[auto=false]{\ldots} (1.5,0);
    \draw (1.6,0) ellipse (0.4cm and 0.25cm);
    
    \draw (0.8,0) ellipse (1.5cm and 0.7cm) node[below, yshift = -0.3cm, xshift = 1.4cm,font=\small]{$\s$};
    
    \draw (3,0)[dashed,gray] ellipse (0.4cm and 0.25cm);
    \path (3.1,0)[gray] -- node[auto=false]{\ldots} (4.5,0);
    \draw (4.6,0)[dashed,gray] ellipse (0.4cm and 0.25cm);

    \draw (2.1,0)[dashed,gray] ellipse (3.2cm and 1.1cm)node[below, yshift = -0.5cm, xshift = 3cm,font=\small]{$\Rcal$}; 
    \end{tikzpicture}
  \caption{Cluster picture $\Sigma_{C}$.}
  \label{fig::C}
\end{subfigure}%
\begin{subfigure}{.5\textwidth}
  \centering
  \begin{tikzpicture}
    \draw (0,0) ellipse (0.4cm and 0.25cm);
    \path (0.1,0) -- node[auto=false]{\ldots} (1.5,0);
    \draw (1.6,0) ellipse (0.4cm and 0.25cm);
    
    \draw (0.8,0) ellipse (1.5cm and 0.7cm);
    \end{tikzpicture}
  \caption{Cluster picture $\Sigma_{C'}$.}
  \label{fig::C'}
\end{subfigure}
\caption{Comparison of the cluster pictures of $C$ and $C'$}
\label{fig::C'comparedtoC}
\end{figure}

The leading coefficient of $C'$ has been chosen so that the corresponding clusters in $\Sigma_{C}$ and $\Sigma_{C'}$ have the same cluster invariants. Therefore, there is a closed immersion $\Y'_k \rightarrow\Y_k$ which commutes with the action of $\GK$. The existence of this immersion is illustrated in Figure \ref{fig::C'comparedtoCmodel}. We can see this by calculating the explicit equations of the components of $\Y'$ and using the explicit Galois action on these components given in \cite[Theorem~8.5]{DDMM18}. Therefore, this immersion also commutes with the quotient by $\Gal(L/K)$.

\begin{figure}[h]
\centering
\begin{subfigure}{0.45\textwidth}
  \centering
    \begin{tikzpicture}
    \draw(0,0)[dashed,gray] -- ++(2,0);
    \draw(0.5,0.2)[dashed,gray] -- ++(0,-1);
    \draw(0.7,-0.6)[dashed,gray] -- ++(-1,0);
    \draw(-0.1,-0.4)[dashed,gray] -- ++(0,-1);
    
    \draw(1,0.2)[dashed,gray] -- ++(0,-0.8);
    
    \draw(1.5,0.2)[dashed,gray] -- ++(0,-1);
    
    \draw(1.3,-0.6)[dashed,gray] -- ++(2,0);
    \draw(2.3,-0.4)[dashed,gray] -- ++(0,-0.8);
    \draw(2.8,-0.4)[dashed,gray] -- ++(0,-0.8);
    
    \draw(-0.3,-1.2) -- ++(2,0);
    \draw(0.4,-1) -- ++(0,-0.7);
    \draw(1,-1) -- ++(0,-0.7);
    
    \end{tikzpicture}
  \caption{Special fibre $\Y_k$, of the minimal SNC\\ model of $C/L$.}
  \label{fig::Cmodel}
\end{subfigure}%
\begin{subfigure}{0.45\textwidth}
  \centering
  \begin{tikzpicture}
    \draw(0.1,-1.2) -- ++(-2,0);
    \draw(-0.6,-1) -- ++(0,-0.7);
    \draw(-1.2,-1) -- ++(0,-0.7);
    \end{tikzpicture}
  \caption{Special fibre $\Y'_k$, of the minimal SNC model of $C'/L$.}
  \label{fig::C'model}
\end{subfigure}
\caption{Comparison of the special fibres of the minimal SNC models of $C$ and $C'$}
\label{fig::C'comparedtoCmodel}
\end{figure}

After taking this quotient by $\Gal(L/K)$, and performing any appropriate blow ups and blow downs, we obtain a closed immersion $\overline{\X'_k \setminus T_{\infty}} \rightarrow \X_k$, where $\X'$ is the minimal SNC model of $C'/K$ and $T_{\infty}$ is the set of infinity tails of $\X'_k$. We remove the infinity tails since in the small distance case (see Section \ref{subsec:upcsmalldistance}) the whole tails do not appear in $\X_k$. By our inductive hypothesis (since the number of proper clusters in $\Sigma_{C'}$ is strictly less than that in $\Sigma_C$), we can calculate $\X'_k$. This gives us a full description of the components of $\X_k$ which arise from the subclusters of $\s$. 

Finally let us remove the assumption that $\s$ is $\GK$ invariant. Let $X < \Rcal$ be a non-trivial orbit of children. Extend $K$ by degree $|X|$ to the field $K_X$, the minimal extension such that each cluster in $X$ is fixed by $\Gal(\overline{K}/K_X)$. By our inductive hypothesis (since $C/K_X$ needs an extension of degree strictly less than $C/K$ does in order to have semistable reduction), we can calculate the minimal SNC model of $C$ over $K_X$, which we denote $\X_X$. Since each cluster of $X$ is fixed by $\Gal(L/K_X)$, there is a divisor $D_{\s}$ corresponding to every cluster $\s \in X$ and all of the subclusters of $\s$. Let $D_X=\bigcup_{\s\in X}D_{\s}$ be the union of these divisors. Since $\Gal(K_X/K)$ simply permutes these divisors, the quotient by $\Gal(K_X/K)$ is an \'etale morphism, and the image of $D_X$ consists of precisely the same components as $D_{\s}$ for some $\s \in X$, but with all the multiplcities multiplied by $|X|$. See Figure \ref{fig::orbitsofdivisors} for an illustration. This concludes the proof when $\Rcal$ is principal.
\begin{figure}[h]
\centering
  \centering
    \begin{tikzpicture}
    \draw(0,0)[dashed,gray] --  ++(3,0);
    \draw(0.25,0.2) --node[left,font=\small,yshift=-0.1cm] {$\mu$} node[below,font=\small,yshift=-0.5cm] {$D_{\s_1}$}++(0,-1);
    \draw(1,0.2) -- node[below,font=\small,yshift=-0.5cm] {$D_{\s_2}$} ++(0,-1);
    \draw(1.75,0.2) -- node[below,font=\small,yshift=-0.5cm] {$D_{\s_3}$} ++(0,-1);
    \path (1.75,-0.4) -- node[auto=false]{\ldots} (2.75,-0.4);
    \draw(2.75,0.2) -- node[below,font=\small,yshift=-0.5cm] {$D_{\s_l}$} ++(0,-1);
    \draw[->](0.3,0.25) to [bend left] (0.95,0.25);
    \draw[->](1.05,0.25) to [bend left] (1.7,0.25);
    \draw[->](2.7,-1.3) to [bend left] (0.3,-1.3);
    
    \draw[->](3.5,-0.7) to node[above]{$q$} node[below]{quotient} (5,-0.7);
    
    \draw(6,0)[dashed,gray] --  ++(1,0);
    \draw(6.25,0.2) --node[left,font=\small,yshift=-0.1cm] {$|X|\mu$} node[below,font=\small,yshift=-0.5cm] {$q(D_{X})$}++(0,-1);
    \end{tikzpicture}
  \caption{Divisors $D_{\s_i}$, where $X=\{\s_1\dots,\s_l\}$, are permuted by $\Gal(K_X/K)$. After taking the quotient the image of $D_X=\bigcup_{i=1}^lD_{\s_i}$ consists of the components of $D_{\s_i}$ but where a component of multiplicity $\mu$ in $D_{\s_i}$ now has multiplicity $|X|\mu$.}
  \label{fig::orbitsofdivisors}
\end{figure}

\vspace{-10px}
\subsubsection{$\Rcal$ not principal}
\label{subsubsec::proofnotprincipal}

Now suppose that $\mathcal{R}$ is not principal. If $\Rcal$ is a cotwin, then the contribution to the special fibre of the minimal SNC model from $\Rcal$ can be deduced using Remark \ref{rem::upccorrespondence} and Lemmas \ref{lem:linkingchaininvariantdetermines} and \ref{lem:cnew}. The contribution of $\s<\Rcal$, the child of size $2g$, can be calculated by induction using a curve $C'$ as in (\ref{eqn::cprime}) above. 

If $\Rcal$ is not principal and not a cotwin then $\Rcal$ is even and the union of two proper children. In this case, we will write $\Rcal=\s_1\sqcup\s_2$. Here the $s_i$ are either fixed or swapped by $\GK$. We will deal with the case when $s_i$ are swapped at the end of this section, so for now suppose that both $s_i$ are fixed by $\GK$. The first of these lemmas shows that there is a M\"obius transform taking a certain class of curves with $\Rcal$ not principal to the curves we studied in Section \ref{sec::upc}.

\begin{lemma}
\label{lem::nonprincmobiustransform}
Let $C/K$ be a hyperelliptic curve with cluster picture $\Sigma_{C/K}$, and set of roots $\Rcal$.
    \begin{enumerate}
    \item Let $\s\in\Sigma_{C/K}$ be a cluster with centre $z_{\s}$. Write every root $r \in \s$ as $r = z_{\s} + r_h$, where $v_K(r_h) \geq d_{\s}$. Then there exists at most one $r\in\s$ such that $v_K(r_h) > d_{\s}$.
    \item If $\mathcal{R} = \s_1\sqcup \s_2$ with $d_{\Rcal} \geq 0$, where $\s_1$ and $\s_2$ are both fixed by $Gal(L/K)$, have no proper children, and $z_{\s_1} = 0$. Then the M\"obius transform $\psi : r \mapsto \frac{1}{r}$ takes $C$ to a new curve $C_M$ which has cluster picture $\Sigma_M = \{\Rcal_M=\s_{1,M}, \s_{2,M}\}$, with $\s_{1,M} = \{\frac{1}{r} : 0 \neq r \in \s_1\}$, $\s_{2,M} = \{\frac{1}{r} : r \in \s_2\}$, $d_{\s_{1,M}} = - d_{\s_1}$ and $d_{\s_{2,M}} = d_{\s_2} - 2d_{\Rcal}$.  
    \end{enumerate}
\end{lemma}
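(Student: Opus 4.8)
The plan is to handle the two parts separately: part (i) is a one-line ultrametric observation, and part (ii) is an explicit computation with the map $\psi(x)=1/x$. For part (i) I would argue by contradiction. If $r_1\neq r_2$ are roots of $\s$ with $v_K(r_1-z_\s)>d_\s$ and $v_K(r_2-z_\s)>d_\s$, then $v_K(r_1-r_2)\geq\min\{v_K(r_1-z_\s),v_K(r_2-z_\s)\}>d_\s$, so $r_1,r_2$ both lie in the disc $z_\s+\pi_K^n\OO_{\Kbar}$ for $n=\min\{v_K(r_1-z_\s),v_K(r_2-z_\s)\}$; intersecting with $\Rcal$ produces a cluster $\s'\subseteq\s$ with $d_{\s'}\geq n>d_\s$, i.e. a proper subcluster of $\s$ containing two distinct roots. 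In the setting where the lemma is applied — $\s$ (and, in part (ii), $\s_1,\s_2$) has no proper children — this is impossible, since then every pair of distinct roots of $\s$ is at distance exactly $d_\s$ (a closer pair would again cut out a proper subcluster).

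For part (ii) I would first record the valuations of all the roots. After a translation (which changes neither the cluster picture nor semistability over $L$) we may take $z_{\s_1}=0$ to be a root of $\s_1$; then part (i) applied to $\s_1$ gives $v_K(r)=d_{\s_1}$ for every root $0\neq r\in\s_1$. For $r_2\in\s_2$ we have $v_K(r_2)=v_K(r_2-z_{\s_1})=d_{\Rcal}$, since roots in distinct children of $\Rcal$ are at distance exactly $d_{\Rcal}$; as $d_{\Rcal}\geq 0$ this is finite, so $0\notin\s_2$ and $\psi$ is defined on every root of $\s_2$, while the root $0\in\s_1$ is sent to $\infty$ (so, after the usual rewriting of the Weierstrass equation, $\Rcal_M=\{1/r:0\neq r\in\Rcal\}=\s_{1,M}\sqcup\s_{2,M}$). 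Now for distinct roots $r,r'$ one computes $v_K\!\left(\tfrac1r-\tfrac1{r'}\right)=v_K(r-r')-v_K(r)-v_K(r')$; plugging in $v_K(r-r')=d_{\s_1}$ for $r,r'\in\s_1$, $v_K(r-r')=d_{\s_2}$ for $r,r'\in\s_2$ (both since $\s_1,\s_2$ have no proper children), and $v_K(r-r')=d_{\Rcal}$ when the two roots lie in different $\s_i$, yields pairwise valuations $-d_{\s_1}$, $\,d_{\s_2}-2d_{\Rcal}$ and $-d_{\s_1}$ respectively.

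Since $d_{\s_1},d_{\s_2}>d_{\Rcal}$ we get $d_{\s_2}-2d_{\Rcal}>-d_{\s_1}$, so among the roots of $\Rcal_M$ the minimal pairwise distance is $-d_{\s_1}$ and the only pairs strictly closer are pairs inside $\s_{2,M}$; hence $\Rcal_M$ has depth $-d_{\s_1}$ and exactly one proper subcluster, namely $\s_{2,M}$ of depth $d_{\s_2}-2d_{\Rcal}$, with $\s_{1,M}=\Rcal_M\setminus\s_{2,M}$ — precisely the asserted $\Sigma_M$. Finally, since $\psi$ is defined over $K$ it commutes with the $\Gal(\overline K/K)$-action, so $\s_{1,M},\s_{2,M}$ remain Galois-invariant and $C_M$ is still semistable over $L$.

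The main obstacle will be the interaction between the two parts: one must ensure that the single ``exceptional'' root allowed by part (i) is precisely the root $0$, so that every other root of $\s_1$ has valuation exactly $d_{\s_1}$; otherwise $d_{\s_{1,M}}$ would come out as $-v_K(r^\ast)$ for a stray close root $r^\ast$ rather than $-d_{\s_1}$. This is what forces the normalization $z_{\s_1}=0$ to be taken with $0$ a root of $\s_1$. Everything else — the valuation bookkeeping above, and the standard fact that a $K$-rational Möbius transform carries a hyperelliptic curve over $K$ with semistable reduction over $L$ to another such curve with roots transforming by $r\mapsto 1/r$ — is routine.
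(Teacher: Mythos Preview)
Your proof is correct and follows essentially the same route as the paper: the ultrametric inequality for (i), and the identity $v_K(1/x-1/y)=v_K(x-y)-v_K(x)-v_K(y)$ together with the root-valuation bookkeeping for (ii). You are in fact a bit more careful than the paper, since you make explicit that (i) requires the ``no proper children'' hypothesis (the paper uses it tacitly in writing $d_\s=v_K(r-r')$), and your observation that the single exceptional root from (i) must be arranged to be $0$ is exactly what underlies the paper's claim that $v_K(r)=d_{\s_1}$ for all $0\neq r\in\s_1$.
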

\begin{proof}
(i) Suppose there are two roots $r$ and $r'$ such that $v_K(r_h),v_K(r'_h)>d_{\s}$. Then $d_{\s} = v_K(r - r') = v_K(r_h - r'_h) \geq \min(v_K(r_h),v_K(r'_h)) > d_{\s}$.

(ii) Since $z_{\s_1} = 0$, we have that $v_K(r) = d_{\s_1}$ for any $0 \neq r \in \s_1$. Note also that, $v_K(z_{\s_2}) = d_{\Rcal}$, hence $v_K(r) = d_{\Rcal}$ for any $r \in \s_2$. The statement then follows from the fact that $v_K\left(\frac{1}{x} - \frac{1}{y}\right) = v_K(x-y) - v_K(x) - v_K(y)$.
\end{proof}

\begin{remark}
    Note that $\delta_{\s_{1,M}} = \delta_{\s_1} + \delta_{\s_2}$, $\lambda_{\s_{1,M}} = \lambda_{\s_1} - (g(\s) + 1)d_{\s}$ and $\lambda_{\s_2} - \lambda_{\s_{2,M}} = (|s_1|-|s_2|)d_{\Rcal} \in 2\Z$.
\end{remark}

The next lemma is analogous to Lemma \ref{lem:cnew}, it gives us the existence of some new curve, which we will again call $\Cnew$, to which we can apply Lemma \ref{lem::nonprincmobiustransform}. This will allow us to calculate the linking chain(s) between $\Gamma_{\s_1}^{\pm}$ and $\Gamma_{\s_2}^{\pm}$, by using Lemma \ref{lem:linkingchaininvariantdetermines}.

\begin{lemma}
\label{lem::unionoftwoclustersnewinvars}
    Let $\Rcal = \s_1\sqcup\s_2$ with $\s_i$ both fixed by Galois. Then there exists a hyperelliptic curve $\Cnew:y^2=f^{\mathrm{new}}(x)$ whose set of roots of $f^{\mathrm{new}}$ we denote by $\Rcalnew$, such that $\Rcalnew=\snew_1\sqcup \snew_2$, where $\snew_i$ has no proper children, $|\s_i| - |\snew_i|\in 2\Z$, $d_{\s_i} = d_{\snew_i}$ and $\lambda_{\s_i} - \lambda_{\snew_i} \in \Z$ for $i=1,2$.
\end{lemma}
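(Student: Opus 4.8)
The plan is to mimic the construction in Lemma \ref{lem:cnew}, but now replacing the single proper child $\s$ with the pair $\s_1, \s_2$, each of which must be collapsed to a child with no proper children of its own while preserving the depths exactly and the relevant $\lambda$-invariants modulo $\Z$ (and the parities of the sizes). The curve $\Cnew$ will be defined by $y^2 = c_f f_{\s_1} f_{\s_2}$, where each $f_{\s_i}$ is built from centres of the odd children of $\s_i$ together with suitable correction factors to fix the parity and the power of $\pi_K$, exactly as the three-case definition of $f_{\s}$ in Lemma \ref{lem:cnew}. Concretely, I would set
\begin{align*}
    f_{\s_i} = \begin{cases} \displaystyle\prod_{\oo \in \widetilde{\s_i}} (x - z_{\oo}) & |\widetilde{\s_i}| \geq 2, \\
    \displaystyle\prod_{\oo \in \widetilde{\s_i}^{\mathrm{f}}} (x - z_{\oo}) \displaystyle\prod_{\s' \in \childrennotfixedofs{\s_i}} (x - z_{\s'}) & |\widetilde{\s_i}| \leq 1,\ |\childrennotfixedofs{\s_i}| \textrm{ even}, \\
    \displaystyle\prod_{\oo \in \widetilde{\s_i}^{\mathrm{f}}} (x - z_{\oo}) \displaystyle\prod_{\s' \in \childrennotfixedofs{\s_i}} (x - z_{\s'})(x + z_{\s'}) & |\widetilde{\s_i}| \leq 1,\ |\childrennotfixedofs{\s_i}| \textrm{ odd},\end{cases}
\end{align*}
with the $z_{\oo}$, $z_{\s'}$ chosen (using Lemma \ref{lem::orbitssizeb}) to lie inside $\s_i$ so that the roots of $f_{\s_i}$ really do form a cluster with the same centre and depth as $\s_i$; here I may need to shift so that the centres of $\s_1$ and $\s_2$ are sufficiently far apart, which is automatic since $d_{\Rcal}$ is strictly less than both $d_{\s_1}$ and $d_{\s_2}$.

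The verification then splits into the usual three parts. First, that $\Sigma_{\Cnew/K} = \{\Rcalnew = \snew_1 \sqcup \snew_2\}$ with $\snew_i$ having no proper children: this is immediate from the construction, since each $f_{\s_i}$ contributes a collection of roots all at mutual distance $\geq d_{\s_i}$ (the new roots are centres of children of $\s_i$, hence at distance $\geq d_{\s_i}$ from one another — and exactly $d_{\s_i}$ for at least one pair because not all of $\widehat{\s_i}$ can be stable/concentrated), and the roots of $f_{\s_1}$ are at distance $d_{\Rcal}$ from those of $f_{\s_2}$. Second, $d_{\s_i} = d_{\snew_i}$ and $d_{\Rcal} = d_{\Rcalnew}$ follow because any root of a cluster is a valid centre, so the depths are unchanged; and $|\s_i| \equiv |\snew_i| \bmod 2$ follows by counting degrees in each of the three cases exactly as in Lemma \ref{lem:cnew} (the $(x+z_{\s'})$ trick in the odd case is precisely what corrects the parity). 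Third, and this is the only genuinely computational step, one checks $\lambda_{\s_i} - \lambda_{\snew_i} \in \Z$ by expanding $2\lambda_{\s_i} = \nu_{\s_i} - 2 d_{\s_i} \sum_{\s' < \s_i} \lfloor |\s'|/2 \rfloor$ and comparing term by term with $2\lambda_{\snew_i}$. As in Lemma \ref{lem:cnew}, when $|\widetilde{\s_i}| \geq 2$ one gets equality on the nose; otherwise the difference is an even integer multiple of $d_{\s_i}$, and if $d_{\s_i} \notin \Z$ then $b_{\s_i} > 1$ forces (by Lemma \ref{lem::orbitssizeb}) the relevant children to come in orbits of even clusters, making $|\widehat{\s_i} \setminus \widetilde{\s_i}| d_{\s_i} \in \Z$. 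One must also track the effect on $\nu_{\s_i}$ of the roots coming from the \emph{other} cluster $\s_{3-i}$ and from $c_f$, but since $d_{\Rcal} = d_{\Rcalnew}$ and the number of roots outside $\s_i$ is congruent mod $2$ to the number outside $\snew_i$, the term $\sum_{r \notin \s_i} d_{\s_i \wedge r} = |\Rcal \setminus \s_i| d_{\Rcal}$ changes by an even multiple of $d_{\Rcal}$, which lies in $\Z$ when $d_{\Rcal} \in \Z$ and otherwise is handled by the same orbit-size argument applied to $\Rcal$.

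The main obstacle I anticipate is bookkeeping rather than conceptual: correctly handling the interaction between the two sub-polynomials $f_{\s_1}$ and $f_{\s_2}$ in the formula for $\nu_{\snew_i}$, and making sure the parity-correction factors $(x \pm z_{\s'})$ do not accidentally land a root of $f_{\s_i}$ on top of a root of $f_{\s_{3-i}}$ or create a spurious intermediate cluster — one may need to replace $z_{\s'}$ by $z_{\s'} + (\text{something of valuation} > d_{\s_i})$ to keep everything generic, exactly as is implicitly needed in Lemma \ref{lem:cnew}. Once $\Cnew$ is in hand, Lemma \ref{lem::nonprincmobiustransform} applies to $\Cnew$ (after the shift ensuring $z_{\snew_1} = 0$), transporting it to a curve with a single proper child to which Theorem \ref{thm:uniquepropchild} applies, and then Lemma \ref{lem:linkingchaininvariantdetermines} lets us transfer the linking-chain description back to $C$; but that is the content of the next lemmas, not of this one, so the proof of this statement ends once the four congruence/equality conditions on $\snew_1, \snew_2$ are verified.
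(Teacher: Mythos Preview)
Your proposal is correct and follows essentially the same route as the paper: define $\Cnew : y^2 = c_f f_{\s_1} f_{\s_2}$ with each $f_{\s_i}$ built by the three-case recipe from Lemma~\ref{lem:cnew}, then verify the invariants case by case. The paper's own proof is even terser than yours --- it writes down the analogous three-case definition and then simply says ``Proving this satisfies the conditions\dots is similar to the proof of Lemma~\ref{lem:cnew}'' --- so your sketch of the verification (including the extra bookkeeping for the contribution of $\s_{3-i}$ to $\nu_{\s_i}$) is already more detailed than what the paper provides. The only cosmetic difference is that the paper splits cases on $g(\Gamma_{\s_i,L}) > 0$ rather than $|\widetilde{\s_i}| \geq 2$, and in the small-genus cases runs the first product over all fixed children rather than just the fixed odd ones; both variants work for the same reasons.
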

\begin{proof}
For $i=1,2$ define
    \begin{align*}
        f_{\s_i} = \begin{cases} \displaystyle\prod_{\oo\in \widetilde{\s_i}} (x-{z_\oo}) & g(\Gamma_{\s_i,L})>0,\\
        \displaystyle\prod_{\oo \in \childrenfixedofs{\s_i}}(x-z_{\oo})\displaystyle\prod_{\s'\in \childrennotfixedofs{\s_i}}(x-z_{\s'}) & g(\Gamma_{\s_i,L}) = 0 \textrm{ and } |\childrennotfixedofs{\s_i}| \textrm{ even},\\
        \displaystyle\prod_{\oo \in \childrenfixedofs{\s_i}}(x-z_{\oo})\displaystyle\prod_{\s' \in \childrennotfixedofs{\s_i}}(x-z_{\s'})(x+z_{\s'}) & g(\Gamma_{\s_i,L}) = 0 \textrm{ and } |\childrennotfixedofs{\s_i}| \textrm{ odd}. \end{cases}
    \end{align*}
    Let $f^{\mathrm{new}}=c_f f_{\s_1}f_{\s_2}$, so $\Cnew: y^2= c_f f_{\s_1} f_{\s_2}$. Proving this satisfies the conditions in the statement of this lemma is similar to the proof of Lemma \ref{lem:cnew}.
\end{proof}

So, if $\Rcal$ is not principal and a union of two clusters $s_i$ which are fixed by $\GK$ then, by Lemma \ref{lem::unionoftwoclustersnewinvars}, Lemma \ref{lem:linkingchaininvariantdetermines}, and Lemma \ref{lem::nonprincmobiustransform}, we know now the linking chain(s) between $\Gamma_{\s_1}^{\pm}$ and $\Gamma_{\s_2}^{\pm}$. We can calculate the components associated to $\s_i$ and its subclusters by induction, constructing a curve as in (\ref{eqn::cprime}). Therefore this gives us the full special fibre of minimal SNC model of $C/K$ when $\Rcal = \s_1 \sqcup \s_2$ is not principal and $\s_i$ are fixed by Galois. 

It remains to consider the case when $\Rcal=s_1\sqcup \s_2$ is not principal and $\s_i$ are swapped by Galois. This is solved by extending the field $K$ to $K_X$, an extension of degree two. Here, $C/K_X$ has a non principal top cluster $\Rcal'=\s_1'\sqcup\s_2'$, where $s_i'$ are both proper clusters, and are fixed by $\Gal(\overline{K}/K_X)$. So we can apply the above lemmas to find the special fibre of the minimal SNC model of $C/K_X$. Taking the quotient by $\Gal(K_X/K)$, which we know how to do by Section \ref{subsec::QuotientsofModels}, gives the special fibre of the minimal SNC model of $C/K$. This completes the cases when $\Rcal$ is not principal.

\begin{proof}[Proof of Theorem \ref{thm::main1}]
Combining the results proved in the rest of this section proves this.   
\end{proof}

Recall that, in Section \ref{sec::intro} we assumed that $\Rcal$ was principal, and gave some examples. We conclude with a couple of additional examples of when $\Rcal$ is not principal. Let $K=\Qur$

\begin{eg}
Let $C/K$ be the hyperelliptic curve given by $C:y^2=\left((x^2-p)^2+p^{4}\right)\left((x-1)^2-p^3\right)$. Note that $\tfrak_1$ and $\tfrak_2$ are swapped by $\GK$ and denote their orbit by $X$. This is a hyperelliptic curve of Namikawa-Ueno type $\mathrm{II}_{2-4}$  as in \cite[p.~183]{NU73}. 
Note $\s$ is \"ubereven and $\epsilon_{\s}=1$, hence $\s$ gives rise to two components; $X$ is an orbit of twins with $\epsilon_X = 1$, so gives rise to a linking chain, and $\Rcal$ is a cotwin (Definition \ref{def:cotwin}) so gives rise to a linking chain. Also $e_{\s} = 2$ so $\Gamma_{\s}^{\pm}$ are both intersected by tails. 
\begin{figure}[ht]
\centering
\begin{subfigure}{0.5\textwidth}
  \centering
    \begin{tikzpicture}
    \fill (0,0) circle (1.5pt);
    \fill (0.25,0) circle (1.5pt);
    \fill (1,0) circle (1.5pt);
    \fill (1.25,0) circle (1.5pt);
    \fill (2.5,0) circle (1.5pt);
    \fill (2.75,0) circle (1.5pt);

    \draw (0.125,0)
    ellipse (0.4cm and 0.25cm) node[below, yshift = -0.1cm, xshift = -0.3cm, font=\small]{$\tfrak_1$}node[above, yshift = 0cm, xshift = 0.5cm, font=\small]{$\frac{3}{2}$};
    
    \draw (1.125,0)
    ellipse (0.4cm and 0.25cm) node[below, yshift = -0.1cm, xshift = -0.3cm, font=\small]{$\tfrak_2$};
    
    \draw (0.625,0)
    ellipse (1.2cm and 0.7cm) node[below, yshift = -0.2cm, xshift = 1.2cm, font=\small]{$\s$}node[above, yshift = 0cm, xshift = 1.3cm, font=\small]{$\frac{1}{2}$};
    
    \draw (2.625,0)
    ellipse (0.4cm and 0.25cm) node[below, yshift = -0.1cm, xshift = 0.4cm, font=\small]{$\tfrak_3$}node[above, yshift = 0cm, xshift = 0.5cm, font=\small]{$\frac{3}{2}$};

    \draw (1.275,0) ellipse (2.2cm and 1.1cm) node[below, yshift = -0.5cm, xshift = 2.1cm,font=\small]{$\Rcal$} node[above, yshift = 0.5cm, xshift = 2.1cm,font=\small]{$0$};
    \end{tikzpicture}
  \caption{Cluster picture $\Sigma_{C/K}$.}
  \label{fig::eg1clusterpic}
\end{subfigure}%
\begin{subfigure}{.5\textwidth}
  \centering
  \begin{tikzpicture}
    \draw (-0.45,0)[line width = 0.5mm
    ] -- node[above, xshift=-0.6cm, font=\small]{$2$} node[left, font=\small, xshift=-0.8cm] {$\Gamma_{\s}^+$} ++ (1.75,0);
    \draw (1.7,0)[line width = 0.5mm
    ] -- node[above, xshift=0.6cm, font=\small] {$2$}  node[right, font=\small, xshift=0.8cm] {$\Gamma_{\s}^-$} ++ (1.75,0);
    \draw (0.25,0.2)
    -- node[left, font=\small] {$1$} ++ (0,-1);
    \draw (2.75,0.2)
    -- node[left, font=\small] {$1$} ++ (0,-1);
    
    \draw (1.15,0.2)
    -- node[left, font=\small] {$2$} ++ (0,-1);
    \draw (0.75,-0.2)
    -- node[left, font=\small] {$1$} ++ (0,1);
    
    \draw (1.85,0.2)
    -- node[right, font=\small] {$2$} ++ (0,-1);
    \draw (2.25,-0.2)
    -- node[right, font=\small] {$1$} ++ (0,1);
    
    \draw(0.95,-0.6)
    -- node[below, font=\small] {$2$} ++ (1.1,0);
    
    \node at (1.5,-1.2)[
    font=\small]{$L_{X}$};
    
    \draw(0.6,0.5)
    -- node[above, font=\small] {$1$} ++ (1,0.8);
    \draw(1.4,1.3)
    -- node[above, font=\small] {$1$} ++ (1,-0.8);
    
    \node at (1.5,1.5)[
    font=\small]{$L_{\tfrak_3}$};
    \end{tikzpicture}
  \caption{Special fibre of the minimal SNC model of $C/K$.}
  \label{fig::eg1model}
\end{subfigure}
\caption{$C:y^2=((x^2-p)^2+p^{4})((x-1)^2-p^3)$ over $K=\Qur$.}
\label{fig::eg1}
\end{figure}
\end{eg}
\begin{eg}
Let $C/K$ be the hyperelliptic curve given by $C:y^2=x(x^2-p)\left((x-1)^3-p^2\right)$  This is a curve of Namikawa-Ueno type $\mathrm{IV-III}-{0}$ as in \cite[p.~167]{NU73}. 
Observe that $\Rcal$ is not principal so gives rise to a linking chain between $\Gamma_{\s_1}$ and $\Gamma_{\s_2}$. Note that the special fibre here is the same as in Example \ref{eg::compareegs}, and there is in fact a M\"obius transform between the two curves.
\begin{figure}[ht]
\centering
\begin{subfigure}{0.5\textwidth}
  \centering
    \begin{tikzpicture}
    \fill (0,0) circle (1.5pt);
    \fill (0.25,0) circle (1.5pt);
    \fill (0.5,0) circle (1.5pt);
    \fill (1.5,0) circle (1.5pt);
    \fill (1.75,0) circle (1.5pt);
    \fill (2,0) circle (1.5pt);

    \draw (0.25,0)
    ellipse (0.6cm and 0.25cm) node[below, yshift = -0.1cm, xshift = 0.65cm, font=\small]{$\s_1$}node[above, yshift = 0cm, xshift = 0.65cm, font=\small]{$\frac{1}{2}$};
    
    \draw (1.75,0)
    ellipse (0.6cm and 0.25cm) node[below, yshift = -0.1cm, xshift = 0.65cm, font=\small]{$\s_2$}node[above, yshift = 0cm, xshift = 0.65cm, font=\small]{$\frac{2}{3}$};

    \draw (1.1,0) ellipse (1.8cm and 0.9cm) node[below, yshift = -0.4cm, xshift = 1.75cm,font=\small]{$\Rcal$} node[above, yshift = 0.4cm, xshift = 1.75cm,font=\small]{$0$};
    \end{tikzpicture}
  \caption{Cluster picture $\Sigma_{C/K}$.}
  \label{fig::egfrobclusterpic}
\end{subfigure}%
\begin{subfigure}{.5\textwidth}
  \centering
  \begin{tikzpicture}
    \draw (0,0)[line width = 0.5mm] -- node[above, font=\small] {$4$} node[right, font=\small, xshift=1.2cm] {$\Gamma_{\s_1}$}  ++ (2.5,0);
    \draw (0.25,-0.2) -- node[left, font=\small, xshift=0.05cm, yshift=0.1cm] {$1$} ++ (0,0.8);
    \draw (0.75,-0.2) -- node[left, font=\small, xshift=0.05cm, yshift=0.1cm] {$2$} ++ (0,0.8);
    \draw (2.25,0.2) -- node[left, font=\small, xshift=0.05cm] {$1$} ++ (0,-1);
    \draw (0,-0.6)[line width = 0.5mm] -- node[below, font=\small] {$3$}  node[right, font=\small, xshift=1.2cm] {$\Gamma_{\s_2}$}  ++ (2.5,0);
    \draw (0.25,-0.4) -- node[left, font=\small, xshift=0.05cm, yshift=-0.1cm] {$1$} ++ (0,-0.8);
    \draw (0.75,-0.4) -- node[left, font=\small, xshift=0.05cm, yshift=-0.1cm] {$1$} ++ (0,-0.8);
    \end{tikzpicture}
  \caption{Special fibre of the minimal SNC model of $C/K$.}
  \label{fig::egfrobmodel}
\end{subfigure}
\caption{$C:y^2=x(x^2-p)((x-1)^3-p^2)$ over $K=\Qur$.}
\label{fig::egfrob}
\end{figure}

\end{eg}

\vspace{-20px}


\begin{thebibliography}{9}

\bibitem[Bet18]{Bet18}
A. Betts, \textit{On the computation of Tamagawa numbers and Neron component groups of Jacobians of semistable hyperelliptic curves}, Arxiv e-prints (2018), available at 1808.05479v1.

\bibitem[Bis19]{Bis19}
M. Bisatt, \textit{Clusters, Inertia and Root Numbers}, ArXiv e-prints (2019), available at
1902.08981.

\bibitem[BW17]{BW17}I. I. Bouw and S. Wewers, \textit{Computing L-functions and semistable reduction of superelliptic curves}, Glasg.
Math. J., 59(1):77–108 (2017)

\bibitem[CES03]{CES03}
B. Conrad, B. Edixhoven and W. Stein, \textit{$J_1(p)$ has Connected Fibers}, Documenta Mathematica. 8 (2003), 325 -- 402. 

\bibitem[Dok18]{Dok18}
T. Dokchitser, \textit{Models of curves over DVRs}, ArXiv e-prints (2018), available at
1807.00025.

\bibitem[DD18]{DD18} T. Dokchitser, V. Dokchitser, \textit{Quotients of hyperelliptic curves and Étale cohomology}, Quarterly Journal of Mathematics, vol. 69, no. 2, pp. 747-768. (2018)

\bibitem[DDMM18]{DDMM18}
T. Dokchitser, V. Dokchitser, C. Maistret, and A. Morgan, \textit{Arithmetic of Hyperelliptic Curves Over Local Fields},  ArXiv e-prints (2018), available at
1808.02936v2.

\bibitem[Hal10]{Hal10} L.H. Halle, \textit{Stable reduction of curves and tame ramification}, Math. Z. 265(3), 529–550 (2010)

\bibitem[Kun19]{Kun19} S. Kunzweiler, \textit{Differential Forms on Hyperelliptic Curves with Semistable Reduction}, Arxiv e-prints (2019), available at 1902.07784v1.

\bibitem[LLLGR18]{LLLGR18} R. Lercier, Q. Liu, E. Lorenzo García, and C. Ritzenthaler, \textit{ Reduction type of smooth quartics,} Arxiv e-prints (2018) available at 1803.05816v2.

\bibitem[Lip78]{Lip78} J. Lipman, \textit{Desingularization of two-dimensional schemes}, Ann. Math. (2) 107
(1978), no. 1, 151-–207.

\bibitem[Liu02]{Liu02} Q. Liu, \textit{Algebraic geometry and arithmetic curves}, Oxford Graduate Texts in
Mathematics 6, Oxford University Press, 2002.

\bibitem[Lor90]{Lor90}
D. Lorenzini, \textit{Dual Graphs of Degenerating Curves},  Mathematische Annalen 287 (1990), 135 - 150.

\bibitem[NU73]{NU73} Y. Namikawa, K. Ueno, \textit{The complete classification of fibres in pencils of curves of genus two}, Manuscripta Math 9 (1973), issue 2, 143--186. 

\bibitem[OS19]{OS19} A. Obus, P. Srinivasan, \textit{Conductor-discriminant inequality for hyperelliptic curves in odd residue characteristic}, Arxiv e-prints (2019) available at arXiv:1910.02589v2.
 

\bibitem[Rom13]{Rom13} M. Romagny, Models of curves. \textit{In Arithmetic and geometry around Galois theory. Based on two summer schools, Istanbul, Turkey, 2008 and 2009}, 149--170. Basel: Birkh\"auser, 2013.

\bibitem[Sil94]{Sil94}
J. H. Silverman,
\textit{Advanced Topics in the Arithmetic of Elliptic Curves,} Graduate Texts in Mathematics. Springer-Verlag, New York, 1994.
\end{thebibliography}
\end{document}